
\documentclass[12pt,reqno]{amsart}

\usepackage{amsmath, amsthm, amssymb, amscd, amsfonts}
\usepackage{fontsmpl}
\usepackage{fontsmpl,layout}
\usepackage{color}
\usepackage[all]{xy}             
 \usepackage{hyperref}

\setlength{\topmargin}{-11mm}
\setlength{\textheight}{239mm}
\setlength{\textwidth}{150mm}
\setlength{\oddsidemargin}{0.43cm}
\setlength{\evensidemargin}{0.43cm}
\setcounter{tocdepth}{2}
%

\font \smallrm=cmr10 at 10truept
\font \smallsl=cmsl10 at 10truept
\font \ssmallrm=cmr10 at 9truept

\numberwithin{equation}{section}

\newcommand{\subu}[2]{{#1}_{\raise-2pt\hbox{$ \scriptstyle #2 $}}}
\newcommand{\subd}[3]{{#1}_{\raise-2pt\hbox{$ \scriptstyle #2 #3 $}}}



\newtheorem{lema}{Lemma}[subsection]
\newtheorem{theorem}[lema]{Theorem}
\newtheorem{cor}[lema]{Corollary}

\newtheorem{prop}[lema]{Proposition}

\theoremstyle{definition}
\newtheorem{definition}[lema]{Definition}

\newtheorem{rmk}[lema]{Remark}
\newtheorem{rmks}[lema]{Remarks}
\newtheorem{free text}[lema]{}

\theoremstyle{remark}


\font \smallsl=cmsl10 at 9pt
\font \smallbf=cmbx10 at 9pt


\newcommand \id{\operatorname{id}}

\newcommand \gr{\operatorname{gr}}

\newcommand \Hom{\operatorname{Hom}}

\newcommand \Ker{\operatorname{Ker}}

\newcommand \smallast {{\operatorname{\raise1,5pt\hbox{$ \scriptscriptstyle \ast $}}}}
\newcommand \smallp {{\operatorname{\raise1pt\hbox{$ \scriptscriptstyle + $}}}}
\newcommand \smallm {{\operatorname{\raise1pt\hbox{$ \scriptscriptstyle - $}}}}
\newcommand \smallpm {{\operatorname{\raise1pt\hbox{$ \scriptscriptstyle \pm $}}}}
\newcommand \smallmp {{\operatorname{\raise1pt\hbox{$ \scriptscriptstyle \mp $}}}}

\newcommand{\eps}{\epsilon}
\newcommand{\ot}{\otimes}
\newcommand{\com}{\Delta}

\newcommand{\bnu}{{\boldsymbol{\nu}}}

\newcommand{\F}{{\mathcal F}}

\newcommand{\Z}{{\mathcal Z}}

\newcommand{\Kc}{{\mathcal K}}
\newcommand{\Lc}{{\mathcal L}}

\newcommand{\B}{{\mathcal B}}
\newcommand{\G}{{\mathbf G}}

\newcommand{\Hc}{{\mathcal H}}



\def \bq {\mathbf{q}}

\def \NN{\mathbb{N}}
\def \ZZ {\mathbb{Z}}
\def \QQ{\mathbb{Q}}
\def \CC{\mathbb{C}}
\def \FF{\mathbb{F}}
\def \k {\Bbbk}

\def \kh{\Bbbk[[\hbar]]}

\def \Zqqm {{\mathbb{Z}\big[\hskip1pt q,q^{-1}\big]}}

\def\FF{\mathbb{F}}

\def\II{\mathcal{I}}

\def\SS{\mathcal{S}}

\def\G{\mathcal{G}}

\def \Dpicc {{\scriptscriptstyle D}}

\def \erm {\mathrm{e}}

\def \frm {\mathrm{f}}

\def \hrm {\mathrm{h}}
\def \trm {\mathrm{t}}

\def \lieg{\mathfrak{g}}
\def \liegd{\mathfrak{g}_{\raise-2pt\hbox{$ \Dpicc $}}}
\def \liegdotdq{\dot{\lieg}_{\raise-2pt\hbox{$ \Dpicc , \hskip-0,9pt{\scriptstyle \bq} $}}}
\def \liegtildq{\tilde{\lieg}_{\raise-2pt\hbox{$ \Dpicc , \hskip-0,9pt{\scriptstyle \bq} $}}}
\def \liegdq{\lieg_{\raise-2pt\hbox{$ \Dpicc , \hskip-0,9pt{\scriptstyle \bq} $}}}
\def \liegdqcheck{\lieg_{\raise-2pt\hbox{$ \Dpicc , \hskip-0,9pt{\scriptstyle \check{\bq}} $}}}
\def \Gtildeqstar {\widetilde{G}^{\,\raise2pt\hbox{$ \scriptstyle * $}}_{\!\Dpicc , \hskip+0,9pt{\scriptstyle \bq}}}

\def \lieb{\mathfrak{b}}
\def \lieh{\mathfrak{h}}
\def \liehd{\mathfrak{h}_{\raise-2pt\hbox{$ \Dpicc $}}}

\def \lien{\mathfrak{n}}
\def \lieso{\mathfrak{so}}

\def \uhg{U_\hbar(\hskip0,5pt\lieg)}
\def \upsihg{U^{\scriptscriptstyle \Psi}_\hbar(\hskip0,5pt\lieg)}
\def \uhgd{U_\hbar(\hskip0,5pt\liegd)}
\def \upsihgd{U^{\scriptscriptstyle \Psi}_\hbar(\hskip0,5pt\liegd)}

\def \uhh{U_\hbar(\hskip0,0pt\lieh)}
\def \uhbp{U_\hbar(\hskip0,0pt\lieb_+)}
\def \upsihbp{U^{\scriptscriptstyle \Psi}_\hbar(\hskip0,0pt\lieb_+)}
\def \udotpsihbp{{\dot{U}}^{\scriptscriptstyle \Psi}_\hbar(\hskip0,0pt\lieb_+)}
\def \udotpsihbpm{{\dot{U}}^{\scriptscriptstyle \Psi}_\hbar(\hskip0,0pt\lieb_\pm)}
\def \uhbm{U_\hbar(\hskip0,0pt\lieb_-)}
\def \upsihbm{U^{\scriptscriptstyle \Psi}_\hbar(\hskip0,0pt\lieb_-)}
\def \udotpsihbm{{\dot{U}}^{\scriptscriptstyle \Psi}_\hbar(\hskip0,0pt\lieb_-)}

\def \uqg{U_q(\hskip0,8pt\lieg)}
\newcommand \uqgG[1]{U_{q,#1}(\hskip0,5pt\lieg)}

\newcommand \upsiqgG[1]{U^{\scriptscriptstyle \,\Psi}_{q,#1}(\hskip0,5pt\lieg)}
\def \uqgd{U_q(\hskip0,5pt\liegd)}
\newcommand \uqgdG[1]{U_{q,#1}(\hskip0,5pt\liegd)}

\newcommand \upsiqgdG[1]{U^{\scriptscriptstyle \,\Psi}_{q,#1}(\hskip0,5pt\liegd)}
\def \uqh{U_q(\hskip0,0pt\lieh)}
\newcommand \uqhG[1]{U_{q,#1}(\lieh)}
\def \uqbp{U_q(\lieb_+)}
\newcommand \uqbpG[1]{U_{q,#1}(\lieb_+)}

\newcommand \upsiqbpG[1]{U^{\scriptscriptstyle \Psi}_{q,#1}(\lieb_+)}

\newcommand \uhatpsiqbpG[1]{{\hat{U}}^{\scriptscriptstyle \Psi}_{q,#1}(\lieb_+)}
\newcommand \uhatpsiqbpmG[1]{{\hat{U}}^{\scriptscriptstyle \Psi}_{q,#1}(\lieb_\pm)}
\def \uqbm{U_q(\lieb_-)}
\newcommand \uqbmG[1]{U_{q,#1}(\lieb_-)}

\newcommand \upsiqbmG[1]{U^{\scriptscriptstyle \Psi}_{q,#1}(\lieb_-)}

\newcommand \uhatpsiqbmG[1]{{\hat{U}}^{\scriptscriptstyle \Psi}_{q,#1}(\lieb_-)}

\def \QEq{U_\bq(\hskip0,5pt\liegd)}

\def \QEqcheck {U_{\check{\bq}}(\hskip0,5pt\liegd)}

\def \fqgd{F_q[G_{\raise-2pt\hbox{$ \Dpicc $}}]}

\def\pf{\begin{proof}}
\def\epf{\end{proof}}

\theoremstyle{plain}



\begin{document}


\title[Twisted deformations vs.\ cocycle deformations for quantum groups]
{Twisted deformations  \\
 vs.\ cocycle deformations  \\
 for quantum groups}

\author[g.~a.~garc{\'\i}a \ , \ \  f.~gavarini]
{Gast{\'o}n Andr{\'e}s Garc{\'\i}a${}^\flat$ \ ,  \ \ Fabio Gavarini$\,{}^\sharp$}

\address{\newline\noindent Departamento de Matem\'atica, Facultad de Ciencias Exactas
   \newline
 Universidad Nacional de La Plata   --- CMaLP ---  CONICET
   \newline
 C.\ C.\ 172   \, --- \,   1900 La Plata, ARGENTINA
   \newline
  {\ } \quad  {\tt ggarcia@mate.unlp.edu.ar}
 \vspace*{0.5cm}
   \newline
 Dipartimento di Matematica,
   \newline
 Universit\`a degli Studi di Roma ``Tor Vergata''
   \newline
 Via della ricerca scientifica 1   \, --- \,   I\,-00133 Roma, ITALY
   \newline
  {\ } \quad  {\tt gavarini@mat.uniroma2.it}}

\thanks{\noindent  {\sl 2010 MSC:}\,  17B37, 16W30   \ --- \   {\it Keywords:}\,
Quantum Groups, Quantum Enveloping Algebras.}

\begin{abstract}
  \bigskip
   In this paper we study two deformation procedures for quantum groups:
 deformations by twists, that we call  \textit{``comultiplication twisting''},  as they
 modify the coalgebra structure, while keeping the algebra one ---   and
deformations by  $ 2 $--cocycle, that we call  \textit{``multiplication twisting''},  as they
 deform the algebra structure, but save the coalgebra one.
                                                                     \par
   We deal with quantum universal enveloping algebras, in short QUEA's,
   for which we accordingly consider those arising from twisted deformations (in short TwQUEA's)
   and those arising from  $ 2 $--cocycle  deformations, usually called multiparameter QUEA's (in short MpQUEA's).
   Up to technicalities, we show that the two deformation methods are equivalent, in that they eventually provide
   isomorphic outputs, which are deformations (of either kinds) of the ``canonical'', well-known one-parameter
   QUEA by Jimbo and Lusztig.  It follows that the two notions of TwQUEA's and of MpQUEA's   --- which, in Hopf algebra theoretical terms are naturally dual to each other ---   actually coincide; thus, that there exists in fact only one type of ``pluriparametric deformation'' for QUEA's.   In particular, the link between the realization of any such QUEA as a MpQUEA and that as a TwQUEA is just a (very simple, and rather explicit) change of presentation.
 \vskip-31pt
   {\ }
\end{abstract}

{\ } \vskip-65pt

   \centerline{ \smallrm  {\smallsl Communications in Contemporary Mathematics\/}  {\smallbf 23}  (2021), no.~8 --- 2050084 (56 pages) }
                                                 \par
   \centerline{ \smallrm {\smallbf DOI:}  10.1142/S0219199720500844 }
 \vskip1pt
   \centerline{\smallrm {\smallsl The original publication is available at\/}
\  https://www.worldscientific.com/doi/epdf/10.1142/S0219199720500844}
 {\ }
\vskip1pt

\maketitle

\tableofcontents



\smallskip

\section{Introduction}\label{sec:intro}

\vskip5pt

   Roughly speaking, quantum groups   --- in the form of quantized universal enveloping algebras ---   are Hopf algebra deformations of the universal enveloping algebra  $ U(\lieg) $  of some Lie algebra  $ \lieg \, $.  From this deformation,  $ \lieg $  itself inherits (as ``semiclassical limit'' of the deformed coproduct) a Lie cobracket that makes it into a Lie bialgebra   --- the infinitesimal counterpart of a Poisson group whose tangent Lie algebra is $ \lieg \, $.
                                                               \par
   When  $ \lieg $  is a complex simple Lie algebra, a quantum group in this sense, depending
   on a single parameter, was introduced by Drinfeld  \cite{Dr}  as a formal series deformation
   $ U_\hbar(\lieg) $  defined over a ring of formal power series (in the formal parameter  $ \hbar \, $)
   and by Jimbo and Lusztig (see  \cite{Ji},  \cite{Lu})  as a deformation  $ U_q(\lieg) $  defined over
   a ring of rational series (in the formal parameter  $ q \, $).  Indeed, Jimbo's  $ U_q(\lieg) $  is actually
   a ``polynomial version'' of Drinfeld's  $ U_\hbar(\lieg) \, $.
                                                               \par
   Later on, several authors  (cf.\ \cite{BGH},  \cite{BW1,BW2},  \cite{CM},  \cite{CV1},
   \cite{Hay},  \cite{HLT},  \cite{HPR},  \cite{Ko},  \cite{KT},  \cite{Ma},  \cite{OY},  \cite{Re},  \cite{Su},  \cite{Ta},
   to name a few) introduced many types of deformations of  $ U(\lieg) $  depending on several parameters,
   usually referred to as ``multiparameter quantum groups''.  In turn, these richer deformations induce as
   semiclassical limits corresponding ``multiparameter'' bialgebra structures on  $ \lieg \, $.
   The construction of these multiparameter deformations applies a general procedure, always available
   for Hopf algebras, following two patterns that we recall hereafter.
 \vskip3pt
   Let  $ H $  be any Hopf algebra (in some braided tensor category).
   Among all possible deformations of the Hopf structure of  $ H $,
   we look at those in which only one of either the product or the coproduct is actually modified,
   while the other one is kept fixed.  The general deformation will then be, somehow, an intermediate
   case between two such extremes.  On the one hand, a  \textsl{twist deformation\/}  of  $ H $  is a (new)
   Hopf algebra structure on  $ H $  where the multiplicative structure is unchanged, whereas a new coproduct
   is defined by  $ \; \Delta^{\scriptscriptstyle \F}(x) := \F \, \Delta(x) \F^{-1} \; $  for  $ \, x  \in H \, $:  here  $ \F $
   is an invertible element in  $ H^{\otimes 2} $  satisfying suitable axioms, called a ``twist'' for  $ H $.
   On the other hand, a  \textsl{$ 2 $--cocycle deformation\/}  of  $ H $  is one where the coproduct is unchanged,
   while a new product is defined via a formula which only depends on the old product and on a
   $ 2 $--cocycle  $ \sigma $  of  $ H $
 (as an algebra): again, this procedure can be read as a suitable ``conjugation'' of the
  old product map by the  $ 2 $--cocycle.
                                                              \par
   Inasmuch as a meaningful notion of ``duality'' applies to the Hopf algebras one is dealing with,
   these two constructions of deformations are  {\sl dual to each other},  directly by definition.
   In detail, if  $ H^* $  is a Hopf algebra dual to  $ H $  with respect to a non-degenerate (skew) Hopf pairing,
   e.g.\  $ H $  and  $ \, H^* := H^\circ \, $  (i.e., Sweedler's  \textsl{restricted\/}  dual),
   then the dual of the deformation by twist, resp.\ by  $ 2 $--cocycle,  of  $ H $  is a deformation by  $ 2 $--cocycle,
   resp.\ by twist, of  $ H^* $;  moreover, the  $ 2 $--cocycle, resp.\ the twist, on  $ H^* $
   is uniquely determined by the twist, resp.\ the  $ 2 $--cocycle,  on  $ H $.
 In order to stress this duality between the two types of deformation procedures that we are dealing with,
  as well as the fact that both are in fact ``conjugations'' of some sort, we adopt the terminology
  \textit{``comultiplication twisting''\/}  and  \textit{``multiplication twisting''},  instead of
  \textsl{``deformation by twist''\/}  and of  \textsl{``deformation by  $ 2 $--cocycle''},  respectively.
 \vskip5pt
   It so happens that the large majority of multiparameter quantizations of  $ U(\lieg) $
   considered in literature actually occur as either
comultiplication twistings
 or  multiplication twistings
 of a one-parameter quantization of Drinfeld's type or Jimbo-Lusztig's type.
 Indeed, in both cases the twists and the  $ 2 $--cocycles  taken into account are of special type,
 namely ``toral'' ones, in that (roughly speaking) they are defined only in terms of the (quantum) toral part
 of the one-parameter deformation of  $ U(\lieg) \, $.
                                                               \par
   Technically speaking, Drinfeld's  $ U_\hbar(\lieg) $  is better suited for
comultiplication twistings,
 while Jimbo-Lusztig's  $ U_q(\lieg) $  is typically used for
multiplication twistings
 (see  \cite{Re}, \cite{Ma}, \cite{Su}, \cite{HPR}, \cite{HLT}, \cite{CV1}, \cite{Ta}).
 As we aim to compare both kinds of twistings, we focus on polynomial one-parameter quantum groups
 $ U_q(\lieg) \, $,  and we adapt the very notion of ``twist deformation'',
or ``comultiplication twisting'',
 to them.  Then we consider both
comultiplication twistings
 and
multiplication twistings
 (of ``toral type'', in both cases) of  $ U_q(\lieg) $   ---
thus getting
``twisted quantized universal enveloping algebras (=TwQUEA's)'' and ``multiparameter quantized
universal enveloping algebras (=MpQUEA's)'', respectively ---   and compare them.
Moreover, by natural reasons we restrict ourselves to twists and cocycles that are defined by a
{\sl rational\/}  datum, i.e., a matrix with rational entries.
 \vskip3pt
   As a first result, we
describe
 the link  {\sl \,twist  $ \longleftrightarrow 2 $--cocycle\;}
 under duality.  Namely, quantum Borel (sub)groups  $ U_q(\lieb_\pm) $  of opposite signs are in Hopf duality
 (in a proper sense): then we prove that
any twisting
 on one side   ---
of either comultiplication or multiplication
 ---   and the  {\sl dual\/}  one on the other side   ---
of either multiplication or comultiplication,
 respectively ---   are described by  {\it the same rational datum}.
 Indeed, we provide an explicit bijection between the sets of toral twists and toral  $ 2 $--cocycles.
 \vskip3pt
   As a second, more striking result (the core of our paper, indeed), we find that, in short,
   {\sl twisted quantum groups and multiparameter quantum groups coincide\/}:
   namely, any TwQUEA can be realized as a MpQUEA, and viceversa.
   Even more precisely, the twist and the  $ 2 $--cocycle  involved in either realization are described by
   {\it the same (rational) datum}.  This result is, in a sense, a side effect of the ``autoduality'' of quantum groups
   (in particular Borel ones).  The proof is constructive, and quite explicit: indeed, switching from the realization as
   TwQUEA to that as MpQUEA and viceversa is a sheer  {\sl change of presentation}.
   We can shortly sketch the underlying motivation: any ``standard'' (=undeformed) quantum group is pointed
   (as a Hopf algebra); then any TwQUEA  of ``toral type'' is pointed as well, and it is generated by the quantum torus and
   $ (1,g) $--skew  primitive elements: these new ``homogeneous'' generators yield a new presentation,
   which realizes the TwQUEA as a MpQUEA.
                                                               \par
   The direct consequence of this result is that (roughly speaking, and within the borders of our restrictions)
   there exists only  {\sl one\/}  type of multiparameter quantization of  $ U(\lieg) $,  and consequently only one
   type of corresponding multiparameter Lie bialgebra structure on  $ \lieg $  arising as semiclassical limits, as in
\cite{GG1}.
 \vskip3pt
 All the elements that lead us to the above mentioned results for TwQUEA's and MpQUEA's are also
 available for Hopf algebras that (like Borel quantum subgroups) are bosonizations of
 Nichols algebras of diagonal type; thus, we can replicate our work in that context too.
 In another direction, we extend further on this analysis in the framework of multiparametric
 \textsl{formal\/}  QUEA's, \`a la Drinfeld ---  cf.\  \cite{GG2}.
 \vskip5pt
   We finish with a few words on the structure of the paper.
                                                               \par
   In Section 2 we collect the material on Hopf algebras and their deformations that will be
   later applied to quantum groups. Section 3 is devoted to introduce quantum groups (both in Drinfeld's version
   and in Jimbo-Lusztig's one) and their
 comultiplication twistings
 (of rational, toral type),
i.e., the TwQUEA's: the
 part on  {\sl Drinfeld's\/}  quantum groups could be dropped, yet we present it to explain the deep-rooting motivations
 of our work.  In Section 4, instead, we present the
multiplication twistings
 (of rational, toral type) of Jimbo-Lusztig's quantum groups,
hence the MpQUEA's.
  Finally, in Section 5 we compare TwQUEA's and MpQUEA's,
 proving that   --- in a proper sense,
under some finiteness assumption
 ---   they actually coincide.
 \vskip23pt
   \centerline{\ssmallrm ACKNOWLEDGEMENTS}
 \vskip3pt
   {\smallrm
   We thank the referee for the careful reading of the paper and for her/his suggestions that
   allowed us to improve the presentation.

   This work was supported by the research group GNSAGA of the Istituto Nazionale di Alta Matematica,
   by the International Center for Theoretical Physics of Trieste and the University of Padova (Italy),
   the research programs CONICET and ANPCyT (Argentina), the Visiting Professor Program of the
   Department of Mathematics of the Universidad Nacional de La Plata (Argentina) and by the MIUR
   {\smallsl Excellence Department Project\/}  awarded to the Department of Mathematics of the University of Rome
   ``Tor Vergata'', CUP E83C18000100006.}

\bigskip
 \medskip

\section{Preliminaries}  \label{preliminaries}

\medskip

   In this section we fix the basic material on Hopf algebras and combinatorial data that we shall need later on.
   In particular,  $ \, \NN = \{0, 1,\ldots\} \, $  and  $ \, \NN_+ := \NN \setminus \{0\} \, $.

\medskip

 \subsection{The combinatorial tool box}  \label{tool-case}  \
 \vskip7pt
   The definition of our multiparameter quantum groups requires a full lot of related material that we now present.
   First of all,  $ \k $  will be a field of characteristic zero.

\medskip

\begin{free text}{\bf Root data and Lie algebras.}  \label{root-data_Lie-algs}
 Hereafter we fix  $ \, n \in \NN_+ \, $  and  $ \, I := \{1,\dots,n\} \, $.  Let  $ \, A := {\big(\, a_{ij} \big)}_{i, j \in I} \, $
 be a generalized, symmetrisable Cartan matrix; then there exists a unique diagonal matrix
 $ \, D := {\big(\hskip0,7pt d_i \, \delta_{ij} \big)}_{i, j \in I} \, $  with positive integral, pairwise coprime entries such that  $ \, D A \, $  is symmetric.
 Let  $ \, \lieg =  \lieg_A \, $  be the Kac-Moody algebra over  $ \CC $  associated with  $ A $  (cf.\ \cite{Ka});
 we consider a split integral  $ \ZZ $--form  of  $ \lieg \, $,  and its scalar extension $ \lieg_{R} \, $ from  $ \ZZ $  to any ring  $ R \, $:
 when this ring is  $ \k \, $,  by abuse of notation  {\it the resulting Lie algebra over  $ \k $  will be denoted by  $ \lieg $  again}.
                                                                  \par
   Let  $ \Phi $  be the root system  of  $ \lieg \, $,  with  $ \, \Pi = \big\{\, \alpha_i \,\vert\, i\in I \,\big\} \, $  as a set of simple roots,
   $ \, Q = \bigoplus_{i \in I} \ZZ \, \alpha_i \, $  the associated root lattice,  $ \Phi^+ $  the set of positive roots with respect to
   $ \Pi \, $,  $ \, Q^+ = \bigoplus_{i \in I} \NN \, \alpha_i \, $  the positive root (semi)lattice.
                                                                  \par
   Fix a Cartan subalgebra  $ \lieh $  of  $ \lieg \, $,  whose associated set of roots identifies with
   $ \Phi \, $  (so  $ \, \k{}Q \subseteq \lieh^* \, $);  then for all  $ \, \alpha \in \Phi \, $  we call
   $ \lieg_\alpha $  the corresponding root space.  Now set  $ \, \lieh' := \lieg' \cap \lieh \, $  where
   $ \, \lieg' := [\lieg\,,\lieg] \, $  is the derived Lie subalgebra of  $ \lieg \, $:  then
   $ \, {\big( \lieh'\big)}^* = \k{}Q \subseteq \lieh^* \, $.  We fix a  $ \k $--basis
   $ \, \Pi^\vee := {\big\{\, \hrm_i := \alpha_i^\vee \,\big\}}_{i \in I} \, $  of  $ \lieh' $  so that
   $ \, \big( \lieh \, , \Pi \, , \Pi^\vee \big) \, $  is a  {\it realization\/}  of  $ A \, $,  as in
   \cite[Chapter 1]{Ka};  in particular,  $ \, \alpha_i(\hrm_j) = a_{ji} \, $  for all  $ \, i , j \in I \, $.
                                                                \par
   Let  $ \lieh'' $  be any vector space complement of  $ \lieh' $  inside  $ \lieh \, $.
   Then there exists a unique symmetric  $ \k $--bilinear  pairing on  $ \lieh \, $,
   denoted  $ (\,\ ,\ ) \, $,  such that
%
%
$ \, (\hrm_i\,,\hrm_j) = a_{ji}\,d_i^{-1} \, $,
 $ \, (\hrm_i\,,h''_2) = \alpha_i\big(h''_2\big) d_i^{-1} \, $
 and  $ \, (h''_1\,,h''_2) = 0 \, $,  for all  $ \, i, j \in I \, $,  $ \, h''_1, h''_2 \in \lieh'' \, $;
 in addition, this pairing is invariant and non-degenerate (cf.\ \cite[Lemma 2.1]{Ka}).
 By non-degeneracy, this pairing induces a  $ \k $--linear  isomorphism
 $ \; t : \lieh^* \,{\buildrel \cong \over {\relbar\joinrel\longrightarrow}}\, \lieh \; $,  with
 $ \, t(\alpha_i) = d_i \, \alpha_i^\vee =  d_i \, h_i \, $  for all  $ \, i \in I \, $,  and this in
 turn defines a similar pairing on  $ \, \lieh^* \, $   --- again denoted  $ (\,\ ,\ ) $  ---
 via pull-back, namely
   $ \, \big( t^{-1}(h_1) , t^{-1}(h_2) \big) := (h_1\,,h_2) \, $;  in particular, on simple roots
   this gives  $ \, (\alpha_i \, , \alpha_j) := d_i\,a_{ij} \, $
   for all  $ \, i, j \in I \, $.  In fact, this pairing on  $ \lieh^* $  does restrict to a (symmetric,
   $ \ZZ $--valued,  $ \ZZ $--bilinear)
   pairing on  $ Q \, $;  note that, in terms of the latter pairing on  $ Q \, $,  one has
   $ \, d_i = (\alpha_i\,,\alpha_i) \big/ 2 \, $  and
   $ \, a_{ij} = \frac{\,2\,(\alpha_i , \,\alpha_j)\,}{\,(\alpha_i , \,\alpha_i)\,} \, $  for all  $ \, i, j \in I \, $.
   We also notice that  $ \; t : \lieh^* \,{\buildrel \cong \over {\relbar\joinrel\longrightarrow}}\, \lieh \; $
   restricts to another isomorphism
   $ \; t' : {\big( \lieh' \big)}^* \,{\buildrel \cong \over {\relbar\joinrel\longrightarrow}}\, \lieh' \; $
   for which we use notation  $ \, t_\alpha := t'(\alpha) = t(\alpha) \, $.
 \vskip5pt
%
   When
 $ A $  is of  {\sl finite type}   --- which is equivalent to saying that  $ \, \lieh' = \lieh \, $  ---
   we denote by  $ P $  its associated  {\sl weight lattice},  with basis  $ \, {\big\{\, \omega_i \,\big\}}_{i \in I} \, $
   dual to  $ \, {\big\{\, \alpha_j \,\big\}}_{j \in I} \, $,  so that  $ \, \omega_i(\alpha_j) = \delta_{ij} \; $  for  $ \, i, j \in I \, $.
   If we identify  $ \lieh^* $  with  $ \lieh $  via the isomorphism
   $ \; t : \lieh^* \,{\buildrel \cong \over {\relbar\joinrel\longrightarrow}}\, \lieh \; $  as above, the root lattice  $ Q $
   identifies with a suitable sublattice of  $ P \, $:  then there is also an identification  $ \, \QQ{}P = \QQ{}Q \, $
   --- where hereafter we use such notation as  $ \, \QQ{}Q := \QQ \otimes_\ZZ Q \, $,  etc.\ ---
   and we have such identities as
   $ \; \alpha_i = \sum_{j \in I} a_{ji} \, \omega_j \; $  and
   $ \; (\omega_i \,, \alpha_ j) := d_i \, \delta_{ij} \; $  for all  $ \, i \, , j \in I \, $.
 \vskip7pt
   According to our choice of positive and negative roots, let  $ \lieb_+ \, $,  resp.\  $ \lieb_- \, $,
   be the Borel subalgebra in  $ \lieg $  containing  $ \lieh $  and all positive, resp.\  negative, root spaces.
   There is a canonical, non-degenerate pairing between  $ \lieb_+ $  and  $ \lieb_- \, $,
   using which one can construct a  {\sl Manin double\/}  $ \, \liegd = \mathfrak{b}_+ \oplus \mathfrak{b}_- \, $,
   that is automatically endowed with a structure of Lie bialgebra   --- roughly,  $ \liegd $
   is like  $ \lieg $ {\sl but\/}  with  {\sl two copies of\/}  $ \lieh $  inside it (cf.\  \cite{CP}, \S 1.4),
   namely  $ \, \lieh_+ := \lieh \oplus 0 \, $  inside  $ \lieb_+ $  and  $ \, \lieh_- := 0 \oplus \lieh \, $
   inside  $ \lieb_- \, $;  accor\-dingly, we set also  $ \, \lieh'_+ := \lieh' \oplus 0 \, $  and
   $ \, \lieh'_- := 0 \oplus \lieh' \, $.  By construction both  $ \lieb_+ $  and  $ \lieb_- $  lies in
   $ \liegd $  as Lie sub-bialgebras.  Moreover, there exists a Lie bialgebra epimorphism
   $ \, \pi_{\liegd} \! : \liegd \!\relbar\joinrel\relbar\joinrel\twoheadrightarrow \lieg \; $  which
   maps the copy of  $ \lieb_\pm $  inside  $ \liegd $  identically onto its copy in  $ \lieg \, $.
 \vskip7pt
   For later use we fix generators  $ \, \erm_i , \hrm_i , \frm_i \, (\, i \in I \,) \, $  in  $ \lieg \, $
   as in the usual Serre's presen\-tation of  $ \lieg \, $.  Moreover, for the corresponding elements inside
   $ \, \liegd = \lieb_+ \oplus \lieb_- \, $  we adopt notation  $ \, \erm_i := (\erm_i , 0) \, $,
   $ \, \hrm^+_i := (\hrm_i , 0) \, $,  $ \, \hrm^-_i := (0 , \hrm_i) \, $  and  $ \, \frm_i := (0 , \frm_{i}) \, $,
   for all  $ \, i \in I \, $.  {\sl Notice that\/}  we have by construction
\begin{equation}  \label{Serre-generators}
  \erm_i \in \lieg_{+\alpha_i}  \quad ,
  \qquad  \hrm_i = d_i^{-1} t_{\alpha_i} \in \lieh  \quad ,  \qquad  \frm_i \in \lieg_{-\alpha_i}
  \qquad \qquad  \forall \;\; i \in I
\end{equation}

\end{free text}

\medskip

\begin{free text}{\bf Root twisting.}  \label{root-twisting}
 Applying the twisting procedure to quantized universal enveloping algebras, we shall eventually be lead to consider an
 operation of ``root twisting'', in some sort, that we formalize hereafter.
 \vskip5pt
   Let  $ \hbar $  be a formal variable, and  $ \k((\hbar)) $
   the corresponding field of Laurent formal series with coefficients in  $ \k \, $;
   for simplicity, we eventually will take a field  $ \FF $  of characteristic zero containing  $ \k((\hbar)) \, $.
   Fix a subring  $ \mathcal{R} $  of  $ \k((\hbar)) $  containing  $ \QQ[[\hbar]] \, $,  let  $ \, \mathcal{R}Q \, $
   be the scalar extension of  $ Q $  by  $ \mathcal{R} \, $,  and fix an  $ (n \times n) $--matrix
   $ \; \Psi := {\big( \psi_{ij} \big)}_{i,j \in I} \in M_n\big(\mathcal{R}\big) \; $.  We define the endomorphisms
   $ \; \psi_\pm : \mathcal{R}Q \longrightarrow \mathcal{R}Q \; $  given by
\begin{equation}  \label{endom-psi}
  \psi_+(\alpha_\ell) = \zeta_\ell^{\scriptscriptstyle +} := \! {\textstyle \sum\limits_{i, j \in I}} \psi_{ij} \, a_{j\ell} \, d_i^{-1} \, \alpha_i  \; ,   \;\;
  \psi_-(\alpha_\ell) = \zeta_\ell^{\scriptscriptstyle -} := \! {\textstyle \sum\limits_{i, j \in I}} \psi_{ji} \, a_{j\ell} \, d_i^{-1} \, \alpha_i   \quad \forall \; \ell \in I
\end{equation}
that in matrix notation reads
\begin{equation}  \label{endom-psi matr-notat}  {\ } \hskip-9pt
  {\big( \psi_+(\alpha_\ell) = \zeta_\ell^{\scriptscriptstyle +} \big)}_{\! \ell \in I} \! :=
  A^{\scriptscriptstyle T} \Psi^{\scriptscriptstyle T} D^{-1} {\big( \alpha_k \big)}_{\! k \in I}  \; ,   \;\;
  {\big( \psi_-(\alpha_\ell) = \zeta_\ell^{\scriptscriptstyle -} \big)}_{\! \ell \in I} \! :=
  A^{\scriptscriptstyle T} \Psi D^{-1} {\big( \alpha_k \big)}_{\! k \in I}
\end{equation}
where  $ \,  {\big( \alpha_k \big)}_{\! k \in I} = {\big( \alpha_1 \, , \dots , \alpha_n \big)}^{\!\scriptscriptstyle T} \, $  is
thought of as a column vector, and likewise for  $ \, {\big( \psi_\pm(\alpha_\ell) \big)}_{\! \ell \in I} = {\big( \zeta_\ell^{\scriptscriptstyle \pm} \big)}_{\! \ell \in I} \, $.
 Now, borrowing notation from  \S \ref{root-data_Lie-algs}  we fix the  $ \mathcal{R} $--integral form  $ \lieh'_{{}_{\mathcal{R}}} $
 of  $ \lieh' $  in  $ \lieg_{{}_{\mathcal{R}}} $  spanned by the simple coroots  $ \, \hrm_i \, (\, i \in I \,) \, $,
 and the corresponding isomorphism
 $ \; t' : {\big( \lieh'_{{}_{\mathcal{R}}} \big)}^* \,{\buildrel \cong \over {\relbar\joinrel\longrightarrow}}\, \lieh'_{{}_{\mathcal{R}}} \;
 \big( \alpha \mapsto t_\alpha \big) \; $   --- this does make sense, because the original isomorphism
 $ \; t' : {\big( \lieh' \big)}^* \,{\buildrel \cong \over {\relbar\joinrel\longrightarrow}}\, \lieh' \; $  in  \S \ref{root-data_Lie-algs}  is actually
well-defined over\/  $ \QQ \, $.  Then we define endomorphisms $ \psi^{\lieh'}_\pm $  of  $ \lieh'_{{}_{\mathcal{R}}} $
as  $ \, \psi^{\lieh'}_\pm := t' \circ \psi_\pm \circ {\big( t' \big)}^{-1} \, $;  \,if we set  $ \, T_\ell := t_{\alpha_\ell} \, $  and
$ \, H_\ell := d_\ell^{-1} \, T_\ell \, $ for  $ \, \ell \in I \, $,  these are obviously described by
\begin{eqnarray}  \label{endom-psi-h}
  \psi^{\lieh'}_+(T_\ell)  \, :=  {\textstyle \sum\limits_{i, j \in I}} \psi_{ij} \, a_{j\ell} \, d_i^{-1} \, T_i  \, :=
  {\textstyle \sum\limits_{i, j \in I}} \psi_{ij} \, a_{j\ell} \, H_i \quad   \qquad \forall \;\;\; \ell \in I  \nonumber  \\
    \psi^{\lieh'}_-(T_\ell)  \, :=  {\textstyle \sum\limits_{i, j \in I}} \psi_{ji} \, a_{j\ell} \, d_i^{-1} \, T_i  \, :=
    {\textstyle \sum\limits_{i, j \in I}} \psi_{ji} \, a_{j\ell} \, H_i
     \quad
  \qquad \forall \;\;\; \ell \in I   \nonumber
\end{eqnarray}
that in matrix notation reads
  $$  \displaylines{
   {\big( \psi^{\lieh'}_+(T_\ell) \big)}_{\! \ell \in I} := A^{\scriptscriptstyle T} \,
   \Psi^{\scriptscriptstyle T} D^{-1} {\big( T_k \big)}_{\! k \in I} = A^{\scriptscriptstyle T}\, \Psi^{\scriptscriptstyle T} {\big( H_k \big)}_{\! k \in I}  \cr
   {\big( \psi^{\lieh'}_-(T_\ell) \big)}_{\! \ell \in I} := A^{\scriptscriptstyle T} \,
   \Psi \, D^{-1} {\big( T_k \big)}_{\! k \in I} =  A^{\scriptscriptstyle T}\, \Psi \, {\big( H_k \big)}_{\! k \in I}  }  $$
   \indent   Note that, by definition, we have  $ \, \psi_+ = \psi_- \, $,  or equivalently  $ \, \psi^{\lieh'}_+ = \psi^{\lieh'}_- \, $,
   if and only if  $ \, \Psi^{\scriptscriptstyle T} = \Psi \, $,  i.e.,  $ \Psi $  is symmetric.
                                                              \par
   Finally we introduce the following elements of  $ \, {\big( \lieh'_{{}_{\mathcal{R}}} \big)}_{\!+} \! \oplus {\big( \lieh'_{{}_{\mathcal{R}}} \big)}_{\!-} \, $,
   for all  $ \, i \in I \, $:
  $$  T^{\scriptscriptstyle \Psi}_{i,+} := \big( \text{id}_{\lieh'_+} \! + \psi_+^{\lieh'} \big)\big(T^+_i\big) - \psi_-^{\lieh'}\big(T^-_i\big) \;\; ,  \quad
   T^{\scriptscriptstyle \Psi}_{i,-} \, := \, \big( \text{id}_{\lieh'_-} \! + \psi_-^{\lieh'} \big)\big(T^-_i\big) - \psi_+^{\lieh'}\big(T^+_i\big)  $$
 \vskip7pt
   The following two results will be of use later on:

\vskip9pt

\begin{lema}  \label{psi_pm-psi_mp-antisym}  {\ }
 \vskip3pt
   {\it (a)}\,  The maps
   $ \; \pm \big( \psi_+ \! - \psi_- \big) : {\big( \lieh'_{{}_{\mathcal{R}}} \big)}^* \relbar\joinrel\relbar\joinrel\relbar\joinrel\longrightarrow
   {\big( \lieh'_{{}_{\mathcal{R}}} \big)}^* \; $  are antisymmetric with respect to the (symmetric) bilinear product
   $ \, (\,\ ,\ ) \, $  on  $ {\big( \lieh'_{{}_{\mathcal{R}}} \big)}^* \, $;
 \vskip2pt
   {\it (b)}\,  The maps
   $ \; \pm \big( \psi_+^{\lieh'} \! - \psi_-^{\lieh'} \big) : \lieh'_{{}_{\mathcal{R}}} \relbar\joinrel\relbar\joinrel\relbar\joinrel\longrightarrow
   \lieh'_{{}_{\mathcal{R}}} \; $  are antisymmetric with respect to the (symmetric) bilinear product  $ \, (\,\ ,\ ) \, $  on
   $ \lieh'_{{}_{\mathcal{R}}} \, $.
\end{lema}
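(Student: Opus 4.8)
The plan is to reduce the whole statement to an explicit index computation, using formula \eqref{endom-psi} for $\psi_\pm$ together with the fact that on $ {\big( \lieh'_{{}_{\mathcal{R}}} \big)}^* $ the symmetric pairing is $\mathcal{R}$--bilinearly generated by the values $ \, (\alpha_i,\alpha_j) = d_i\,a_{ij} \, $ on simple roots. Since both $ \, \psi_+ - \psi_- \, $ and $ \, (\,\ ,\ ) \, $ are $\mathcal{R}$--(bi)linear, antisymmetry need only be checked on the $ \alpha_a,\alpha_b $ ($ a,b \in I $). First I would substitute \eqref{endom-psi} and use $ \, (\alpha_i,\alpha_b) = d_i\,a_{ib} \, $, noting that the factor $ d_i^{-1} $ in \eqref{endom-psi} cancels against this $ d_i $; this gives
$$ \big(\psi_+(\alpha_a),\alpha_b\big) \,=\, {\textstyle \sum\limits_{i,j \in I}} \psi_{ij}\,a_{ib}\,a_{ja} \;\; , \qquad \big(\psi_-(\alpha_a),\alpha_b\big) \,=\, {\textstyle \sum\limits_{i,j \in I}} \psi_{ij}\,a_{ia}\,a_{jb} $$
the second identity after the harmless relabelling $ i \leftrightarrow j $ in the double sum. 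Subtracting,
$$ \big( (\psi_+ - \psi_-)(\alpha_a)\,,\,\alpha_b \big) \,=\, {\textstyle \sum\limits_{i,j \in I}} \psi_{ij}\,\big( a_{ib}\,a_{ja} - a_{jb}\,a_{ia} \big) \; . $$

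Second, I would simply observe that the right-hand side above changes sign under $ a \leftrightarrow b \, $; combining this with the symmetry of $ \, (\,\ ,\ ) \, $ yields $ \, \big( (\psi_+ - \psi_-)(\alpha_a)\,,\,\alpha_b \big) = - \big( \alpha_a\,,\,(\psi_+ - \psi_-)(\alpha_b) \big) \, $ for all $ a,b $, hence for all arguments by bilinearity. This is precisely statement (a) — and the overall sign $ \pm $ in the statement is immaterial, since $ -\phi $ is antisymmetric whenever $ \phi $ is.

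Third, for part (b) I would pass to $ \lieh'_{{}_{\mathcal{R}}} $ through the isomorphism $ \; t' : {\big( \lieh'_{{}_{\mathcal{R}}} \big)}^* \,{\buildrel \cong \over \longrightarrow}\, \lieh'_{{}_{\mathcal{R}}} \; $. The point is that $ t' $ is an \emph{isometry} for $ \, (\,\ ,\ ) \, $: indeed the pairing on $ \lieh^* $ was defined by pull-back along $ t $, so $ \, (t'\beta_1,t'\beta_2) = (\beta_1,\beta_2) \, $, and this survives the scalar extension from $ \QQ $ to $ \mathcal{R} $ because $ t' $ is already defined over $ \QQ \, $. Since $ \, \psi^{\lieh'}_\pm = t' \circ \psi_\pm \circ {(t')}^{-1} \, $, for $ \, x,y \in \lieh'_{{}_{\mathcal{R}}} \, $ one computes
$$ \big( (\psi^{\lieh'}_+ - \psi^{\lieh'}_-)(x)\,,\,y \big) = \big( (\psi_+ - \psi_-)\big({(t')}^{-1}x\big)\,,\,{(t')}^{-1}y \big) = - \big( {(t')}^{-1}x\,,\,(\psi_+ - \psi_-)\big({(t')}^{-1}y\big) \big) = - \big( x\,,\,(\psi^{\lieh'}_+ - \psi^{\lieh'}_-)(y) \big) $$
using the isometry property in the outer equalities and part (a) in the middle one. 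This proves (b).

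There is no real obstacle here: the argument is elementary. The only points requiring attention are the bookkeeping of indices in the double sum (in particular the relabelling that makes the $\psi_-$--term comparable to the $\psi_+$--term, and the cancellation of $ d_i^{-1} $ against $ d_i $, so that the symmetry of $ DA $ is not even needed for (a)), and, for (b), the explicit remark that the pairing is $ \QQ $--rational so that ``$ t' $ is an isometry'' remains valid after extending scalars to $ \mathcal{R} \, $.
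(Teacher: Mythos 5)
Your argument is correct and, for part (a), is essentially the paper's own computation: you exhibit the matrix of the bilinear form $ \big( (\psi_+ - \psi_-)(\cdot)\,,\,\cdot\, \big) $ on simple roots as a manifestly antisymmetric combination of $ A^{\scriptscriptstyle T} \Psi A $ and its transpose, where the paper instead adds the two sides of the antisymmetry identity and cancels the same four terms. For (b) --- which the paper dispatches as ``analogous'' --- your transport of antisymmetry through $ t' $ is valid and arguably cleaner, since the pairing on $ {\big( \lieh'_{{}_{\mathcal{R}}} \big)}^{*} $ is by definition the pullback along $ t $ of the one on $ \lieh'_{{}_{\mathcal{R}}} \, $; the only quibble is that the symmetry of the pairing (equivalently, of $ DA $) \emph{is} used in your passage from antisymmetry of that matrix to antisymmetry of the operator, so your closing remark that it is ``not even needed'' for (a) slightly overreaches.
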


\begin{proof}
 Both claims in the statement follow by sheer computation.  Namely, for the map
 $ \, \big( \psi_+ \! - \psi_- \big) \, $  this gives, for all  $ \, h, k \in I \, $,
  $$  \displaylines{
   \Big( \big( \psi_+ \! - \psi_- \big)(\alpha_h) \, , \, \alpha_k \Big) \, + \, \Big( \alpha_h \, ,
\, \big( \psi_+ \! - \psi_- \big)(\alpha_k) \Big)  \, = \,   \hfill  \cr
   \qquad   = \,  {\textstyle \sum\limits_{i,j \in I}} \Big( \big( \psi_{ij} \, a_{jh} \, d_i^{-1} \,
\alpha_i \, , \, \alpha_k \big) \, - \, \big( \psi_{ji} \, a_{jh} \, d_i^{-1} \, \alpha_i \, , \, \alpha_k \big) \Big) \, +   \hfill  \cr
   \hfill   + \, {\textstyle \sum\limits_{i,j \in I}} \Big( \big( \alpha_h \, , \, \psi_{ij} \, a_{jk} \, d_i^{-1} \, \alpha_i \big) \, -
   \, \big( \alpha_h \, , \, \psi_{ji} \, a_{jk} \, d_i^{-1} \, \alpha_i \big) \Big)  \, = \,   \qquad  \cr
   = \,  {\textstyle \sum\limits_{i,j \in I}} \Big( \psi_{ij} \, a_{jh} \, a_{ik} \, - \, \psi_{ji} \, a_{jh} \, a_{ik} \Big) \, +
   \, {\textstyle \sum\limits_{i,j \in I}} \Big( \psi_{ij} \, a_{jk} \, a_{ih} \, - \, \psi_{ji} \, a_{jk} \, a_{ih} \Big)  \, =  \cr
   = \,  {\Big( A^{\scriptscriptstyle T} \, \Psi A \Big)}_k^h - {\Big( A^{\scriptscriptstyle T} \,
\Psi^{\scriptscriptstyle T} A \Big)}_k^h + {\Big( A^{\scriptscriptstyle T} \,
\Psi^{\scriptscriptstyle T} A \Big)}_k^h - {\Big( A^{\scriptscriptstyle T} \, \Psi A \Big)}_k^h  \, = \,  0  }  $$
where  $ M_k^h $  always denotes the  $ (k,h) $--entry  of a matrix  $ M \, $.  So
$ \, \pm \big( \psi_+ \! - \psi_- \big) \, $  is antisymmetric.  The proof for
$ \, \pm \big( \psi_+^\lieh \! - \psi_-^\lieh \big) \, $  is analogous.
\end{proof}

\vskip7pt

\begin{lema}  \label{lem:posivedef}
Assume that the Cartan matrix  $ A $  is of  \textsl{finite type}.
                                                           \par
   (a)\,  The maps  $ \; \big(\id_{\,\lieh'} \pm \big( \psi_+^{\lieh'} - \psi_-^{\lieh'} \big) \big) : \lieh'_{{}_{\mathcal{R}}}
\relbar\joinrel\relbar\joinrel\longrightarrow \lieh'_{{}_{\mathcal{R}}} \; $  are bijective.
                                                           \par
   (b)\,   The maps  $ \,\; {\big( \lieh'_{{}_{\mathcal{R}}} \big)}_{\!\pm} \!
   \relbar\joinrel\longrightarrow {\big( \lieh'_{{}_{\mathcal{R}}} \big)}_{\!+} \!\oplus {\big( \lieh'_{{}_{\mathcal{R}}} \big)}_{\!-} \; $
   defined by  $ \; T^\pm_\ell \mapsto T^{\scriptscriptstyle \Psi}_{\ell,\pm} \; $ are injective.
\end{lema}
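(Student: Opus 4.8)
The plan is to establish part (a) first and then deduce part (b) from it by a one-line kernel computation. For part (a), set $N := \psi^{\lieh'}_{+} - \psi^{\lieh'}_{-}$, an $\mathcal{R}$-linear endomorphism of $\lieh'_{\mathcal{R}}$, so that the two maps in question are $\id_{\lieh'} \pm N$. By Lemma~\ref{psi_pm-psi_mp-antisym}(b) the operator $N$ is antisymmetric for the symmetric bilinear form $(\,\ ,\ )$ on $\lieh'_{\mathcal{R}}$, and since we work in characteristic zero this forces $\big(N(v),v\big) = 0$ for every $v \in \lieh'_{\mathcal{R}}$. I would then use that $(\,\ ,\ )$ restricts to an anisotropic — indeed positive definite — form on $\lieh'$ (this is exactly where the hypotheses on the Cartan matrix $A$ enter: for $A$ of finite type it is the restriction of the normalized invariant form to the real span of the simple coroots). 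Thus, if $v$ satisfies $(\id_{\lieh'} \pm N)(v) = 0$, then $v = \mp N(v)$, hence $(v,v) = \mp\big(N(v),v\big) = 0$, and therefore $v = 0$: so $\id_{\lieh'} \pm N$ is injective.

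To promote injectivity to bijectivity I would argue on determinants. In a fixed basis of $\lieh'_{\mathcal{R}}$ let $M_{\pm} \in M_n(\mathcal{R})$ be the matrix of $\id_{\lieh'} \pm N$; from $M_{+} M_{-} = \id_{\lieh'} - N^2$ one gets $\det(M_{+})\,\det(M_{-}) = \det\big(\id_{\lieh'} - N^2\big)$. Since $-N^2 = N^{\ast} N$, with $N^{\ast} = -N$ the adjoint of $N$ for $(\,\ ,\ )$, the operator $-N^2$ is positive semidefinite; hence $\id_{\lieh'} - N^2$ has all eigenvalues $\geq 1$, and $\det\big(\id_{\lieh'} - N^2\big)$ is a unit of $\mathcal{R}$ (e.g.\ its specialization at $\hbar = 0$ equals $\det(\id_{\lieh'} - N_0^2) \geq 1 \neq 0$, where $N_0$ denotes the classical part of $N$). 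Consequently each $\det(M_{\pm})$ is a unit of $\mathcal{R}$, so $\id_{\lieh'} \pm N$ is invertible, which proves (a).

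For part (b) I would just unravel the definitions of $T^{\Psi}_{\ell,\pm}$. Under the identifications ${\big(\lieh'_{\mathcal{R}}\big)}_{+} \cong \lieh'_{\mathcal{R}} \cong {\big(\lieh'_{\mathcal{R}}\big)}_{-}$, the assignment $T^{+}_{\ell} \mapsto T^{\Psi}_{\ell,+}$ reads $v \mapsto \big( (\id_{\lieh'} + \psi^{\lieh'}_{+})(v),\, -\psi^{\lieh'}_{-}(v) \big)$, while $T^{-}_{\ell} \mapsto T^{\Psi}_{\ell,-}$ reads $v \mapsto \big( -\psi^{\lieh'}_{+}(v),\, (\id_{\lieh'} + \psi^{\lieh'}_{-})(v) \big)$. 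If $v$ lies in the kernel of the first map, then $(\id_{\lieh'} + \psi^{\lieh'}_{+})(v) = 0$ and $\psi^{\lieh'}_{-}(v) = 0$; subtracting the second identity from the first yields $(\id_{\lieh'} + N)(v) = 0$, whence $v = 0$ by (a). Likewise, a kernel vector of the second map satisfies $\psi^{\lieh'}_{+}(v) = 0$ and $(\id_{\lieh'} + \psi^{\lieh'}_{-})(v) = 0$, so $(\id_{\lieh'} - N)(v) = 0$ and again $v = 0$. Hence both maps are injective.

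The main obstacle is the passage, in part (a), from antisymmetry of $N$ to triviality of $\ker(\id_{\lieh'} \pm N)$: over an indefinite form this is simply false, so the real content is to isolate and exploit the (positive) definiteness of $(\,\ ,\ )$ on $\lieh'$ — the only place where the precise nature of $A$ is used. By contrast, the reduction of (b) to (a), and the verification that the determinants above are units rather than merely nonzero, are routine.
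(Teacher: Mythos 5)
Your argument is correct and rests on the same two pillars as the paper's own proof: the antisymmetry of $\phi := \psi_+^{\lieh'} - \psi_-^{\lieh'}$ supplied by Lemma~\ref{psi_pm-psi_mp-antisym}, and a positivity property of the form $(\,\ ,\ )$ on $\lieh'$. The mechanics differ slightly: the paper factors $(\id - \phi)(\id + \phi) = \id - \phi^2 = \id + \phi\,\phi^t$ and asserts that $-1$ is never an eigenvalue of $\phi\,\phi^t$, whereas you run the direct kernel computation $v = \mp\phi(v) \Rightarrow (v,v) = \mp(\phi(v),v) = 0 \Rightarrow v = 0$ and then pass to determinants. Your version is more careful on two points the paper skates over: you separate injectivity from bijectivity --- which genuinely matters since $\lieh'_{{}_{\mathcal{R}}}$ is a module over a ring, not a vector space, and your ``determinant is a unit'' step via specialization at $\hbar = 0$ is the right fix for the case $\mathcal{R} = \k[[\hbar]]$ actually used later in the paper --- and you carry out the reduction of (b) to (a) explicitly, which the paper dismisses as a ``direct consequence''; your identification of the two kernel conditions with $(\id \pm \phi)(v) = 0$ is precisely what that phrase hides. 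The one caveat worth recording is that your appeal to positive definiteness of $(\,\ ,\ )$ on $\lieh'$ is literally valid only for $A$ of finite type, whereas the lemma is stated for an arbitrary symmetrizable generalized Cartan matrix (for affine or indefinite $A$ the restriction of the invariant form to $\lieh'$ is degenerate or indefinite, and ``positive'' also needs interpreting once the entries of $\Psi$ lie in $\mathcal{R} \subseteq \k((\hbar))$ for an abstract field $\k$). But the paper's one-line claim that ``the latter always holds true'' conceals exactly the same assumption, so this is a gap you share with the source rather than one you introduced --- and you at least make the dependence explicit.
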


\pf
 Set  $ \, \phi = \big( \psi_+^{\lieh'} \! - \psi_-^{\lieh'} \big) \, $:  this is antisymmetric, hence
 $ \, (\id_{\,\lieh'} \! - \, \phi)(\id_{\,\lieh'} \! + \, \phi) = \big( \id_{\,\lieh'} \! - \, \phi^2 \,\big) = \big( \id_{\,\lieh'} \! +
 \, \phi \, \phi^t \,\big) \, $.
 So the claim in  {\it (a)\/}  is the same as claiming that  $ \, \big( \id_{\,\lieh'} \! + \, \phi \, \phi^t \,\big) \, $  is non-singular,
 which in turn is the same as stating that  $ -1 $  is not an eigenvalue of  $ \, \phi \, \phi^t \, $:
 the claim then follows since the latter always holds true.
                                                                   \par
   The claim in  {\it (b)\/}  is a direct consequence of  {\it (a)}.
\epf
\end{free text}

\smallskip

\begin{free text}{\bf  $ q $--numbers.}  \label{q-numbers}
 Throughout the paper we shall consider several kinds of  ``$ q $--numbers''.
 Let  $ \Zqqm $  be the ring of Laurent polynomials with integral coefficients in the indeterminate  $ q \, $.
 For every  $ \, n \in \NN \, $ we define
  $$  \displaylines{
   {(0)}_q  \, := \;  1 \;\; ,  \quad
    {(n)}_q  \, := \;  \frac{\,q^n -1\,}{\,q-1\,}  \; = \;  1 + q + \cdots + q^{n-1}  \; =
    \; {\textstyle \sum\limits_{s=0}^{n-1}} \, q^s  \qquad  \big(\, \in \, \ZZ[q] \,\big)  \cr
   {(n)}_q!  \, := \;  {(0)}_q {(1)}_q \cdots {(n)}_q  := \,  {\textstyle \prod\limits_{s=0}^n} {(s)}_q  \;\; ,  \quad
    {\binom{n}{k}}_{\!q}  \, := \;  \frac{\,{(n)}_q!\,}{\;{(k)}_q! {(n-k)}_q! \,}  \qquad  \big(\, \in \, \ZZ[q] \,\big)  \cr
   {[0]}_q  := \,  1  \; ,  \;\;
    {[n]}_q  := \,  \frac{\,q^n -q^{-n}\,}{\,q-q^{-1}\,}  =  \, q^{-(n-1)} + \cdots + q^{n-1}  =
    {\textstyle \sum\limits_{s=0}^{n-1}} \, q^{2\,s - n + 1}  \!\quad  \big( \in \Zqqm \,\big)  \cr
   {[n]}_q!  \, := \;  {[0]}_q {[1]}_q \cdots {[n]}_q  = \,  {\textstyle \prod\limits_{s=0}^n} {[s]}_q  \;\; ,  \quad
    {\bigg[\, {n \atop k} \,\bigg]}_q  \, := \;  \frac{\,{[n]}_q!\,}{\;{[k]}_q! {[n-k]}_q!\,}  \qquad  \big(\, \in \, \Zqqm \,\big)  }  $$
 \vskip5pt
\noindent
   In particular, we have the identities
  $$  {(n)}_{q^2} = q^{n-1} {[n]}_q \;\; ,  \qquad  {(n)}_{q^2}! = q^{\frac{n(n-1)}{2}} {[n]}_q  \;\; ,
\qquad  {\binom{n}{k}}_{\!\!q^{2}} = q^{k(n-k)} {\bigg[\, {n \atop k} \,\bigg]}_q  \;\; .  $$
   Furthermore, thinking of Laurent polynomials as functions on  $ \FF^\times \, $,  for any  $ \, q \in \FF^\times \, $
   we shall read every symbol above as representing the corresponding element in  $ \, \FF \, $.
 \end{free text}

\medskip

\subsection{Multiparameters}  \label{subsec: multipar}  \
 \vskip7pt
   The main objects of our study will depend on ``multiparameters'', i.e., suitable collections of parameters.
   Hereafter we introduce these gadgets and the technical results about them that we shall need later on.
\smallskip

\begin{free text}{\bf Multiplicative multiparameters.}  \label{mult-multiparameters}
 Let  $ \FF $  be a fixed ground field, and let  $ \, I := \{1,\dots,n\} \, $  be as in  \S \ref{root-data_Lie-algs}  above.
 We fix a matrix  $ \, \bq := {\big(\, q_{ij} \big)}_{i,j \in I} \, $,  whose entries belong to  $ \FF^\times \, $
 and will play the role of ``parameters'' of our quantum groups.  Then inside the lattice
 $ \, \varGamma := \ZZ^n \, $  one has a ``generalized root system'' associated with the diagonal
 braiding given by  $ \bq \, $,  in which the vectors in the canonical basis of  $ \, \varGamma := \ZZ^n \, $
 are taken as (positive) simple roots  $ \alpha_i $  ($ \, i = 1, \dots, n \, $).
                                                                  \par
   We shall say that the matrix  $ \, \bq \, $  is  {\sl of Cartan type\/}
%
%
 if there is a symmetrisable generalized Cartan matrix  $ \, A = {\big( a_{ij} \big)}_{i,j \in I} \, $  such that
\begin{equation}  \label{qij-ident}
  \qquad \qquad \qquad \qquad \quad   q_{ij} \, q_{ji}  \; = \;  q_{ii}^{\,a_{ij}}
  \qquad \qquad   \forall \;\; i, j \in I  \quad
      \end{equation}
%
%
 In such a case, to avoid some irrelevant technicalities  {\sl we shall assume that the Cartan matrix
 $ A $  is indecomposable\/}:  that is,  $ A $  is not expressible, after any permutation of indices,
 as a block-diagonal matrix with more than one block (although this is  {\sl not\/}  necessary for the theory;
 it only helps in having the main results look better, say).  In particular, this implies that for all  $ \, i, j \in I \, $,
 there exists a sequence  $ \, i = k_1 , \ldots , k_\ell = j \, $  in  $ I $  such that  $ \, a_{k_s,k_{s+1}} \neq 0 \, $
 for all  $ \, 1 \leq s < \ell \, $.
 Moreover   --- still in the Cartan case ---   for later use we fix in  $ \FF $  some
 ``square roots'' of all the  $ q_{ii} $'s,  as follows.  From relations  (\ref{qij-ident})  one easily finds
 --- because the Cartan matrix  $ A $  is indecomposable ---   that  {\sl there exists an index
 $ \, j_{\raise-2pt\hbox{$ \scriptstyle 0 $}} \in I \, $  such that
 $ \, q_{ii} = q_{j_{\raise-2pt\hbox{$ \scriptscriptstyle 0 $}}  j_{\raise-2pt\hbox{$ \scriptscriptstyle 0 $}}}^{\,e _i} \, $
 for some  $ \, e_i \in \NN \, $,  for all  $ \, i \in I \, $}.
Now  {\it we assume hereafter that  $ \FF $  contains a square root of
 $ q_{j_{\raise-2pt\hbox{$ \scriptscriptstyle 0 $}} j_{\raise-2pt\hbox{$ \scriptscriptstyle 0 $}}} \, $,
which we fix throughout and denote by  $ \, q_{j_{\raise-2pt\hbox{$ \scriptscriptstyle 0 $}}}
:= \sqrt{\,q_{j_{\raise-2pt\hbox{$ \scriptscriptstyle 0 $}}
j_{\raise-2pt\hbox{$ \scriptscriptstyle 0 $}}}^{\phantom{o}}} \, $,  and also by
$ \, q := \sqrt{\,q_{j_{\raise-2pt\hbox{$ \scriptscriptstyle 0 $}}
j_{\raise-2pt\hbox{$ \scriptscriptstyle 0 $}}}^{\phantom{o}}} \, \big( = q_{j_0} \big) \, $.}
 Then we set also  $ \, q_i := q_{j_{\raise-2pt\hbox{$ \scriptscriptstyle 0 $}}}^{\,e _i} \, $
 (a square root of  $ q_{ii} \, $)  for all  $ \, i \in I \, $.
 \vskip5pt
   As recorded in  \S \ref{root-data_Lie-algs}
above, we fix positive, relatively prime integers
 $ d_1 $,  $ \dots $,  $ d_n $  such that the diagonal matrix  $ \, D=\text{\sl diag}\,(d_1,\dots,d_n) \, $
 symmetrizes  $ A \, $,  i.e.,  $ \, D\,A \, $  is symmetric; in fact,
these  $ d_i $'s  coincide with the exponents  $ e_i $  mentioned above.
                                               \par
   We introduce now two special cases of Cartan type multiparameter matrices.
 \vskip7pt
   {\sl  $ \underline{\hbox{Integral type}} $:}\,  We say that
   $ \, \bq := {\big(\hskip0,7pt q_{ij} \big)}_{i,j \in I} \, $  is  {\it of integral type\/}
   if it is of Cartan type with associated Cartan matrix  $ \, A = {\big( a_{ij} \big)}_{i, j \in I} \, $,
   and there exist  $ \, p \in \FF^\times \, $  and  $ \, b_{ij} \in \ZZ \, $  ($ \, i, j \in I \, $)  such that
   $ \, b_{ii} = 2\,d_i $  and  $ \, q_{ij} = p^{\,b_{ij}} \, $  for  $ \, i \not= j \in I \, $.
   The Cartan condition  \eqref{qij-ident}  yields  $ \; b_{ij} + \, b_{ji} = 2 \, d_i \, a_{ij} \, $,  \,for  $ \, i, j \in I \, $
   (with  $ d_i $'s  as above).  To be precise,  {\it we say also that  $ \, \bq $  is ``of integral type  $ (\,p\,,B) $''},
   with  $ \, B := {\big(\hskip0,7pt b_{ij} \big)}_{i,j \in I} \; $.
 \vskip5pt

   {\sl  $ \underline{\hbox{Canonical multiparameter}} $:}\,  Given  $ \, q \in \FF^\times \, $
   and a symmetrisable (generalised) Cartan matrix  $ A = {\big( a_{ij} \big)}_{i, j \in I}$,  consider
\begin{equation}  \label{qij-canon}
  \qquad \qquad \qquad \qquad \quad   \check{q}_{ij}  \; := \;  q^{\, d_i a_{ij}}   \qquad \qquad
  \forall \;\; i, j \in I  \quad
\end{equation}
with  $ \, d_i \; (\,i \in I\,) \, $  given as above.  Then these special values of the
$ \, q_{ij} = \check{q}_{ij} \, $'s  do satisfy condition  (\ref{qij-ident}),
hence they provide a special example of matrix  $ \, \bq = \check{\bq} \, $  of
Cartan type, to which we shall refer to hereafter as  {\it the ``$ \, q $--canonical''  case}.
                                                          \par
   Note also that  $ \check{\bq} $  is of integral type  $ (\,q\,,DA) \, $.
 \vskip7pt
   By the way, when the multiparameter matrix  $ \, \bq := {\big(\, q_{ij} \big)}_{i,j \in I} \, $  is  {\sl symmetric},
   i.e., $ \, q_{ij} = q_{ji} \, $  (for all  $ \, i, j \in I \, $),  then the ``Cartan conditions''  $ \, q_{ij} \, q_{ji} = q_{ii}^{a_{ij}} \, $
   read  $ \, q_{ij}^{\,2} = q^{\, 2 \, d_i a_{ij}} \, $,  hence  $ \, q_{ij} = \pm q^{\, d_i a_{ij}} \, $  (for all  $ \, i, j \in I \, $).
   Thus every symmetric multiparameter is ``almost the  $ \, q $--canonical''  one, as indeed it is the
   $ \, q $--canonical  one ``up to sign(s)''.
\end{free text}

\vskip9pt

\begin{free text}{\bf Equivalence and group action for multiparameters.}  \label{equiv-act_x_mprmts}
 Let  $ \, \bq := {\big(\, q_{ij} \big)}_{i,j \in I}$  be a multiparameter matrix.
 The  {\it generalized Dynkin diagram}  $ D(\bq) \, $  associated with  $ \bq $  is a
 {\sl labelled\/}  graph whose set of vertices is  $ I $,  where the  $ i $-th
 vertex is labelled with  $ q_{ii} \, $,  and
there exists
 an edge between the vertices  $ i $  and  $ j $  only if  $ \, \widetilde{q_{ij}} := q_{ij} q_{ji} \neq  1 \, $,
 in which case the edge is decorated by  $ \widetilde{q_{ij}} \, $.  An automorphism of a generalized
 Dynkin diagram  $ D(\bq) \, $  is an automorphisms as a  {\sl labelled\/}  graph.
 In the set  $ \, M_n(\FF^\times) \, $  of all  $ \FF $--valued  multiparameters, we consider the relation
 $ \, \sim \, $  defined by
\begin{equation}  \label{def: twist-equiv x mprmts}
 \bq' \, \sim \, \bq''  \; \iff \;\,  q'_{ij} \, q'_{ji} = q''_{ij} \, q''_{ji} \; , \;\; q'_{ii} = q''_{ii}  \quad \qquad
 \forall \;\;\; i , j \in I := \{1,\dots,n\} \, .
\end{equation}
 This is obviously an equivalence, which is known in the literature as  {\sl twist equivalence}.
Indeed,  $ \, \sim \, $  is nothing but the equivalence relation
 associated with the map  $ \; \bq \mapsto D(\bq) \; $, which to every multiparameter  $ \bq $
 associates its unique generalized Dynkin diagram  $ D(\bq) \, $.
 In particular, if  $ \theta $  is an automorphism of  $ D(\bq) \, $
  --- in the obvious sense ---   then
 we have  $ \, \theta(\bq) := {\big(\, q_{\theta(i),\theta(j)} \big)}_{i, j \in I} \sim \bq \, $.
 \vskip5pt
   Observe also that  $ \, M_n(\FF^\times) \, $  is a group (isomorphic to the direct product of  $ n^2 $
   copies of  $ F^\times \, $)  with the Hadamard product, i.e., the operation   --- denoted  $ \odot $  ---
   is the componentwise multiplication.  For  $ \, \bq = {\big( q_{ij} \big)}_{i,j \in I} \, \in M_n(\FF^\times) \, $,
   we have  that $ \, \bq^{-1} = {\big( q_{ij}^{-1} \big)}_{i,j \in I} \, $;  moreover, we set
   $ \, \bq^T = {\big( q_{ji} \big)}_{i,j \in I} \, $  and
   $ \, \bq^{-T} := {\big( \bq^{-1} \big)}^T = {\big( \bq^T \big)}^{-1} = {\big( q_{ji}^{-1} \big)}_{i,j \in I} \, $.
   The group  $ \, \big(\, M_n(\FF^\times) \, ; \, \odot \,\big) \, $  acts on itself by the adjoint action
\begin{equation}  \label{def: mprmts-acting-on-mprmts}
   \bnu.\,\bq \, := \, \bnu \odot \bq \odot \bnu^{-T}   \qquad \qquad  \forall \;\;\; \bnu \, , \, \bq \, \in M_n(\FF^\times)
\end{equation}
%
%
%
 In down-to-earth terms,
%
%
 this action is given   --- for all
 $ \; \bnu = {\big( \nu_{ij} \big)}_{i, j \in I} \, , \, \bq = {\big( q_{ij} \big)}_{i, j \in I} \, \in M_n(\FF^\times) \, $  ---   by
\begin{equation}  \label{def: EXPL_mprmts-acting-on-mprmts}
   \bnu.\,\bq \, := \, \bq^\bnu \,= \, {\big(\, q^\bnu_{ij} := \nu_{ij} q_{ij} \nu_{ji}^{-1} \,\big)}_{i, j \in I}
\end{equation}
                                                               \par
   The following lemma shows that the equivalence in  $ M_n(\FF^\times) $ induced
   by this action is just the twist equivalence introduced above (the proof is a trivial calculation):

\smallskip

\begin{lema}  \label{lemma: mprmtr-action=>twist-equiv}
 Let  $ \, \tilde{\bq} = {\big( \tilde{q}_{ij} \big)}_{i, j \in I} \, ,
 \hat{\bq} = {\big( \hat{q}_{ij} \big)}_{i, j \in I} \in M_n(\FF^\times) \, $.  Then
  $$  \tilde{\bq} \,\sim\, \hat{\bq}  \,\; \iff \;\,  \exists \;\; \bnu \in M_n(\FF^\times) \, : \,
  \hat{\bq} = \bnu.\,\tilde{\bq}   \leqno \indent \text{(a)}  $$
It follows then that the  $ \, \sim \, $--equivalence  classes coincide with the orbits of the
$ M_n(\FF^\times) $--action  on itself.  In particular, if  $ \; \tilde{\bq} \sim \hat{\bq} \, $  then:
 \vskip5pt
   (b)\;  the  $ \bnu $'s  in  $ M_n(\FF^\times) $  satisfying  $ \; \hat{\bq} = \bnu.\,\tilde{\bq} \, $
   are those such that  $ \; \nu_{ij} \, \nu_{ji}^{-1} =  \hat{q}_{ij} \, \tilde{q}_{ij}^{-1} \; $;
 \vskip5pt
   (c)\;  an explicit  $ \, \bnu $  such that  $ \; \hat{\bq} = \bnu.\,\tilde{\bq} \, $  is given by
  $ \quad  \bnu \, = \, \begin{cases}
              \; \hat{q}_{ij} \, \tilde{q}^{\,-1}_{ij}  &   \quad  \forall \; i \leq j  \\
              \; 1  &  \forall \; i > j
                                    \end{cases} $
 \vskip5pt
   (d)\;  if\/  $ \FF $  contains all square roots of the  $ \tilde{q}_{ij} $'s  and  $ \hat{q}_{ij} $'s,
   and we fix them so that for the multiparameters formed by these square roots we have
   $ \, \tilde{\bq}^{1/2} \sim \hat{\bq}^{1/2} \, $  (as it is always possible) then another explicit  $ \, \nu $
   such that  $ \; \hat{\bq} = \bnu.\,\tilde{\bq} \, $  is given by   $ \; \bnu = \hat{\bq}^{1/2} \, \tilde{\bq}^{-1/2} \; $.
   \qed
\end{lema}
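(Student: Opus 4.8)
The plan is to verify everything by direct computation, using the explicit entrywise formula \eqref{def: EXPL_mprmts-acting-on-mprmts} for the action, namely $\, q^\bnu_{ij} = \nu_{ij}\, q_{ij}\, \nu_{ji}^{-1} \,$, together with the entrywise description \eqref{def: twist-equiv x mprmts} of the twist equivalence $\,\sim\,$.

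First I would settle part (b), which is the technical core even though it is entirely elementary. Given $\, \hat{\bq} = \bnu.\,\tilde{\bq} \,$, reading off the $(i,j)$--entry via \eqref{def: EXPL_mprmts-acting-on-mprmts} yields $\, \hat{q}_{ij} = \nu_{ij}\,\tilde{q}_{ij}\,\nu_{ji}^{-1} \,$, i.e.\ $\, \nu_{ij}\,\nu_{ji}^{-1} = \hat{q}_{ij}\,\tilde{q}_{ij}^{-1} \,$; conversely, this family of identities is plainly equivalent to $\, \hat{q}_{ij} = \nu_{ij}\,\tilde{q}_{ij}\,\nu_{ji}^{-1} \,$ for all $\, i,j \,$, that is, to $\, \hat{\bq} = \bnu.\,\tilde{\bq} \,$. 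Two consequences drop out immediately: specialising $\, i = j \,$ forces $\, \hat{q}_{ii} = \tilde{q}_{ii} \,$, and multiplying the $(i,j)$ relation by the $(j,i)$ one gives $\, \hat{q}_{ij}\,\hat{q}_{ji} = \tilde{q}_{ij}\,\tilde{q}_{ji} \,$. Hence the mere existence of such a $\, \bnu \,$ already entails $\, \tilde{\bq} \sim \hat{\bq} \,$, which is the ``only if'' half of (a).

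For the ``if'' half of (a) I would exhibit the candidate $\, \bnu \,$ of part (c), namely $\, \nu_{ij} = \hat{q}_{ij}\,\tilde{q}^{\,-1}_{ij} \,$ for $\, i \le j \,$ and $\, \nu_{ij} = 1 \,$ for $\, i > j \,$, and check $\, \hat{\bq} = \bnu.\,\tilde{\bq} \,$ case by case according to the position of $(i,j)$: for $\, i < j \,$ the equality $\, \nu_{ij}\,\tilde{q}_{ij}\,\nu_{ji}^{-1} = \hat{q}_{ij} \,$ is immediate since $\, \nu_{ji} = 1 \,$; for $\, i = j \,$ it holds because $\, \hat{q}_{ii} = \tilde{q}_{ii} \,$ forces $\, \nu_{ii} = 1 \,$; and for $\, i > j \,$ one has $\, \nu_{ij}\,\tilde{q}_{ij}\,\nu_{ji}^{-1} = \tilde{q}_{ij}\,\tilde{q}_{ji}\,\hat{q}_{ji}^{-1} \,$, which equals $\, \hat{q}_{ij} \,$ precisely by the twist--equivalence identity $\, \tilde{q}_{ij}\,\tilde{q}_{ji} = \hat{q}_{ij}\,\hat{q}_{ji} \,$. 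This proves (a) and (c) simultaneously; the assertion that the $\,\sim\,$--classes coincide with the orbits of the action \eqref{def: mprmts-acting-on-mprmts} is then just a restatement of (a).

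Finally, for (d) I would substitute $\, \bnu = \hat{\bq}^{1/2}\,\tilde{\bq}^{-1/2} \,$, i.e.\ $\, \nu_{ij} = \hat{q}_{ij}^{1/2}\,\tilde{q}_{ij}^{-1/2} \,$, into the criterion of part (b): one computes $\, \nu_{ij}\,\nu_{ji}^{-1} = \hat{q}_{ij}^{1/2}\,\hat{q}_{ji}^{-1/2}\,\tilde{q}_{ij}^{-1/2}\,\tilde{q}_{ji}^{1/2} \,$, then uses the hypothesis $\, \tilde{\bq}^{1/2} \sim \hat{\bq}^{1/2} \,$, which gives $\, \hat{q}_{ij}^{1/2}\,\hat{q}_{ji}^{1/2} = \tilde{q}_{ij}^{1/2}\,\tilde{q}_{ji}^{1/2} \,$, to rewrite $\, \hat{q}_{ji}^{-1/2}\,\tilde{q}_{ji}^{1/2} = \hat{q}_{ij}^{1/2}\,\tilde{q}_{ij}^{-1/2} \,$; substituting collapses the expression to $\, \hat{q}_{ij}\,\tilde{q}_{ij}^{-1} \,$, as required. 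No genuine obstacle is expected at any stage; the only point demanding care is the bookkeeping with the transposes appearing in \eqref{def: EXPL_mprmts-acting-on-mprmts}, and keeping the three ranges $\, i<j \,$, $\, i=j \,$, $\, i>j \,$ separate when verifying the explicit $\, \bnu \,$ of (c).
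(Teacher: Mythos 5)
Your verification is correct and complete: part (b) follows by reading off the entrywise formula $\hat{q}_{ij}=\nu_{ij}\,\tilde{q}_{ij}\,\nu_{ji}^{-1}$, the candidate in (c) is checked correctly in the three ranges $i<j$, $i=j$, $i>j$ using $\tilde{q}_{ij}\tilde{q}_{ji}=\hat{q}_{ij}\hat{q}_{ji}$, and (d) reduces to (b) exactly as you describe; this is precisely the ``trivial calculation'' the paper alludes to without writing it out. (The only blemish is that you swap the labels ``if'' and ``only if'' for the two implications of (a), but both directions are in fact proved.)
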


\medskip

   For multiparameters of Cartan type we also have the following:

\medskip

\begin{prop}
 Each multiparameter\/  $ \bq $  of Cartan type is  $ \, \sim $--equivalent
 to some multiparameter\/  $ \check{\bq} $  of canonical type.
\end{prop}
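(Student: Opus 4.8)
The plan is to write down the canonical multiparameter explicitly and then check by hand that it is twist-equivalent to $\bq$, exploiting that the relation $\sim$ of \eqref{def: twist-equiv x mprmts} remembers nothing about $\bq$ beyond the diagonal entries $q_{ii}$ and the symmetrized off-diagonal products $q_{ij}\,q_{ji}$.

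First I would unpack the data attached to a Cartan type multiparameter $\bq$, as set up in \S\ref{mult-multiparameters}: a symmetrisable (indecomposable) generalized Cartan matrix $A = {\big(a_{ij}\big)}_{i,j\in I}$ with $q_{ij}\,q_{ji} = q_{ii}^{\,a_{ij}}$ for all $i,j$; the pairwise coprime symmetrising integers $d_i$, so that $D A$ is symmetric, i.e.\ $d_i\,a_{ij} = d_j\,a_{ji}$; the fact that $q_{ii} = q_{j_0 j_0}^{\,d_i}$ for the distinguished index $j_0$ (recall $e_i = d_i$); and the standing choice of a square root $\, q := \sqrt{q_{j_0 j_0}} \in \FF^\times$, which yields $q_{ii} = q^{\,2 d_i}$ for every $i \in I$ (using $a_{ii} = 2$).

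Next I would set $\, \check{\bq} := {\big( \check q_{ij} \big)}_{i,j\in I} \, $ with $\, \check q_{ij} := q^{\,d_i a_{ij}} \, $, exactly as in \eqref{qij-canon}; by construction this is a multiparameter of canonical type. It then remains to verify the two conditions in \eqref{def: twist-equiv x mprmts}. The diagonal condition is immediate: $\, \check q_{ii} = q^{\,d_i a_{ii}} = q^{\,2 d_i} = q_{ii} \, $. For the off-diagonal products one computes $\, \check q_{ij}\,\check q_{ji} = q^{\,d_i a_{ij} + d_j a_{ji}} = q^{\,2 d_i a_{ij}} = {\big( q^{\,2 d_i} \big)}^{a_{ij}} = q_{ii}^{\,a_{ij}} = q_{ij}\,q_{ji} \, $, where the second equality uses the symmetry of $DA$ and the last uses the Cartan relation satisfied by $\bq$. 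Hence $\, \bq \sim \check{\bq} \, $, which is the assertion. (Equivalently, one may observe that these two computations say precisely that $D(\bq) = D(\check{\bq})$, and then invoke the remark after \eqref{def: twist-equiv x mprmts} that $\sim$ is the fibre equivalence of $\bq \mapsto D(\bq)$.)

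There is essentially no genuine obstacle: the only delicate point is making sure the diagonal entries really coincide, which forces one to recall the normalization $e_i = d_i$ together with the fixed square root $q = \sqrt{q_{j_0 j_0}}$ from \S\ref{mult-multiparameters} — without a square root of $q_{j_0 j_0}$ in $\FF$ one could not hope to match $q_{ii} = q^{\,2 d_i}$ in general. Once $\check q_{ii} = q_{ii}$ is secured, the off-diagonal verification is the one-line manipulation above, combining symmetrisability of $A$ with the defining Cartan identity $q_{ij}\,q_{ji} = q_{ii}^{\,a_{ij}}$.
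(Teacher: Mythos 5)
Your argument is correct and is essentially identical to the paper's own proof: both define $\check{q}_{ij} := q^{\,d_i a_{ij}}$ and verify the two conditions of \eqref{def: twist-equiv x mprmts} via $\check q_{ii}=q^{\,2d_i}=q_{ii}$ and $\check q_{ij}\,\check q_{ji}=q^{\,d_i a_{ij}+d_j a_{ji}}=q^{\,2d_i a_{ij}}=q_{ii}^{\,a_{ij}}=q_{ij}\,q_{ji}$, using the symmetry of $DA$ and the Cartan condition. Your explicit remarks on the normalization $e_i=d_i$ and the fixed square root $q=\sqrt{q_{j_0 j_0}}$ are accurate and consistent with the conventions of \S\ref{mult-multiparameters}.
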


\pf
 Let  $ \, \bq = {\big( q_{ij} \big)}_{i, j \in I} \in M_n(\FF^\times) \, $  be a multiparameter of Cartan type.
 By definition,   --- cf.\ \S \ref{mult-multiparameters}  ---   this means that there exists  $ \, q \in \FF^\times \, $
 and a (generalised) symmetrisable Cartan matrix  $ \, A = {\big( a_{ij} \big)}_{i, j \in I} \, $  such that
 $ \; q_{ij} \, q_{ji} \, = \, q_{ii}^{a_{ij}} \; $   --- see  \eqref{qij-ident}  ---   and  $ \, q_{ii} = q^{2\,d_i} \, $
 for all  $ \, i, j \in I \, $;  altogether these imply
  $$  q_{ij} \, q_{ji}  \; = \;  q_{ii}^{a_{ij}}  \; = \;  q^{2\,d_i{}a_{ij}}  \; = \;  q^{d_i{}a_{ij}} \, q^{d_j{}a_{ji}}  \;
  = \; \check{q}_{ij} \, \check{q}_{ji}   \eqno  \forall \;\; i, j \in I   \qquad  $$
while  $ \, q_{ii} = q^{2\,d_i} \, $  also reads  $ \; q_{ii}= q^{2\,d_i} = \check{q}_{ii} \, $,  so that in the end we have
$ \, \bq \sim \check{\bq} \, $  with  $ \, \check{\bq} := {\big( q^{d_i{}a_{ij}} \big)}_{i, j \in I} \, $
a multiparameter of canonical type as in the claim.
\epf

\end{free text}

\medskip

 \subsection{Conventions for Hopf algebras} \label{conv-Hopf}  \
 \vskip7pt
   Our main references for the theory of Hopf algebras are  \cite{Mo}  and  \cite{Ra}.
   We use standard notation: the comultiplication is denoted  $ \com $  and the antipode  $ \SS \, $.
   For the first, we use the Sweedler-Heyneman notation but with the summation sign suppressed,
   namely we write the coproduct as  $ \, \com(x) = x_{(1)} \otimes x_{(2)} \, $.
                                                              \par
   Although in most parts of the paper it is not needed, we assume that the antipode  $ \SS $
   is bijective and denote by $ \SS^{-1} $  its composition inverse.
                                                              \par
   Hereafter by  $ \k $  we denote the ground ring of our algebras, coalgebras, etc.
                                                              \par
   In any coalgebra  $ C \, $,  the set of group-like elements is denoted by  $ G(C) \, $;
   also, we denote by  $ \, C^+ := \Ker(\epsilon) \, $  the augmentation ideal, where
   $ \, \epsilon : C \longrightarrow \k \, $  is the counit map.
   If  $ \, g, h \in G(C) \, $,  the set of  $ (g,h) $--primitive  elements is defined as
  $ \; P_{g,h}(C)  \, := \,  \big\{\, x \in C \,\vert\, \com(x) = x \ot g + h \ot x \,\big\} \; $.
                                                                      \par
   If  $ H $  is a Hopf algebra (or just a bialgebra),  we write  $ H^{\text{op}} \, $,  resp.\  $ H^{\text{cop}} \, $,
   for the Hopf algebra (or bialgebra) given by taking in  $ H $  the opposite product, resp.\ coproduct.
 \vskip3pt
   Finally, we recall the notion of  {\it skew-Hopf pairing\/}  between two Hopf algebras (taken from  \cite{AY},  \S 2.1,
   but essentially standard):

\smallskip

\begin{definition}  \label{def_(skew-)Hopf-pairing}
 Given two Hopf algebras  $ H $  and  $ K $  with bijective antipode over the ring  $ \k \, $,
 a  $ \k $--linear  map  $ \, \eta : H \otimes_\k K \longrightarrow \k \, $  is called a  {\sl skew-Hopf pairing\/}
 (between  $ H $  and  $ K \, $)  if, for all  $ \, h \in H \, $,  $ \, k \in K \, $,  one has
  $$  \displaylines{
   \eta\big( h \, , \, k' \, k'' \big) \, = \, \eta\big( h_{(1)} \, , \, k' \big) \, \eta\big( h_{(2)} \, , \, k'' \big)  \;\; ,
   \qquad
 \eta\big( h' \, h'' \, , \, k \big) \, = \, \eta\big( h' \, , \, k_{(2)} \big) \, \eta\big( h'' \, , \, k_{(1)} \big)  \cr
   \eta\big( h \, , 1 \big) \, = \, \epsilon(h)  \;\; ,  \;\quad  \eta\big( 1 \, , k \big) \, = \, \epsilon(k)  \quad ,  \;\;\qquad
   \eta\big( \SS^{\pm 1}(h) \, , k \big) \, = \, \eta\big( h \, , \SS^{\mp 1}(k) \big)  }  $$
 \vskip3pt
\end{definition}

\vskip3pt

   Recall that, given two Hopf  algebras  $ H $  and  $ K $ over  $ \k \, $,
   and a skew-Hopf pairing, say  $ \, \eta: H\otimes_\k K \relbar\joinrel\relbar\joinrel\longrightarrow \k \, $,
   the  {\it Drinfeld double\/}  $ D(H,K,\eta) $  is the quotient algebra  $ \, T(H \oplus K) \big/ \II \, $
   where  $ \II $  is the (two-sided) ideal generated by the relations
  $$  \displaylines{
   \qquad   1_H  = \,  1  \, = \,  1_K  \,\; ,  \qquad  a \otimes b  \, = \, a\,b
   \qquad \qquad  \forall  \;\;  a \, , b \in H  \text{\;\;\ or\;\;\ }  a \, , b \in K \;\; ,  \cr
   x_{(1)} \otimes y_{(1)} \ \eta(y_{(2)},x_{(2)})  \, =
   \,  \eta(y_{(1)},x_{(1)})\ y_{(2)} \otimes x_{(2)} \qquad  \forall \;\; x \in K \, ,  \; y \in H  \;\; ;  }  $$
such a quotient  $ \k $--algebra  is also endowed with a standard Hopf algebra structure,
which is  {\sl consistent},  in that both  $ H $  and  $ K $  are Hopf  $ \k $--subalgebras  of it.

\medskip

 \subsection{Hopf algebra deformations}  \label{defs_Hopf-algs}  \
 \vskip7pt
   There exist two standard methods to deform Hopf algebras,
usually called  ``$ 2 $--cocycle  deformations'' and ``twist deformations'': we
  shortly recall both, giving them new names;
 then later on in the paper we shall apply them to quantum groups.

\vskip11pt

\begin{free text}  \label{cocyc-defs}  \
{\bf Multiplication twistings (or ``2-cocycle deformations'').}
 Let us consider a bialgebra  $ \, \big( H, m, 1, \Delta, \epsilon \big) \, $  over a ring  $ \k \, $.
 A convolution invertible linear map  $ \sigma $  in  $ \, \Hom_{\Bbbk}(H \otimes H, \k \,) \, $
 is called a  {\it normalized multiplicative (or Hopf) 2-cocycle\/}  if
  $$  \sigma(b_{(1)},c_{(1)}) \, \sigma(a,b_{(2)}c_{(2)})  \; = \;
   \sigma(a_{(1)},b_{(1)}) \, \sigma(a_{(2)}b_{(2)},c)  $$
and  $ \, \sigma (a,1) = \eps(a) = \sigma(1,a) \, $  for all  $ \, a, b, c \in H \, $,  see  \cite[Sec.\ 7.1]{Mo}.
We will simply call it a  $ 2 $--cocycle  if no confusion arises.
                                                                        \par
   Using a  $ 2 $--cocycle  $ \sigma $  it is possible to define a new algebra structure on  $ H $
   by deforming the multiplication.  Indeed, define
   $ \, m_{\sigma} = \sigma * m * \sigma^{-1} : H \otimes H \longrightarrow H \, $  by
  $$  m_{\sigma}(a,b)  \, = \,  a \cdot_{\sigma} b  \, = \,  \sigma(a_{(1)},b_{(1)}) \, a_{(2)} \, b_{(2)} \,
  \sigma^{-1}(a_{(3)},b_{(3)})  \eqno \forall \;\, a, b \in H  \quad  $$
 If in addition  $ H $  is a Hopf algebra with antipode  $ \SS \, $,  then define also
 $ \, \SS_\sigma : H \longrightarrow H \, $  as  $ \, \SS_{\sigma}  : H \longrightarrow H \, $
where
  $$  \SS_{\sigma}(a)  \, = \,  \sigma(a_{(1)},\SS(a_{(2)})) \, \SS(a_{(3)}) \, \sigma^{-1}(\SS(a_{(4)}),a_{(5)})
  \eqno \forall \;\, a \in H  \quad  $$
 It is known  that  $ \, \big( H, m_\sigma, 1, \Delta, \eps \big) \, $  is in turn a bialgebra, and
 $ \, \big( H, m_\sigma, 1, \Delta, \eps, \SS_\sigma \big) \, $
 is a Hopf algebra (see \cite{DT} for details): this new bialgebra or Hopf algebra structure on
  $ H $,  graphically denoted by  $ H_\sigma \, $,  is usually called ``cocycle deformation'' of the old one;
  we adopt instead hereafter the terminology  \textsl{``multiplication twisting''}.
\end{free text}

\vskip11pt

\begin{free text}  \label{twist-defs}
 {\bf Comultiplication twistings (or ``twist deformations'').}
 Let  $ H $  be a Hopf algebra (over a commutative ring), and let  $ \, \F \in H \otimes H \, $
 be an invertible element in  $ H^{\otimes 2} $  (later called a ``{\it twist\/}'',  or  ``{\it twisting element\/}'')
 such that
  $$  \F_{1{}2} \, \big( \Delta \otimes \text{id} \big)(\F) \, = \,  \F_{2{}3} \, \big( \text{id}
  \otimes \Delta \big)(\F)  \quad ,  \qquad  \big( \epsilon \otimes \text{id} \big)(\F)
  = 1 = \big( \text{id} \otimes \epsilon \big)(\F)  $$
 Then  $ H $  bears a second Hopf algebra structure, denoted  $ H^\F $,
 with the old product, unit and counit, but with new ``twisted'' coproduct  $ \Delta^\F $
 and antipode  $ \SS^\F $  given by
\begin{equation}  \label{twist-coprod}
    \qquad   \Delta^{\!\F}(x)  \, := \,  \F \, \Delta(x) \, \F^{-1}  \quad ,  \qquad  \SS^\F(x) \, := \, v\,\SS(x)\,v^{-1}
    \quad \qquad  \forall \;\; x \in H
\end{equation}
where  $ \, v := \sum_\F \SS(f'_1)\,f'_2 \, $   --- with  $ \, \sum_\F f'_1 \otimes f'_2 = \F^{-1} \, $
---   is invertible in  $ H \, $;  see  \cite{CP},  \S 4.2.{\sl E},  and references therein, for further details.
 Note also that if  $ H $  is just a bialgebra then the procedure applies as well, with  $ H^\F $  being just a bialgebra.
                                                                      \par
   The Hopf algebra (or just bialgebra)  $ H^\F $  is usually said to be a ``twist deformation'' of  $ H \, $:
   we adopt here instead the terminology ``comultiplication twisting'' to denote both this
   deformation procedure and its final outcome  $ H^F $.
\end{free text}

\vskip10pt

\begin{free text}  \label{deforms-vs-duality}
 {\bf Deformations and duality.}  The two notions of  ``$ 2 $--cocycle''  and of ``twist'' are
 dual to each other with respect to Hopf duality.  In detail, we have the following:

\vskip11pt

\begin{prop}  \label{prop: duality-deforms}
 Let  $ H $  be a Hopf algebra over a field, $ H^* $  be its linear dual, and  $ H^\circ $  its Sweedler dual.
 \vskip3pt
   {\it (a)}\,  Let  $ \F $  be a twist for  $ H \, $,  and  $ \sigma_{{}_\F} $  the image of
   $ \F $  in $ {(H^{*} \otimes H^{*})}^* $  for the natural composed embedding
   $ \, H \otimes H \lhook\joinrel\relbar\joinrel\longrightarrow H^{**}
   \otimes H^{**} \lhook\joinrel\relbar\joinrel\longrightarrow {\big( H^* \otimes H^* \big)}^* \, $.
   Then  $ \sigma_{{}_\F} $  is a  $ 2 $--cocycle  for  $ H^\circ \, $,
   and there exists a canonical isomorphism
   $ \, {\big( H^\circ \big)}_{\sigma_{{}_\F}} \cong {\big( H^\F \,\big)}^\circ \, $.
 \vskip3pt
   {\it (b)}\,  Let  $ \sigma $  be a  $ 2 $--cocycle  for  $ H \, $;
   assume that  $ H $  is finite-dimensional, and let  $ \F_\sigma $
   be the image of  $ \sigma $  in the natural identification
   $ \, {(H \otimes H)}^* = H^* \otimes H^* \, $.  Then  $ \F_\sigma $  is a twist for  $ H^* \, $,
   and there exists a canonical isomorphism
   $ \, {\big( H^* \big)}^{\F_\sigma} \cong {\big( H_\sigma \big)}^* \, $.
\end{prop}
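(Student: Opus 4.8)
The plan is to treat part (a) directly and to deduce part (b) by the same computation, restricted to the finite-dimensional situation where all the natural maps are honest isomorphisms. In both cases the essential point is that the twist axioms for $\F$ (the cocycle-type identity $\F_{12}(\Delta\ot\id)(\F)=\F_{23}(\id\ot\Delta)(\F)$ together with the counit normalization) are, under the pairing between $H$ and its dual, exactly the axioms for a normalized Hopf $2$--cocycle on the dual, and vice versa; the asserted isomorphisms then amount to checking that ``twist the coproduct on $H$, then dualize'' equals ``dualize, then cocycle-deform the product''.

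For part (a), first I would make precise the element $\sigma_{{}_\F}\in(H^*\ot H^*)^*$: for $\phi,\psi\in H^*$ one sets $\sigma_{{}_\F}(\phi,\psi):=(\phi\ot\psi)(\F)=\sum_\F\phi(F_1)\,\psi(F_2)$ writing $\F=\sum_\F F_1\ot F_2$. Restricting attention to $H^\circ$ (the Sweedler dual), which is a Hopf algebra and on which $(H^\circ\ot H^\circ)^*$ receives $H^{**}\ot H^{**}$ hence $H\ot H$, one checks $\sigma_{{}_\F}$ is convolution-invertible with inverse given by $\F^{-1}$. The cocycle identity for $\sigma_{{}_\F}$ on $H^\circ$ reads, for $a,b,c\in H^\circ$,
\[
\sigma_{{}_\F}(b_{(1)},c_{(1)})\,\sigma_{{}_\F}(a,b_{(2)}c_{(2)})
=\sigma_{{}_\F}(a_{(1)},b_{(1)})\,\sigma_{{}_\F}(a_{(2)}b_{(2)},c),
\]
and pairing both sides against $\F$ rewrites the left-hand side as $\big(a\ot b\ot c\big)\big(\F_{23}(\id\ot\Delta)(\F)\big)$ and the right-hand side as $\big(a\ot b\ot c\big)\big(\F_{12}(\Delta\ot\id)(\F)\big)$ — here I use that the product on $H^\circ$ is dual to $\Delta$ on $H$ and that multiplication in $(H^\circ)^{\ot 2}$ is dual to $\Delta\ot\id$, $\id\ot\Delta$ appropriately. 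Equality then follows verbatim from the twist axiom. Normalization $\sigma_{{}_\F}(a,1)=\eps(a)=\sigma_{{}_\F}(1,a)$ is the dual of $(\eps\ot\id)(\F)=1=(\id\ot\eps)(\F)$. Finally, for the isomorphism $(H^\circ)_{\sigma_{{}_\F}}\cong(H^\F)^\circ$: as coalgebras both are $H^\circ$ with $\Delta$ dual to the product of $H$; the product of $(H^\circ)_{\sigma_{{}_\F}}$ is $\sigma_{{}_\F}*m_{H^\circ}*\sigma_{{}_\F}^{-1}$, which dualizes the map $x\mapsto \F\Delta(x)\F^{-1}=\Delta^{\F}(x)$, i.e. it is precisely the coproduct that $(H^\F)^\circ$ inherits from $H^\F$. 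One checks the identity map $H^\circ\to H^\circ$ intertwines these structures, and that it carries $\SS_{\sigma_{{}_\F}}$ to the dual of $\SS^\F$ by the analogous bookkeeping with the formulas in \ref{cocyc-defs} and \eqref{twist-coprod}.

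For part (b), the situation is formally the same but cleaner because $H$ is finite-dimensional: the canonical map $(H\ot H)^*\xrightarrow{\ \sim\ }H^*\ot H^*$ is an isomorphism, $H^*$ is again a Hopf algebra with bijective antipode, and $H^{**}=H$ canonically. Define $\F_\sigma\in H^*\ot H^*$ as the image of $\sigma$ under this identification; convolution-invertibility of $\sigma$ translates to invertibility of $\F_\sigma$ in $(H^*)^{\ot 2}$. Reading the $2$--cocycle identity for $\sigma$ backwards through the pairing gives exactly the twist axiom $(\F_\sigma)_{12}(\Delta_{H^*}\ot\id)(\F_\sigma)=(\F_\sigma)_{23}(\id\ot\Delta_{H^*})(\F_\sigma)$, and normalization of $\sigma$ gives the counit conditions on $\F_\sigma$. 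The isomorphism $(H^*)^{\F_\sigma}\cong(H_\sigma)^*$ is then the identity on the underlying vector space $H^*$: the coproduct of $(H_\sigma)^*$ is dual to $m_\sigma=\sigma*m*\sigma^{-1}$ on $H$, which is $\F_\sigma\,\Delta_{H^*}(-)\,\F_\sigma^{-1}=\Delta^{\F_\sigma}$, and the antipode matches by comparing the formula for $\SS_\sigma$ with \eqref{twist-coprod}.

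The only genuinely delicate point — the ``main obstacle'' — is the index-bookkeeping in part (a) regarding where the algebra/coalgebra structures live: one must be careful that $\sigma_{{}_\F}$ a priori lands in $(H^*\ot H^*)^*$ rather than in $H^\circ\ot H^\circ$, so that the cocycle and invertibility conditions are checked on the sub-Hopf-algebra $H^\circ\subseteq H^*$ where the pairing $H^\circ\ot H^\circ\to\k$ with $H\ot H$ is well-behaved, and that the convolution products appearing in $m_\sigma$ and $\SS_\sigma$ are computed with respect to the correct coalgebra (namely $H^\circ\ot H^\circ$, dual to $\Delta_H\ot\Delta_H$ composed with the flip-reshuffle). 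Once these identifications are set up carefully, every verification reduces to applying the twist axiom or the cocycle axiom once, with no further computation; I would therefore spend most of the write-up pinning down the maps and then note that the equalities ``hold by inspection''.
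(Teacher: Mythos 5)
Your argument is correct and is exactly the ``exercise in Hopf duality theory'' that the paper's proof declines to write out: you dualize the twist axiom through the pairing of $H$ with $H^\circ$ to obtain the normalized $2$--cocycle identity, and then check that the $\sigma$--deformed product on $H^\circ$ is the transpose of $\F\,\Delta(-)\,\F^{-1}$ (and conversely in the finite-dimensional case for part~(b)), with the genuinely delicate point --- that $\sigma_{{}_\F}$ lives a priori in $(H^*\otimes H^*)^*$ and must be restricted to $H^\circ\otimes H^\circ$ where the convolution structure is the right one --- correctly identified and handled. The only cosmetic simplification available is that the matching of antipodes is automatic from uniqueness of the antipode on a bialgebra, so the final ``bookkeeping'' with $\SS_\sigma$ and $\SS^\F$ need not be carried out by hand.
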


\begin{proof}
 The proof is an exercise in Hopf duality theory, left to the reader
 --- a matter of reading the identities that characterize a twist or a  $ 2 $--cocycle  in dual terms.
\end{proof}

\vskip9pt

   As a final remark, let us mention that the notions of  $ 2 $--cocycle  and twist and of the associated deformations
 (by ``twisting'', as precised above)
 can also be extended to more general contexts where ``Hopf algebra'' has a broader meaning
 --- essentially, taking place in more general tensor categories than vector spaces over a field
 with algebraic tensor product.  In these cases, if a suitable notion of Hopf ``duality'' is established
 (possibly involving different tensor categories), then  Proposition \ref{prop: duality-deforms}  still makes sense,
 with no need of the finite-dimensionality assumption in claim  {\it (b)}.  For instance, this applies to
 Drinfeld's ``quantized universal enveloping algebras''  $ \uhg $  and their
 dual ``quantized formal series Hopf algebras''  $ F_\hbar[G] \, $,
 that both are topological Hopf algebras (with respect to different topologies) over  $ \kh \, $.
\end{free text}

\bigskip

 \section{Quantum groups (as QUEA's) and their comultiplication twistings}
\label{form-MpQUEA's_deform's}

\smallskip

   In this section we briefly recall the notion of formal and polynomial ``quantized universal enveloping algebra''
   (or QUEA in short), following Drinfeld and Jimbo-Lusztig. Then we discuss their
comultiplication twistings (via so-called
 ``toral twists''), and we provide an alternative presentation of these twisted QUEA's.

\medskip

\subsection{Formal QUEA's (``\`a la Drinfeld'')}  \label{memo_formal-QUEAs}
 {\ }

\smallskip

  We begin by the description of  formal quantized universal enveloping algebra following Drinfeld and others,
  and some related tools.

\vskip11pt

\begin{free text}  \label{intro-Uhg}
 {\bf The formal QUEA  $ \uhg \, $.}  Let  $ \kh $  be the ring of formal power series in
 $ \hbar \, $. Let $ \, A := {\big(\, a_{ij} \big)}_{i, j \in I} \, $  be a generalized symmetrisable Cartan matrix.
 The quantized universal enveloping algebra, or QUEA in short,  $ \, U_\hbar\big(\lieg'(A)\big) = \uhg \, $
 is the associative, unital, topologically complete  $ \kh $--algebra  with generators
 $ H_i \, $, $ E_i \, $  and  $ F_i $  ($ \, i \in I := \{1,\dots,n\} \, $)  and relations (for all  $ i, j \in I $)
 \begin{eqnarray}
   H_{i} H_{j} - H_{j} H_{i}\; = \;  0  \;\; ,  \quad
   H_i E_j - E_j H_i  \; = \;  +a_{ij} \, E_j  \;\; ,  \quad  H_i F_j - F_j H_i  \; = \;  -a_{ij} \, F_j  \nonumber  \\
   E_i F_j - F_j E_i  \; = \;  \delta_{ij} \, {{\, e^{+\hbar \, d_i H_i} - e^{-\hbar \, d_i H_i} \,}
\over {\, e^{+\hbar \, d_i} - e^{-\hbar \, d_i} \,}}   \qquad \qquad \qquad \qquad \qquad   \nonumber  \\
   \sum\limits_{k = 0}^{1-a_{ij}} (-1)^k {\left[ { 1-a_{ij} \atop k }
\right]}_{q_i} E_i^{1-a_{ij}-k} E_j E_i^k  \; = \;  0   \quad \qquad   (\, i \neq j \,)   \qquad \quad
 \label{eq:QUEA-Drinfeld-qSerre1}
 \end{eqnarray}
 \begin{eqnarray}
     \qquad \qquad \quad
   \sum\limits_{k = 0}^{1-a_{ij}} (-1)^k {\left[ { 1-a_{ij} \atop k }
\right]}_{q_i} F_i^{1-a_{ij}-k} F_j F_i^k  \; = \;  0   \quad \qquad   (\, i \neq j \,)   \quad
 \label{eq:QUEA-Drinfeld-qSerre2}
 \end{eqnarray}
 where hereafter we use shorthand notation  $ \, e^X := \exp(X) \, $,
 $ \, q:= e^\hbar \, $,  $ \, q_i:= q^{d_i} = e^{\hbar \, d_i} \, $.  It is known that  $ \, \uhg \, $
 has a structure of (topological) Hopf algebra, given by
  $$  \displaylines{
   \Delta(E_i) := E_i \otimes 1 + e^{+\hbar \, d_i H_i} \otimes E_i  \;\; ,  \qquad
 \SS(E_i) := - e^{-\hbar \, d_i H_i} E_i  \;\; ,  \qquad  \epsilon(E_i) := 0  \cr
   \Delta(H_i) := H_i \otimes 1 + 1 \otimes H_i  \;\quad ,  \;\;\qquad
 \SS(H_i):= -H_i \;\quad ,  \;\;\qquad  \epsilon(H_i) := 0  \cr
   \Delta(F_i) := F_i \otimes e^{-\hbar \, d_i H_i} + 1 \otimes F_i  \;\; ,  \;\qquad
 \SS(F_i) := - F_i \, e^{+\hbar \, d_i H_i}  \;\; ,  \;\qquad  \epsilon(F_i) := 0  }  $$
 (for all  $ \, i \in I \, $)  where the coproduct takes values in the  $ \hbar$--adic
 completion  $ {\uhg}^{\widehat{\otimes}\, 2} $  of the algebraic tensor square
 $ {\uhg}^{\otimes 2} \, $   --- see, e.g.,  \cite{CP}  (and references therein) for details,
 taking into account that we adopt slightly different normalizations.
 \vskip3pt
   Finally, the semiclassical ``limit''   $ \, \uhg \Big/ \hbar\,\uhg \, $  of  $ \uhg $  is isomorphic, as a Hopf algebra, to
   $ U(\hskip0,8pt\lieg) \, $   --- via
   $ \, E_i \mapsto \erm_i \, $,  $ \, H_i \mapsto \hrm_i \, $,  $ \, F_i \mapsto \frm_i \, $
   (notation of  \S \ref{root-data_Lie-algs})  ---   hence  $ \uhg $  is a deformation quantization of
   $ U(\hskip0,8pt\lieg'(A)) \, $,  the universal enveloping algebra of the derived algebra of the Kac-Moody
   Lie algebra  $ \lieg(A) \, $.  Then the latter is endowed with a structure of  {\sl co-Poisson\/}
   Hopf algebra, which makes  $ \lieg'(A) $  into a Lie bialgebra.
\end{free text}

 \vskip1pt
\begin{rmk}  \label{rmk: from U_h(g') to U_h(g)}
 Clearly, one can slightly modify the definition of  $ \, U_\hbar\big(\lieg'(A)\big) = \uhg \, $
 by allowing additional generators corresponding to the elements of the  $ \k $--basis
 of any vector space complement  $ \lieh'' $  of  $ \lieh' $  inside  $ \lieg $
 (notation of  \S \ref{root-data_Lie-algs}),  along with the obvious, natural relations:
 this provides a larger (if $ \, \lieg' \subsetneqq \lieg \, $)  quantum Hopf algebra which is a formal quantization of
 $ U_\hbar\big(\lieg(A)\big) $  and so endows the latter with a structure of  {\sl co-Poisson\/}
 Hopf algebra, thus making  $ \lieg(A) $  into a Lie bialgebra for whom  $ \lieg'(A) $  is then a Lie sub-bialgebra.
 {\it For this larger quantum Hopf algebra one can repeat,  {\sl up to minimal changes},  all what we do from now on}.
\end{rmk}

\vskip5pt

\begin{free text}  \label{Cartan-Borel_form-QUEAs}
 {\bf Quantum Borel (sub)algebras and their Drinfeld double.}
 We denote hereafter by  $ \uhh \, $,  resp.\  $ \uhbp \, $,  resp.\  $ \uhbm \, $,  the  $ \hbar $--adically
 complete subalgebra of  $ \uhg $  generated by all the  $ H_i $',  resp.\  the  $ H_i $'  and the
 $ E_i $'s,  resp.\ the  $ H_i $'  and the  $ F_i $'s.  We refer to  $ \uhh $  as  {\sl quantum Cartan (sub)algebra\/}
 and to  $ \uhbp \, $,  resp.\  $ \uhbm \, $,  as  {\sl quantum positive},  resp.\  {\sl negative},  {\sl Borel (sub)algebra}.
 It follows directly from definitions that  {\sl $ \uhh \, $,  $ \uhbp $  and  $ \uhbm $  all are Hopf subalgebras of
 $ \uhg \, $}.
                                                                        \par
   It is also known that the quantum Borel subalgebras are related via a skew-Hopf pairing
   $ \; \eta : \uhbp \otimes_{\kh} \uhbm \relbar\joinrel\longrightarrow \kh \; $  given by
\begin{equation}  \label{h-Borel_pairing}
   \eta\big(H_{i\,},H_j\big) = -d_i a_{ij}  \; ,  \,\;  \eta\big(E_{i\,},\!F_j\big) = \delta_{ij} \, {{\,1\,}
   \over {\,q_i^{-1\,} \!\! - q_i\,}}  \; ,  \,\;  \eta\big(E_{i\,},H_j\big) = 0 = \eta\big(H_{i\,},\!F_j\big)
\end{equation}
 Using this skew-Hopf pairing, one considers  as in  \S \ref{conv-Hopf}  the corresponding Drinfeld double
 $ \, D\big( {\uhbp} , \uhbm , \eta \big)$;  in the sequel we denote the latter by  $ \, \uhgd \,$.
                                                           \par
   By construction, there is a Hopf algebra epimorphism
   $ \; \pi_\lieg : \uhgd \relbar\joinrel\relbar\joinrel\twoheadrightarrow \uhg \; $.
   In order to describe it, we use the identification
  $$  \uhgd \, = \, D\big( {\uhbp}, \uhbm , \eta \big)  \; \cong \;  {\uhbp} \otimes_{\kh} \uhbm  $$
 as coalgebras, and adopt such shorthand notation as  $ \, E_i = E_i \otimes 1 \, $,
 $ \, H^+_i = H_i \otimes 1 \, $,  $ \, H^-_i = 1 \otimes H_i \, $,  $ \, F_i = 1 \otimes F_i \, $
 (for all  $ \, i \in I \, $);  then the projection  $ \pi_\lieg $  is determined by
\begin{equation}  \label{proj_Uhgd-->Uhg}
  \pi_\lieg \, :  \;\quad   E_i \mapsto E_i \; ,  \quad  H^+_i \mapsto H_i \; ,
  \quad  H^-_i \mapsto H_i \; ,  \quad  F_i \mapsto F_i  \qquad  \forall \;\; i \in I
\end{equation}
                                                        \par
   Furthermore,  $ \uhgd $  can be explicitly described as follows: it is the associative, unital,
   topologically complete  $ \kh $--algebra  with generators  $ E_i \, $,  $ H^+_i \, $,  $ H^-_i $  and
   $ F_i $  ($ \, i \in I := \{1,\dots,n\} \, $)  satisfying the relations  \eqref{eq:QUEA-Drinfeld-qSerre1},
   \eqref{eq:QUEA-Drinfeld-qSerre2}  and the following:
  $$  \displaylines{
   H^+_i H^+_j - H^+_j H^+_i \; = \; 0  \;\; ,  \quad  H^+_i H^-_j - H^-_j H^+_i \; = \; 0  \;\; ,  \quad
   H^-_i H^-_j - H^-_j H^-_i \; = \; 0  \cr
   H^\pm_i E_j - E_j H^\pm_i  \; = \;  \pm a_{ij} \, E_j   \quad ,  \qquad  H^\pm_i F_j - F_j H^\pm_i
   \; = \;  \mp a_{ij} \, F_j  \cr
     E_i F_j - F_j E_i  \; = \;  \delta_{ij} \, {{\, e^{+ \hbar \, d_i H^+_i} - e^{- \hbar \, d_i H^-_i} \,}
   \over {\, e^{+\hbar \, d_i} - e^{-\hbar \, d_i} \,}}  \cr}  $$
 In addition, the structure of (topological) Hopf algebra of  $ \uhgd $  is given
by
  $$  \displaylines{
   \Delta(E_i) \! := E_i \otimes 1 + e^{+\hbar \, d_i H^+_i} \otimes E_i  \;\;\; ,  \qquad
   \SS(E_i) := - e^{-\hbar \, d_i H^+_i} E_i  \;\;\; ,  \qquad  \epsilon\,(E_i) := 0  \cr
   \Delta\big(H^\pm_i\big) := H^\pm_i \otimes 1 + 1 \otimes H^\pm_i  \;\quad ,  \;\;\qquad
   \SS\big(H^\pm_i\big):= -H^\pm_i  \;\quad ,  \;\;\qquad  \epsilon\,\big(H^\pm_i\big) := 0  \cr
   \Delta(F_i) := F_i \otimes e^{-\hbar \, d_i H^-_i} + 1 \otimes F_i  \;\;\; ,  \qquad
   \SS(F_i) := - F_i \, e^{+\hbar \, d_i H^-_i}  \;\;\; ,  \qquad  \epsilon\,(F_i) := 0  }  $$
 \vskip3pt
   By construction, both  $ \uhbp $  and  $ \uhbm $  are Hopf subalgebras of  $ \uhgd \, $:  the natural embeddings
   $ \, \uhbp \lhook\joinrel\longrightarrow \uhgd \, $  and  $ \, \uhbm \lhook\joinrel\longrightarrow \uhgd \, $  are described by
  $$  E_i \mapsto E_i \; ,  \quad  H_i \mapsto H^+_i   \qquad \text{and} \qquad
  F_i \mapsto F_i \; ,  \quad  H_i \mapsto H^-_i   \eqno \forall \;\; i \in I  \qquad  $$
 \vskip5pt
   Like for  $ \uhg \, $,  if we look at the semiclassical limit of  $ \uhh \, $,  resp.\  of  $ \uhbp \, $,
   resp.\  of  $ \uhbm \, $,  resp.\  of  $ \uhgd \, $,  we find  $ U\big(\hskip0,8pt\lieh'(A)\big) \, $,
   resp.\  $ U\big(\hskip0,8pt\lieb_+'(A)\big) \, $,  resp.\  $ U\big(\hskip0,8pt\lieb_-'(A)\big) \, $,
   resp.\  $ U\big(\hskip0,8pt\liegd'(A)\big) \, $,  where  $ \liegd'(A) $  is the Manin double (see  \S \ref{root-data_Lie-algs}).
   This entails that all these universal enveloping algebras also are  {\sl co-Poisson\/}
   Hopf algebras, whose co-Poisson structure in the first three cases is just the restriction of that of
   $ U\big(\hskip0,8pt\lieg'(A)\big) \, $;  in particular, all of  $ \lieh'(A) \, $,  $ \lieb'_+(A) $  and  $ \lieb'_-(A) $
   are Lie sub-bialgebras of  $ \lieg'(A) \, $.
                                                      \par
 Finally, a suitably modified version of  Remark \ref{rmk: from U_h(g') to U_h(g)} applies to  $ \uhgd $  as well.
\end{free text}

\smallskip

  \subsection{Comultiplication twistings of formal QUEA's}  \label{twist_formal-QUEAs}
 {\ }

\smallskip

   Following an idea of Reshetikhin  (cf.\  \cite{Re}),  we shall consider special twisting elements for
   $ \uhg $  and use them to provide a new, twisted QUEA denoted  $ \upsihg \, $.
   Then we shall extend the same method to  $ \uhgd $  as well.

\vskip9pt

\begin{free text}  \label{twist-Uhg}
{\bf Comultiplication twistings of  $ \uhg \, $.}
 Let us consider an  $ (n \times n) $--matrix  $ \; \Psi := {\big( \psi_{ij} \big)}_{i,j \in I} \in M_n\big(\kh\big) \; $.
 A straightforward check shows that the element
\begin{equation}  \label{Resh-twist_F-uhg}
  \F_\Psi  \,\; := \;\,  \exp\bigg( \hbar \, {\textstyle \sum\limits_{i,j=1}^n} \psi_{ij} \, H_i \otimes H_j \bigg)
\end{equation}
is actually a  {\sl twist\/}  of  $ U_\hbar(\lieg) $  in the sense of  \S \ref{twist-defs}.
 Indeed,
 since  $ U_\hbar(\liehd) $  is commutative, all computations follow from the simple fact that
 $ \; \exp(\hbar a) \exp(\hbar b) = \exp (\hbar(a+b)) \; $  for all commuting elements  $ a $  and  $ b \, $;
 for instance, this implies that  $ \F_\Psi $  is invertible in  $ \uhgd^{\widehat{\otimes}\, 2} $  with inverse
 $ \; \F_\Psi^{-1} \, := \, \exp\bigg(\!-\hbar\,{\textstyle \sum\limits_{i,j=1}^n} \psi_{ij} \, H_i \otimes H_j \bigg) \; $.
 Also, since  $ \epsilon $  is an algebra map (which is continuous in the  $ \hbar $--adic topology) we have that
  $$  \begin{aligned}
   \big(\epsilon \otimes \id\big)(\F_\Psi)  &  \; = \;  \exp\bigg(\hbar \, {\textstyle \sum\limits_{i,j=1}^n} \psi_{ij}
   \, \epsilon (H_i) \otimes H_j \bigg)  \; = \;  1  \\
   \big(\id \otimes \epsilon\big)(\F_\Psi)  &  \; = \;  \exp\bigg(\hbar \, {\textstyle \sum\limits_{i,j=1}^n} \psi_{ij}
   \, H_i \otimes\epsilon ( H_j )\bigg)  \; = \;  1
      \end{aligned}   \leqno  \textrm{and}  $$
 Finally, let us check that  $ \F_\Psi $  satisfies the cocycle condition.  Since  $ \com $
 is also an algebra map (again, continuous in the  $ \hbar $--adic  topology), we have
  $$  \begin{aligned}
   &  (\F_{\Psi})_{1{}2} \, \big( \Delta \otimes \text{id} \big)(\F_\Psi)  \, = \,
 \exp\!\bigg( \hbar {\textstyle \sum\limits_{i,j=1}^n} \psi_{ij} \, H_i \otimes H_j \otimes 1 \!\bigg) \,
  \exp\!\bigg( \hbar {\textstyle \sum\limits_{i,j=1}^n} \psi_{ij} \, \com\big(H_i\big) \otimes H_j \!\bigg)  \\
   &  \qquad  = \,  \exp\!\bigg( \hbar \, {\textstyle \sum\limits_{i,j=1}^n} \psi_{ij} \,
   \big( H_i \otimes H_j \otimes 1 + H_i \ot 1 \otimes H_j + 1 \otimes H_i \otimes H_j \big) \!\bigg)
 \\
   &  (\F_{\Psi})_{2{}3} \, \big( \text{id} \otimes \Delta \big)(\F_\Psi)  \, = \,
 \exp\!\bigg( \hbar {\textstyle \sum\limits_{i,j=1}^n} \psi_{ij} \, 1 \otimes H_i \otimes H_j \!\bigg) \,
  \exp\!\bigg( \hbar {\textstyle \sum\limits_{i,j=1}^n} \psi_{ij} \, H_i \otimes \com\big(H_j\big) \!\bigg)  \\
   &  \qquad  = \,  \exp\!\bigg( \hbar \, {\textstyle \sum\limits_{i,j=1}^n} \psi_{ij} \,
   \big( 1 \otimes H_i \otimes H_j + H_i \otimes H_j \otimes 1 + H_i \otimes 1 \otimes H_j \big) \!\bigg)
 \end{aligned}  $$
 so that  $ \; (\F_{\Psi})_{1{}2} \, \big( \Delta \otimes \text{id} \big)(\F_\Psi) \, = \, (\F_{\Psi})_{2{}3} \,
 \big( \text{id} \otimes \Delta \big)(\F_\Psi) \; $  as expected.
                                                                 \par
   Therefore, the recipe in  \S \ref{twist-defs}  endows  $ U_\hbar(\lieg) $  with a new,
``twisted'' coproduct  $ \, \Delta^{\scriptscriptstyle (\Psi)} := \Delta^{\!\F_\Psi} \, $
which gives us a new Hopf structure  $ U_\hbar(\lieg)^{\F_\Psi} \, $.  An easy computation proves
that the new coproduct reads as follows on generators (for  $ \, \ell \in I \, $):
  $$  \displaylines{
   \Delta^{\scriptscriptstyle \!(\Psi)}\!\big(E_\ell\big)  \, = \,
   E_\ell \otimes e^{+\hbar \sum_{i, j \in I} \psi_{ij} \, a_{i\ell} H_j} \, + \,
   e^{+\hbar \, d_\ell H_\ell \, + \, \hbar \sum_{i, j \in I} \psi_{ij} \, a_{j\ell} H_i} \otimes E_\ell  \cr
   \Delta^{\scriptscriptstyle \!(\Psi)}\big(H_\ell\big)  \, = \,  H_\ell \otimes 1 \, + \, 1 \otimes H_\ell  \cr
   \Delta^{\scriptscriptstyle \!(\Psi)}\!\big(F_\ell\big)  \, = \,
   F_\ell \otimes e^{-\hbar \, d_\ell H_\ell - \hbar \sum_{i, j \in I} \psi_{ij} \,
   a_{i\ell} H_j} \, + \, e^{-\hbar \sum_{i, j \in I} \psi_{ij} \, a_{j\ell} H_i} \otimes F_\ell  }  $$
Using notation of  \S \ref{root-twisting}  with  $ \, \mathcal{R} := \kh \, $  and
$ \, T_\ell = d_\ell H_\ell \, $  for all  $ \, \ell \in I \, $,  these formulas read  (for  $ \, \ell \in I \, $)
  $$  \displaylines{
   \Delta^{\scriptscriptstyle \!(\Psi)}\!\big(E_\ell\big)  \, = \,
   E_\ell \otimes e^{+\hbar \, \psi_-^{\lieh'}(T_\ell)} \, + \,
   e^{+\hbar \, \big( \id_{\lieh'} + \, \psi_+^{\lieh'} \big)(T_\ell)} \otimes E_\ell  \cr
   \Delta^{\scriptscriptstyle \!(\Psi)}\big(H_\ell\big)  \, = \,
   H_\ell \otimes 1 \, + \, 1 \otimes H_\ell  \cr
   \Delta^{\scriptscriptstyle \!(\Psi)}\!\big(F_\ell\big)  \, = \,
   F_\ell \otimes e^{-\hbar \, \big( \id_{\lieh'} + \, \psi_-^{\lieh'} \big)(T_\ell)} \, +
   \, e^{-\hbar \, \psi_+^{\lieh'}(T_\ell)} \otimes F_\ell  }  $$
                                                             \par
   Similarly, the ``twisted'' antipode  $ \, \SS^{\scriptscriptstyle (\Psi)} := \SS^{\F_\Psi} \, $
   is expressed by  (for all  $ \, \ell \in I \, $)
  $$  \displaylines{
   \SS^{\scriptscriptstyle (\Psi)}\big(E_\ell\big)  \, = \,
 - e^{-\hbar \big( \id_{\lieh'} + \, \psi_+^{\lieh'}\big)(T_\ell)} \, E_\ell \, e^{-\hbar \, \psi_-^{\lieh'}(T_\ell)}  \cr
   \SS^{\scriptscriptstyle (\Psi)}\big(H_\ell\big)  \, = \,  -H_\ell  \cr
   \SS^{\scriptscriptstyle (\Psi)}\big(F_\ell\big)  \, = \,
 - e^{+\hbar \, \psi_+^{\lieh'}(T_\ell)} \, F_\ell \, e^{+\hbar \big( \id_{\lieh'} + \, \psi_-^{\lieh'} \big)(T_\ell)}  }  $$
and the (untwisted!) counit by  $ \, \epsilon\big(E_\ell\big) = 0 \, $,  $ \, \epsilon\big(H_\ell\big) = 0 \, $,
$ \, \epsilon\big(F_\ell\big) = 0 \, $  again  (for  $ \, \ell \in I \, $).
                                                             \par
   In the sequel we shall use the shorter notation  $ \upsihg $  to denote  $ {\uhg}^{\F_\Psi} \, $.
                                                             \par
   The previous formulas show that the quantum Borel subalgebras  $ \uhbp $  and  $ \uhbm $
   are still Hopf subalgebras inside  $ \upsihg $  as well.  In fact, the element  $ \F_\Psi $
   can also be seen as a twisting element for both  $ \uhbp $  and  $ \uhbm \, $,  and then the corresponding twisted
   Hopf algebras obviously sit as Hopf subalgebras inside  $ \upsihg \, $.
 \vskip5pt
   On the other hand, the previous formulas suggest a different presentation of the quantum Borel subalgebras.
  \textsl{Hereafter, we assume for simplicity that the Cartan matrix  $ A $}  \textit{be of finite type\/};
  actually, this assumption can be lifted   --- indeed, we do it in  \cite{GG2}  ---
  but it makes our discussion definitely simpler.
                                                                 \par
   Namely, let us consider the ``twisted generators''
  $$  E^{\scriptscriptstyle \Psi}_i := \, e^{-\hbar \, \psi_-^{\lieh'}(T_i)} E_i  \quad ,
  \qquad  T^{\scriptscriptstyle \Psi}_{i,+} := \big( \text{id} + \psi_+^{\lieh'} \! - \psi_-^{\lieh'} \big)(T_i)  \quad
  \eqno \forall \;\; i \in I  \quad  $$
 in $ \upsihbp \, $:  \,since, by  Lemma \ref{lem:posivedef}\textit{(a)}
--- which exploits the finiteness assumption on  $ A $  ---
 the set  $ \, {\big\{ T^{\scriptscriptstyle \Psi}_{i,+} \big\}}_{i \in I} \, $  is a $ \kh $--basis  of  $ \lieh'_{\kh} \, $,
 we have that it generates  $ U^{\scriptscriptstyle \Psi}_\hbar(\lieh) = \uhh \, $;  \,in particular, the
 $ \kh $-subalgebra  generated (in topological sense) by these twisted generators coincide with  $ \upsihbp \, $.
 Analogously, consider the ``twisted generators''
  $$  T^{\scriptscriptstyle \Psi}_{i,-} := \big( \text{id} + \psi_-^{\lieh'} \! - \psi_+^{\lieh'} \big)(T_i)  \quad ,
  \qquad  F^{\scriptscriptstyle \Psi}_i := \, e^{+\hbar \, \psi_+^{\lieh'}(T_i)} F_i  \quad   \eqno \forall \;\; i \in I
  \quad  $$
for  $ \upsihbm \, $.  Then the above formulas for  $ \Delta^{\scriptscriptstyle (\Psi)} $  give (for all  $ \, \ell \in I \, $)
\begin{equation}  \label{eq: coprod_x_Upsihg}
 \begin{aligned}
   \Delta^{\scriptscriptstyle \!(\Psi)}\! \big(E^{\scriptscriptstyle \Psi}_\ell\big)
\, = \,  E^{\scriptscriptstyle \Psi}_\ell \otimes 1 \, +
\, e^{+\hbar T^{\scriptscriptstyle \Psi}_{\ell,+}} \otimes E^{\scriptscriptstyle \Psi}_\ell  \\
   \Delta^{\scriptscriptstyle \!(\Psi)}\big(T^{\scriptscriptstyle \Psi}_{\ell,\pm}\big)
\, = \,  T^{\scriptscriptstyle \Psi}_{\ell,\pm} \otimes 1 \, + \,
1 \otimes T^{\scriptscriptstyle \Psi}_{\ell,\pm}  \;\;\;  \\
   \Delta^{\scriptscriptstyle \!(\Psi)}\!\big(F^{\scriptscriptstyle \Psi}_\ell\big)  \, = \,
   F^{\scriptscriptstyle \Psi}_\ell \otimes e^{-\hbar T^{\scriptscriptstyle \Psi}_{\ell,-}} \, +
   \, 1 \otimes F^{\scriptscriptstyle \Psi}_\ell
 \end{aligned}
\end{equation}
and those for  $ \, \epsilon^{\scriptscriptstyle (\Psi)} :=
\epsilon \, $  and  $ \SS^{\scriptscriptstyle (\Psi)} $  yield
(for all  $ \, \ell \in I \, $)
\begin{equation}  \label{eq: antipode_x_Upsihg}
   \quad
 \begin{matrix}
   \epsilon^{\scriptscriptstyle (\Psi)}\big(E^{\scriptscriptstyle \Psi}_\ell\big) \, = \, 0  \;\;\; ,  &
   \;\quad  \epsilon^{\scriptscriptstyle (\Psi)}\big(T^{\scriptscriptstyle \Psi}_{\ell,\pm}\big) \, = \, 0  \;\;\; ,  &
   \;\quad  \epsilon^{\scriptscriptstyle (\Psi)}\big(F^{\scriptscriptstyle \Psi}_\ell\big) \, = \, 0  \;\;\;\;  \\
   \SS\big(E^{\scriptscriptstyle \Psi}_\ell\big)  \, = \,  -
   e^{-\hbar \, T^{\scriptscriptstyle \Psi}_{\ell,+}} E^{\scriptscriptstyle \Psi}_\ell  \;\; ,  &
   \;\;\;  \SS\big(T^{\scriptscriptstyle \Psi}_{\ell,\pm}\big)  \, = \,  -
   T^{\scriptscriptstyle \Psi}_{\ell,\pm}  \;\; ,  &
   \;\;\;  \SS\big(F^{\scriptscriptstyle \Psi}_\ell\big)  \, = \,  -
   F^{\scriptscriptstyle \Psi}_\ell \, e^{+\hbar \, T^{\scriptscriptstyle \Psi}_{\ell,-}}
 \end{matrix}
\end{equation}

\vskip9pt
   \indent   Indeed, using these generators we may write down a complete presentation by
   generators and relations of  $ \upsihg $  and similarly also of  $ \upsihbp $  and  $ \upsihbm \, $;  in fact,
   {\sl a complete set of relations is the following}  (for all  $ \, i, j \in I \, $)
\begin{equation}  \label{eq: multip-comm-rel's_x_Upsihg}
 \begin{aligned}
   {\big( \text{id} + \psi_+^{\lieh'} \! - \psi_-^{\lieh'} \big)}^{-1}(T^{\scriptscriptstyle \Psi}_{i,+})  \,\; =
   \;\,  {\big( \text{id} + \psi_-^{\lieh'} \! - \psi_+^{\lieh'} \big)}^{-1}(T^{\scriptscriptstyle \Psi}_{i,-})   \qquad \qquad \qquad  \\
   T^{\scriptscriptstyle \Psi}_{i,\pm} \, T^{\scriptscriptstyle \Psi}_{i,\pm}  \; =
   \; T^{\scriptscriptstyle \Psi}_{j,\pm} \, T^{\scriptscriptstyle \Psi}_{i,\pm}  \quad  ,
 \qquad \qquad \qquad  T^{\scriptscriptstyle \Psi}_{i,\pm} \, T^{\scriptscriptstyle \Psi}_{j,\mp}  \; =
 \;  T^{\scriptscriptstyle \Psi}_{j,\mp} \, T^{\scriptscriptstyle \Psi}_{i,\pm}   \quad \qquad \quad  \\
   T^{\scriptscriptstyle \Psi}_{i,+} \, E^{\scriptscriptstyle \Psi}_j \, - \,
   E^{\scriptscriptstyle \Psi}_j \, T^{\scriptscriptstyle \Psi}_{i,+}  \; =
   \;  +p_{i,j} \, E^{\scriptscriptstyle \Psi}_j  \quad  ,
 \qquad  T^{\scriptscriptstyle \Psi}_{i,-} \, E^{\scriptscriptstyle \Psi}_j \, - \,
 E^{\scriptscriptstyle \Psi}_j \, T^{\scriptscriptstyle \Psi}_{i,-}  \; = \;
 +p_{j,i} \, E^{\scriptscriptstyle \Psi}_j   \quad  \\
   T^{\scriptscriptstyle \Psi}_{i,+} \, F^{\scriptscriptstyle \Psi}_j \, - \,
   F^{\scriptscriptstyle \Psi}_j \, T^{\scriptscriptstyle \Psi}_{i,+}  \; = \;
   -p_{i,j} \, F^{\scriptscriptstyle \Psi}_j  \quad  ,
 \qquad  T^{\scriptscriptstyle \Psi}_{i,-} \, F^{\scriptscriptstyle \Psi}_j \, - \,
 F^{\scriptscriptstyle \Psi}_j \, T^{\scriptscriptstyle \Psi}_{i,-}  \; = \;
 -p_{j,i} \, F^{\scriptscriptstyle \Psi}_j   \quad  \\
   E^{\scriptscriptstyle \Psi}_i \, F^{\scriptscriptstyle \Psi}_j \, - \,
   F^{\scriptscriptstyle \Psi}_j \, E^{\scriptscriptstyle \Psi}_i  \; = \;
   \delta_{i,j} \, {{\; e^{+\hbar \, T^{\scriptscriptstyle \Psi}_{i,+}} \, - \,
   e^{-\hbar \, T^{\scriptscriptstyle \Psi}_{i,-}} \;} \over {\; e^{+\hbar\,p_{i{}i}/2} \,
   - \, e^{-\hbar\,p_{i{}i}/2} \;}}   \qquad \qquad \qquad \quad  \\
   \sum\limits_{k = 0}^{1-a_{ij}} (-1)^k {\left[ { 1-a_{ij} \atop k }
\right]}_{e^{\hbar\,p_{i{}i}/2}} {\big( E^{\scriptscriptstyle \Psi}_i \big)}^{1-a_{ij}-k}
E^{\scriptscriptstyle \Psi}_j {\big( E^{\scriptscriptstyle \Psi}_i \big)}^k  \; = \;
0   \quad \qquad   (\, i \neq j \,)  \\
   \sum\limits_{k = 0}^{1-a_{ij}} (-1)^k {\left[ { 1-a_{ij} \atop k }
\right]}_{e^{\hbar\,p_{i{}i}/2}} {\big( F^{\scriptscriptstyle \Psi}_i \big)}^{1-a_{ij}-k}
F^{\scriptscriptstyle \Psi}_j {\big( F^{\scriptscriptstyle \Psi}_i \big)}_i^k  \; = \;
0   \quad \qquad   (\, i \neq j \,)
 \end{aligned}
\end{equation}
with  $ \; p_{i,j} := \, \alpha_j\big(\big(\, \text{id} + \psi_+^{\lieh'} \! - \psi_-^{\lieh'} \big)(T_i)\big) =
{\big( D A + A^{\scriptscriptstyle T} \big( \Psi^{\scriptscriptstyle T} \! - \Psi \big) A \big)}_i^j \, \in \, \kh \; $
--- where the very first relation above makes sense because of  Lemma \ref{lem:posivedef}{\it (a)}.
                                                  \par
   Note,  by the way, that the matrix  $ \; P := {\big(\, p_{i,j} \big)}_{i, j \in I} = D A + A^{\scriptscriptstyle T}
   \big( \Psi^{\scriptscriptstyle T} \! - \Psi \big) A \; $  is also described by
   $ \; p_{i,j} = \alpha_j\big(\big(\, \text{id} + \psi_-^{\lieh'} \! - \psi_+^{\lieh'} \big)(T_i)\big) \; $.
   Moreover, we notice that the matrix  $ \, P - DA = A^{\scriptscriptstyle T} \big( \Psi^{\scriptscriptstyle T} \! -
   \Psi \big) A \, $  is antisymmetric.
                                                        \par
   As we pointed out, thanks to  Lemma \ref{lem:posivedef}{\it (a)}, both sets
   $ \, {\big\{ T^{\scriptscriptstyle \Psi}_{i,+} \big\}}_{i \in I} \, $  and
   $ \, {\big\{ T^{\scriptscriptstyle \Psi}_{i,-} \big\}}_{i \in I} \, $  are  $ \kh $--basis  of  $ \lieh'_{\kh} \, $,
   so each one of them is enough to generate  $ U^{\scriptscriptstyle \Psi}_\hbar(\lieh) = \uhh \, $.
   It follows that  $ \upsihg $  can be generated by either one of the smaller sets
   $ \, {\big\{ E^{\scriptscriptstyle \Psi}_i , T^{\scriptscriptstyle \Psi}_{i,+} \, , F^{\scriptscriptstyle \Psi}\big\}}_{i \in I} \, $  or
   $ \, {\big\{ E^{\scriptscriptstyle \Psi}_i , T^{\scriptscriptstyle \Psi}_{i,-} \, , F^{\scriptscriptstyle \Psi}\big\}}_{i \in I} \, $;
   in both case, however, the presentation would be less good-looking than using the bigger set
   $ \, {\big\{ E^{\scriptscriptstyle \Psi}_i , T^{\scriptscriptstyle \Psi}_{i,+} \, ,
   T^{\scriptscriptstyle \Psi}_{i,-} \, , F^{\scriptscriptstyle \Psi}\big\}}_{i \in I} \, $.
\end{free text}

\vskip9pt

\begin{free text}  \label{twist-Uhgd}
 {\bf Comultiplication twistings of  $ \uhgd \, $.}
 Let  $ \; \Psi := {\big( \psi_{ij} \big)}_{i,j \in I} \in M_n\big(\kh\big) \; $  as in  \S \ref{twist-Uhg}  above.
 Again, a direct check shows that the element
\begin{equation}  \label{Resh-twist_F-uhgd}
  \F_\Psi  \,\; := \;\,  \exp\bigg( \hbar \, {\textstyle \sum\limits_{i,j=1}^n} \psi_{ij} \, H^+_i \otimes H^-_j \bigg)
\end{equation}
is actually a  {\sl twist\/}  for  $ U_\hbar(\liegd) $  in the sense of  \S \ref{twist-defs}.
 Henceforth, we have analogous formulas for the new coproduct on generators similar to those in
 \S \ref{twist-Uhg},  namely
  $$  \displaylines{
   \Delta^{\scriptscriptstyle \!(\Psi)}\!\big(E_\ell\big)  \, = \,  E_\ell \otimes e^{+ \hbar \sum_{i, j \in I}
   \psi_{ij} \, a_{i\ell} H^-_j} \, + \, e^{+\hbar \, d_\ell H^+_\ell +
   \, \hbar \sum_{i, j \in I} \psi_{ij} \, a_{j\ell} H^+_i} \otimes E_\ell  \cr
   \Delta^{\scriptscriptstyle \!(\Psi)}\big(H^\pm_\ell\big)  \, = \,  H^\pm_\ell \otimes 1 \, + \, 1 \otimes H^\pm_\ell  \cr
   \Delta^{\scriptscriptstyle \!(\Psi)}\!\big(F_\ell\big)  \, = \,  F_\ell \otimes e^{-\hbar \, d_\ell H^-_\ell
   - \, \hbar \sum_{i, j \in I} \psi_{ij} \, a_{i\ell} H^-_j} \,
   + \, e^{-\hbar \sum_{i, j \in I} \psi_{ij} \, a_{j\ell} H^+_i} \otimes F_\ell  }  $$
 Using notation of  \S \ref{root-twisting}  with  $ \, \mathcal{R} := \kh \, $  and  $ \, {\{ T^\pm_i \}}_{i \in I} \, $
 thought of as a basis of  $ \lieh'_\pm \, $,  the formulas above read
  $$  \displaylines{
   \Delta^{\scriptscriptstyle \!(\Psi)}\!\big(E_\ell\big)  \, = \,  E_\ell \otimes
   e^{+\hbar \, \psi_-^{\lieh'}(T^-_\ell)} \, + \, e^{+\hbar \, \big( \id_{\lieh^{'}_+} \! +
   \, \psi_+^{\lieh'} \big)(T^+_\ell)} \otimes E_\ell  \cr
   \Delta^{\scriptscriptstyle \!(\Psi)}\big(H_\ell\big)  \, = \,  H_\ell \otimes 1 \, + \, 1 \otimes H_\ell  \cr
   \Delta^{\scriptscriptstyle \!(\Psi)}\!\big(F_\ell\big)  \, = \,  F_\ell \otimes
   e^{-\hbar \, \big( \id_{\lieh^{'}_{-}} \! + \, \psi_-^{\lieh'} \big)(T^-_\ell)} \, + \,
   e^{-\hbar \, \psi_+^{\lieh'}(T^+_\ell)} \otimes F_\ell  }  $$
                                                             \par
   Similarly, the ``twisted'' antipode  $ \, \SS^{\scriptscriptstyle (\Psi)} := \SS^{\F_\Psi} \, $  and the (untwisted!) counit
   $ \, \epsilon^{\scriptscriptstyle (\Psi)} := \epsilon \, $  are expressed by  (for all  $ \, \ell \in I \, $)
  $$  \begin{matrix}
   \SS^{\scriptscriptstyle (\Psi)}\big(E_\ell\big)  \, = \,  - e^{-\hbar \big( \id_{\lieh^{'}_{+}} + \, \psi_+^{\lieh'}\big)(T^+_\ell)} \,
   E_\ell \, e^{-\hbar \, \psi_-^{\lieh'}(T^-_\ell)}  \quad ,  &  \qquad  \epsilon\big(E_\ell\big) \, = \, 0  \\
   \SS^{\scriptscriptstyle (\Psi)}\big(H^\pm_\ell\big)  \, = \,  -H^\pm_\ell  \quad ,
   \phantom{\Big|^|}  &  \qquad  \epsilon\big(H^\pm_\ell\big) \, = \, 0  \\
   \SS^{\scriptscriptstyle (\Psi)}\big(F_\ell\big)  \, = \,  - e^{+\hbar \, \psi_+^{\lieh'}(T^+_\ell)}
   \, F_\ell \, e^{+\hbar \big( \id_{\lieh^{'}_{-}} + \,
   \psi_-^{\lieh'} \big)(T^-_\ell)}  \quad ,  &  \qquad  \epsilon\big(F_\ell\big) \, = \, 0
\end{matrix}  $$
 \vskip5pt
   Once more, acting again like in  \S \ref{twist-Uhg}
 --- and again assuming for simplicity that the Cartan matrix  $ A $  \textit{be of finite type}  ---
 we can consider the ``twisted'' generators (in topological sense, as usual)
  $$  E^{\scriptscriptstyle \Psi}_i := \, e^{-\hbar \, \psi_-^{\lieh'}(T^-_i)} E_i  \quad ,
  \qquad  T^{\scriptscriptstyle \Psi}_{i,+} := \big( \text{id} + \psi_+^{\lieh'} \big)(T^+_i) -
  \psi_-^{\lieh'}(T^-_i)  \quad   \eqno \forall \;\; i \in I  \quad  $$
for  $ \upsihbp $  and
  $$  T^{\scriptscriptstyle \Psi}_{i,-} := \big( \text{id} + \psi_-^{\lieh'} \big)(T^-_i) - \psi_+^{\lieh'}(T^+_i)  \quad ,
  \qquad  F^{\scriptscriptstyle \Psi}_i := \, e^{+\hbar \, \psi_+^{\lieh'}(T^+_i)} F_i  \quad   \eqno \forall \;\; i \in I  \quad  $$
for  $ \upsihbm \, $.  Then the above formulas for the Hopf operations along with the commutation relations in
$ \, \upsihgd = \uhgd \, $   --- an identity of algebras! ---   yield a presentation for
$ \upsihgd \, $, quite similar to that for  $ \upsihg \, $.
 \vskip5pt
   Finally, we remark that the epimorphism  $ \; \pi_\lieg : \uhgd \relbar\joinrel\relbar\joinrel\twoheadrightarrow \uhg \; $
   (see  \S \ref{Cartan-Borel_form-QUEAs}) of (topological) Hopf algebras is also an epimorphism for the twisted
   Hopf structure on both sides, i.e.,  it is an epimorphism
   $ \; \pi^{\scriptscriptstyle \Psi}_\lieg \! := \pi_\lieg : \upsihgd \relbar\joinrel\relbar\joinrel\twoheadrightarrow \upsihg \; $.
   Indeed, this is a direct, easy consequence of the fact that  $ \pi_\lieg^{\otimes 2} $  maps
   the twist element  \eqref{Resh-twist_F-uhgd}  of  $ \upsihgd $  onto the twist element  \eqref{Resh-twist_F-uhg}
   of  $ \upsihg \, $.  As a matter of description, using either presentation   --- the one with the  $ E_i $'s,  the
   $ T_{i,\pm} $'s  and the
 $ F_i $'s  or the one with the  $ E^{\scriptscriptstyle \Psi}_i $'s,  the  $ T^{\scriptscriptstyle \Psi}_{i,\pm} $'s
 and the  $ F^{\scriptscriptstyle \Psi}_i $'s  ---   for both algebras, it is clear that  $ \pi^{\scriptscriptstyle \Psi}_\lieg $
 maps each generator of  $ \upsihgd $  onto the same name generator of  $ \upsihg \, $.
\end{free text}

\vskip9pt

\begin{free text}  \label{twist-Borel-in-Uhgd}
 {\bf Twisting Borel subalgebras in  $ \uhgd$.}
 The formulas for the twisted coproduct in  \S \ref{twist-Uhgd}  above show that
 {\sl the quantum Borel subalgebras  $ \uhbp $  and  $ \uhbm $  are no longer (in general)
 Hopf subalgebras inside  $ \upsihgd \, $}.  Instead, let  $ \udotpsihbp \, $,  resp.\  $ \udotpsihbm \, $,
 be the complete, unital subalgebra of  $ \upsihgd $  generated by
 $ \, {\big\{ E_i \, , \, \big(\id_{\lieh^{'}_{+}} \! + \psi_+^{\lieh'}\big)\big(T^+_i\big) \, , \,
 \psi_-^{\lieh'}\big(T^-_i\big) \,\big\}}_{i \in I} \, $,  resp.\ by
 $ \, {\big\{ \psi_+^{\lieh'}\big(T^+_i\big) \, , \, \big( \id_{\lieh^{'}_{-}} \! + \psi_-^{\lieh'}\big)\big(T^-_i\big) \, ,
 F_i \,\big\}}_{i \in I} $.
 \break
 Then the same formulas yield
 \vskip5pt
   \centerline{\it  $ \udotpsihbm $  and  $ \, \udotpsihbm $  are Hopf subalgebras of  $ \, \upsihgd \, $.}
 \vskip5pt
   Clearly, the semiclassical limits of these twisted subalgebras are
  $$  \udotpsihbp \Big/ \hbar \, \udotpsihbp \; \cong \; U\big( {\dot{\lieb}}^{\scriptscriptstyle \Psi}_+ \big)
  \qquad\;\;\;  \text{and}  \;\;\;\qquad  \udotpsihbm \Big/ \hbar \, \udotpsihbm \; \cong \;
  U\big( {\dot{\lieb}}^{\scriptscriptstyle \Psi}_- \big)  $$
where (with notation of  \S \ref{root-data_Lie-algs})
 \vskip3pt
   \centerline{\sl  $ \, {\dot{\lieb}}^{\scriptscriptstyle \Psi}_+ \, := \, $
   Lie subalgebra of  $ \, \liegd'(A) $  generated by
   $ \; {\big\{ \erm_i \, , \, \big( \id_{\lieh^{'}_{-}} \! + \psi_+^{\lieh'} \big)\big( \trm^+_i \big) \, ,
   \, \psi_-^{\lieh'}\big( \trm^-_i \big) \big\}}_{i \in I} \, $ }
 \vskip3pt
   \centerline{\sl  $ \, {\dot{\lieb}}^{\scriptscriptstyle \Psi}_- \, := \, $  Lie subalgebra of
   $ \, \liegd'(A) $  generated by $ \; {\big\{ \psi_+^{\lieh'}\big( \trm^+_i \big) \, ,
   \, \big( \id_{\lieh^{'}_{-}} \! + \psi_-^{\lieh'} \big)\big( \trm^-_i \big) \, , \, \frm_i \big\}}_{i \in I} \, $ }
 \vskip3pt
\noindent
 --- where  $ \; \psi_\pm^{\lieh'}\big( \trm^\pm_i \big) := \psi_\pm^{\lieh'}\big( T^\pm_i \big) \,\;
 \text{mod} \; \hbar \, \kh \, \in {\lieh'}^\pm \subseteq \lieb_\pm'(A) \; $  ---
 so that a more inspiring, self-explaining notation might be
 $ \; U_\hbar\big( {\dot{\lieb}}^{\scriptscriptstyle \Psi}_\pm \big) := \udotpsihbpm \; $.
                                                                \par
   Note, however, that  $ {\dot{\lieb}}^{\scriptscriptstyle \Psi}_+ $  and  $ {\dot{\lieb}}^{\scriptscriptstyle \Psi}_- $
   both have a larger ``Cartan subalgebra'' than  $ \lieh' \, $,  so they cannot be correctly thought of as ``twisted''
   Borel subalgebras inside  $ \liegd'(A) \, $.
 \vskip5pt
   On the other hand, let us consider the complete, unital subalgebras  $ \upsihbp $  and  $ \upsihbm $  of  $ \upsihgd $
   generated respectively by the ``twisted generators''
  $$  E^{\scriptscriptstyle \Psi}_i \, := \, e^{-\hbar \, \psi_-^{\lieh'}(T^-_i)} E_i  \quad ,
  \qquad  T^{\scriptscriptstyle \Psi}_{i,+} := \big(\, \text{id}_{\lieh'_+} \! + \psi_+^{\lieh'} \,\big)\big(T^+_i\big) -
  \psi_-^{\lieh'}\big(T^-_i\big)   \eqno \forall \;\; i \in I  \quad  $$
and
  $$  F^{\scriptscriptstyle \Psi}_i \, := \, e^{+\hbar \, \psi_+^{\lieh'}(T^+_i)} F_i  \quad ,
  \qquad  T^{\scriptscriptstyle \Psi}_{i,-} \, := \,  \big(\, \text{id}_{\lieh'_-} \! + \psi_-^{\lieh'} \,\big)\big(T^-_i\big) -
  \psi_+^{\lieh'}\big(T^+_i\big)   \eqno \forall \;\; i \in I  \quad  $$
given in  \S \ref{twist-Uhgd}
 --- still assuming for simplicity that the matrix  $ A $  be of finite type.
 Then the above formulas for  $ \Delta^{\scriptscriptstyle (\Psi)} $,
$ \SS^{\scriptscriptstyle (\Psi)} $  and  $ \, \epsilon^{\scriptscriptstyle (\Psi)} := \epsilon \, $
altogether yield (for all  $ \, \ell \in I \, $)
  $$  \begin{matrix}
   \Delta^{\scriptscriptstyle \!(\Psi)}\! \big(E^{\scriptscriptstyle \Psi}_\ell\big)
\, = \,  E^{\scriptscriptstyle \Psi}_\ell \otimes 1 \, + \, e^{+\hbar \, T^{\scriptscriptstyle \Psi}_{\ell,+}}
\otimes E^{\scriptscriptstyle \Psi}_\ell  \quad ,  &  \qquad
   \Delta^{\scriptscriptstyle \!(\Psi)}\big(T^{\scriptscriptstyle \Psi}_{\ell,+}\big)  \, = \,
   T^{\scriptscriptstyle \Psi}_{\ell,+} \otimes 1 \, + \, 1 \otimes T^{\scriptscriptstyle \Psi}_{\ell,+}  \\
   \SS^{\scriptscriptstyle (\Psi)}\big(E^{\scriptscriptstyle \Psi}_\ell\big)  \, = \,
 - e^{-\hbar \, T^{\scriptscriptstyle \Psi}_{\ell,+}} E^{\scriptscriptstyle \Psi}_\ell  \quad ,  &  \qquad
 \SS^{\scriptscriptstyle (\Psi)}\big(T^{\scriptscriptstyle \Psi}_{\ell,+}\big)  \, = \,
 - T^{\scriptscriptstyle \Psi}_{\ell,+}  \\
   \epsilon\big(E^{\scriptscriptstyle \Psi}_\ell\big) \, = \, 0  \quad ,  \phantom{\big|^|_|}  &
   \qquad  \epsilon\big(T^{\scriptscriptstyle \Psi}_{\ell,\pm}\big) \, = \, 0  \\
\end{matrix}  $$
on the generators of  $ \upsihbp $  and
  $$  \begin{matrix}
   \Delta^{\scriptscriptstyle \!(\Psi)}\! \big(F^{\scriptscriptstyle \Psi}_\ell\big)
\, = \,  F^{\scriptscriptstyle \Psi}_\ell \otimes e^{-\hbar \, T^{\scriptscriptstyle \Psi}_{\ell,-}} \, +
\, 1 \otimes F^{\scriptscriptstyle \Psi}_\ell  \quad ,  &  \qquad
   \Delta^{\scriptscriptstyle \!(\Psi)}\big(T^{\scriptscriptstyle \Psi}_{\ell,-}\big)  \, =
   \,  T^{\scriptscriptstyle \Psi}_{\ell,-} \otimes 1 \, + \, 1 \otimes T^{\scriptscriptstyle \Psi}_{\ell,-}  \\
   \SS^{\scriptscriptstyle (\Psi)}\big(F^{\scriptscriptstyle \Psi}_\ell\big)  \, = \,
 - F^{\scriptscriptstyle \Psi}_\ell \, e^{+\hbar \, T^{\scriptscriptstyle \Psi}_{\ell,-}}  \quad ,  &
 \qquad  \SS^{\scriptscriptstyle (\Psi)}\big(T^{\scriptscriptstyle \Psi}_{\ell,-}\big)  \, = \,
 - T^{\scriptscriptstyle \Psi}_{\ell,-}  \\
   \epsilon\big(F^{\scriptscriptstyle \Psi}_\ell\big) \, = \, 0  \quad ,  \phantom{\big|^|_|}  &
   \qquad  \epsilon\big(T^{\scriptscriptstyle \Psi}_{\ell,-}\big) \, = \, 0  \\
\end{matrix}  $$
(for all  $ \, \ell \in I \, $)  on those of  $ \upsihbm \, $.  Altogether, these formulas show that
 {\it both  $ \upsihbp $  and  $ \upsihbm $  are Hopf subalgebras of  $ \upsihgd \, $}.
 \vskip5pt
   Now, at the semiclassical level, let us consider the elements
  $$  \trm^{\scriptscriptstyle \Psi}_{i,+} := \big( \text{id}_{\lieh^{'}_{+}} \! + \,
  \psi_+^{\lieh'} \big)\big( \trm^+_i \big) - \, \psi_-^{\lieh'}\big( \trm^-_i \big)  \;\; ,  \quad
  \trm^{\scriptscriptstyle \Psi}_{i,-} := \big( \text{id}_{\lieh^{'}_{-}} \! + \,
  \psi_-^{\lieh'}\big)\big( \trm^-_i \big) - \, \psi_+^{\lieh'}\big( \trm^+_i \big)   \eqno \quad \forall \;\; i \in I \;\;  $$
and the Lie subalgebras
 \vskip3pt
   \centerline{\sl  $ \, \lieb^{\scriptscriptstyle \Psi}_+ \, := \, $  Lie subalgebra of  $ \, \liegd'(A) $
   generated by  $ \, {\big\{\, \erm_i \, , \, \trm^{\scriptscriptstyle \Psi}_{i,+} \big\}}_{i \in I} \, $ }
 \vskip3pt
   \centerline{\sl  $ \, \lieb^{\scriptscriptstyle \Psi}_- \, := \, $  Lie subalgebra of  $ \, \liegd'(A) $
   generated by  $ \; {\big\{\, \trm^{\scriptscriptstyle \Psi}_{i,-} \, , \, \frm_i \big\}}_{i \in I} \, $ }
 \vskip3pt
\noindent
 Notice that the toral part of  $ \, \lieb^{\scriptscriptstyle \Psi}_\pm \, $,  that is the Lie
 subalgebra generated by the new elements  $ \trm^{\scriptscriptstyle \Psi}_{i,\pm} \, $,
 is isomorphic to  $ \lieh' \, $,  thanks to  Lemma \ref{lem:posivedef}{\it (b)}.
                                                                  \par
 Then one easily sees that the semiclassical limits of  $ \upsihbp $  and  $ \upsihbm $  are
  $$  \upsihbp \Big/ \hbar \, \upsihbp \; \cong \; U\big( \lieb^{\scriptscriptstyle \Psi}_+ \big)  \qquad\;\;\;  \text{and}
  \;\;\;\qquad  \upsihbm \Big/ \hbar \, \upsihbm \; \cong \;  U\big( \lieb^{\scriptscriptstyle \Psi}_- \big)  $$
so that the self-explaining notation  $ \; U_\hbar\big( \lieb^{\scriptscriptstyle \Psi}_+ \big) := \upsihbp \; $
and  $ \; U_\hbar\big( \lieb^{\scriptscriptstyle \Psi}_- \big) := \upsihbm \; $  can be also adopted.
All this, in turn, also reflects the fact that  {\sl both\/  $ \lieb^{\scriptscriptstyle \Psi}_+ $  and\/
$ \lieb^{\scriptscriptstyle \Psi}_- $
are Lie sub-bialgebras of the Lie bialgebra\/  $ \liegd'(A) $}  (and have the correct size for ``Borel''!).
                                                            \par
   Note that, like in  \S \ref{twist-Uhg}  above, from the previous observations we can also find
   out a complete presentation by generators (the ``twisted'' ones) and relations for  $ \upsihbp $
   and  $ \upsihbm \, $,  easily deduced from that for  $ \upsihgd \, $:  in comparison with those of the
   corresponding untwisted quantum algebras, in these presentations
   {\sl the formulas for the coproduct will read the same},  while
   {\sl the commutation relations will be deformed\/}  instead --- just the converse of what occurs
   in the original presentations.
\end{free text}

\smallskip

\begin{rmk}  \label{rmk:twistedpairing}
 Using the skew-Hopf pairing
 \eqref{h-Borel_pairing}  between  $ \uhbp $  and  $ \uhbm $  and the formulas in
 \S \ref{root-twisting},  one may define a ``twisted'' skew-Hopf pairing  $ \eta^{\scriptscriptstyle \Psi} $
 between  $ \upsihbp $  and  $ \upsihbm \, $,  in such a way that the corresponding Drinfeld double
 $ D \big( \upsihbp, \upsihbm, \eta^{\Psi} \big) $  is canonically isomorphic to  $ \upsihgd \, $.
\end{rmk}

\vskip5pt

\begin{rmk}
 It is worth noting that, strictly speaking,  $ \, U_\hbar\big( {\dot{\lieb}}^{\scriptscriptstyle \Psi}_\pm \big)\, $
 \textsl{cannot be thought of as a
comultiplication twisting
 of the isomorphic copy of  $ \, U_\hbar(\lieb_\pm) $  inside}  $ \, \uhgd \, $;
 indeed, this occurs because the twisting element
  $$  \F_\Psi  \; := \;\,  \exp\bigg( \hbar \, {\textstyle \sum\limits_{i,j=1}^n} \psi_{ij} \, H^+_i \otimes H^-_j \bigg)  $$
that we used to deform  $ \uhgd $   --- cf.\  \S \ref{twist-Uhgd}  ---   does not belong to  $ \, U_\hbar(\lieb_\pm)  \, $.
                                                             \par
   On the other hand, when the (generalized) Cartan matrix  $ A $  {\sl is of  {\it finite}  type\/}
   (which is equivalent to saying that  $ A^{-1} $  exists!), let us write  $ \, {\big( a'_{hk} \big)}_{h,k \in I} = A^{-1} \, $
   and consider the elements  $ \, T^\pm_{\omega_i} := \sum_{j=1}^n a'_{ji} \, T^\pm_i \; (\, i \in I \,) \, $.
   Then by a straightforward calculation we can re-write  $ \F_\Psi $  as
\begin{equation}  \label{F-twist_Uhbp}
  \F_\Psi  \; := \;\,  \exp\bigg( \hbar \, {\textstyle \sum\limits_{k=1}^n} \,
  d_k^{-1} \, T^+_{\omega_k} \otimes \psi_-^\lieh\big(T^-_k\big) \bigg)
\end{equation}
and also as
\begin{equation}  \label{F-twist_Uhbm}
  \F_\Psi  \; := \;\,  \exp\bigg( \hbar \, {\textstyle \sum\limits_{k=1}^n} \,
  \psi_+^\lieh\big(T^+_k\big) \otimes d_k^{-1} \, T^-_{\omega_k} \bigg)
\end{equation}
   \indent   Lead by these formulas, we define  $ \, {\check{U}}_\hbar(\lieb_+) \, $
   to be the complete, unital subalgebra of  $ \upsihgd $  generated by
  $ \; {\Big\{ E_i \, , \, d_i^{-1} \, T^+_{\omega_i} \, , \, \big( \id_\lieh \! + \psi_+^\lieh\big)\big(T^+_i\big) \, ,
  \, \psi_-^\lieh\big(T^-_i\big) \Big\}}_{i \in I} \, $,
 \, and  $ \, {\check{U}}_\hbar(\lieb_-) \, $  the one generated by
  $ \; {\Big\{\, d_i^{-1} \, T^-_{\omega_i} \, , \, \psi_+^\lieh\big(T^+_i\big) \, , \,
  \big( \id_\lieh \! + \psi_-^\lieh\big)\big(T^-_i\big) \, , \, F_i \,\Big\}}_{i \in I} \, $.
 Then  $ {\check{U}}_\hbar(\lieb_\pm) $  is a Hopf subalgebra of both  $ \uhgd $  and  $ \upsihgd \, $,
 hence we shall write
 $ {\check{U}}^{\scriptscriptstyle \Psi}_\hbar(\lieb_-) $  when we consider  $ {\check{U}}_\hbar(\lieb_-) $
 endowed with the Hopf
 (sub)algebra structure induced from the (twisted) Hopf algebra structure of  $ \uhgd \, $.
 Now, by  \eqref{F-twist_Uhbp}  and  \eqref{F-twist_Uhbm}  we have
  $$  \F_\Psi \, \in \, {\check{U}}^{\scriptscriptstyle \Psi}_\hbar(\lieb_+)
  \,\widehat{\otimes}\, {\check{U}}^{\scriptscriptstyle \Psi}_\hbar(\lieb_+)  \quad\qquad
  \text{and}
  \qquad\quad  \F_\Psi \, \in \, {{\check{U}}^{\scriptscriptstyle \Psi}_\hbar(\lieb_-)} \,\widehat{\otimes}\,
  {\check{U}}^{\scriptscriptstyle \Psi}_\hbar(\lieb_-)  $$
so that  $ \F_\Psi $  can indeed be rightfully called ``twisting element'' for  $ {\check{U}}_\hbar(\lieb_\pm) \, $
--- as it does belong to  $ \, {\check{U}}_\hbar(\lieb_\pm) \,\widehat{\otimes}\, {\check{U}}_\hbar(\lieb_\pm) \, $
and twisting  $ {\check{U}}_\hbar(\lieb_\pm) $  by it we actually get
$ {\check{U}}^{\scriptscriptstyle \Psi}_\hbar(\lieb_\pm) \, $.
\end{rmk}

\medskip

\subsection{Polynomial QUEA's (``\`a la Jimbo-Lusztig'')}  \label{memo_polyn-QUEAs}
 {\ }

\smallskip

  We shortly recall hereafter the ``polynomial version'' of the notion of QUEA, as introduced by Jimbo,
  Lusztig and others, as well as some related material.

\smallskip

   Let  $ \k_q $  be the subfield of  $ \kh $  generated by
   $ \; \k \,\cup\, \big\{\, q^{\pm 1/m} := e^{\pm \hbar/m} \,\big|\, m \! \in \! \NN_+ \big\} \; $;
   \,in particular  $ \, q^{\pm 1} := e^{\pm \hbar} \in \k_q \, $  and  $ \, q^{\pm 1}_i := q^{\pm d_i} \in \k_q \, $
   for all  $ \, i \in I \, $.  Note that  $ \k_q $  is the injective limit of all the fields of rational functions
   $ \, \k\big(q^{1/m}\big) \, $,  but in specific cases we shall be working with a specific bound on  $ m \, $,
   fixed from scratch, so in fact we can adopt as ground ring just one such ring  $ \, \k\big(q^{1/N}\big) \, $
   for a single, large enough  $ N $.
                                                                    \par
   As a general matter of notation, hereafter by  $ q^r $  for any  $ \, r \in \QQ \, $  we shall always mean
   $ \, q^r = q^{a/d} := {\big( q^{1/d} \big)}^a \, $  if  $ \, r = a/d \, $  with  $ \, a \in \ZZ \, $  and  $ \, d \in \NN_+ \, $.

\smallskip

\begin{rmk}
  As a matter of fact, one could still work over subring(s) of  $ \kh $  generated by elements of the set
  $$  \big\{\, q^c := \exp\big( \hbar \, c \,\big) \,\big|\, c \in \kh \,\big\}  \,\; = \;\,
  \big\{\, 1 + \hbar \, c \,\big|\, c \in \kh \,\big\}  $$
 This implies that one considers sublattices in  $ \lieh' $  which might generate different
 $ \QQ $--vector spaces than  $ \lieh'_\QQ \, $,  but this makes no difference when it comes to
 {\sl polynomial\/}  QUEA's  and their multiparameter versions.  As a byproduct, later on we
 do not need to restrict ourselves to  {\sl ``rational twists''},  i.e., (toral) twists coming from
 $ \, \Psi \in M_n(\QQ) \, $.
\end{rmk}

\vskip9pt

\begin{free text}  \label{intro-Uqg}
 {\bf The polynomial QUEA  $ \uqg \, $.}  We introduce the ``polynomial'' QUEA for
  $ \lieg'(A) \, $,
 hereafter denoted
 $  U_{q}(\lieg'(A))= \uqg \, $,  as being the unital  $ \k_q $--subalgebra  of  $ \uhg $  generated by
 $ \, \big\{ E_i \, , \, K_i^{\pm 1} \! := \! e^{\pm \hbar \, d_i H_i} , \, F_i \;\big|\; i \! \in \! I \,\big\} \, $.
 From this definition and from the presentation of  $ \uhg $  we deduce that  $ \uqg $ can be
 presented as the associative, unital
 $ \k_q $--algebra  with generators  $ E_i \, $,  $ K_i^{\pm 1} $  and  $ F_i $  ($ \, i \in I := \{1,\dots,n\} \, $)
 and relations (for all  $ i, j \in I $)
\begin{eqnarray}
   K_i K_j \; = \; K_j K_i  \quad ,  \qquad  K_i^{\pm 1} K_i^{\mp 1} \; = \; 1 \; = \; K_i^{\mp 1} K_i^{\pm 1}
   \qquad\qquad \nonumber\\
   K_i E_j K_{i}^{-1} \, = \;  q_i^{+a_{ij}} E_j   \quad ,  \qquad  K_i F_j K_{i}^{-1} \, = \;  q_i^{-a_{ij}} F_j
   \qquad\qquad \nonumber\\
   E_i F_j - F_j E_i  \; = \;  \delta_{ij} \, {{\, K_i - K_i^{-1} \,}
\over {\, q_i - q_i^{-1} \,}} \qquad\qquad \qquad \nonumber  \\
    \qquad \qquad  \sum\limits_{k = 0}^{1-a_{ij}} (-1)^k {\left[ { 1-a_{ij} \atop k }
\right]}_{q_i} E_i^{1-a_{ij}-k} E_j E_i^k  \; = \;  0   \qquad  \qquad  (\, i \neq j \,)
\label{eq:QUEA-Jimbo-aSerre1}\\
    \qquad \qquad  \sum\limits_{k = 0}^{1-a_{ij}} (-1)^k {\left[ { 1-a_{ij} \atop k }
\right]}_{q_i} F_i^{1-a_{ij}-k} F_j F_i^k  \; = \;  0   \qquad  \qquad  (\, i \neq j \,)
\label{eq:QUEA-Jimbo-aSerre2}
\end{eqnarray}
The formulas for the coproduct, antipode and counit in  $ \uhg $  now give
  $$  \displaylines{
   \Delta(E_i) := E_i \otimes 1 + K_i \otimes E_i  \;\; ,  \qquad
   \SS(E_i) := - K_i^{-1} E_i  \;\; ,   \qquad  \epsilon(E_i) := 0  \cr
   \Delta\big( K_i^{\pm 1} \big) := K_i^{\pm 1} \otimes K_i^{\pm 1}  \;\quad ,
   \;\;\qquad  \SS\big( K_i^{\pm 1} \big) := K_i^{\mp 1}  \;\quad ,  \;\;\qquad
   \epsilon\big( K_i^{\pm 1} \big) := 1  \cr
   \Delta(F_i) := F_i \otimes K_i^{-1} + 1 \otimes F_i  \;\; ,  \;\qquad
   \SS(F_i) := - F_i \, K_i \;\; ,  \;\qquad  \epsilon(F_i) := 0  }  $$
--- for all  $ \, i \in I \, $  ---   so that  {\it  $ \uqg $  is actually a
{\sl (standard, i.e., non-topological)}
 Hopf subalgebra inside  $ \uhg \, $};  cf.\  \cite{CP},
 and references therein for details, taking into account that we adopt slightly different normalizations.
\end{free text}

\vskip7pt

\begin{free text}  \label{Cartan-Borel_pol-QUEAs}
 {\bf Polynomial quantum Borel (sub)algebras and their double.}
 We consider now the ``polynomial version'' of the quantum Cartan subalgebra  $ \uhh \, $,
 the quantum Borel subalgebras  $ U_\hbar(\hskip0,8pt\lieb_\pm) $  and the ``quantum double''  $ \uhgd \, $.
                                                      \par
   First, we define  $ \uqh $  as being the unital  $ \k_q $--subalgebra  of  $ \uhg $  generated by
   $ \, \big\{\, K_i^{\pm 1} \;\big|\; i \in I \,\big\} \, $;  this is clearly a Hopf subalgebra of both  $ \uhh $  and  $ \uqg $
   --- in fact, it coincides with  $ \, \uhh \cap \uqg \, $  ---   isomorphic to the group algebra over  $ \k_q \, $,
   with its canonical Hopf structure, of the Abelian group  $ \ZZ^n \, $.
                                                      \par
   Similarly, we define  $ \uqbp \, $,  resp.\  $ \uqbm \, $,  as being the unital  $ \k_q $--subalgebra  of
   $ \uhg $  generated by  $ \, \big\{\, E_i \, , K_i^{\pm 1} \;\big|\; i \in I \,\big\} \, $,  resp.\  by
   $ \, \big\{\, K_i^{\pm 1} , \, F_i \;\big|\; i \in I \,\big\} \, $.  Then  $ U_q(\hskip0,8pt\lieb_\pm) $
   is a Hopf subalgebra of both  $ U_\hbar(\hskip0,8pt\lieb_\pm) $  and  $ \uqg \, $,  coinciding with
   $ \, U_\hbar(\hskip0,8pt\lieb_\pm) \cap \uqg \, $.
                                                      \par
   From the presentation of  $ \uqg $ one can easily deduce a similar presentation for  $ \uqh \, $,
   $ \uqbp $  and  $ \uqbm $  as well.
 \vskip7pt
   The skew-Hopf pairing  $ \; \eta : \uhbp \otimes_{\kh} \uhbm \relbar\joinrel\longrightarrow \kh \; $
 in  \S \ref{Cartan-Borel_form-QUEAs}
 restricts to a similar skew-pairing  $ \; \eta : \uqbp \otimes_{\k_q} \uqbm\relbar\joinrel\longrightarrow \k_q \; $
 between  {\sl polynomial\/}  quantum Borel subalgebras, described by the formulas
\begin{equation}  \label{q-Borel_pairing}  \hskip5pt
  \eta\big(K_{i\,},\!K_j\big) = q^{-d_i a_{ij}}  \, ,  \,\;
  \eta\big(E_{i\,},\!F_j\big) = {{\,\delta_{ij}\,} \over {\,q_i^{-1\,} \!\! - q_i\,}}  \; ,
  \,\;  \eta\big(E_{i\,},\!K_j\big) = 0 = \eta\big(K_{i\,},\!F_j\big)
\end{equation}
 Using this pairing one constructs the corresponding Drinfeld double as in  \S \ref{conv-Hopf},  in the
 sequel denoted also by  $ \, \uqgd := D\big( {\uqbp} , \uqbm , \eta \big) \, $.
                                                            \par
   Tracking the whole construction, one realizes that  $ \uqgd $  naturally identifies with the unital
   $ \k_q $--subalgebra  of  $ \uhgd $  generated by
  $$ \, \big\{\, E_i \, , \, K_{i,\smallp} \! := e^{\pm d_i H^+_i} \! = e^{\pm T^+_i} \! , \, K_{i,-} :=
  e^{\pm d_i H^-_i} \! = e^{\pm T^-_i} , \, F_i \;\big|\; i \in I \,\big\}  $$
 Then $ \uqgd $  can be presented as the associative, unital  $ \k_q $--algebra  with generators
 $ E_i \, $,  $ K_{i,\smallpm} $  and  $ F_i $   --- with  $ \, i \in I \, $  ---   satisfying \eqref{eq:QUEA-Jimbo-aSerre1}  and
 \eqref{eq:QUEA-Jimbo-aSerre2},  together with the relations (for all  $ i, j \in I $)
  $$  \displaylines{
   K_{i,\smallp} \, K_{j,\smallp} \; = \; K_{j,\smallp,j} \, K_{i,\smallp}  \!\quad ,  \!\!\qquad K_{i,\smallp} \, K_{j,-} \; =
   \; K_{j,-} \, K_{i,\smallp}  \!\quad ,  \!\!\qquad   K_{i,-} \, K_{j,-} \; = \; K_{j,-} \, K_{i,-}  \cr
   K_{i,\smallp}^{\pm 1} \, K_{i,\smallp}^{\mp 1}  \, = \;  1  \; = \;  K_{i,\smallp}^{\mp 1} \, K_{i,\smallp}^{\pm 1}
   \quad ,  \qquad   K_{i,-}^{\pm 1} \, K_{i,-}^{\mp 1}  \, = \;  1  \; = \; K_{i,-}^{\mp 1} \, K_{i,-}^{\pm 1}  \cr
   K_{i,\smallpm}\, E_j \, K_{i,\smallmp}^{-1} \, = \;  q^{+d_i a_{ij}} E_j   \qquad ,  \quad \qquad
   K_{i,\smallpm} \, F_j \, K_{i,\smallpm}^{-1} \, = \;  q^{-d_i a_{ij}} F_j  \cr
   E_i \, F_j - F_j \, E_i  \,\; = \;\,  \delta_{ij} \, {{\, K_{i,\smallp} - K_{i,-}^{-1} \,} \over {\, q_i - q_i^{-1} \,}}  }  $$
 while coproduct, antipode and counit are described by the formulas
  $$  \displaylines{
   \Delta(E_i) := E_i \otimes 1 + K_{i,\smallp}\otimes E_i  \;\; ,  \qquad
   \SS(E_i) := - K_{i,\smallp}^{-1} E_i  \;\; ,  \qquad  \epsilon(E_i) := 0  \cr
   \Delta\big( K_{i,\smallp}^{\pm 1} \big) := K_{i,\smallp}^{\pm 1} \otimes K_{i,\smallp}^{\pm 1}  \;\quad ,
   \;\;\qquad  \SS\big( K_{i,\smallp}^{\pm 1} \big) := K_{i,\smallp}^{\mp 1}  \;\quad ,
   \;\;\qquad  \epsilon\big( K_{i,\smallp}^{\pm 1} \big) := 1  \cr
   \Delta\big( K_{i,-}^{\pm 1} \big) := K_{i,-}^{\pm 1} \otimes K_{i,-}^{\pm 1}  \;\quad ,
   \;\;\qquad  \SS\big( K_{i,-}^{\pm 1} \big) := K_{i,-}^{\mp 1}  \;\quad ,
   \;\;\qquad  \epsilon\big( K_{i,-}^{\pm 1} \big) := 1  \cr
   \Delta(F_i) := F_i \otimes K_{i,-}^{-1} + 1 \otimes F_i  \;\; ,  \;\qquad
   \SS(F_i) := - F_i \, K_{i,-}  \;\; ,  \;\qquad  \epsilon(F_i) := 0  }  $$
for all  $ \, i \in I \, $   --- so that  {\it  $ \uqgd $  is actually a  {\sl Hopf subalgebra\/}  inside  $ \uhgd \, $}.
                                                     \par
  In terms of this description, and using the canonical identification
  $$  \uqgd \, := \, D\big( {\uqbp} , \uqbm , \eta \big)  \; \cong \;  {\uqbp} \otimes_{\k_q} \uqbm  $$
 as coalgebras, we have  $ \, E_i = E_i \otimes 1 \, $,
 $ \, K_{i,\smallp}^{\pm 1} = K_i^{\pm 1} \otimes 1 \, $,  $ \, K_{i,-}^{\pm 1} = 1 \otimes K_i^{\pm 1} \, $,
 $ \, F_i = 1 \otimes F_i \, $  (for all  $ \, i \in I \, $).  Moreover, the natural embeddings of  $ \uqbp $  and
 $ \uqbm $  as Hopf subalgebras inside  $ \, \uqgd \, \cong \, {\uqbp} \otimes_{\k_q} \uqbm \, $  are described by
  $$  E_i \mapsto E_i \; ,  \quad  K_i^{\pm 1} \mapsto K_{i,\smallp}^{\pm 1}
  \qquad \text{and} \qquad  F_i \mapsto F_i \; ,  \quad  K_i^{\pm 1} \mapsto K_{i,-}^{\pm 1}
  \eqno \qquad \forall \;\; i \in I  \quad  $$
 \vskip3pt
   Also by construction, the projection  $ \; \pi_\lieg : \uhgd \relbar\joinrel\relbar\joinrel\twoheadrightarrow \uhg \; $
   --- an epimorphism of formal Hopf algebras over  $ \kh $  ---   restricts to a Hopf  $ \k_q $--algebra
   epimorphism  $ \; \pi_\lieg : \uqgd \relbar\joinrel\relbar\joinrel\twoheadrightarrow \uqg \; $,  \, which is
   explicitly described by
\begin{equation}  \label{proj_Uqgd-->Uqg}
  \pi_\lieg \, :  \;\quad   E_i \mapsto E_i \; ,  \quad  K_{i,\smallp}^{\pm 1} \mapsto K_i^{\pm 1} \; ,
  \quad  K_{i,-}^{\pm 1} \mapsto K_i^{\pm 1} \; ,  \quad  F_i \mapsto F_i  \;\;\qquad  \forall \;\; i \in I
\end{equation}
\end{free text}

\vskip11pt

\begin{free text}  \label{larger-tori_QUEA's}
 {\bf Larger tori for (polynomial) QUEA's.}
 By definition, the ``toral part'' of a (polynomial) quantum group  $ \uqg $  is its Cartan subalgebra  $ \uqh \, $:
 the latter identifies with the group (Hopf) algebra of the Abelian group  $ \ZZ^n \, $,
 which in turn is identified with the free Abelian group  $ \mathcal{K}_Q $  generated by the
 $ K_i $'s  via  $ \, K_i \mapsto e_i \, $  (the  $ i $--th  element in the canonical  $ \ZZ $--basis  of
 $ \ZZ^n \, $).  As  $ \ZZ^n $  is isomorphic to the root lattice
 $ \, Q := \sum_{i \in I} \ZZ \, \alpha_i \, $  via  $ \, e_i \mapsto \alpha_i \, $,  we have also an isomorphism
 $ \; \varOmega_Q : Q \;{\buildrel \cong \over {\lhook\joinrel\relbar\joinrel\relbar\joinrel\twoheadrightarrow}}\;
 \mathcal{K}_Q \; $  given by  $ \, \alpha_i \mapsto K_i \; $;  \, we then fix notation
 $ \, K_\alpha := \varOmega_Q(\alpha) \; \big(\! \in \mathcal{K}_Q \big) \, $  for every  $ \, \alpha \in Q \, $.
                                                               \par
   With this notation in use, note that the commutation relations between the  $ K_i^{\pm 1} $'s
   and the  $ E_j $'s  or the  $ F_j $'s  generalize to
  $$  K_\alpha E_j K_\alpha^{-1} \, = \;  q^{+(\alpha,\alpha_j)} E_j   \quad ,  \!\qquad
   K_\alpha F_j K_\alpha^{-1} \, = \;  q^{-(\alpha,\alpha_j)} F_j    \eqno \;\qquad \forall \;\;
   \alpha \in Q \, , \; j \in I  \!\!\quad  $$
where  $ (\ \,,\ ) $  is the symmetric bilinear pairing on  $ \, \QQ{}Q \times \QQ{}Q \, $  introduced in
\S \ref{root-data_Lie-algs}.  Note that  $ \, \pm(\alpha,\alpha_j) \in \ZZ \, $  so that
$ \, q^{\pm(\alpha,\alpha_j)} \in \k\big[ q , q^{-1} \big] \subseteq \k_q \, $:  in particular,
$ \uqg $  {\it is actually defined over the smaller ring  $ \, \k\big[ q , q^{-1} \big] \, $  too}
--- no need of the whole  $ \k_q \, $.
 \vskip5pt
   Let  $ \varGamma $  be a sublattice of  $ \QQ{}Q $  of rank  $ n $  with  $ \, Q \subseteq \varGamma \, $.
   For any basis  $ \big\{ \gamma_1 , \dots \gamma_n \big\} $  of  $ \varGamma \, $,  let
   $ \, C := {\big( c_{ij} \big)}_{i=1,\dots,n;}^{j=1,\dots,n;} \, $  be the matrix of integers such that
   $ \, \alpha_i = \sum_{j=1}^n c_{ij}\,\gamma_j \, $  for every  $ \, i \in I = \{1,\dots,n\} \, $.
   Write $ \, c := \big| \det(C) \big| \in \NN_+ \, $:  as it is known, it equals the index (as a subgroup) of
   $ Q $  in  $ \varGamma \, $;  in particular, it is independent of any choice of bases.
   Write $ \, C^{-1} = {\big( c'_{ij} \big)}_{i=1, \dots, n;}^{j=1, \dots, n;} \, $  for the inverse matrix to
   $ C \, $:  then  $ \, \gamma_i = \sum_{j=1}^n c'_{ij}\,\alpha_j \, $ for each  $ \, i \in I = \{1,\dots,n\} \, $,
   where $ \, c'_{ij} \in \, c^{-1} \, \ZZ \, $,  for all  $ \, i, j \in I \, $.
                                                        \par
  We denote by  $ \, \uqhG{\varGamma} \, $  the group algebra over  $ \k_q $  of the lattice
  $ \varGamma \, $,  with its canonical structure of Hopf algebra.  So, each element
  $ \, \gamma \in \varGamma \, $  corresponds to an element  $ \, K_\gamma \in \uqhG{\varGamma} \, $.
  Denote by  $ \mathcal{K}_\varGamma $  the subgroup of  $ \uqhG{\varGamma} $  generated by all
  these  $ K_\gamma $'s.  This clearly yields a group isomorphism
  $ \; \varOmega_\varGamma : \varGamma \;{\buildrel \cong \over
  {\lhook\joinrel\relbar\joinrel\relbar\joinrel\twoheadrightarrow}}\; \mathcal{K}_\varGamma \; $  given by
  $ \, \gamma \mapsto K_\gamma \, $  that extends the  $ \varOmega_Q $  given above.
  Moreover, as the group  $ Q $  embeds into  $ \varGamma $  we have a corresponding Hopf
  algebra embedding  $ \, \uqh = \uqhG{Q} \lhook\joinrel\relbar\joinrel\longrightarrow \uqhG{\varGamma} \, $.
  In particular, each  $ \, K_i = K_{\alpha_i} \, $  is expressed by the formula
  $ \, K_{\alpha_i} \! = \prod_{j \in I} K_{\gamma_j}^{\,c_{ij}} \, $.
                                                        \par
   Note that each of these extended quantum Cartan (sub)algebras still embeds, in a natural way, inside
   $ \uhh \, $.  To see it, we use again notation from  \S \ref{root-data_Lie-algs}:  consider the
   $ \QQ $--span  of the  $ H_i $'s  as a  $ \QQ $--integral  form of  $ \lieh \, $,
   take the associated isomorphism
   $ \, t : \lieh_{{}_\QQ}^* \,{\buildrel \cong \over {\relbar\joinrel\longrightarrow}}\, \lieh_{{}_\QQ} \, $
   and look at the elements  $ \, T_i := t_{\alpha_i} \! = d_i{}H_i \, $  for all  $ \, i \in I \, $.  By construction, we have
  $$  K_{\alpha_i}  \equiv \,  K_i  \, := \,  e^{\hbar \, d_i H_i}  \, = \,  e^{\hbar \, T_i}   \eqno \forall \;\; i \in I  \quad  $$
and more in general for  $ \, \alpha \in Q \, $  written as  $ \, \alpha = \sum_{j \in I} z_j \, \alpha_j \, $
(with  $ \, z_j \in \ZZ \, $)  we have
  $$  K_\alpha  \, := \,  {\textstyle \prod\limits_{j \in I}} K_j^{z_j}  \, = \,
  {\textstyle \prod\limits_{j \in I}} e^{\hbar \, z_j d_i H_i}  \, = \,  {\textstyle \prod\limits_{j \in I}} e^{\hbar \, z_j T_j}  \, = \,
  e^{\hbar \sum_{j \in I} z_j T_j}  \, = \,  e^{\hbar \, T_\alpha}   \eqno \forall \;\; \alpha \in Q  \quad  $$
Now this picture extends to any other lattice  $ \varGamma $  in  $ \, \lieh_{{}_\QQ}^* \equiv \QQ{}Q \, $
containing $ Q \, $.  Indeed, mapping  $ \, K_\gamma \mapsto e^{\hbar \, T_\gamma} \, $
--- for all  $ \, \gamma \in \varGamma \, $  ---   provides a unique, well-defined monomorphism of
Hopf algebras from  $ \uqhG{\varGamma} $  to  $ \uhh \, $.  In other words,  {\sl  $ \uqhG{\varGamma} $
canonically identifies with the  $ \k_q $--subalgebra  of  $ \uhh $  generated by
$ \, \big\{ K_\gamma \! := e^{\hbar \, T_\gamma} \,\big|\, \gamma \in \varGamma \,\big\} \, $}.
\end{free text}

\vskip9pt

\begin{free text}  \label{pol-qgroups_larg-tor}
 {\bf Polynomial QUEA's with larger torus.}
 We aim now to introduce new polynomial QUEA's having a ``larger Cartan subalgebra'',
 modeled on those of the form  $ \uqhG{\varGamma} $  presented in  \S \ref{larger-tori_QUEA's}  above.
 \vskip4pt
   To begin with, let  $ \varGamma $  be any lattice in  $ \QQ{}Q $
   such that  $ \, Q \subseteq \varGamma \, $.  Like we did for  $ \uqg \, $,
   we define  $ \uqgG{\varGamma} $  as being the unital  $ \k_q $--subalgebra  of
   $ \uhg $  generated by  $ \, \big\{\, E_i \, , \, K_\gamma \! := e^{\hbar \, T_\gamma} ,
   \, F_i \;\big|\; i \! \in \! I , \, \gamma \! \in \! \varGamma \,\big\} \, $.
   From this definition and from the presentation of  $ \uhg $
   we deduce that  $ \uqgG{\varGamma} $  can be presented as
   the associative, unital  $ \k_q $--algebra  with generators
   $ E_i \, $,  $ K_\gamma $  and  $ F_i $  ($ \, i \in I := \{1,\dots,n\} \, $,
   $ \, \gamma \in \varGamma \, $) satisfying \eqref{eq:QUEA-Jimbo-aSerre1} and
   \eqref{eq:QUEA-Jimbo-aSerre2},
   together with the relations
  $$  \displaylines{
   K_{\gamma'} K_{\gamma''} \, = \, K_{\gamma'+\gamma''} \, = \, K_{\gamma''}
   K_{\gamma'}  \quad ,  \qquad  K_{+\gamma} K_{-\gamma} \; = \; 1 \; = \;
   K_{-\gamma} K_{+\gamma}  \cr
   K_\gamma E_j K_\gamma^{-1} \, = \;  q^{+(\gamma,\alpha_j)} E_j   \quad ,
   \qquad  K_\gamma F_j K_\gamma^{-1} \, = \;  q^{-(\gamma,\alpha_j)} F_j  \cr
   E_i F_j - F_j E_i  \; = \;  \delta_{ij} \, {{\, K_{\alpha_i} - K_{\alpha_i}^{-1} \,}
\over {\, q_i - q_i^{-1} \,}}  \cr }  $$
 (for all  $ i, j \in I \, $,  $ \, \gamma , \gamma' , \gamma'' \in \varGamma \, $),
 where  $ \, q_i:= q^{d_i} = e^{\hbar \, d_i} \, $  as before.
 The formulas for the coproduct, antipode and counit in  $ \uhg $  then give
  $$  \displaylines{
   \Delta(E_i) \, = \, E_i \otimes 1 + K_{\alpha_i} \otimes E_i  \;\; ,
   \qquad  \SS(E_i) \, = \, - K_{\alpha_i}^{-1} E_i  \;\; ,  \qquad  \epsilon(E_i) \,
   = \, 0  \cr
   \Delta\big( K_\gamma^{\pm 1} \big) \, = \, K_\gamma^{\pm 1} \otimes
   K_\gamma^{\pm 1}  \;\quad ,  \;\;\qquad  \SS\big( K_\gamma^{\pm 1} \big) \,
   = \, K_\gamma^{\mp 1}  \;\quad ,  \;\;\qquad  \epsilon\big( K_\gamma^{\pm 1} \big) \,
   = \, 1  \cr
   \Delta(F_i) \, = \, F_i \otimes K_{\alpha_i}^{-1} + 1 \otimes F_i  \;\; ,
   \;\qquad  \SS(F_i) \, = \, - F_i \, K_{\alpha_i}  \;\; ,  \;\qquad  \epsilon(F_i) \, = \, 0  }  $$
(for all  $ \, i \! \in \! I \, $,  $ \, \gamma \! \in \! \varGamma \, $)  so that
{\it  $ \uqgG{\varGamma} $  is (again) a  {\sl Hopf subalgebra} inside  $ \uhg \, $}.
                                                                       \par
   With notation of  \S \ref{larger-tori_QUEA's},  let  $ \, C := {\big( c_{ij} \big)}_{i=1,\dots,n;}^{j=1,\dots,n;} \, $
   be the matrix of change of  $ \QQ $--basis  (for  $ \QQ{}Q \, $)  from any basis of  $ \varGamma $
   to the basis  $ \big\{ \alpha_1 , \dots \alpha_n \big\} \, $  of simple roots
 and set  $ \, c := \big| \text{\sl det}(C) \big| \in \NN_+ \, $.  Then from the above presentation one easily sees that
 {\it  $ \uqgG{\varGamma} $  is actually well-defined over the subfield  $ \, \k\big(q^{1/c}\big) $  of  $ \, \k_q \, $}.
 \vskip3pt
   With much the same method we define also quantum Borel subalgebras with larger torus, modeled on the lattice
   $ \varGamma $,  hereafter denoted  $ \, \uqbpG{\varGamma} \, $,  resp.\  $ \, \uqbmG{\varGamma} \, $,
   simply dropping the  $ F_i $',  resp.\ the  $ E_i $'s,  from the set of generators.
                                                                       \par
   Similarly, if we take as generators only the  $ K_\gamma $'s  we get a Hopf  $ \k_q $--subalgebra  of
   $ \uhh $  (hence of  $ \uhg $  as well) that is isomorphic to  $ \uqhG{\varGamma} \, $
   --- cf.\  \S \ref{larger-tori_QUEA's}  above.
                                                                       \par
   Note also that definitions give
   $ \, U_q(\lieb_\pm) = U_{q,Q}(\lieb_\pm) \subseteq U_{q,\varGamma}(\lieb_\pm) \subseteq U_\hbar(\lieb_\pm) \, $.
   Moreover,  {\it all these algebraic objects are actually well-defined over  $ \, \k\big(q^{1/c}\big) $  as well}.
 \vskip2pt
   Now let  $ \varGamma_+ $  and  $ \varGamma_- $  be any two lattices in  $ \QQ{}Q $ such that
   $ \, Q \subseteq \varGamma_\pm \, $,  and let
   $ \, \varGamma_{\!\bullet} := \varGamma_{\!+} \times \varGamma_{\!-} \, $.
   Then we define  $ \uqgdG{\varGamma_{\!\bullet}} $  as being the unital  $ \k_q $--subalgebra  of  $ \uhgd $
   generated by
   $ \, \big\{\, E_i \, , \, K_{\gamma_\pm} \! := e^{\hbar \, T_{\gamma_\pm}} , \, F_i \;\big|\; i \! \in \! I ,
   \, \gamma_\pm \! \in \! \varGamma_{\!\pm} \,\big\} \, $.
   From the presentation of  $ \uhgd $  we deduce that  $ \uqgG{\varGamma_{\!\bullet}} $
   can be presented as the associative, unital  $ \k_q $--algebra
   with generators  $ E_i \, $,  $ K_{\gamma_\pm} $  and  $ F_i $  ($ \, i \in I := \{1,\dots,n\} \, $,
   $ \, \gamma_\pm \in \varGamma_{\!\pm} \, $)
   satisfying the relations \eqref{eq:QUEA-Jimbo-aSerre1} and \eqref{eq:QUEA-Jimbo-aSerre2},
   together with the following:
  $$  \displaylines{
   K_{\gamma_+} K_{\gamma_-}  \; = \;  K_{\gamma_-} K_{\gamma_+}  \cr
   K_{\gamma'_\pm} K_{\gamma''_\pm} \, = \, K_{\gamma'_\pm + \gamma''_\pm} \, = \,
   K_{\gamma''_\pm} K_{\gamma'_\pm}  \quad ,  \qquad  K_{+\gamma_\pm} K_{-\gamma_\pm} \; = \; 1
   \; = \; K_{-\gamma_\pm} K_{+\gamma_\pm}  \cr
   K_{\gamma_\pm} E_j K_{\gamma_\pm}^{-1} \, = \;  q^{+(\gamma_\pm\,,\,\alpha_j)} E_j  \quad ,
   \qquad  K_{\gamma_\pm} F_j K_{\gamma_\pm}^{-1} \, = \;  q^{-(\gamma_\pm\,,\,\alpha_j)} F_j  \cr
   E_i F_j - F_j E_i  \; = \;  \delta_{ij} \, {{\, K_{\alpha^+_i} - K_{\alpha^-_i}^{-1} \,}
\over {\, q_i - q_i^{-1} \,}}  }  $$
 (for all  $ i, j \in I \, $,  $ \, \gamma_\pm, \gamma'_\pm , \gamma''_\pm \in \varGamma_{\!\pm} \, $),
 where  $ \, \alpha^\pm_i $  is the copy of  $ \, \alpha_i \, (\! \in Q) \, $  inside  $ \varGamma_\pm \, $
 and  $ \, q_i:= q^{d_i} = e^{\hbar \, d_i} \, $  as usual.  Moreover, coproduct, antipode and counit of  $ \uhgd $
 on these generators read
  $$  \displaylines{
   \Delta(E_i) \, = \, E_i \otimes 1 + K_{\alpha^+_i} \otimes E_i  \;\; ,
   \qquad  \SS(E_i) \, = \, - K_{\alpha^+_i}^{-1} E_i  \;\; ,  \qquad  \epsilon(E_i) \, = \, 0  \cr
   \Delta\big( K_{\gamma_\pm}^{\pm 1} \big) \, = \, K_{\gamma_\pm}^{\pm 1}
   \otimes K_{\gamma\pm}^{\pm 1}  \;\quad ,  \;\;\qquad
   \SS\big( K_{\gamma_\pm}^{\pm 1} \big) \, = \, K_{\gamma_\pm}^{\mp 1}  \;\quad ,
   \;\;\qquad  \epsilon\big( K_{\gamma_\pm}^{\pm 1} \big) \, = \, 1  \cr
   \Delta(F_i) \, = \, F_i \otimes K_{\alpha^-_i}^{-1} + 1 \otimes F_i  \;\; ,
   \;\qquad  \SS(F_i) \, = \, - F_i \, K_{\alpha^-_i} \;\; ,  \;\qquad  \epsilon(F_i) \, = \, 0  }  $$
(for all  $ \, i \! \in \! I \, $,  $ \, \gamma_\pm \! \in \! \varGamma_{\!\pm} \, $).
Thus  {\it  $ \uqgdG{\varGamma_{\!\bullet}} $  is also a  {\sl Hopf subalgebra} inside  $ \uhgd \, $}.
 \vskip5pt
   Fix again two lattices  $ \varGamma_+ $  and  $ \varGamma_- $  in  $ \QQ{}Q $  such that
   $ \, Q \subseteq \varGamma_\pm \, $.
   As  $ \, U_{q,\varGamma_{\!\pm}}(\lieb_\pm) \subseteq U_\hbar(\lieb_\pm) \, $,  the skew-pairing
   $ \; \eta : \uhbp \otimes_{\kh} \uhbm \relbar\joinrel\longrightarrow \kh \; $  restricts to a similar skew-Hopf pairing
   $ \; \eta : \uqbpG{\varGamma_{\!+}} \otimes_{\k_q} \! \uqbmG{\varGamma_{\!-}} \relbar\joinrel\longrightarrow \k_q \, $,
   which in turn extends the one on  $ \, \uqbp \otimes_{\k_q} \! \uqbm = \uqbpG{Q} \otimes_{\k_q} \! \uqbmG{Q} \, $;
   the formulas
  $$  \eta\big(K_{\gamma_+},K_{\gamma_-}\big) = q^{-(\gamma_+,\gamma_-)}  \; ,
  \;\;\;  \eta\big(E_{i\,},F_j\big) = {{\,\delta_{ij}\,} \over {\,q_i^{-1\,} \!\! - q_i\,}}  \; ,
  \;\;\;  \eta\big(E_{i\,},K_j\big) = 0 = \eta\big(K_{i\,},F_j\big)  $$
 uniquely determines this pairing.  Using the latter, we construct the corresponding Drinfeld double
 $ \, D\big( {\uqbpG{\varGamma_{\!+}}}, \uqbmG{\varGamma_{\!-}} , \eta \big) \, $  as in  \S \ref{conv-Hopf}.
 It follows by construction that this double coincides with the Hopf algebra  $ \uqgdG{\varGamma_{\!\bullet}} $
 considered right above.
 \vskip5pt
   Again by construction, we also have that the projection
   $ \; \pi_\lieg : \uhgd \relbar\joinrel\relbar\joinrel\twoheadrightarrow \uhg \; $
   yields by restriction a Hopf  $ \k_q $--algebra  epimorphism
   $ \; \pi_\lieg : \uqgdG{\varGamma_{\!\bullet}} \relbar\joinrel\relbar\joinrel\twoheadrightarrow \uqgG{\varGamma_{\!*}} \; $,
   \, where
   $ \, \varGamma_{\!*} := \varGamma_{\!+} + \varGamma_{\!-} \; \big(\! \subseteq \QQ{}Q \big) \, $
   whose explicit description is obvious.
 \vskip7pt
   An alternative method to construct these QUEA's with larger toral part goes as follows.
   Fix a lattice  $ \varGamma $  in  $ \QQ{}Q $  such that  $ \, Q \subseteq \varGamma \, $,
   and consider the associated Hopf algebra  $ \uqhG{\varGamma} $  as in
   \S \ref{larger-tori_QUEA's}  and the canonical embedding
   $ \, \uqh = \uqhG{Q} \subseteq \uqhG{\varGamma} \, $.
   The natural (adjoint) action of  $ \uqh $  onto  $ \uqg $  extends (uniquely) to a  $ \uqhG{\varGamma} $--action
  $ \; \cdot :\, \uqhG{\varGamma} \times \uqg \relbar\joinrel\longrightarrow \uqg \; $
 given (for all  $ \, i \in I \, $  and  $ \, \gamma \in \varGamma \, $)  by
  $$  K_\gamma \,\cdot \, E_j  \, = \,  q^{+(\gamma\,,\alpha_j)} \, E_j  \quad ,  \qquad
     K_\gamma \,\cdot \, F_j  \, = \,  q^{-(\gamma\,,\alpha_j)} \, F_j  \quad ,  \qquad
    K_\gamma \,\cdot \, K_j  \, = \,   K_j  $$
 that makes  $ \uqg $  into a  $ \uqhG{\varGamma} $--module algebra.  Then we can consider the Hopf algebra
 $ \, \uqhG{\varGamma} \ltimes \uqg \, $  given by the {\it smash product\/}  of  $ \uqhG{\varGamma} $  and  $ \uqg \, $:
 the underlying vector space is just $ \, \uqhG{\varGamma} \otimes_{\k_q} \uqg \, $,  the coalgebra structure is the one
 given by the tensor product of the corresponding coalgebras, and the product is given by the formula
\begin{equation}\label{eq:smashprod}
  (h \ltimes x) \, (k\ltimes y)  \; = \;  h \, k_{(1)} \ltimes \big( \SS(k_{(2)}) \cdot x \big) \, y
\end{equation}
 for all  $ \, h , k \in \uqhG{\varGamma} \, $,  $ \, x , y \in \uqg \, $.  Since  $ \uqhG{\varGamma} $  contains
 $ \, \uqh \, $  as a Hopf subalgebra, it follows that  $ \uqhG{\varGamma} $  itself is a right
 $ \uqh $--module  Hopf algebra with respect to the adjoint action.  Under these conditions,
 it is easy to see that the smash product  $ \, \uqhG{\varGamma} \ltimes \uqg \, $
 maps onto a Hopf algebra structure on the  $ \k_q $--module
 $ \, \uqhG{\varGamma} \hskip-3pt \mathop{\otimes}\limits_{\uqh} \hskip-3pt \uqg \, $,
 which we can denote by  $ \, \uqhG{\varGamma} \hskip-3pt \mathop{\ltimes}\limits_{\uqh} \hskip-3pt \uqg \, $,
 see  \cite[Theorem 2.8]{Len}.  Finally, tracking the whole construction one easily sees that this Hopf algebra
 $ \, \uqhG{\varGamma} \hskip-3pt \mathop{\ltimes}\limits_{\uqh} \hskip-3pt \uqg \, $
actually coincides with the Hopf algebra  $ \uqgG{\varGamma} $  considered above.
 \vskip5pt
   With the same approach, one can also realize  $ U_{q,\varGamma_{\!\pm}}(\lieb_\pm) \, $,  resp.\
   $ \uqbmG{\varGamma} \, $,  as Hopf algebra structure on
   $ \, \uqhG{\varGamma_{\!\pm}} \!\! \mathop{\otimes}\limits_{\uqh} \!\! U_q(\lieb_\pm) \, $,  resp.\
   $ \, \uqhG{\varGamma_{\!\bullet}} \hskip-17pt \mathop{\otimes}\limits_{\raise-3pt\hbox{$ \scriptstyle \uqh \otimes \uqh $}}
   \hskip-17pt \uqgd \, $,  obtained as quotient of the smash product Hopf algebra
   $ \, \uqhG{\varGamma_{\!\pm}} \ltimes U_q(\lieb_\pm) \, $,  resp.\  $ \, \uqhG{\varGamma_{\!\bullet}} \ltimes \uqgd \, $.
\end{free text}

\medskip

\subsection{ Comultiplication twistings of polynomial QUEA's (=TwQUEA's)}  \label{twist_polyn-QUEAs}
 {\ }

\smallskip

   We introduce now the polynomial version of twisted QUEA's, or twisted polynomial QUEA's,
   just by matching what we did in  \S \ref{memo_polyn-QUEAs}  and  \S \ref{twist_formal-QUEAs}.

\vskip11pt

\begin{free text}  \label{twist-Uqg}
 {\bf Comultiplication twistings of  $ \uqg \, $.}
 Let  $ \, \k_q := \lim\limits_{\buildrel {\leftarrow\joinrel\relbar\joinrel\relbar}
 \over {m \in \NN}} \k\big(q^{1/m}\big) \; \big(\! \subseteq \kh \big) \, $  and  $ \uqg $  be as in
 \S \ref{memo_polyn-QUEAs}.  Fix also an  $ (n \times n) $--matrix
 $ \; \Psi := {\big( \psi_{ij} \big)}_{i,j \in I} \in M_n\big(\kh\big) \; $  as in  \S \ref{twist-Uhg},
 but now  {\sl we make the stronger assumption that  $ \; \Psi := {\big( \psi_{ij} \big)}_{i,j \in I} \in M_n\big(\QQ\big) \; $}.
 \vskip5pt
   Via the recipe in  \S \ref{twist-Uhg},  we pick the corresponding twisted formal QUEA  $ \upsihg \, $.
   Then inside the latter we consider the unital  $ \k_q $--subalgebra  $ \uqg \, $,
   generated by
   $ \, \big\{ E_i \, , \, K_i^{\pm 1} \! := \! e^{\pm \hbar \, d_i H_i} \! = \! e^{\pm \hbar \, T_i} , \, F_i \;\big|\; i \! \in \! I \,\big\} \, $,
   for which we have an explicit presentation.
 On the other hand, the new, twisted Hopf structure of  $ \upsihg $  on the generators of  $ \uqg $
 --- with notation of  \S \ref{root-twisting}  and \S \ref{larger-tori_QUEA's}  ---   reads
  $$  \displaylines{
   \Delta^{\scriptscriptstyle \!(\Psi)}\!\big(E_\ell\big)  \, =
   \,  E_\ell \otimes K_{+\psi_-(\alpha_\ell)} + K_{+(\text{id} + \psi_+)(\alpha_\ell)} \otimes E_\ell  \, =
   \,  E_\ell \otimes K_{+\zeta^-_\ell} + K_{+\alpha_\ell + \zeta^+_\ell} \otimes E_\ell
 \cr
   \Delta^{\scriptscriptstyle \!(\Psi)}\big(K_\ell^{\pm 1}\big)  \, = \,  K_\ell^{\pm 1} \otimes K_\ell^{\pm 1}  \cr
   \Delta^{\scriptscriptstyle \!(\Psi)}\!\big(F_\ell\big)  \, = \,
   F_\ell \otimes K_{-(\text{id} + \psi_-)(\alpha_\ell)} + K_{-\psi_+(\alpha_\ell)} \otimes F_\ell  \, =
   \,  F_\ell \otimes K_{-\alpha_\ell - \zeta^-_\ell} + K_{-\zeta^+_\ell} \otimes F_\ell  \cr
   \SS^{\scriptscriptstyle (\Psi)}\big(E_\ell\big)  \, = \,
 - K_{-(\id_\lieh + \, \psi_+)(\alpha_\ell)} \, E_\ell \, K_{-\psi_-(\alpha_\ell)}  \, = \,
 - K_{-\alpha_\ell - \zeta^+_\ell} \, E_\ell \, K_{-\zeta^-_\ell}  \;\; ,  \qquad  \epsilon\big(E_\ell\big)  \, = \,  0  \cr
   \SS^{\scriptscriptstyle (\Psi)}\big(K_\ell^{\pm 1}\big)  \, = \,  K_\ell^{\mp 1}  \qquad ,
 \qquad \qquad \qquad \qquad  \epsilon\big(K_\ell^{\pm 1}\big)  \, = \,  1  \cr
   \SS^{\scriptscriptstyle (\Psi)}\big(F_\ell\big)  \, = \,
 - K_{+\psi_+(\alpha_\ell)} \, F_\ell \, K_{+(\id_\lieh + \, \psi_-)(\alpha_\ell)}  \, =
 \,  - K_{+\zeta^+_\ell} \, F_\ell \, K_{+\alpha_\ell + \zeta^-_\ell}  \;\; ,
 \qquad  \epsilon\big(F_\ell\big)  \, = \,  0  }  $$
 (for all  $ \, \ell \in I \, $).  This shows explicitly that
 \vskip4pt
   \centerline{\it  $ \uqg $  is a  {\sl Hopf subalgebra}  (over\/  $ \k_q \, $)  inside  $ \upsihg $
   {\sl if and only if}  $ \, \psi_\pm(Q) \subseteq Q \, $.}
 \vskip4pt
   In order to settle this point, we pick the sublattice  in  $ \QQ{}Q \, $ given by
   $ \, Q^{\scriptscriptstyle \Psi} := Q + \psi_+(Q) + \psi_-(Q) \, $,  and for each lattice
   $ \varGamma $  in  $ \QQ{}Q $  containing  $ Q^{\scriptscriptstyle \Psi} $
   we consider the corresponding polynomial QUEA, namely  $ \uqgG{\varGamma} $  as in
   \S \ref{pol-qgroups_larg-tor}.  Then  $ \uqgG{\varGamma} $  is naturally embedded inside  $ \uhg \, $,
   and repeating the previous analysis we see that
\begin{equation}  \label{UpsiqgGpsi-subHopf-Upsihg}
   \text{\it  $ \uqgG{\varGamma} $  is a  {\sl Hopf subalgebra}  (over\/  $ \k_q \, $)  inside  $ \upsihg \, $.}
\end{equation}
                                                             \par
   To sum up, the previous analysis allows us to give the following definition:

\vskip11pt

\begin{definition}  \label{def: UpsiqgG}
 We denote by  $ \upsiqgG{\varGamma} $  the Hopf algebra defined in  \eqref{UpsiqgGpsi-subHopf-Upsihg},
 whose Hopf structure is given by restriction from  $ \upsihg \, $.
 We call any such Hopf algebra  {\sl twisted polynomial QUEA}   --- or  {\sl TwQUEA},  in short ---
 saying it is obtained from  $ \uqgG{\varGamma} $  by twisting (although, strictly speaking, this is not entirely correct).
\end{definition}

\vskip5pt

\begin{rmk}
 The multiparameter quantum groups  $ U^{\varphi}_q(\lieg) $  introduced by Costantini and Varagnolo in
 \cite{CV1,CV2,CV3}  are a particular case of a twisted polynomial QUEA, obtained by taking
 $ \, 2\,\psi_- = -2\,\psi_+ = \varphi \, $.  More precisely, they fix assumptions on  $ \varphi $
 --- hence on  $ \psi $  ---   that guarantee that it is enough to take, once and for all,
 the ``larger torus'' modeled on the lattice  $ \, \varGamma = P \, $.
\end{rmk}
\end{free text}

\vskip11pt

\begin{free text}  \label{twist-Uqgd}
 {\bf Comultiplication twistings of  $ \uqgd \, $.}
 Let again  $ \, \k_q := \lim\limits_{\buildrel {\leftarrow\joinrel\relbar\joinrel\relbar} \over {m \in \NN}} \k \big(q^{1/m} \big) \, $
 and  $ \; \Psi := {\big( \psi_{ij} \big)}_{i,j \in I} \in M_n\big(\QQ\big) \; $  be as in  \S \ref{twist-Uqg}.
 Following  \S \ref{twist-Uhgd},  we pick the twisted formal QUEA  $ \upsihgd $ \, :
 inside the latter, we pick the unital  $ \k_q $--subalgebra  $ \uqgd $  of  \S \ref{Cartan-Borel_pol-QUEAs},
 for which we know an explicit presentation.

\vskip3pt

   Since we are working with a double copy of  $ \lieh' $,  hence also with
   $ \, \QQ{}(Q \times Q) = \QQ{}Q \times \QQ{}Q \, $,  we write  $ \, Q_\pm \, $  to mean
   $ \, Q_+ := Q \times \{0\} \, $  and  $ \, Q_- := \{0\} \times Q \, $,  and also   --- for every
   $ \, \gamma \in \QQ{}Q \, $  ---   we set  $ \, \gamma_+ := (\gamma,0) \, $  and
   $ \, \gamma_- := (0,\gamma) \, $ inside  $ \, \QQ{}Q \times \QQ{}Q \, $.
   Besides this, hereafter the notation is that of  \S \ref{root-twisting}  and \S \ref{larger-tori_QUEA's}.

\vskip3pt

   The twisted Hopf structure of  $ \upsihgd $  on the generators of  $ \uqgd $  yields
  $$  \displaylines{
   \Delta^{\scriptscriptstyle \!(\Psi)}\!\big(E_\ell\big)  \, =
   \,  E_\ell \otimes K_{+\psi_-(\alpha^-_\ell)} + K_{+(\text{id} + \psi_+)(\alpha^+_\ell)} \otimes E_\ell \, =
   \,  E_\ell \otimes K_{+\zeta^-_{\ell,-}} + K_{+\alpha^+_\ell + \zeta^+_{\ell,+}} \otimes E_\ell  \cr
   \Delta^{\scriptscriptstyle \!(\Psi)}\big(K_\ell^{\pm 1}\big)  \, =
   \,  K_\ell^{\pm 1} \otimes K_\ell^{\pm 1}  \cr
   \Delta^{\scriptscriptstyle \!(\Psi)}\!\big(F_\ell\big)  \, =
   \,  F_\ell \otimes K_{-(\text{id} + \psi_-)(\alpha^-_\ell)} + K_{-\psi_+(\alpha^+_\ell)} \otimes F_\ell  \, =
   \,  F_\ell \otimes K_{-\alpha^-_\ell - \zeta^-_{\ell,-}} + K_{-\zeta^+_{\ell,+}} \otimes F_\ell  \cr
%
  }  $$
  $$  \displaylines{
   \SS^{\scriptscriptstyle (\Psi)}\big(E_\ell\big)  \, = \,
%
%
 - K_{-(\id_\lieh + \, \psi_+)(\alpha^+_\ell)} \, E_\ell \, K_{-\psi_-(\alpha^-_\ell)}  \, = \,
 - K_{-\alpha^+_\ell - \zeta^+_{\ell,+}} \, E_\ell \, K_{-\zeta^-_{\ell,-}}  \;\; ,
 \!\!\!\qquad  \epsilon\big(E_\ell\big)  \, = \,  0  \cr
   \SS^{\scriptscriptstyle (\Psi)}\big(K_\ell^{\pm 1}\big)  \, = \,  K_\ell^{\mp 1}  \qquad ,
 \qquad \qquad \qquad \qquad  \epsilon\big(K_\ell^{\pm 1}\big)  \, = \,  1  \cr
   \SS^{\scriptscriptstyle (\Psi)}\big(F_\ell\big)  \, = \,
%
%
 - K_{+\psi_+(\alpha^+_\ell)} \, F_\ell \, K_{+(\id_\lieh + \, \psi_-)(\alpha^-_\ell)}  \, =
 \,  - K_{+\zeta^+_{\ell,+}} \, F_\ell \, K_{+\alpha^-_\ell + \zeta^-_{\ell,-}}  \;\; ,
 \!\!\qquad  \epsilon\big(F_\ell\big)  \, = \,  0  }  $$
for all  $ \, \ell \in I \, $.  From the formulas above we see that
 \vskip4pt
   \centerline{\it  $ \uqgd $  is a  {\sl Hopf subalgebra}  (over\/  $ \k_q \, $)  in
   $ \upsihgd $  {\sl if and only if}  $ \, \psi_\pm(Q_\pm) \subseteq Q_\pm \, $.}
 \vskip4pt
   To fix this issue, we consider the sublattices
   $ \, Q^{\scriptscriptstyle \Psi}_{(\pm)} := (\text{id} + \psi_\pm)(Q_\pm) + \psi_\mp(Q_\mp) \, $
%
%
 inside  $ \, \QQ{}Q \times \QQ{}Q \, $  and their sum
 $ \, Q^{\scriptscriptstyle \Psi}_\ast := Q^{\scriptscriptstyle \Psi}_{(+)} + Q^{\scriptscriptstyle \Psi}_{(-)} \; $;
 then for each lattice  $ \varGamma_{\!\circ} $  inside  $ \, \QQ{}Q \times \QQ{}Q \, $ containing
 $ Q^{\scriptscriptstyle \Psi}_\ast $  we consider the associated polynomial QUEA
 $ \uqgdG{\varGamma_{\!\circ}} $  as in  \S \ref{pol-qgroups_larg-tor}.  By construction
 $ \uqgdG{\varGamma_{\!\circ}} $  sits inside  $ \uhgd \, $,  and repeating the previous analysis we find that
\begin{equation}  \label{UpsiqgdGpsi-subHopf-Upsihgd}
   \text{\it $ \uqgdG{\varGamma_{\!\circ}} $  is a  {\sl Hopf subalgebra}  (over\/  $ \k_q \, $)  inside  $ \upsihgd \, $.}
\end{equation}
 \vskip4pt
   At the end of the day, we are allowed to give the following definition:

\vskip11pt

\begin{definition}  \label{def: UpsiqgdG}
 We denote by  $ \upsiqgdG{\varGamma_{\!\circ}} $  the Hopf algebra given in  \eqref{UpsiqgdGpsi-subHopf-Upsihgd},
 whose Hopf structure is given by restriction from  $ \upsihgd \, $.
 We call any such Hopf algebra  {\sl twisted polynomial QUEA}   --- or  {\sl TwQUEA},  in short ---
 saying it is obtained from  $ \uqgdG{\varGamma_{\!\circ}} $  by twisting (although, strictly speaking,
 this is not entirely correct).
\end{definition}

\vskip9pt

\begin{rmk}  \label{rem: UpsiqgdG ---> UpsiqgG}
 It follows by construction that the epimorphism of twisted formal QUEA's
 $ \; \pi^{\scriptscriptstyle \Psi}_\lieg \! := \pi_\lieg : \upsihgd \relbar\joinrel\relbar\joinrel\twoheadrightarrow \upsihg \; $
 (cf.\  \S \ref{twist-Uhgd})  restricts to an epimorphism
%
%
 $ \; \pi^{\scriptscriptstyle \Psi}_\lieg \! := \pi_\lieg : \upsiqgdG{\varGamma_{\!\circ}}
 \relbar\joinrel\relbar\joinrel\twoheadrightarrow \upsiqgG{\varGamma} \; $
 of twisted polynomial QUEA's, where  $ \varGamma $  is the image of  $ \varGamma_{\circ} $  for the
%
%
 projection of  $ \QQ(Q \times Q) $  onto  $ \QQ{}Q $  mapping
 $ \, \alpha_\pm \in Q_\pm \, $  onto  $ \, \alpha \in Q \, $.
\end{rmk}
\end{free text}

\vskip11pt

\begin{free text}  \label{twist-Uqbpm}
 {\bf Comultiplication twistings of  $ U_q(\lieb_\pm) \, $.}
 We still work with
 $ \, \Psi \! := \! {\big( \psi_{ij} \big)}_{i,j \in I} \! \in \! M_n(\QQ) \, $  as in  \S \ref{twist-Uqg}.
 Using it, we define suitable
 ``comultiplication twistings'' of the quantum Borel algebras
 $ U_q(\lieb_\pm) $  {\sl as Hopf subalgebras inside  $ U_q^{\scriptscriptstyle \Psi}(\lieg) $
 and  inside  $ U_q^{\scriptscriptstyle \Psi}(\liegd) \, $.}
 \vskip5pt
   First we pick  in  $ \QQ{}Q \, $  the sublattices
   $ \, Q^{\scriptscriptstyle \Psi}_\pm := (\text{id} + \psi_\pm)(Q) + \psi_\mp(Q) \, $,
   and for any lattice  $ \varGamma_\pm $  in  $ \QQ{}Q $  containing
   $ Q^{\scriptscriptstyle \Psi}_\pm $  we consider  $ \, \varGamma_* := \varGamma_+ +  \varGamma_- \, $
   and the corresponding  $ \uqgG{\varGamma_{*\!}} \, $,  as in  \S \ref{twist-Uqg};
   inside the latter, we consider the (polynomial) quantum Borel (or ``Borel-like'')  subalgebra
   $ U_{q,\varGamma_\pm}(\lieb_\pm) \, $.  Now, the formulas for the twisted Hopf structure of
   $ \uqgG{\varGamma_*\!} $  show that the subalgebra  $ U_{q,\varGamma_\pm}(\lieb_\pm) $
   is also a  {\sl Hopf subalgebra\/}  in  $ \upsiqgG{\varGamma_*} \, $.
   Therefore  $ U_{q,\varGamma_\pm}(\lieb_\pm) $  {\sl with the twisted coproduct,
   antipode and counit is a new Hopf algebra, that we denote by}
   $ U^{\scriptscriptstyle \Psi}_{q,\varGamma_\pm}\!(\lieb_\pm) \, $,
   and call (polynomial) ``twisted'' quantum Borel subalgebra.
                                                  \par
   Note also that both  $ U^{\scriptscriptstyle \Psi}_{q,\varGamma_+}\!(\lieb_+) $
   and  $ U^{\scriptscriptstyle \Psi}_{q,\varGamma_-}\!(\lieb_-) $  are Hopf subalgebras in
   $ \upsiqgG{\varGamma_*\!} \, $.
 \vskip5pt
   Second, we pick  in  $ \QQ{}(Q \times Q) \, $  the sublattices
   $ \, Q^{\scriptscriptstyle \Psi}_{(\pm)} := (\text{id} + \psi_\pm)(Q_\pm) + \psi_\mp(Q_\mp) \, $,
   and for any lattice  $ \varGamma_{(\pm)} $  in  $ \QQ{}(Q \times Q) $  containing
   $ Q^{\scriptscriptstyle \Psi}_{(\pm)} $  we consider
   $ \, \varGamma_{(*)} := \varGamma_{(+)} +  \varGamma_{(-)} \, $
   and the corresponding (double) quantum group  $ \upsiqgdG{\varGamma_{(*)\!}} \, $;
   inside the latter, we fix the (polynomial) quantum Borel subalgebra
 $ \, U^{\scriptscriptstyle \Psi}_{q,\varGamma_{(\pm)}}\!(\lieb_\pm) $.
%
%
 Then the explicit formulas for the (twisted) Hopf structure of
 $ \upsiqgdG{\varGamma_{(*)\!}} $ show that
 \vskip5pt
 \centerline{\it  $ U^{\scriptscriptstyle \Psi}_{q,\varGamma_{(\pm)\!}}(\lieb_\pm) $
 is a Hopf subalgebra of  $ \, \upsiqgdG{\varGamma_{(*)\!}} \, $.}
 \vskip5pt
   However, a major drawback of both the subalgebras
   $ U^{\scriptscriptstyle \Psi}_{q,\varGamma_\pm}\!(\lieb_\pm) $
   --- inside  $ \upsiqgG{\varGamma_{*\!}} $  ---   and
   $ U^{\scriptscriptstyle \Psi}_{q,\varGamma_{(\pm)\!}}(\lieb_\pm) $
   --- inside  $ \upsiqgdG{\varGamma_{(*)\!}} $  ---   is that  {\sl they have too large a toral part\/}
   to be rightfully called ``(polynomial) quantum Borel (sub)algebras''.
   We tackle and settle this problem in  \S \ref{twist-gen's_x_twist-QUEA's}  hereafter.
\end{free text}

\vskip11pt

\begin{free text}  \label{twist-gen's_x_twist-QUEA's}
 {\bf Twisted generators for polynomial twisted QUEA's.}
 Let us consider in  $ \QQ(Q\times Q) $  the elements
 $ \; \tau_i^\pm := (\text{id} + \psi_\pm)(\alpha_i^\pm) - \psi_\mp(\alpha_i^\mp) = \alpha_i^\pm +
 \zeta_{i,\pm}^\pm - \zeta_{i,\mp}^\mp \; (i \in I) \; $   --- cf.\ \S \ref{root-twisting}  ---
 and the sublattices  $ Q^{\scriptscriptstyle \Psi}_{[\pm]} $  with
 $ \ZZ $--basis  $ \, \big\{\, \tau_i^\pm \,\big|\; i \in I \,\big\} \, $.
                                                       \par
   Let  $ \, Q^{\scriptscriptstyle \Psi}_{(\pm)} := (\text{id} + \psi_\pm)(Q_\pm) + \psi_\mp(Q_\mp) \, $
   and  $ \varGamma_{(\pm)} $  be as in  \S \ref{twist-Uqbpm}  above; then
   $ Q^{\scriptscriptstyle \Psi}_{[\pm]} $  is a sublattice in the lattice
   $ Q^{\scriptscriptstyle \Psi}_{(\pm)} \, $,  hence in  $ \varGamma_{(\pm)} $  as well.
   Inside the (twisted) Borel QUEA
$ \, U^{\,\scriptscriptstyle \Psi}_{q,\varGamma_{(\pm)}}\!(\lieb_\pm) $  we consider the elements
  $$  \displaylines{
   E^{\scriptscriptstyle \Psi}_i := K_{-\psi_-(\alpha^-_i)} E_i = K_{-\zeta^-_{i,-}} E_i  \;\; , \quad
   K^{\scriptscriptstyle \Psi}_{i,+} := K_{( \id + \, \psi_+)(\alpha^+_i) - \psi_-(\alpha^-_i)} =
   K_{\alpha^+_i + \, \zeta^+_{i,+} - \,\zeta^-_{i,-}}  \cr
   F^{\scriptscriptstyle \Psi}_i := K_{+\psi_+(\alpha^+_i)} F_i = K_{+\zeta^+_{i,+}} F_i  \;\; , \quad
   K^{\scriptscriptstyle \Psi}_{i,-} := K_{( \id \! + \, \psi_-)(\alpha^-_i) - \psi_+(\alpha^+_i)} =
   K_{\alpha^-_i + \, \zeta^-_{i,-} \! - \,\zeta^+_{i,+}}  }  $$
 (for all  $ \, i \in I \, $)   --- hereafter called ``twisted generators'' ---   and then
 {\sl define  $ \, {\hat{U}}^{\scriptscriptstyle \Psi}_q(\lieb_+) \, $,  resp.\
 $ \, {\hat{U}}^{\scriptscriptstyle \Psi}_q(\lieb_-) \, $,  as being the unital\/  $ \k_q $--subalgebra  of
 $ U^{\,\scriptscriptstyle \Psi}_{q,\varGamma_{(\pm)\!}}(\lieb_\pm) $  generated by the
 $ E^{\scriptscriptstyle \Psi}_i $'s  and the  $ {\big( K^{\scriptscriptstyle \Psi}_{i,+} \big)}^{\pm 1} \, $'s,
 resp.\ by the  $ F^{\scriptscriptstyle \Psi}_i $'s  and the  $ {\big( K^{\scriptscriptstyle \Psi}_{i,-} \big)}^{\pm 1} \, $'s}.
                                                                         \par
   More in general, for any other sublattice  $ M_\pm $  containing
   $ Q^{\scriptscriptstyle \Psi}_{[\pm]} $  {\sl we define  $ \, \uhatpsiqbpG{M_+\!} \, $,
   resp.\  $ \, \uhatpsiqbmG{M_-\!} \, $,  as the unital  $ \, \k_q $--subalgebra  of
   $ \, U_{q,M_\pm + Q^\Psi_{(\pm)}}\!(\lieb_\pm) $  generated  by the  $ E^{\scriptscriptstyle \Psi}_i $'s
   and the  $ K_{y_+} $'s,  resp.\  by the $ F^{\scriptscriptstyle \Psi}_i $'s  and the  $ K_{y_-} $'s,
   with  $ \, i \in I \, $  and  $ \, y_\pm \! \in \! M_\pm \, $}.
 \vskip5pt
   The key fact is that the (twisted) Hopf structure of
   $ \, U^{\scriptscriptstyle \Psi}_{q,M_\pm + Q^\Psi_{(\pm)}}\!(\lieb_\pm) \, $  yields
  $$  \displaylines{
   \Delta^{\scriptscriptstyle \!(\Psi)}\! \big(E^{\scriptscriptstyle \Psi}_\ell\big)
\, = \,  E^{\scriptscriptstyle \Psi}_\ell \otimes 1 + K_{+\tau^+_\ell} \otimes E^{\scriptscriptstyle \Psi}_\ell  \;\; ,
  \;\quad  \SS^{\scriptscriptstyle (\Psi)}\big(E^{\scriptscriptstyle \Psi}_\ell\big)
  =  - K_{-\tau^+_\ell} E^{\scriptscriptstyle \Psi}_\ell  \;\; ,
  \;\quad  \epsilon\big(E^{\scriptscriptstyle \Psi}_\ell\big) \, = \,  0  \cr
   \Delta^{\scriptscriptstyle \!(\Psi)}\big(K_{y_\pm}\big)  \, = \,  K_{y_\pm} \otimes K_{y_\pm}  \;\; ,  \;\quad
 \SS^{\scriptscriptstyle (\Psi)}\big(K_{y_\pm}\big)  \, = \,  K_{y_\pm}^{\,-1}  \, = \,  K_{-y_\pm}  \;\; ,
 \;\quad  \epsilon\big(K_{y_\pm}\big)  \, = \,  1  \cr
   \Delta^{\scriptscriptstyle \!(\Psi)}\! \big(F^{\scriptscriptstyle \Psi}_\ell\big)
\, = \,  F^{\scriptscriptstyle \Psi}_\ell \otimes K_{-\tau^-_\ell} + 1 \otimes F^{\scriptscriptstyle \Psi}_\ell  \;\; ,
 \;\quad  \SS^{\scriptscriptstyle (\Psi)}\big(F^{\scriptscriptstyle \Psi}_\ell\big)  =
- F^{\scriptscriptstyle \Psi}_\ell K_{+\tau^-_\ell}  \;\; ,  \;\quad
\epsilon\big(F^{\scriptscriptstyle \Psi}_\ell\big) \, = \,  0  }  $$
 for all  $ \, \ell \in I \, $.  Altogether, these formulas show that
 \vskip7pt
   \centerline{\it  $ \, \uhatpsiqbpmG{M_\pm\!} \, $  is a Hopf subalgebra of
   $ \, U^{\scriptscriptstyle \Psi}_{q,M_\pm + Q^\Psi_{(\pm)}}\!(\lieb_\pm) \, $.}
 \vskip9pt
   It is clear from definitions that if  $ \, M_\pm \supseteq Q^{\scriptscriptstyle \Psi}_{(\pm)} \, $
   then we have  $ \, \uhatpsiqbpmG{M_\pm} = \upsiqbpG{M_\pm} \, $   ---  cf.\ \S \ref{twist-Uqbpm}.
   Thus  $ \, \uhatpsiqbpG{M_+} = \upsiqbpG{M_+} \, $  can be generated by either set of generators
   $ \, {\big\{ E_i \, , K_{y_+} \big\}}_{i \in I \, , \, y_+ \in M_+} \, $  or
   $ \, {\big\{ E^{\scriptscriptstyle \Psi}_i , K_{y_+} \big\}}_{i \in I \, , \, y_+ \in M_+} \, $,  while
   $ \, \uhatpsiqbmG{M_-} = \upsiqbmG{M_-} \, $  can be generated by either
   $ \, {\big\{ F_i \, , K_{y_-} \big\}}_{i \in I \, , \, y_- \in M_-} \, $  or
   $ \, {\big\{ F^{\scriptscriptstyle \Psi}_i , K_{y_-} \big\}}_{i \in I \, , \, y_- \in M_-} \, $.
                                                            \par
   An entirely similar remark applies if we use the  $ E^{\scriptscriptstyle \Psi}_i $'s,  $ K_{y_\pm} \! $'s
   and  $ F^{\scriptscriptstyle \Psi}_i $'s  altogether as algebra generators of
$ \, \upsiqgdG{\varGamma_{(\ast)\!}} \, $  for
$ \, \varGamma_{(\ast)} := \varGamma_{(+)} + \varGamma_{(-)} \, $  with
$ \, \varGamma_{(\pm)} \supseteq Q^{\scriptscriptstyle \Psi}_{(\pm)} \; $.
                                                             \par
   Of course we can repeat this analysis for quantum Borel subalgebras inside
   $ \, \upsiqgG{\varGamma} \, $   --- for suitable lattices  $ \varGamma $  in  $ \QQ{}Q \, $
   ---   and introduce ``twisted'' algebra generators for them, that will also be, altogether,
   (twisted) algebra generators of  $ \, \upsiqgG{\varGamma} \, $  itself.
 \vskip5pt
   Now, when we use the ``twisted'' generators, the above shows that the formulas for
   the (twisted) coproduct, antipode and counit look exactly like those for the generators of the
   {\sl untwisted\/}  Borel subalgebras.  In other words, for these quantum Borel subalgebras
   {\sl twisting the algebra generators (as above) we end up with untwisted formulas for the
   coalgebra and antipodal structure}.  This remark applies to  {\sl both\/}  cases: twisted Borel
   subalgebras (and their generators) inside a double quantum group
   $ \, \upsiqgdG{\varGamma_{\!\circ}} \, $  {\sl and\/}  embedded inside a (``single'') quantum group
   $ \, \upsiqgG{\varGamma} \, $.
                                                             \par
   In all these new presentations, the new (twisted) generators of our Borel algebras
   (inside  $ \, \upsiqgdG{\varGamma_{\!\circ}} \, $  and inside  $ \, \upsiqgG{\varGamma} \, $)
   enjoy new ``twisted'' relations.  The striking fact is that in these presentations the ``twisted''
   relations happen to present a well precise form, commonly formalized via the notion of
   ``multiparameter quantum group'': we shall investigate this in detail in the forthcoming sections.
\end{free text}

\vskip11pt

\begin{free text}  \label{twist-deform's_x_TwQUEA's}
{\bf Comultiplication twistings for polynomial TwQUEA's.}
With notation as above, let  $ \, \Psi \in M_n(\QQ) \, $  be as in  \S \ref{twist-Uqg},
 and let  $ U_{q,\varGamma}^{\scriptscriptstyle \,\Psi}(\lieg) $  be the corresponding polynomial TwQUEA
 (for some lattice  $ \varGamma \, $)  over  $ \lieg'(A) \, $.  From its very construction, it follows that
the comultiplication twisting procedure
 can be iterated: namely, for  $ \, \Psi' \in M_n(\QQ) \, $  we can define the Hopf algebra
 $ {\big( U_{q,\varGamma}^{\scriptscriptstyle \,\Psi}(\lieg) \big)}^{\scriptscriptstyle \!\Psi'} $
 just reproducing the construction of  $ U_{q,\varGamma}^{\scriptscriptstyle \,\Psi}(\lieg) $
 but with  $ \Psi' $  replacing  $ \Psi $  and  $ U_{q,\varGamma}^{\scriptscriptstyle \,\Psi}(\lieg) $
 replacing  $ U_{q,\varGamma}(\lieg) \, $.
                                                                     \par
   {\sl It is then clear that
   $ \; {\big( U_{q,\varGamma}^{\scriptscriptstyle \,\Psi}(\lieg) \big)}^{\scriptscriptstyle \!\Psi'} \!\! =
   U_{q,\varGamma}^{\scriptscriptstyle \,\Psi+\Psi'}\!(\lieg) \, $},
   as this equality holds for the TwQUEA $ \upsihg \, $.  {\sl Therefore, all these new Hopf algebras will still
   be called TwQUEA's.}
 \vskip3pt
   The same construction is possible, and similar remarks apply, for TwQUEA's associated with
   $ \liegd'(A) \, $,  $ \lieb_+'(A) $ and  $ \lieb_- '(A)\, $.
\end{free text}

\bigskip

\section{Multiparameter quantum groups}
 We introduce now the multiparameter QUEA  $ U_\bq(\hskip0,8pt\liegd) \, $,  or
 {\sl MpQUEA\/}  for short, associated with a suitable matrix of parameters.
 Hereafter,  $ \FF $  will be a field of characteristic zero, and  $ \, \FF^\times := \FF \setminus \{0\} \, $.

\vskip13pt

\subsection{Defining multiparameter QUEA's (=MpQUEA's)}  \label{def-MpQUEA}  \
 \vskip7pt
   We fix a multiparameter matrix  $ \, \bq := \! {\big( q_{ij} \big)}_{i, j \in I} \in M_n\big(\FF\big) \, $,  with
   $ \, i, j \! \in \! I \, $,  $ \, i \! \not= \! j \, $,  and a generalized, symmetrisable Cartan matrix
   $ \, A := {\big( a_{ij} \big)}_{i, j \in I} \, $,  \,{\it with the additional compatibility assumption that  $ \, q_{ii}^k \not= 1 \, $}
   for  $ \, k = 1, \dots, \max(1\!-\!a_{ij}\,,2) \, $.  Moreover, when  $ \, \bq \, $  is of Cartan type
   we assume that its associated
   (generalized, symmetrisable) Cartan matrix   --- cf.\  \S \ref{mult-multiparameters}  ---   coincides with the matrix  $ A $
   mentioned above, and we fix scalars  $ \, q_i \in \FF \, $  ($ \, i \! \in I \, $)  as in  \S \ref{mult-multiparameters}.
   Finally, like in  \S \ref{tool-case},  we denote by  $ \, \lieg = \lieg(A) \, $
   the unique Kac-Moody algebra associated with $ A \, $,
by  $ \lieg' $  its derived subalgebra, etc.

\vskip11pt

\begin{definition}  \label{def:multiqgroup_ang}
 (cf.\ \cite{HPR})  We denote by  $ U_\bq(\hskip0,8pt\liegd) $  the unital associative algebra over  $ \FF $  with generators
 $ \, E_i \, , \, F_i \, , \, K_i^{\pm 1} \, , \, L^{\pm 1}_i \, $  (for all  $ \, i \in I \, $)  and relations
\begin{align*}
  (a\,)  \qquad  &  K_i^{\pm 1} L^{\pm 1}_j  \, = \,  L^{\pm 1}_j K^{\pm 1}_i \;\; ,
\qquad K_i^{\pm 1} K^{\mp 1}_i  \, = \,  1  \, = \,  L^{\pm 1}_i L^{\mp 1}_i  \\
  (b\,)  \qquad  &  K_i^{\pm 1} K^{\pm 1}_j  \, = \,  K^{\pm 1}_j K^{\pm 1}_i  \;\; ,
\qquad  L_i^{\pm 1} L_j^{\pm 1}  \, = \,  L_j^{\pm 1} L_i^{\pm 1}  \\
  (c\,)  \qquad  &  K_i \, E_j \, K_i^{-1}  \, = \,  q_{ij} \, E_j  \;\; ,  \qquad
L_i \, E_j \, L_i^{-1}  \, = \,  q_{ji}^{-1} \, E_j  \\
  (d\,)  \qquad  &  K_i \, F_j \, K_i^{-1}  \, = \,  q_{ij}^{-1} \, F_j  \;\; ,  \qquad
L_i \, F_j \, L_i^{-1}  \, = \,  q_{ji} \, F_j
\end{align*}
\begin{align*}
  (e\,)  \qquad  &  [E_i , F_j]  \, = \,  \delta_{i,j} \, q_{ii} \, \frac{\, K_i - L_i \,}{\, q_{ii} - 1 \,}  \\
  (f\,)  \qquad  &  \sum_{k=0}^{1-a_{ij}} (-1)^k \,
  {\bigg( {1-a_{ij} \atop k} \bigg)}_{\!\!q_{ii}} q_{ii}^{{k \choose 2}} \, q_{ij}^k \,
E_i ^{\,1-a_{ij}-k} E_j \, E_i^{\,k}  \; = \;  0   \;\; \qquad (\, i \neq j \,)  \\
  (g\,)  \qquad  &  \sum_{k=0}^{1-a_{ij}} (-1)^k \,
  {\bigg( {1-a_{ij} \atop k} \bigg)}_{\!\!q_{ii}} q_{ii}^{{k \choose 2}} \, q_{ij}^k \,
F_i^{\,k} F_j \, F_i^{\,1-a_{ij}-k}  \; = \;  0   \;\; \qquad (\, i \neq j \,)  \\
\end{align*}

   \indent   Moreover,  $ U_\bq(\hskip0,8pt\liegd) $  is a Hopf algebra with coproduct,
   counit and antipode determined for all  $ \, i, j \in I \, $  by
\begin{align*}
  \com(E_i) \,  &  = \,  E_i \otimes 1 + K_i \otimes E_i  \;\; ,  &
 \eps(E_i) \,  &  = \,  0  \;\; ,  &  \SS(E_i) \,  &  = \,  -K_i^{-1} E_i  \\
  \com(F_i) \,  &  = \,  F_i \otimes L_i + 1 \otimes F_i  \;\; ,  &
 \eps(F_i) \,  &  = \,  0  \;\; ,  &  \SS(F_i) \,  &  = \,  - F_i{L_i}^{-1}  \\
  \com\big(K_i^{\pm 1}\big)  &  = \,  K_i^{\pm 1} \otimes K_i^{\pm 1}  \;\;  ,  &
 \eps\big(K_i^{\pm 1}\big)  &  = \,  1  \;\; ,  &  \SS\big(K_i^{\pm 1}\big)  &  = \,  K_i^{\mp 1}  \\
  \com\big(L_i^{\pm 1}\big)  &  = \,  L_i^{\pm 1} \otimes L_i^{\pm 1}  \;\; ,  &
 \eps\big(L_i^{\pm 1}\big)  &  = \,  1  \;\; ,  &  \SS\big(L_i^{\pm 1}\big)  &  = \,  L_i^{\mp 1}
\end{align*}
 \vskip5pt
   Finally, for later use we introduce also,  for every  $ \, \lambda = \sum_{i \in I} \lambda_i \, \alpha_i \, \in \, Q \, $,
   the notation  $ \; K_\lambda := \prod_{i \in I} K_i^{\lambda_i} \; $  and  $ \; L_\lambda := \prod_{i \in I} {L_i}^{\lambda_i} \; $.
\end{definition}

\medskip

\begin{rmk}  \label{link_QEq-QE & symm-case}
 Assume that  $ \, q \in \FF^\times \, $  is not a root of unity and fix the  \hbox{$ \, q $--canonical}
 multiparameter  $ \check{\bq} := {\big(\, \check{q}_{ij} = q^{d_i a_{ij}} \big)}_{i, j \in I} \, $  like in
 (\ref{qij-canon})  above; then we can define the corresponding MpQUEA as above, now denoted  $ \QEqcheck \, $.
 The celebrated one-parameter quantum group  $ U_q(\lieg) $  defined by Jimbo and Lusztig is (up to a minimal
 change of generators in its presentation, irrelevant for what follows) nothing but the quotient of our  $ \QEqcheck $
 by the (Hopf) ideal generated by  $ \, \big\{ L_i - K_i^{-1} \,\big|\; i = 1, \dots, n \, \big\} \, $.
                                                                  \par
   As a matter of fact,  {\sl most constructions usually carried on for  $ \, U_q(\lieg) \, $  actually makes sense
   and apply the same to  $ \QEqcheck $  as well}.
\end{rmk}

\smallskip

   We introduce then the so-called ``quantum Borel / nilpotent / Cartan subalgebras'' of any MpQUEA, say
   $ U_\bq(\hskip0,8pt\liegd) \, $,  as follows:

\medskip

\begin{definition}  \label{def:q-Bor-sbgr}
 Given  $ \, \bq := {\big(\, q_{ij} \big)}_{i,j \in I} \, $  and  $ \, U_\bq(\hskip0,8pt\liegd) \, $  as in  \S \ref{def-MpQUEA},
 we define  $ \, U_\bq^{\,0} := U_\bq(\liehd) \, $,  $ \, U_\bq^{+,0} \, $,  $ \, U_\bq^{-,0} \, $,
 $ \; U_\bq^- := U_\bq(\lien_-) \, $,  $ \; U_\bq^+ := U_\bq(\lien_+) \, $,  $ \; U_\bq^\leq :=  U_\bq(\lieb_-) \; $  and
 $ \; U_\bq^\geq := U_\bq(\lieb_+) \; $  to be the  $ \k $--subalgebras  of  $ \QEq $  respectively generated as
  $$  \displaylines{
   U_\bq^{\,0}  \; := \;  \Big\langle\, K_i^{\pm 1} \, , \, L_i^{\pm 1} \,\Big\rangle_{i \in I}  \quad ,
\qquad   U_\bq^{+,0}  \; := \;  \Big\langle\, K_i^{\pm 1} \,\Big\rangle_{i \in I}  \quad ,
\qquad   U_\bq^{-,0}  \; := \;  \Big\langle\, L_i^{\pm 1} \,\Big\rangle_{i \in I}  \cr
%
%
   U_\bq^-  := \,  \big\langle F_i \big\rangle_{\! i \in I}  \; ,
\quad   U_\bq^\leq  := \,  \Big\langle F_i \, , \, L_i^{\pm 1} \Big\rangle_{\! i \in I}  \; ,
\quad   U_\bq^\geq  := \,  \Big\langle K_i^{\pm 1} \, , \, E_i \Big\rangle_{\! i \in I}  \; ,
\quad   U_\bq^+  := \,  \big\langle E_i \big\rangle_{\! i \in I}  }  $$
   \indent   We shall refer to  $ U_\bq^\leq $  and  $ U_\bq^\geq $
   as to the  {\sl positive\/}  and  {\sl negative\/}  multiparameter quantum Borel
   (sub)algebras, and  $ U_\bq^{\,0} \, $,  $ U_\bq^{+,0} $  and  $ U_\bq^{-,0} $
   as to the  {\sl global},  {\sl positive\/}  and  {\sl negative\/}  multiparameter Cartan (sub)algebras.
                                                                       \par
   For later use, we also define  $ \G_n $  to be the free Abelian group of rank  $ \, n := |I| \, $,
   that we will write in multiplicative notation, and we denote by  $ \, U_\bq(\lieh) \, $  the group
   algebra of  $ \G_n $  over  $ {\mathbb F} $   --- which, in spite of notation, is independent of  $ \bq \, $.
   Letting  $ {\{G_i\}}_{i \in I} $  be a  $ \ZZ $--basis  of  $ \G_n \, $,  we have natural Hopf algebra
   isomorphisms of  $ U_\bq(\lieh) $  with  $ U_\bq^{+,0} $  and with  $ U_\bq^{-,0} $  given by
   $ \, G_i^{\pm 1} \mapsto K_i^{\pm 1} \, $  and  $ \, G_i^{\pm 1} \mapsto L_i^{\pm 1} \, $,  respectively.
\end{definition}

\vskip5pt

\begin{rmks}  \label{rmks:mpqg's-vs-Nichols}  {\ }
 \vskip3pt
   {\it (a)}\;  Let  $ \, \bq = {\big( q_{ij} \big)}_{1\leq i,j \leq n} \, $  be a multiparameter matrix of Cartan type.
   Following  \cite[Section 4]{He},  we define the following braided vector spaces:
 \vskip3pt
  {\it (i)} \quad  $ V_E $ with  $ \FF $--basis  $\{E_1, \dots, E_n\}$ and
braiding given by
  $$  c(E_{i}\ot E_{j}) \, := \, q_{ij} \, E_j \ot E_i   \qquad \text{ for all } 1 \leq i,j \leq n \; ,  $$
 \vskip3pt
  {\it (ii)} \quad  $ V_F $ with  $ \FF $--basis  $ \{F'_1=L^{-1}_{1}F_{1}, \dots, F'_n=L_{n}^{-1}F_{n}\} $ and
braiding given by
  $$  c(F'_{i}\ot F'_{j}) \, := \, q_{ji} \, F'_j \ot F'_i   \qquad \text{ for all } 1 \leq i,j \leq n \; .  $$
   \indent    Then we have the corresponding Nichols algebras   --- of diagonal type ---
   $ \B\big(V_E\big) $  and  $ \B\big(V_F\big) $,  as well as their bosonizations over the group
   $ \, \varGamma := \ZZ^n \, $:
  $$  \Hc\big(V_E\big) \, := \, \B\big(V_E\big) \,\#\, \FF[\varGamma]   \qquad \text{and} \qquad
      \Hc\big(V_F\big) \, := \, \B\big(V_F\big) \,\#\, \FF[\varGamma]  $$
                                              \par
   Directly from definitions, one has canonical identifications  $ \, U_\bq^- \cong \B\big(V_F\big) \, $  and
   $ \, U_\bq^+ \cong \B\big(V_E\big) \, $  (as  $ U_\bq^{\pm,0} $--comodule  algebras), and
   $ \, U_\bq^\leq \cong \Hc\big(V_F\big) \, $  and  $ \, U_\bq^\geq \cong \Hc\big(V_E\big) \, $
   as Hopf algebras,  $ \, U_\bq^0 \cong \FF[\varGamma \times \varGamma] \, $  as Hopf algebras, etc.
   For more details on the relation with Nichols algebras and bosonization see  \cite{An}, \cite{Gar},  \cite{He}.
 \vskip3pt
   {\it (b)}\; It is also known, see for example  \cite{AA},  that the multiparameter quantum group  $ U_\bq(\hskip0,8pt\liegd) $
   can be realized as a Drinfeld double of  $ \, U_\bq^\leq \cong \Hc\big(V_F\big) \, $  and
   $ \, U_\bq^\geq \cong \Hc\big(V_E\big) \, $
   using the Hopf pairing given in  Proposition \ref{sk-H_pair}  below.  Thus, in the end,  $ U_\bq(\hskip0,8pt\liegd) $
   is a Drinfeld double of bosonizations of Nichols algebras of diagonal type.
 \vskip3pt
%
%
%
\end{rmks}

\vskip5pt

   From  \cite[Proposition 4.3]{He}   --- see also  \cite[Theorem 20]{HPR}  and  \cite[Proposition 2.4]{AY}   ---
   we recall the following:

\vskip9pt

\begin{prop}  \label{sk-H_pair}  {\ }
 \vskip1pt
   With the assumptions above, assume in addition that  $ \, q_{ii} \not= 1 \, $  for all indices $ \, i \in I \, $.
   Then there exists a unique skew-Hopf pairing
 $ \; \eta : U_\bq^\geq \mathop{\otimes}\limits_\FF \, U_\bq^{\leq} \relbar\joinrel\longrightarrow \FF \; $  such that
 \vskip-11pt
  $$  \eta(K_i,L_j) \, = \, q_{ij} \;\; ,  \qquad  \eta(E_i,F_j) \, = \, \delta_{i,j} \, {{\, - \, q_{ii} \,} \over {\, q_{ii} - 1 \,}} \;\; ,
  \qquad  \eta(E_i,L_j) \, = \, 0 \, = \, \eta(K_i,F_j)  $$
for all  $ \, 1 \leq i, j \leq n \, $.  It enjoys the following property: for every  $ \, E \in U_\bq^+ \, $,  $ \, F \in U_\bq^-  \, $,
and every Laurent monomials  $ K $  in the  $ K_i $'s  and  $ L $  in the  $ L_i $'s,  we have
  $$  \eta\big( E \, K , F \, L \big)  \; = \;  \eta(E\,,F) \, \eta(K,L)   \eqno  \square  $$
\end{prop}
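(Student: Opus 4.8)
The statement asserts the existence and uniqueness of a skew-Hopf pairing $\eta : U_\bq^\geq \otimes_\FF U_\bq^{\leq} \to \FF$ with prescribed values on generators, together with the multiplicativity formula $\eta(EK,FL) = \eta(E,F)\,\eta(K,L)$. The plan is to cite the quoted references for the bulk of the construction and verification, and to indicate the standard mechanism by which such a proof proceeds, emphasizing where the genuine work lies. The natural route is to build $\eta$ directly on the free algebra (or rather on $T(V_E)\#\FF[\varGamma]$) by imposing the required compatibility with coproducts, then to check that it descends to the quotient by the quantum Serre relations and the torus relations.

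\medskip

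First I would recall that $U_\bq^\geq \cong \Hc(V_E) = \B(V_E)\#\FF[\varGamma]$ and $U_\bq^\leq \cong \Hc(V_F) = \B(V_F)\#\FF[\varGamma]$ as Hopf algebras (Remarks \ref{rmks:mpqg's-vs-Nichols}(a)), so that the assertion is really a statement about a Hopf pairing between two bosonizations of Nichols algebras of diagonal type. The uniqueness part is immediate: since $U_\bq^\geq$ is generated as an algebra by the $K_i^{\pm 1}$ and the $E_i$, and $U_\bq^\leq$ by the $L_i^{\pm 1}$ and the $F_i$, the two module-algebra axioms for a skew-Hopf pairing together with $\eta(-,1)=\epsilon$, $\eta(1,-)=\epsilon$ force the value of $\eta$ on any pair of monomials to be a polynomial expression in the prescribed values $\eta(K_i,L_j)=q_{ij}$ and $\eta(E_i,F_j)=-q_{ii}/(q_{ii}-1)$ — compute $\eta$ on $E_{i_1}\cdots E_{i_r}K$ paired against $F_{j_1}\cdots F_{j_s}L$ by repeatedly splitting coproducts. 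This already yields the multiplicativity formula $\eta(EK,FL)=\eta(E,F)\,\eta(K,L)$ once existence is known, because $\eta(K,F)=\epsilon$-type vanishing ($\eta(K_i,F_j)=0$) propagates through the coproduct axioms to give $\eta(K,F\cdot\text{anything})$ and $\eta(\text{anything}\cdot K, F)$ the expected ``diagonal'' behaviour; I would spell this out by a short induction on word length.

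\medskip

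For existence, the standard device is: define a bilinear form on $T(V_E)\#\FF[\varGamma]$ against $T(V_F)\#\FF[\varGamma]$ by the explicit combinatorial formula dictated by uniqueness (equivalently, realize it via the braided-biproduct/Radford construction, or via Lusztig's bilinear form adapted to the diagonal braiding $\bq$), check it is a skew-Hopf pairing at the level of tensor algebras, and then verify that the quantum Serre elements
$$
\sum_{k=0}^{1-a_{ij}} (-1)^k \binom{1-a_{ij}}{k}_{\!q_{ii}} q_{ii}^{\binom{k}{2}} q_{ij}^k\, E_i^{1-a_{ij}-k} E_j E_i^k
$$
lie in the radical of the form on the $U_\bq^\geq$-side, and symmetrically for the $F$-side — so that $\eta$ descends to the quotients $\B(V_E)$ and $\B(V_F)$. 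This last point is precisely the computation carried out in \cite[Proposition 4.3]{He} and \cite[Theorem 20]{HPR}: the quantum Serre relators are annihilated by pairing because, by the $q$-binomial identities recorded in \S\ref{q-numbers} and the compatibility hypothesis $q_{ii}^k\neq 1$ for $1\le k\le\max(1-a_{ij},2)$, the corresponding expression on the dual side telescopes to zero. I would therefore simply reference those statements rather than redo the calculation.

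\medskip

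\textbf{Main obstacle.} The genuinely delicate step is the descent through the quantum Serre relations: one must know that the ambient pairing on the tensor algebras kills these relators on both sides simultaneously, which is exactly where the non-degeneracy hypothesis $q_{ii}\neq 1$ and the divisibility/compatibility assumptions on $q_{ii}^k$ are used (they guarantee the relevant $q_{ii}$-integers and $q$-binomial coefficients are invertible, so the Nichols-algebra relations are the ``expected'' ones and the form factors correctly). Since this is established in the cited works, in the write-up I would treat it as known and devote the explicit argument only to (i) uniqueness, via the coproduct-splitting induction, and (ii) the multiplicativity identity $\eta(EK,FL)=\eta(E,F)\,\eta(K,L)$, which follows from $\eta(E_i,L_j)=0=\eta(K_i,F_j)$ together with the fact that $\Delta(E)$ on $U_\bq^+$ has all tensor-factors in $U_\bq^+\!\cdot U_\bq^{+,0}$ and dually, so no mixed torus/unipotent cross-terms survive the pairing.
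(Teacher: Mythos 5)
Your proposal is correct and matches the paper's treatment: the paper states Proposition~\ref{sk-H_pair} as a recalled result, citing \cite[Proposition 4.3]{He}, \cite[Theorem 20]{HPR} and \cite[Proposition 2.4]{AY} without reproving it, which is exactly the route you take (uniqueness by coproduct-splitting on generators, existence and the descent through the quantum Serre relations deferred to those references, and the multiplicativity formula from the vanishing $\eta(E_i,L_j)=0=\eta(K_i,F_j)$). No gap to report.
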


\vskip9pt

   For later use, we also single out the following result, whose proof is trivial:

\vskip9pt

\begin{lema}  \label{lemma: proj_Uqgd->Uqh}
 Keep notation as above.  There exists a unique Hopf\/  $ {\mathbb F} $--algebra
 epimorphism  $ \; p : U_\bq(\hskip0,8pt\liegd) \relbar\joinrel\relbar\joinrel\twoheadrightarrow U_\bq(\lieh) \; $
 given by
  $$  p(E_i) \, = \, 0 \, = \, p(F_i) \quad ,  \qquad  p\big(K_i^{\pm 1}\big) \, = \, G_i^{\pm 1} \,
  = \, p\big(L_i^{\pm 1}\big)   \eqno  \forall \;\; i \in I \; .  \quad  \qed$$

\end{lema}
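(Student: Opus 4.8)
The plan is to argue directly from the presentation of $U_\bq(\liegd)$ by generators and relations given in Definition \ref{def:multiqgroup_ang}. First I would show that the prescribed assignment on generators, namely $\, E_i \mapsto 0 \, $, $\, F_i \mapsto 0 \, $, $\, K_i^{\pm 1} \mapsto G_i^{\pm 1} \, $ and $\, L_i^{\pm 1} \mapsto G_i^{\pm 1} \, $, is compatible with all the defining relations $(a)$--$(g)$, so that it extends to an algebra homomorphism $\; p : U_\bq(\liegd) \longrightarrow U_\bq(\lieh) = \FF[\G_n] \; $. Relations $(a)$ and $(b)$ only involve the $K_i^{\pm 1}$'s and $L_i^{\pm 1}$'s, and their images $G_i^{\pm 1}$ pairwise commute in the group algebra $\FF[\G_n]$, so these are respected. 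In relations $(c)$, $(d)$, $(f)$ and $(g)$ every monomial appearing on either side carries at least one factor $E_j$ or $F_j$, hence is sent to $0$, so those hold as well. The one checkpoint that is not purely formal is relation $(e)$: its left-hand side $[E_i,F_j]$ maps to $0$, while its right-hand side $\, \delta_{i,j}\,q_{ii}\,\big(K_i - L_i\big)\big/\big(q_{ii}-1\big) \, $ maps to $\, \delta_{i,j}\,q_{ii}\,\big(G_i - G_i\big)\big/\big(q_{ii}-1\big) = 0 \, $ \emph{exactly because} $p$ sends both $K_i$ and $L_i$ to the same element $G_i$; this is the only place where the specific collapse of the two ``tori'' $U_\bq^{+,0}$ and $U_\bq^{-,0}$ onto the single $U_\bq(\lieh)$ is used, and it is the (mild) heart of the argument.

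Next I would verify that $p$ is in fact a morphism of Hopf algebras, once more checking on generators. For the coalgebra structure, $\, \com\big(p(K_i^{\pm 1})\big) = G_i^{\pm 1}\otimes G_i^{\pm 1} = (p\otimes p)\big(\com(K_i^{\pm 1})\big) \, $, and likewise for $L_i^{\pm 1}$, whereas $\, (p\otimes p)\big(\com(E_i)\big) = (p\otimes p)\big(E_i\otimes 1 + K_i\otimes E_i\big) = 0 = \com\big(p(E_i)\big) \, $ and similarly for $F_i$; the counit is immediate since $\, \eps(E_i) = \eps(F_i) = 0 = \eps(p(E_i)) = \eps(p(F_i)) \, $ and $\, \eps(K_i^{\pm 1}) = \eps(L_i^{\pm 1}) = 1 = \eps(G_i^{\pm 1}) \, $; and for the antipode one uses $\, \SS(G_i^{\pm 1}) = G_i^{\mp 1} = p\big(\SS(K_i^{\pm 1})\big) = p\big(\SS(L_i^{\pm 1})\big) \, $ together with $\, p\big(\SS(E_i)\big) = p\big(-K_i^{-1}E_i\big) = 0 \, $ and $\, p\big(\SS(F_i)\big) = p\big(-F_i L_i^{-1}\big) = 0 \, $. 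Hence $p$ commutes with $\com$, $\eps$ and $\SS$, so it is a Hopf algebra map.

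Finally, surjectivity is clear, since the $\, G_i^{\pm 1} \, $ generate $\, U_\bq(\lieh) = \FF[\G_n] \, $ as an algebra and already lie in the image of $p$; and uniqueness follows at once, because the set $\, \big\{\, E_i \, , F_i \, , K_i^{\pm 1} \, , L_i^{\pm 1} \,\big\}_{i\in I} \, $ generates $\, U_\bq(\liegd) \, $ as an algebra, so any $\FF$-algebra homomorphism agreeing with $p$ on those generators must equal $p$. I do not expect any genuine obstacle in this argument: everything reduces to the routine verification on generators, and the only point deserving attention is relation $(e)$, i.e.\ the fact that identifying $K_i$ with $L_i$ is precisely what makes the difference $K_i - L_i$ die under $p$.
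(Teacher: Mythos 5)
Your proof is correct and is exactly the routine check the paper has in mind when it declares the proof of this lemma ``trivial'' and omits it: verification on generators that the assignment respects relations $(a)$--$(g)$ (with relation $(e)$ being the only non-vacuous point, handled precisely as you do by the collapse $K_i, L_i \mapsto G_i$), then compatibility with $\com$, $\eps$, $\SS$, surjectivity, and uniqueness from generation. Nothing further is needed.
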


\vskip15pt

 \subsection{Multiplication twistings of MpQUEA's}  \label{coc-deform-MpQUEA}  {\ }
 \vskip7pt
   We want to perform on the Hopf algebras  $ U_\bq(\hskip0,8pt\liegd) $  a
multiplication twisting
 process, via special types of  $ 2 $--cocycles,  like in  \S \ref{cocyc-defs},
 following  \cite{AST},  \cite{DT}  and  \cite{Mo}.

\vskip5pt

\begin{free text}  \label{characters --> 2-cocycles}
 {\bf Special  $ 2 $--cocycles  of  $ U_\bq(\hskip0,8pt\liegd) \, $.}
 Let  $ \, \bq :=  {\big(\, q_{ij} \big)}_{i,j \in I} \in M_n(\FF) \, $  be a multiparameter, and let
 $ \, \chi : \G_n \times \G_n \!\relbar\joinrel\relbar\joinrel\longrightarrow \FF^\times \, $
 be any bicharacter of the Abelian group  $ \G_n \, $.  Then  $ \chi $  is automatically a
 $ 2 $--cocycle  of  $ \G_n \, $;  as such, it induces (by  $ \FF $--linear  extension) a normalized
 $ 2 $--cocycle  of the group algebra of  $ \G_n $  over  $ \FF $,  that is  $ U_\bq(\lieh) \, $,
 cf.\ Definition \ref{def:q-Bor-sbgr}:  we denote this  $ 2 $--cocycle  of  $ U_\bq(\lieh) $  by  $ \chi $  again.
 Now, composing  $ \, \chi : U_\bq(\lieh) \otimes U_\bq(\lieh) \!\relbar\joinrel\relbar\joinrel\longrightarrow \FF \, $
 with  $ p^{\otimes 2} $   --- where
 $ \; p : U_\bq(\hskip0,8pt\liegd) \relbar\joinrel\relbar\joinrel\twoheadrightarrow U_\bq(\lieh) \; $  is the Hopf
 $ {\mathbb F} $--algebra  epimorphism in  Lemma \ref{lemma: proj_Uqgd->Uqh}  ---
 we get a (normalized) bimultiplicative Hopf  $ 2 $--cocycle  $ \, \sigma_\chi := \chi \circ p^{\otimes 2} \, $  of
 $ U_\bq(\hskip0,8pt\liegd) \, $.  This leads us to the following definition:

\vskip5pt

\begin{definition}\label{def-sigma}
 We denote by  $ \, \widetilde{\Z}_2\big( U_\bq(\hskip0,8pt\liegd) \, , \FF \,\big) \, $
 the set of all (normalized, bimultiplicative)
 Hopf  $ 2 $--cocycles  of  $ U_\bq(\hskip0,8pt\liegd) $  of the form  $ \, \sigma_\chi \, $
 (as defined above) for some bicharacter
 $ \chi $  of the Abelian group  $ \G_n \, $.
 We call these ``toral''  $ 2 $--cocycles  of  $ U_\bq(\hskip0,8pt\liegd) \, $.
                                                       \par
   Note, in particular, that
%
%
 $ \, \widetilde{\Z}_2\big( U_\bq(\hskip0,8pt\liegd) \, , \FF \,\big) \, $  is
%
%
 independent of the multiparameter  $ \bq \, $.
\end{definition}

\vskip5pt

   By construction, for every  $ \, \sigma_\chi \in \widetilde{\Z}_{2}\big( U_\bq(\hskip0,8pt\liegd) \, , \FF \,\big) \, $  one has
  $$  \displaylines{
   \sigma_\chi(E_i\,,Y\,) \, = \, 0 \quad ,   \qquad  \sigma_\chi(X,F_i) \, = \, 0   \qquad  \forall \;\;
   Y , X \in U_\bq(\hskip0,8pt\liegd) \; , \;\; i, j \in I  \cr
   \sigma_\chi\big(Z'_i\,,Z''_j\big) \, = \, \chi(G_i\,,G_j)   \qquad
   \forall \;\; Z'_{i} , Z''_{j} \in \{K_{i},L_{i}\}_{i\in I} \; , \;\; i, j \in I  }  $$
 in particular, the  $ 2 $--cocycle  $ \sigma_\chi $  is determined uniquely by the values of  $ \chi $  on the Abelian group
 $ \G_n \, $.  Conversely, each (normalized) bimultiplicative Hopf  $ 2 $--cocycle  $ \sigma $  of
 $ U_\bq(\hskip0,8pt\liegd) $  such that
\begin{equation}  \label{propts-cocyc_from-char}
  \begin{aligned}
     \sigma(E_i\,,Y\,) \, = \, 0 \quad ,   \qquad  \sigma(X,F_i) \, = \, 0   \qquad  \forall \;\;
     Y , X \in U_\bq(\hskip0,8pt\liegd) \; , \;\; i, j \in I  \\
     \sigma(K_i\,,K_j) \, = \, \sigma(K_i\,,L_j) \, = \, \sigma(L_i\,,K_j) \, = \, \sigma(L_i\,,L_j)   \qquad  \forall \;\; i, j \in I  \;\;
  \end{aligned}
\end{equation}
is necessarily of the form  $ \, \sigma = \sigma_\chi \, $,  as it is uniquely determined by its values on the torus, hence
$ \sigma $  belongs to  $ \, \widetilde{\Z}_2\big( U_\bq(\hskip0,8pt\liegd) \, , \FF \,\big) \, $.
The corresponding bicharacter  $ \chi $  is then defined by the conditions (for all  $ \, i, j \in I \, $)
  $$  \chi(G_i\,,G_j) \, = \, \sigma(K_i\,,K_j)  \quad \big(\, = \, \sigma(K_i\,,L_j) \, = \, \sigma(L_i\,,K_j) \,
  = \, \sigma(L_i\,,L_j) \,\big)  $$
Therefore,  \eqref{propts-cocyc_from-char}  are the conditions that characterize intrinsically
those (normalized, bimultiplicative) Hopf  $ 2 $--cocycles of  $ U_\bq(\hskip0,8pt\liegd) $
that belong to  $ \, \widetilde{\Z}_{2}\big( U_\bq(\hskip0,8pt\liegd) \, , \FF \,\big) \, $.

\end{free text}

\vskip7pt

\begin{free text}  \label{deforming-MpQUEAs}
 {\bf Deforming MpQUEAs via  $ 2 $--cocycles.}  For any multiparameter
 $ \, \bq := {\big(\, q_{ij} \big)}_{i,j \in I} \in M_n(\FF) \, $  we fix the notation
 $ \, K_\lambda := \prod_{i \in I} K_i^{\lambda_i} \, $  and  $ \, L_\lambda := \prod_{i \in I} {L_i}^{\lambda_i} \, $
 for every  $ \, \lambda = \sum_{i \in I} \lambda_i \alpha_i \, \in \, Q \, $;  similarly, we shall also write
 $ \; q_{\mu\,\nu} \, := \! {\textstyle \prod\limits_{i,j \in I}} \, q_{ij}^{\, \mu_i \nu_j} \; $ for each
 $\mu=\sum_{i\in I} \mu_{i}\alpha_{i}$ and $\nu=\sum_{i\in I} \nu_{i}\alpha_{i}$ in $Q$.
                                                          \par
   If  $ \bq $  is of Cartan type, we fix a special element
   $ \, q:= q_{j_{\raise-2pt\hbox{$ \scriptscriptstyle 0 $}}} \in \FF^\times \, $
   as explained in  \S \ref{mult-multiparameters}  (so that  $ \, q_{ii} = q^{\,2\,d_i} \, $  for all  $ \, i \in I \, $);
   accordingly, we have also a well-defined multiparameter of canonical type, namely
   $ \, \check{\bq} := {\big(\, q^{\,d_i{}a_{ij}} \big)}_{i,j \in I} \, $.
                                                          \par
   Finally,  $ U_\bq(\hskip0,8pt\liegd) $  will be the MpQUEA associated with  $ \bq $  as in
   Definition \ref{def:multiqgroup_ang};  similarly, we have also  $ U_{\check{\bq}}(\hskip0,8pt\liegd) $
   when  $ \bq $  is of Cartan type.

\medskip

   The  {\sl key result\/}  that we shall rely  upon in the sequel is the following:

\medskip

\begin{theorem}  \label{thm:sigma_2-cocy}
 (cf.\ \cite[Theorem 4.5]{LHR}  and  \cite[Theorem 28]{HPR})
 \vskip3pt
   (a)\,  If any two multiparameters are\/  $ \sim $--equivalent  (notation as in  \S \ref{equiv-act_x_mprmts}),
   then their associated MpQUEA's are multiplicative twistings
 of each other.
 In detail, if  $ \, \bq' := {\big(\, q'_{ij} \,\big)}_{i, j \in I} \, , \bq'' := {\big(\, q''_{ij} \,\big)}_{i, j \in I} \in M_n(\FF) \, $
 are multiparameters such that  $ \, \bq' \!\sim \bq'' \, $,  then there exists
 $ \; \sigma \in \widetilde{\Z}_2\big(\, U_{\bq'}(\hskip0,8pt\liegd) \, , \FF \,\big) \; $
 such that  $ \,\; U_{\bq''}(\hskip0,8pt\liegd) \cong {\big( U_{\bq'}(\hskip0,8pt\liegd) \big)}_\sigma \;\, $.
 \vskip3pt
   (b)\,  Any MpQUEA with multiparameter of Cartan type is a
multiplicative twisting
 of a MpQUEA with canonical multiparameter.  In detail, if  $ \, \bq := {\big(\, q_{ij} \big)}_{i, j \in I} \in M_n(\FF) \, $
 is a multiparameter of Cartan type, with associated  $ \, q \in \FF^\times \, $  and Cartan matrix
 $ \, A = {\big( a_{ij} \big)}_{i, j \in I} \, $,  and corresponding multiparameter of canonical type
 $ \, \check{\bq} := {\big(\, q^{\,d_i{}a_{ij}} \big)}_{i, j \in I} \, $,  then there exists
 $ \; \sigma \in \widetilde{\Z}_{2}\big(\, U_{\check{\bq}}(\hskip0,8pt\liegd) \, , \FF \,\big) \; $  such that
 $ \,\; U_\bq(\hskip0,8pt\liegd) \cong {\big( U_{\check{\bq}}(\hskip0,8pt\liegd) \big)}_\sigma \;\, $.
 \vskip3pt
   (c)\,  Similar results hold true for quantum Borel (sub)MpQUEA's as well.
\end{theorem}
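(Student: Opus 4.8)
The plan is to deduce all three assertions from one explicit construction, which is essentially \cite[Theorem 4.5]{LHR} and \cite[Theorem 28]{HPR} recast in our notation. First I would reduce to part~(a): claim~(b) is then the special case $\, \bq' := \check{\bq} \,$, $\, \bq'' := \bq \,$, using the Proposition of \S\ref{equiv-act_x_mprmts} (a Cartan type multiparameter is $\sim$--equivalent to the canonical one $\, \check{\bq} = {\big( q^{d_i a_{ij}} \big)}_{i,j \in I} \,$), while claim~(c) will follow by running the same computation inside the quantum Borel subalgebras $\, U_{\bq}^{\geq} \,$ and $\, U_{\bq}^{\leq} \,$. So assume $\, \bq' \sim \bq'' \,$. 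By Lemma~\ref{lemma: mprmtr-action=>twist-equiv} there is $\, \bnu = {\big( \nu_{ij} \big)}_{i,j \in I} \in M_n(\FF^\times) \,$ with $\, q''_{ij} = \nu_{ij}\, q'_{ij}\, \nu_{ji}^{-1} \,$ for all $\, i,j \in I \,$ (an explicit one is provided by Lemma~\ref{lemma: mprmtr-action=>twist-equiv}(c)). I would then let $\, \chi : \G_n \times \G_n \longrightarrow \FF^\times \,$ be the bicharacter determined by $\, \chi(G_i, G_j) := \nu_{ij} \,$ and take $\, \sigma := \sigma_\chi \in \widetilde{\Z}_2\big( U_{\bq'}(\liegd)\,, \FF \,\big) \,$, the associated toral $2$--cocycle constructed in \S\ref{characters --> 2-cocycles}.

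The heart of the matter is to check that the assignment $\, E_i \mapsto E_i \,$, $\, F_i \mapsto F_i \,$, $\, K_i^{\pm 1} \mapsto K_i^{\pm 1} \,$, $\, L_i^{\pm 1} \mapsto L_i^{\pm 1} \,$ extends to a Hopf algebra isomorphism $\, U_{\bq''}(\liegd) \,{\buildrel \cong \over {\longrightarrow}}\, {\big( U_{\bq'}(\liegd) \big)}_\sigma \,$. Here I would exploit that $\, \sigma = \sigma_\chi = \chi \circ p^{\otimes 2} \,$ is normalized, bimultiplicative and factors through the toral projection $\, p \,$ of Lemma~\ref{lemma: proj_Uqgd->Uqh}, so it vanishes on every tensor one of whose legs has nonzero $E$-- or $F$--component. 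A short computation with the iterated coproducts of the generators then gives $\, K_i \cdot_\sigma E_j = \chi(G_i,G_j)\, K_i E_j \,$, $\, E_i \cdot_\sigma K_j = \chi(G_i,G_j)\, E_i K_j \,$, $\, E_i \cdot_\sigma E_j = \chi(G_i,G_j)\, E_i E_j \,$, $\, E_i \cdot_\sigma F_j = E_i F_j \,$ and the mirror identities involving $\, F_i \,$ and $\, L_i \,$ (all right-hand products being those of $\, U_{\bq'}(\liegd) \,$), while the group subalgebra generated by the $\, K_i^{\pm 1}, L_i^{\pm 1} \,$ is untouched. Feeding these into relations (a)--(g) of Definition~\ref{def:multiqgroup_ang}: relations (a), (b) are trivially preserved (they involve no parameters) and (e) is preserved because $\, q'_{ii} = q''_{ii} \,$ (which holds since $\, \bq' \sim \bq'' \,$); relations (c), (d), (f), (g) turn into the relations for $\, \bq'' \,$ precisely because $\, q''_{ij} = \big(\chi(G_i,G_j)\big/\chi(G_j,G_i)\big)\, q'_{ij} = \nu_{ij}\, q'_{ij}\, \nu_{ji}^{-1} \,$. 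For the quantum Serre relations (f), (g) the scalars $\, \chi(G_i,G_i) \,$ produced when reordering inside $\, E_i^{\,1-a_{ij}-k} E_j E_i^{\,k} \,$ coalesce into the single, $k$--independent factor $\, \chi(G_i,G_i)^{\binom{1-a_{ij}}{2}} \,$ by virtue of $\, \binom{n}{2} = \binom{n-k}{2} + \binom{k}{2} + k(n-k) \,$, so the relation is transformed homogeneously. Since a cocycle deformation leaves coproduct, counit and antipode unchanged and the Hopf structure formulas of Definition~\ref{def:multiqgroup_ang} for $\, \bq' \,$ and $\, \bq'' \,$ agree on the generators, the assignment is a Hopf algebra morphism; it is manifestly surjective, and I would argue injectivity from the fact that a cocycle deformation preserves the underlying graded vector space, so $\, {\big( U_{\bq'}(\liegd) \big)}_\sigma \,$ has the same homogeneous dimensions as $\, U_{\bq'}(\liegd) \,$, which coincide with those of $\, U_{\bq''}(\liegd) \,$ by the PBW/triangular decomposition of MpQUEA's (the positive and negative parts being Nichols algebras of diagonal type, cf.\ Remark~\ref{rmks:mpqg's-vs-Nichols}, whose Hilbert series depends only on the twist--equivalence class of the braiding); a surjective graded morphism between such spaces is an isomorphism. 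This proves~(a).

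For~(c) I would rerun the very same computation inside $\, U_{\bq}^{\geq} \cong \Hc(V_E) \,$ and $\, U_{\bq}^{\leq} \cong \Hc(V_F) \,$, where $\, \sigma_\chi \,$ restricts to a toral $2$--cocycle and only relations (b), (c), (f) (respectively (b), (d), (g)) intervene, obtaining $\, U_{\bq''}^{\geq} \cong {\big( U_{\bq'}^{\geq} \big)}_{\sigma_\chi} \,$ and $\, U_{\bq''}^{\leq} \cong {\big( U_{\bq'}^{\leq} \big)}_{\sigma_\chi} \,$; the canonical--multiparameter versions then follow as~(b) does from~(a). I expect the main difficulty to be bookkeeping rather than conceptual: carefully carrying the convolution inverse $\, \sigma^{-1} = \sigma_{\chi^{-1}} \,$ through the several--factor products that occur in the Serre relations, and giving a clean proof of the bijectivity of the candidate isomorphism (for which the essential input is the PBW/triangular decomposition of MpQUEA's, equivalently the rigidity of the underlying diagonal Nichols algebras). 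Reconciling our normalizations and Hopf--structure conventions with those of \cite{LHR} and \cite{HPR} is the only remaining point needing attention.
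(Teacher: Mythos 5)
Your proposal is correct and follows exactly the paper's route: reduce to (a), take $\bnu$ with $\bq''=\bnu.\bq'$ from Lemma \ref{lemma: mprmtr-action=>twist-equiv}, form the toral $2$--cocycle $\sigma_{\chi_\bnu}$, and check the isomorphism on generators and relations — the paper itself only asserts that this last step is ``a straightforward calculation'', which you actually carry out (and your identity $\binom{n}{2}=\binom{n-k}{2}+\binom{k}{2}+k(n-k)$ correctly handles the Serre relations). The only stylistic difference is bijectivity: rather than invoking PBW/graded dimensions, it is slightly cleaner to note that the construction applied to $\bnu^{-1}$ (equivalently the convolution inverse $\sigma^{-1}=\sigma_{\chi_\bnu^{-1}}$) yields the inverse morphism on generators.
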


\pf
 For claim  {\it (a)},  by  Lemma \ref{lemma: mprmtr-action=>twist-equiv}{\it (a)\/}  there exists
 $ \, \bnu = {\big( \nu_{ij} \big)}_{i, j \in I} \in M_n(\FF) \, $  such that  $ \, \bq'' = \bnu.\,\bq' \, $,
 that is, $ \, q_{ij}'' = \nu_{ij} \, q_{ij}' \, \nu_{ji}^{-1} \, $  for all  $ \, i , j \in I \, $.
 Clearly there exists a unique bicharacter  $ \chi_\bnu $  of the Abelian group  $ \G_n $
 defined by  $ \, \chi_\bnu(G_i\,,G_j) := \nu_{ij} \, $  (for  $ \, i, j \in I \, $): then, like in
 \S \ref{characters --> 2-cocycles},  this  $ \chi_\bnu $  defines a unique
 $ \, \sigma= \sigma_{\chi_\bnu} \in \widetilde{\Z}_{2}\big(\, U_{\bq'}(\hskip0,8pt\liegd) \, , \FF \,\big) \, $.
 Through a rather straightforward calculation   --- much like in  \cite{LHR}  and  \cite{HPR}  ---
 one may see that  $ \; U_{\bq''}(\hskip0,8pt\liegd) \cong {\big( U_{\bq'}(\hskip0,8pt\liegd) \big)}_\sigma \; $,
 \, as claimed; for the sake of completeness, we include below most of the computations.
 As in  \S \ref{cocyc-defs},  we write  $ \, a \cdot_{\sigma} b := m_{\sigma}(a,b) \, $.
 \vskip5pt
%
  First, we show that the generators  $ \, E_i \, , \, F_i \, , \, K_i^{\pm 1} \, , \, L^{\pm 1}_i \, $
  (for all  $ \, i \in I \, $)  of  $ U_{\bq'}(\hskip0,8pt\liegd) $  do satisfy the relations that define
  $ U_{\bq''}(\hskip0,8pt\liegd) $  when computed with this new product.
                                                                                \par
   In general, the product of any two group-like elements remains unchanged under a multiplicative twist by a
   $ 2 $--cocycle,  for
  $$  g \cdot_{\sigma} h  \, = \,  \sigma(g,h) \, g \, h \, \sigma^{-1}(g,h) \, = \,
  \eps(g) \, \eps(h) \, g \, h \, = \, g \, h   \qquad  \text{\ for all\ } g, h \in G\big(U_{\bq'}(\hskip0,8pt\liegd)\big) \;\; .  $$
 Since the elements  $ \, K_i^{\pm 1} \, , \, L^{\pm 1}_i \, $  are group-like, for all  $ \, i \in I \, $,  we have
\begin{align*}
  (a\,)  \qquad  &  \  K_i^{\pm 1}\cdot_{\sigma} L^{\pm 1}_j  \, = \,
  L^{\pm 1}_j \cdot_{\sigma} K^{\pm 1}_i  \quad ,   \qquad  K_i^{\pm 1} \cdot_{\sigma} K^{\mp 1}_i  \, = \,  1  \, = \,
  L^{\pm 1}_i \cdot_{\sigma} L^{\mp 1}_i  \\
  (b\,)  \qquad  &  K_i^{\pm 1} \cdot_{\sigma} K^{\pm 1}_j  \, = \,
  K^{\pm 1}_j \cdot_{\sigma} K^{\pm 1}_i  \quad ,   \qquad \ \ \   L_i^{\pm 1} \cdot_{\sigma} L_j^{\pm 1}  \, = \,
  L_j^{\pm 1}  \cdot_{\sigma} L_i^{\pm 1}  \\
\end{align*}
   \indent   In order to compute the remaining relations, one has to keep in mind formulas involving
   the coproduct and powers of the generators, for instance
\begin{small}
  \begin{align*}
     \com\big(E_i^k\big) &  \, = \,  \sum_{\ell=0}^k {k \choose \ell}_{\!\!q_{ii}'} \! E_i^{k-\ell} K_i^\ell \otimes E_i^\ell  \quad ,
     \qquad   \com\big(F_i^k\big)  \, = \,  \sum_{\ell=0}^k {k \choose \ell}_{\!\!q_{ii}'} \! F_i^{k-\ell} \otimes F_i^\ell \, L_i^{k-\ell}  \\
     \com^{(2)}\big(E_i^k\big)  &  \, = \,
     \sum_{\ell=0}^k \sum_{j=0}^\ell {k \choose \ell}_{\!\!q_{ii}'} {\ell \choose j}_{\!\!q_{ii}'} E_i^{k-\ell} K_i^\ell \otimes E_i^{\ell -j} K_i^j \otimes E_i^j  \\
     \com^{(2)}\big(F_i^k\big)  &  \, = \,  \sum_{\ell=0}^k \sum_{j=0}^\ell {k \choose \ell}_{\!\!q_{ii}'} {\ell \choose j}_{\!\!q_{ii}'} F_i^{k-\ell-j} \otimes F_i^j L_i^{k-\ell-j} \otimes F_i^\ell \, L_i^{k-\ell}  \\
     \com^{(2)}\big(E_i^k E_s\big)  &  \, = \,  \sum_{\ell=0}^k \sum_{j=0}^\ell {k \choose \ell}_{\!\!q_{ii}'} {\ell \choose j}_{\!\!q_{ii}'} \bigg( E_i^{k-\ell} \, K_i^\ell \, E_s \otimes E_i^{\ell-j} K_i^j \otimes E_i^j \; + \\
        &  \qquad   + \; E_i^{k-\ell} \, K_i^\ell \, K_s \otimes E_i^{\ell-j} K_i^j E_s \otimes E_i^j \, + \, E_i^{k-\ell} \, K_i^\ell \, K_s \otimes E_i^{\ell-j} K_i^j K_s \otimes E_i^j E_s \bigg)
   \end{align*}
\end{small}
\hskip-3,5pt
 for all  $ \, i , s \in I \, $  and all  $ \, k \geq 0 \, $.
 Here  $ \, \com^{(2)} := (\com \otimes \id) \circ \com = (\id \otimes \com) \circ \com \, $.
 So, for the relations in  $ (c\,) $  we have
\begin{align*}
   K_i \,\cdot_\sigma E_j  &  \; = \;  \sigma(K_i,E_j) \, K_i \, \sigma^{-1}(K_i,1) \, + \,
   \sigma(K_i,K_j) \, K_i \, E_j \, \sigma^{-1}(K_i,1) \, +  \\
                           &  \quad   + \, \sigma(K_i,K_j) \, K_i \, K_j \, \sigma^{-1}(K_i,E_j)  \; = \;
                           \sigma(K_i,K_j) \, K_i \, E_j  \; = \;  \nu_{ij} \, K_i \, E_j  \\
   \big(K_i \, E_j\big) \,\cdot_\sigma K_i^{-1}  &  \; = \;
   \sigma\big( K_i \, E_j , K_i^{-1} \big) \, K_i K_i^{-1} \, \sigma^{-1}\big(K_i,K_i^{-1}\big) \, +  \\
                                                 &  \quad   + \, \sigma\big(K_i \, K_j , K_i^{-1} \big) \, K_i \, E_j \, K_i^{-1} \, \sigma^{-1}\big( K_i , K_i^{-1} \big) \, +  \\
                                                 &  \quad   + \, \sigma\big( K_i \, K_j , K_i^{-1} \big) \, K_i \, K_j \, K_i^{-1} \, \sigma^{-1}\big( K_i \, E_j , K_i^{-1} \big)  \; =  \\
                                                 &  = \;  \sigma\big( K_i \, K_j , K_i^{-1} \big) \, K_i \, E_j \, K_i^{-1} \, \sigma^{-1}\big( K_i , K_i^{-1} \big)  \; =  \\
                                                 &  = \;  \sigma\big( K_j , K_i^{-1} \big) \, K_i \, E_j \, K_i^{-1}  \; = \;  \nu_{ji}^{-1} \, K_i \, E_j \, K_i^{-1}  \; = \; \nu_{ji}^{-1} \, q'_{ij} \, E_j
\end{align*}
 for all  $ \, i , j \in I \, $.  Therefore,
  $$  K_i \,\cdot_\sigma E_j \,\cdot_\sigma K_i^{-1}  \; = \;  \nu_{ij} \, \big( K_i \, E_j \big) \cdot_\sigma K_i^{-1}  \;
  = \;  \nu_{ij} \, q'_{ij} \, \nu_{ji}^{-1} \, E_j  \; = \;  q''_{ij} \, E_j  $$
 Similarly, we have
\begin{align*}
   L_i \,\cdot_\sigma E_j \,\cdot_\sigma L_i^{-1}  &  \; = \;  \big( L_i \,\cdot_\sigma E_j \big) \,\cdot_\sigma L_i^{-1}  \; =
   \;  \nu_{ij} \, \big( L_i \, E_j \big) \cdot L_i^{-1}  \; = \;  \nu_{ij} \, \nu_{ji}^{-1} \, L_i \, E_j \, L_i^{-1}  \; =  \\
                                                   &  \; = \;  \nu_{ij} \, {\big( q'_{ji} \big)}^{-1} \, \nu_{ji}^{-1} \, E_j  \; = \;
                                                   {\big( \nu_{ji} \, q'_{ji} \, \nu_{ij}^{-1} \big)}^{-1} \, E_j  \; = \; {(q''_{ji})}^{-1} \, E_j
\end{align*}
 The computation for relation  $ (d\,) $  is entirely similar: for all  $ \, i, j \in I \, $  we have
\begin{align*}
   K_i \,\cdot_\sigma F_j  &  \; = \;  \sigma(K_i,F_j) \, K_i \, L_j \, \sigma^{-1}(K_i,L_j) \, + \,
   \sigma(K_i,1) \, K_i \, F_j \, \sigma^{-1}(K_i,L_j) \, +  \\
                           &  \quad   + \, \sigma(K_i,1) \, K_i \, \sigma^{-1}(K_i,F_j)  \; = \;  \nu_{ij}^{-1} \, K_i \, F_j  \\
   \big( K_i \, F_j \big) \,\cdot_\sigma K_i^{-1}  &  = \;  \sigma\big( K_i \, F_j , K_i^{-1} \big) \, K_i \, L_j \, K_i^{-1} \, \sigma^{-1}\big( K_i \, L_j , K_i^{-1} \big) \, +  \\
                                                   &  \quad   + \, \sigma\big( K_i , K_i^{-1} \big) \, K_i \, F_j \, K_i^{-1} \, \sigma^{-1}\big( K_i \, L_j , K_i^{-1} \big) \, +  \\
                                                   &  \quad   + \, \sigma\big( K_i , K_i^{-1} \big) \, K_i \, K_i^{-1} \, \sigma^{-1}\big( K_i \, F_j , K_i^{-1} \big)  \\
                                                   &  = \;  \sigma\big( K_i , K_i^{-1} \big) \, K_i \, F_j \, K_i^{-1} \, \sigma^{-1}\big( K_i \, L_j , K_i^{-1} \big)  \; =  \\
                                                   &  = \;  \sigma\big( L_j , K_i \big) \, K_i \, F_j \, K_i^{-1}  \; = \;  \nu_{ji} \, K_i \, F_j \, K_i^{-1}  \; = \; \nu_{ji} \, {\big( q'_{ij} \big)}^{-1} \, F_j
\end{align*}
 whence, we obtain
  $$  K_i \,\cdot_\sigma F_j \,\cdot_\sigma K_i^{-1}  =  \nu_{ij}^{-1} \big( K_i F_j \big) \cdot_\sigma K_i^{-1}
  =  \nu_{ij}^{-1} {\big(q'_{ij}\big)}^{-1} \nu_{ji} \, F_j  =  {\big( \nu_{ij} q'_{ij} \nu_{ji}^{-1} \big)}^{-1} F_j  =
  {\big(q''_{ij}\big)}^{-1} F_j
$$
 Also, we have that  $ \; L_i \,\cdot_\sigma F_j \,\cdot_\sigma L_i^{-1} \, = \, q''_{ji} \, F_j \; $.
                                                              \par
   As to relation  $ (e\,) \, $,  it is enough to note that
   $ \; E_i \,\cdot_\sigma F_j \, = \, E_i \, F_j \, $,  $ \; F_j \,\cdot_\sigma E_i \, = \, F_j \, E_i \; $  and
   $ \, q'_{ii} = q''_{ii} \, $  for all  $ \, i, j \in I \, $.  Thus, for instance,
\begin{align*}
   E_i \,\cdot_\sigma F_j  &  \; = \;  \sigma(E_i , F_j) \, L_j \, \sigma^{-1}(1,L_j) \, + \,
   \sigma(K_i,F_j) \, E_i \, L_j \, \sigma^{-1}(1,L_j) \, +  \\
                           &  \quad   + \, \sigma(K_i,F_j) \, K_i \, L_j \, \sigma^{-1}(E_i,L_j) \, + \,
                           \sigma(E_i,1) \, F_j \, \sigma^{-1}(1,L_j) \, +  \\
                           &  \quad   + \, \sigma(K_i,1) \, E_i \, F_j \, \sigma^{-1}(1,L_j) \, + \,
                           \sigma(K_i,1) \, K_i \, F_j \, \sigma^{-1}(E_i,L_j) \, +  \\
                           &  \quad   + \, \sigma(E_i,1) \, \sigma^{-1}(1,F_j) \, + \, \sigma(K_i,1) \, E_i \, \sigma^{-1}(1,F_j) \, +  \\
                           &  \quad   + \, \sigma(K_i,1) \, K_i \, \sigma^{-1}(E_i,F_j)  \; = \;  E_i \, F_j \\
\end{align*}
hence  $ \; E_i \,\cdot_\sigma F_j - F_j \,\cdot_\sigma E_i \, = \, [E_i,F_j] \, = \, \delta_{i,j} \, q'_{ii} \,
\frac{\, K_i - L_i \,}{\, q'_{ii} - 1 \,} \, = \, \delta_{i,j} \, q''_{ii} \, \frac{\, K_i - L_i \,}{\, q''_{ii} - 1 \,} \; $  for all  $ \, i , j \in I \, $.
                                                                  \par
   Finally, we check that the assertion holds for the quantum Serre relations; we prove only  $ (f\,) $,
   since  $ (\,g) $  is completely analogous.
   We have to verify the equality
\begin{align*}
  (f\,)  \qquad  &  \sum_{k=0}^{1-a_{ij}} {(-1)}^k \, {\bigg( {1-a_{ij} \atop k} \bigg)}_{\!\!q''_{ii}} {(q''_{ii})}^{k \choose 2}
  \, {(q''_{ij})}^k \, E_i^{\,\cdot_{\sigma}(1-a_{ij}-k)} \cdot_\sigma E_j \,\cdot_\sigma E_i^{\,\cdot_{\sigma} k}  \; = \;
  0  \quad  (\, i \neq j \,)
\end{align*}
where  $ \, E_i^{\,\cdot_\sigma k} \, $  denotes the  $ k $--th  power with respect to the twisted product.
A routine check yields that  $ \; E_i^{\,\cdot_\sigma k} \, = \, \sigma(K_i,K_i) \cdots \sigma\big( K_i^{k-1} , K_i \big) \,
E_i^k \, = \, \nu_{ii}^{{k-1}\choose 2} E_i \; $
for all  $ \, k \geq 0 \, $.  Thus for all $ \, i \neq j \, $
the left-hand side of  $ (f\,) $  above, that we denote by  $ \clubsuit \, $,  now reads
\begin{small}
   \begin{align*}
      &  \clubsuit  \,\; = \;  \sum_{k=0}^{1-a_{ij}} {(-1)}^k \, {\bigg( {1-a_{ij} \atop k} \bigg)}_{\!\!q''_{ii}} {(q''_{ii})}^{k \choose 2}
      \, {(q''_{ij})}^k \, E_i^{\,\cdot_{\sigma}(1-a_{ij}-k)} \cdot_\sigma E_j \,\cdot_\sigma E_i^{\,\cdot_{\sigma} k}  \,\; =  \\
      &  = \;  \sum_{k=0}^{1-a_{ij}} {(-1)}^k \, {\bigg( {1-a_{ij} \atop k} \bigg)}_{\!\!q'_{ii}} {(q'_{ii})}^{k \choose 2} \,
      \nu_{ij}^k \, {(q'_{ij})}^k \, \nu_{ji}^{-k} \, \nu_{ii}^{{-a_{ij}-k} \choose 2} \nu_{ii}^{{k-1} \choose 2}
      \, E_i^{\, (1-a_{ij}-k)} \cdot_\sigma E_j \,\cdot_\sigma E_i^{\,k}  \,\; =  \\
      &  = \;  \nu_{ii}^{\frac{a_{ij}(a_{ij}-1)}{2}} \sum_{k=0}^{1-a_{ij}} \! {(-1)}^k \,
      {\bigg( {1\!-\!a_{ij} \atop k} \bigg)}_{\!\!q'_{ii}} {(q'_{ii})}^{k \choose 2} \, {(q'_{ij})}^k \, \nu_{ij}^k \, \nu_{ji}^{-k} \,
      \nu_{ii}^{a_{ij} k + k^2 - k} \, E_i^{\, (1-a_{ij}-k)} \cdot_\sigma E_j \,\cdot_\sigma E_i^{\,k}
   \end{align*}
\end{small}
 But since  $ \; E_i^{\,(1-a_{ij}-k)} \cdot_\sigma E_j \, = \, \nu_{ij}^{(1-a_{ij}-k)} \, E_i^{(1-a_{ij}-k)} \, E_j \; $  and
\begin{small}
   \begin{align*}
      &  \big( E_i^{\,(1-a_{ij}-k)} \, E_j \big) \,\cdot_\sigma E_i^{\,k}  \; =  \\
      &  \; =  \sum_{\ell=0}^{1-a_{ij}-k} \! \sum_{r=0}^\ell \sum_{s=0}^k \sum_{t=0}^s
      \sigma\big( E_i^{(1-a_{ij}-k)-\ell} \, K_i^\ell \, K_j \, , E_i^{k-s} K_i^s \big) \, E_i^{\ell-r\,} K_i^r \, E_j \, E_i^{s-t\,} K_i^t \,
      \sigma^{-1}\!\big( E_i^r , E_i^t \big)  \, =  \\
      &  \quad   = \;  \sigma\big( K_i^{(1-a_{ij}-k)} \, K_j \, , K_i^k \big) \, E_i^{(1-a_{ij}-k)} \, E_j \, E_i^k  \; = \;
      \nu_{ii}^{(1-a_{ij}-k)k} \, \nu_{ji}^k \, E_i^{(1-a_{ij}-k)} \, E_j \, E_i^k
   \end{align*}
\end{small}
\hskip-3,5pt
 we have  $ \; \nu_{ij}^k \, \nu_{ji}^{-k} \, \nu_{ii}^{a_{ij}k + k^2 - k} \, E_i^{\,(1-a_{ij}-k)} \cdot_\sigma E_j \,\cdot_\sigma
 E_i^{\,k} \, = \, \nu_{ij}^{1-a_{ij}} \, E_i^{\,(1-a_{ij}-k)} \, E_j \, E_i^k \; $  and so  %
  $$  \clubsuit  \,\; = \;\,  \nu_{ii}^{\frac{a_{ij}(a_{ij}-1)}{2}} \, \nu_{ij}^{1-a_{ij}} \sum_{k=0}^{1-a_{ij}} {(-1)}^k \,
  {\bigg( {1-a_{ij} \atop k} \bigg)}_{\!\!q'_{ii}} {(q'_{ii})}^{k \choose 2} \, {(q'_{ij})}^k \, E_i^{\,(1-a_{ij}-k)} \, E_j \, E_i^k  \,\; =
  \;\,  0  $$
 This proves that there is a Hopf algebra epimorphism
 $ \, p : U_{\bq''}(\hskip0,8pt\liegd)   \relbar\joinrel\longrightarrow  {\big( U_{\bq'}(\hskip0,8pt\liegd) \big)}_\sigma \, $
 which is defined as the identity on the generators.  Using the same relation among the multiparameters  $ \bq' $
 and  $ \bq'' $  but written as  $ \, q'_{ij} = \nu_{ij}^{-1} \, q''_{ij} \, \nu_{ji} \, $,  \,one may define a multiplicative
 $ 2 $--cocycle  $ \sigma_{\bnu^{-1}} $  on  $ U_{\bq''}(\hskip0,8pt\liegd) $  and, via the same computations,
 one gets an epimorphism
 $ \; t :   U_{\bq'}(\hskip0,8pt\liegd) \relbar\joinrel\relbar\joinrel\longrightarrow{\big(U_{\bq''}(\hskip0,8pt\liegd)\big)}_{\sigma_{\bnu^{-1}}} \; $.
 Also, one may consider the multiplicative  $ 2 $--cocycle  $ \, \sigma_\bnu \, $  on
 $ {\big(U_{\bq''}(\hskip0,8pt\liegd)\big)}_{\sigma_{\bnu^{-1}}} $  and performing the
twisting procedure one again yields
$ \, \Big( {\big(U_{\bq''}(\hskip0,8pt\liegd)\big)}_{\sigma_{\bnu^{-1}}} \Big)_{\sigma_\bnu} =
{\big( U_{\bq''}(\hskip0,8pt\liegd) \big)}_{\sigma_{\bnu^{-1}} \ast\, \sigma_{\bnu}} = U_{\bq''}(\hskip0,8pt\liegd) \, $.
On the other hand, since  $ t $  is an epimorphism one may push forward the cocycle deformation by  $ \sigma $
and obtain an epimorphism
$ \, t_{\sigma} : {\big( U_{\bq'}(\hskip0,8pt\liegd) \big)}_\sigma \relbar\joinrel\relbar\joinrel\relbar\joinrel\longrightarrow
\Big( {\big(U_{\bq''}(\hskip0,8pt\liegd)\big)}_{\sigma_{\bnu^{-1}}} \Big)_{\sigma_\bnu} = U_{\bq''}(\hskip0,8pt\liegd) \, $.
Composing the latter with  $ p \, $,  one obtains a sequence of epimorphisms
  $$  U_{\bq''}(\hskip0,8pt\liegd) \,\;{\buildrel p \over {\relbar\joinrel\relbar\joinrel\relbar\joinrel\longrightarrow}}\;\,
  {\big( U_{\bq'}(\hskip0,8pt\liegd) \big)}_\sigma \,{\buildrel t_{\sigma} \over
  {\relbar\joinrel\relbar\joinrel\relbar\joinrel\longrightarrow}}\;\, U_{\bq''}(\hskip0,8pt\liegd)  $$
 whose composition acts as the identity on the generators, so that  $ \, t_{\sigma} \circ p  = \id \, $.
 Similarly, one proves that  $ \, p \circ t_{\sigma}  = \id \, $,  so that  $ p $  is actually an isomorphism.
 \vskip4pt
   Claim  {\it (b)\/}  is a special case of  {\it (a)},  with  $ \, \bq' := \check{\bq} \, $  and  $ \, \bq'' := \bq \; $.
 \vskip4pt
   Claim  {\it (c)\/}  is treated like the previous ones.
\epf

\medskip

\begin{rmk}
 Our (sketch of) proof of  Theorem \ref{thm:sigma_2-cocy}  above mimics that of Theorem 4.5 in \cite{LHR}:
 however, in the latter the very existence of a  $ 2 $--cocycle  as required is, to the best of our understanding,
 not well proved.  The proof of Theorem 28 in  \cite{HPR}  is along the same lines, but the  $ 2 $--cocycle
 used there is that of  Lemma \ref{lemma: mprmtr-action=>twist-equiv}{\it (c)}
 --- hence needs the existence of suitable square roots.
\end{rmk}

\medskip

\begin{cor}  \label{cor: sigma_mprmts -> sigma_MpQUEAs}
 If  $ \, \bq = {\big(\, q_{ij} \big)}_{i, j \in I} \in M_n(\FF) \, $  is a multiparameter,
 $ \, \bnu = {\big(\, \nu_{ij} \big)}_{i, j \in I} \in M_n(\FF) \, $  and
 $ \; \sigma_{\chi_\bnu} \in \widetilde{\Z}_{2}\big(\, U_{\bq}(\hskip0,8pt\liegd) \, , \FF \,\big) \; $
 is the unique element of  $ \, \widetilde{\Z}_{2}\big(\, U_{\bq}(\hskip0,8pt\liegd) \, , \FF \,\big) \, $  such that
 $ \, \sigma_{\chi_\bnu}(K_i\,,K_j) = \nu_{ij} \, $  for  $ \, i, j \in I \, $  (as in the proof of
 Theorem \ref{thm:sigma_2-cocy}  above),  then
  $$  U_{\bnu.\bq}(\hskip0,8pt\liegd)  \; \cong \;  {\big( U_\bq(\hskip0,8pt\liegd) \big)}_{\sigma_{\chi_\bnu}}  $$
   \indent   An entirely similar claim holds true for Borel (sub)MpQUEA's as well.
\end{cor}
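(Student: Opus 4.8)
The proof of Corollary \ref{cor: sigma_mprmts -> sigma_MpQUEAs} is essentially a direct unwinding of Theorem \ref{thm:sigma_2-cocy}(a) in light of Lemma \ref{lemma: mprmtr-action=>twist-equiv}. First I would observe that, by Lemma \ref{lemma: mprmtr-action=>twist-equiv}(a), the multiparameter $ \, \bnu.\bq \, $ is $ \sim $--equivalent to $ \, \bq \, $ (indeed it lies in the same orbit of the $ M_n(\FF^\times) $--action, so its generalized Dynkin diagram coincides with that of $ \bq \, $). Hence Theorem \ref{thm:sigma_2-cocy}(a), applied with $ \, \bq' := \bq \, $ and $ \, \bq'' := \bnu.\bq \, $, already gives the existence of \emph{some} $ \, \sigma \in \widetilde{\Z}_2\big( U_\bq(\hskip0,8pt\liegd) , \FF \big) \, $ with $ \, U_{\bnu.\bq}(\hskip0,8pt\liegd) \cong \big( U_\bq(\hskip0,8pt\liegd) \big)_\sigma \, $. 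The content of the Corollary is the slightly sharper assertion that one may take $ \, \sigma = \sigma_{\chi_\bnu} \, $ for \emph{this specific} $ \bnu $, namely the one we started with.

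The plan is therefore to retrace the proof of Theorem \ref{thm:sigma_2-cocy}(a) keeping track of which $ \bnu $ is being used. In that proof one chooses, via Lemma \ref{lemma: mprmtr-action=>twist-equiv}(a), an element $ \bnu' $ with $ \, \bq'' = \bnu'.\,\bq' \, $, then forms the bicharacter $ \chi_{\bnu'} $ of $ \G_n $ by $ \, \chi_{\bnu'}(G_i,G_j) := \nu'_{ij} \, $, and finally sets $ \, \sigma := \sigma_{\chi_{\bnu'}} \, $; the straightforward calculation then yields $ \, U_{\bq''}(\hskip0,8pt\liegd) \cong \big( U_{\bq'}(\hskip0,8pt\liegd) \big)_\sigma \, $. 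Since in our situation we are simply handed $ \bnu $ with $ \, \bnu.\bq \, $ as target, we take $ \bnu' := \bnu $ in that argument, so that $ \, \chi_{\bnu'} = \chi_\bnu \, $ and $ \, \sigma = \sigma_{\chi_\bnu} \, $ is exactly the cocycle described in the statement (the one determined by $ \, \sigma_{\chi_\bnu}(K_i,K_j) = \nu_{ij} \, $, which by the formulas in \S \ref{characters --> 2-cocycles} also equals $ \, \sigma_{\chi_\bnu}(K_i,L_j) = \sigma_{\chi_\bnu}(L_i,K_j) = \sigma_{\chi_\bnu}(L_i,L_j) \, $). This gives $ \, U_{\bnu.\bq}(\hskip0,8pt\liegd) \cong \big( U_\bq(\hskip0,8pt\liegd) \big)_{\sigma_{\chi_\bnu}} \, $. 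The claim for Borel (sub)MpQUEA's follows in the same way from Theorem \ref{thm:sigma_2-cocy}(c), since $ \, \sigma_{\chi_\bnu} \, $ restricts to the analogous toral cocycle on $ U_\bq^\geq $ and $ U_\bq^\leq \, $.

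The one point that deserves a word of care — and which I regard as the only real obstacle, modest as it is — is that $ \bnu $ is \emph{not} uniquely determined by the condition $ \, \bnu.\bq = \hat\bq \, $; Lemma \ref{lemma: mprmtr-action=>twist-equiv}(b) shows the freedom is precisely in the ``symmetric part'' $ \, \nu_{ij}\nu_{ji} \, $. So one must make sure that the cocycle deformation $ \, \big( U_\bq(\hskip0,8pt\liegd) \big)_{\sigma_{\chi_\bnu}} \, $ genuinely depends only on the orbit data, i.e.\ that the isomorphism class of the output is insensitive to replacing $ \bnu $ by another $ \bnu $ with the same action on $ \bq \, $. But this is immediate from the fact that the target MpQUEA $ \, U_{\bnu.\bq}(\hskip0,8pt\liegd) \, $ is literally defined from $ \, \bnu.\bq \, $ and nothing else, so once the calculation in Theorem \ref{thm:sigma_2-cocy}(a) produces an isomorphism $ \, U_{\bnu.\bq}(\hskip0,8pt\liegd) \cong \big( U_\bq(\hskip0,8pt\liegd) \big)_{\sigma_{\chi_\bnu}} \, $ for the given $ \bnu $, there is nothing further to check. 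Hence the proof reduces to: \emph{apply Theorem \ref{thm:sigma_2-cocy}(a) with the chosen $ \bnu $, and observe that the cocycle it produces is $ \sigma_{\chi_\bnu} \, $.}

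\begin{proof}
 By Lemma \ref{lemma: mprmtr-action=>twist-equiv}(a) we have  $ \, \bnu.\bq \sim \bq \, $.  Applying Theorem \ref{thm:sigma_2-cocy}(a) with  $ \, \bq' := \bq \, $  and  $ \, \bq'' := \bnu.\bq \, $,  and inspecting its proof, we see that the required  $ 2 $--cocycle  is  $ \, \sigma_{\chi_\bnu} \, $,  the toral cocycle of  $ U_\bq(\hskip0,8pt\liegd) $  attached to the bicharacter  $ \chi_\bnu $  of  $ \G_n $  given by  $ \, \chi_\bnu(G_i,G_j) := \nu_{ij} \, $  (for  $ \, i, j \in I \, $),  so that  $ \, \sigma_{\chi_\bnu}(K_i,K_j) = \nu_{ij} \, $  for all  $ \, i, j \in I \, $.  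Hence  $ \; U_{\bnu.\bq}(\hskip0,8pt\liegd) \cong {\big( U_\bq(\hskip0,8pt\liegd) \big)}_{\sigma_{\chi_\bnu}} \; $,  as claimed.  The statement for Borel (sub)MpQUEA's follows in the same manner from Theorem \ref{thm:sigma_2-cocy}(c), using that  $ \sigma_{\chi_\bnu} $  restricts to the analogous toral  $ 2 $--cocycle  on  $ U_\bq^\geq $  and  $ U_\bq^\leq \, $.
\end{proof}
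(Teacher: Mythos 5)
Your proof is correct and takes essentially the same route as the paper, which likewise just invokes Lemma \ref{lemma: mprmtr-action=>twist-equiv}(a) to get $\bq \sim \bnu.\bq$ and then observes that the cocycle produced in the proof of Theorem \ref{thm:sigma_2-cocy}(a) for this choice of $\bnu$ is precisely $\sigma_{\chi_\bnu}$. Your extra remark about the non-uniqueness of $\bnu$ is harmless but not needed, since the corollary fixes $\bnu$ from the outset.
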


\pf
 By  Lemma \ref{lemma: mprmtr-action=>twist-equiv}{\it (a)\/}  we know that  $ \, \bq \sim \bnu.\,\bq \, $;
 then  Theorem \ref{thm:sigma_2-cocy}{\it (a)\/}  above applies, and the claim follows at once.
\epf

\vskip9pt

   We still need some extra notation:

\vskip11pt

\begin{definition}
 For any  $ \; \bq = {\big(\, q_{ij} \big)}_{i, j \in I} \, $,  $ \, \bq' = {\big(\, q_{ij} \big)}_{i, j \in I} \in M_n(\FF) \, $,  \,
 we say that  $ \; \bq \approx \bq' \; $  if and only if there exists a permutation  $ \gamma $  of  $ I $  such that
  $$  q'_{ij}  \, = \,  q_{\gamma(i)\gamma(j)}   \quad  \forall \ i, j \in I  \;\qquad  \text{ or }  \;\qquad  q'_{ij}  \, = \,
  q_{\gamma(j)\gamma(i)}^{-1}   \quad \forall \ i,j \in I \; .  $$
\end{definition}

\vskip9pt

\begin{prop}\label{prop:isom-multip}
 Let  $ \; \bq = {\big(\, q_{ij} \big)}_{i, j \in I} \, $,  $ \, \bq' = {\big(\, q_{ij} \big)}_{i, j \in I} \in M_n(\FF) \, $
 be multiparameters.  Then
 \quad  $ U_{\bq}(\hskip0,8pt\liegd) \; \cong \; U_{\bq'}(\hskip0,8pt\liegd) \; \iff \; \bq \approx\bq' \;\; $.
\end{prop}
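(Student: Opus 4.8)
The plan is to prove the two implications separately, the ``if'' part by exhibiting isomorphisms explicitly and the ``only if'' part by analysing what a Hopf isomorphism must do to the coradically graded structure of a MpQUEA. For the ``if'' direction: if $\,q'_{ij}=q_{\gamma(i)\gamma(j)}\,$ for a permutation $\gamma$ of $I$, I would simply check that the relabelling $\,E_i\mapsto E'_{\gamma(i)}$, $F_i\mapsto F'_{\gamma(i)}$, $K_i^{\pm1}\mapsto {K'_{\gamma(i)}}^{\pm1}$, $L_i^{\pm1}\mapsto {L'_{\gamma(i)}}^{\pm1}\,$ is compatible with relations $(a)$--$(g)$ and with the formulas for $\com,\eps,\SS$ of Definition \ref{def:multiqgroup_ang}, hence defines a Hopf isomorphism. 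If instead $\,q'_{ij}=q_{\gamma(j)\gamma(i)}^{-1}\,$, I would use the assignment $\,E_i\mapsto c_i\,F_{\gamma(i)}{L_{\gamma(i)}}^{-1}$, $F_i\mapsto c_i^{-1}{K_{\gamma(i)}}^{-1}E_{\gamma(i)}$, $K_i^{\pm1}\mapsto {L_{\gamma(i)}}^{\mp1}$, $L_i^{\pm1}\mapsto {K_{\gamma(i)}}^{\mp1}\,$ — which swaps the two triangular halves and inverts the torus — with scalars $c_i\in\FF^\times$ pinned down so as to match relation $(e)$; the verification is longer but routine, using that the $q$--Serre relations $(f)$, $(g)$ are homogeneous in each generator, so rescalings are harmless. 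This disposes of $\,\bq\approx\bq'\Rightarrow U_\bq(\liegd)\cong U_{\bq'}(\liegd)$.

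For the converse I would start from a Hopf isomorphism $\,\phi:U_\bq(\liegd)\to U_{\bq'}(\liegd)\,$ and first record the structural data it must preserve. Using the triangular decomposition $\,U_\bq(\liegd)\cong U_\bq^-\otimes U_\bq^{\,0}\otimes U_\bq^+\,$ and the identifications of $U_\bq^{\pm}$ with the Nichols algebras of diagonal type $\B(V_E),\B(V_F)$ and of $U_\bq^{\,0}$ with the group algebra of $\,\Gamma:=\ZZ^{2n}\,$ on the $K_i$'s and $L_i$'s (Remarks \ref{rmks:mpqg's-vs-Nichols}, Proposition \ref{sk-H_pair}), one obtains: (i) $\,G(U_\bq(\liegd))=\Gamma\,$ is free abelian of rank $2n$, with basis $\{K_i\}_{i\in I}\cup\{L_i\}_{i\in I}$; (ii) the coradically graded $\,\gr U_\bq(\liegd)\,$ is the bosonization $\,\B(W)\#\FF[\Gamma]\,$ of the diagonal Yetter--Drinfeld module $\,W:=\bigoplus_i\FF\,E_i\oplus\bigoplus_i\FF\,F_iL_i^{-1}\,$, in which $E_i$ has $\Gamma$--degree $K_i$ and $F_iL_i^{-1}$ has $\Gamma$--degree $L_i^{-1}$; (iii) hence, for $g\in\Gamma$, the $\Gamma$--homogeneous $(1,g)$--skew primitives of $U_\bq(\liegd)$ reduce to $\FF(1-g)$ unless $g\in\{K_i\}\cup\{L_i^{-1}\}$, in which case this space is one--dimensional modulo $\FF(1-g)$. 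Since $\phi$ carries grouplikes to grouplikes, respects the coradical filtration, and so sends non-trivial skew primitives to non-trivial skew primitives, (i)--(iii) force $\phi(\Gamma)=\Gamma'$ together with a bijection $\{K_i\}\cup\{L_i^{-1}\}\to\{K'_i\}\cup\{{L'_i}^{-1}\}$ that on the degree--one generators is monomial and compatible with the $\Gamma$--grading.

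Next I would exploit the generalized Dynkin diagram of $W$. A direct computation of the braiding coming from the conjugation action of $\Gamma$ shows that the $E$--block carries the diagram $D(\bq)$, the $F$--block a diagram of the same combinatorial shape, and that the two blocks are joined by no edge (for all $i,j$ the product of the two cross--eigenvalues is $q_{ij}^{-1}q_{ij}=1$). Under the standing indecomposability of $A$ and the hypotheses on the $q_{ii}$'s in \S\ref{def-MpQUEA}, each block's diagram is connected, so $D(W)$ has exactly two connected components, each on $n$ vertices, and $\phi$ must carry the $E$--component of $W$ either onto the $E'$--component or onto the $F'$--component of $W'$. In the first case $\Gamma$--homogeneity forces $\phi(K_i)=K'_{\gamma(i)}$ and $\phi(E_i)\in\FF^\times E'_{\gamma(i)}$ for a permutation $\gamma$; feeding this into relation $(c)$ of $U_\bq(\liegd)$ and reading it off in $U_{\bq'}(\liegd)$ gives $\,q_{ij}=q'_{\gamma(i)\gamma(j)}\,$. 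In the second case $\phi(K_i)={L'_{\gamma(i)}}^{-1}$ and $\phi(E_i)\in\FF^\times F'_{\gamma(i)}{L'_{\gamma(i)}}^{-1}$, and the same comparison — now with the $\Gamma$--degree, hence the conjugation eigenvalue, inverted — gives $\,q_{ij}=q'_{\gamma(j)\gamma(i)}{}^{-1}\,$. Either way $\,\bq\approx\bq'$.

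The hardest parts, where a full proof must be careful, are two. First, establishing the structural inputs (i)--(iii): one must know that $\gr U_\bq(\liegd)$ is exactly the bosonization of the \emph{diagonal} Nichols algebra $\B(W)$ with the stated degree--one part, which rests on the PBW/triangular decomposition and on the root--of--unity hypotheses on the $q_{ii}$'s (so that the relevant Nichols algebras and their primitively generated pieces behave as expected), and is to be extracted from the Nichols--algebra picture of Remark \ref{rmks:mpqg's-vs-Nichols}. Second — and this is the genuine obstacle — one must rule out that $\phi$ \emph{splits} the $E$--side of $W$ between the two connected components of $D(W')$: this is precisely where the indecomposability of $A$ (hence connectedness of $D(\bq)$) together with the hypotheses of \S\ref{def-MpQUEA} are indispensable, and without them the Proposition genuinely fails. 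Once these are in hand, the recovery of $\gamma$ and of the two alternatives in the definition of $\approx$ is the routine relation--comparison above; as a by--product the Cartan matrices attached to $\bq$ and $\bq'$ are forced to agree up to $\gamma$, which is why they were suppressed from the statement.
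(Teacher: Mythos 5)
Your proof is correct and follows essentially the same route as the paper's: pointedness, the coradical filtration and the bosonization $\gr\big(U_\bq(\liegd)\big)=R\,\#\,U_\bq(\liehd)$ force any isomorphism to act monomially on the degree-one skew-primitives, and indecomposability of $A$ together with $q_{ii}^{a_{ij}}\neq 1$ rules out mixing the two alternatives --- you package this last step as the statement that the generalized Dynkin diagram of $W$ has exactly two connected components which must be permuted, whereas the paper derives the contradiction $q_{ii}^{a_{ij}}=1$ directly from a pair of adjacent vertices of different type, but these are the same argument. The one point you gloss over as ``routine relation-comparison'' is the paper's Claim~2, namely that relation $(e)$ (non-vanishing of $[E_i,F_i]$) is what forces the permutation governing the images of the $F_i$'s to coincide with the one governing the $E_i$'s; this is indeed easy but does need to be said.
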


\pf
 We denote by  $ E_i \, $,  $ F_i \, $,  $ K_i^{\pm 1} $ and  $ L_i^{\pm 1} $  (with  $ \, i \in I \, $)
 the generators of  $ U_\bq(\hskip0,8pt\liegd) $  and by
 $ E'_j \, $,  $ F'_j \, $,  $ K^{\prime \, \pm 1}_j $  and  $ L^{\prime \, \pm 1}_j $  (with  $ \, j \in I \, $)
 the generators of  $ U_{\bq'}(\hskip0,8pt\liegd) \, $.
 \vskip5pt
   Assume first that there is a Hopf algebra isomorphism
   $ \, \varphi: U_\bq(\hskip0,8pt\liegd) \relbar\joinrel\relbar\joinrel\longrightarrow U_{\bq'}(\hskip0,8pt\liegd) \, $.
   Note that both Hopf algebras  $ \, U_\bq(\hskip0,8pt\liegd) \, $  and $ \, U_{\bq'}(\hskip0,8pt\liegd) \, $
   are pointed, as they are generated by group-likes and skew-primitives: then, in particular,
   their coradicals coincide with the subalgebras  $ \, U_\bq(\hskip0,8pt\liehd) \, $  and
   $ \, U_{\bq'}(\hskip0,8pt\liehd) \, $,  respectively, which are actually isomorphic.
   Moreover, by \cite[Theorem 3.8]{Gar}, they belong to a special family of pointed Hopf algebras
   introduced in  \cite{ARS};  then, using the coradical filtration one can write
   $ \, \gr\big( U_\bq(\hskip0,8pt\liegd) \big) = R \,\#\, U_\bq(\hskip0,8pt\liehd) \, $  and
   $ \, \gr\big( U_{\bq'}(\hskip0,8pt\liegd) \big) = R' \,\#\, U_{\bq'}(\hskip0,8pt\liehd) \, $,
   where  $ \, R \, $  and  $ \, R' \, $  are braided Hopf algebras in the category of Yetter-Drinfeld modules over
   $ \, U_\bq(\hskip0,8pt\liehd) \, $  and  $ \, U_{\bq'}(\hskip0,8pt\liehd) \, $, respectively.
   These are given by the subalgebra of coinvariants, i.e.,
   $ \, R = \big\{\, x \in \gr\big( U_\bq(\hskip0,8pt\liegd) \big) \,\big|\, (\id\ot \pi) \com(x) = x\ot 1 \,\big\} \, $,  where
   $ \, \pi : \gr\big( U_\bq(\hskip0,8pt\liegd) \big) \longrightarrow U_\bq(\hskip0,8pt\liehd) \, $  is the canonical projection.
   As  $ \, \varphi\big( U_\bq(\hskip0,8pt\liehd) \big) = U_{\bq'}(\hskip0,8pt\liehd) \, $,  the map  $ \varphi $
   induces a braided Hopf algebra isomorphism
   $ \, \tilde{\varphi} : \gr\big( U_\bq(\hskip0,8pt\liegd) \big) \!\relbar\joinrel\longrightarrow \gr\big( U_{\bq'}(\hskip0,8pt\liegd)
   \big) \, $  such that  $ \, \tilde{\varphi}(R) = R' \, $.  In particular,  $ \tilde{\varphi} $
   maps primitive element in  $ R $  to primitive elements in $ R' \, $.
                                                               \par
   Now take  $ \, i \in I \, $.  As (the image of the element in the graded algebra)  $ \, E_i \, $
   is  primitive in  $ R \, $,  we have that  $ \tilde{\varphi}(E_i) $  is primitive in  $ R' $  as well.
   As the latter is linearly spanned by the elements  $ \, E'_i \, $  and  $ \, L_j^{\prime\,-1} F_j \, $  with  $ i, j \in I \, $
   --- these are the linear generators of  $ \, V_E \, $  and  $ \, V_F \, $  in Remarks \ref{rmks:mpqg's-vs-Nichols}
   above ---   this implies that  $ \, \varphi(K_i) = K'_j \, $  or  $ \, \varphi(K_i) = L_j^{\prime \, -1} \, $  for some
   $ \, j \in I \, $.  In particular, one has that  $ \, \varphi(E_i) = c_{i,j} E'_j \, $  or
   $ \, \varphi(E_i) = d_{i,j} L_j^{\prime \, -1} F'_j \, $,  respectively, for some  $ \, c_{i,j} , d_{i,j} \in \FF \, $.
   Thus, in the end,  $ \varphi $  induces a bijective map  $ \, \gamma : I \longrightarrow I \, $  such that
   $ \, \varphi(K_i) = K'_{\gamma(i)} \, $  and  $ \varphi(E_i) = c_{i,\gamma(i)} E'_{\gamma(i)} \, $  or
   $ \, \varphi(K_i) = L_{\gamma(i)}^{\prime \, -1} \, $  and
   $ \, \varphi(E_i) = d_{i,\gamma(i)} L_{\gamma(i)}^{\prime \, -1} F'_{\gamma(i)} \, $.
 \vskip7pt
  {\sl  $ \underline{\hbox{Claim 1}} $:}  $ \; \varphi(E_i) =c_{i,\gamma(i)}  E'_{\gamma(i)} \; $  for all
  $ \, i \in I \, $  or  $ \; \varphi(E_i) =  d_{i,\gamma(i)} L_{\gamma(i)}^{\prime \, -1} F'_{\gamma(i)} \; $  for all $ \, i \in I \, $.
 \vskip5pt
   Indeed, suppose that  $ \, \varphi(E_i) = c_{i,\gamma(i)} E'_{\gamma(i)} \, $  and
   $ \, \varphi(E_j) =  d_{j,\gamma(j)} L_{\gamma(j)}^{\prime \, -1} F'_{\gamma(j)} \, $
   for some  $ \, i \neq j \in I \, $;  as the Cartan matrix  $ A $  is indecomposable
   --- hence the Dynkin diagram is connected ---   we can assume in addition that  $ i $  and  $ j $
   are adjacent in the Dynkin diagram, i.e.,  $ \, a_{ij} \not= 0 \, $.  Now,  $ \, \varphi(K_i) = K'_{\gamma(i)} \, $
   and  $ \, \varphi(K_j) = L_{\gamma(j)}^{\prime \, -1} \; $;  since
  $$  q_{ij} \, \varphi(E_j)  \,\; = \;\,  \varphi\big( K_i \, E_j \, K_i^{-1} \big)  \,\; = \;\,
%
%
  d_{j,\gamma(j)}  K'_{\gamma(i)} \, L_{\gamma(j)}^{\prime \, -1} \, F'_{\gamma(j)} \, K_{\gamma(i)}^{\prime \, -1}  \,\;
  = \;\,  q_{\gamma(i)\gamma(j)}^{\prime \, -1} \, \varphi(E_j)  $$
 it follows that  $ \; q_{ij} = q_{\gamma(i)\gamma(j)}^{\prime \, -1} \; $.  On the other hand,
  $$  q_{ji} \, \varphi(E_i)  \,\; = \;\,  \varphi\big( K_j \, E_i \, K_j^{-1} \big)  \,\; = \;\, %
%
 c_{i,\gamma(i)} L_{\gamma(j)}^{\prime \, -1} \, E'_{\gamma(i)} \, L'_{\gamma(j)}  \,\; =
 \;\, q'_{\gamma(i)\gamma(j)} \, \varphi(E_{i})  $$
 which implies that  $ \; q_{ji} = q'_{\gamma(i)\gamma(j)} \; $.
 Therefore  $ \; q_{ii}^{a_{ij}} \, = \, q_{ij} \, q_{ji} \, = \, 1 \; $  and  $ \; a_{ij} = 0 \; $,  as the  $ q_{ii} $'s
 are not  $ a_{ij} $--th  roots of unity:  but
 $ \, a_{ij} \not= 0 \, $  by assumption, a contradiction.
 \vskip7pt
  {\sl  $ \underline{\hbox{Claim 2}} $:}  \;  Let  $ \, i \in I \, $.  If  $ \, \varphi(E_i) = c_{i,\gamma(i)} E'_{\gamma(i)} \; $,
  respectively
  $ \, \varphi(E_i) = d_{i,\gamma(i)} L_{\gamma(i)}^{\prime \, -1} \, F'_{\gamma(i)}$,  then
  $ \, \varphi(F_i) = r_{i,\gamma(i)}F'_{\gamma(i)} \; $,  respectively
  $ \, \varphi(F_i) = s_{i,\gamma(i)}K_{\gamma(i)}^{\prime \, -1} \, E'_{\gamma(i)} \; $,
  \,for some $r_{i,\gamma(i)}$,
  $s_{i,\gamma(i)} \in \FF^{\times}$.
 \vskip7pt
   Indeed, since  $ \; [E_i,F_i] \, = \, q_{ii} \, {\displaystyle \frac{\, K_i - L_i \,}{\, q_{ii} - 1 \,}} \, \in \, U_\bq(\liehd) \; $,
   and  $ \, \varphi(U_\bq(\liehd)) = U_{\bq'}(\liehd) \, $, we must have
   $ \varphi([E_i,F_i]) = \; \big[ \varphi(E_i) , \varphi(F_i) \big] \in U_{\bq'}(\liehd) \; $.
   Hence, if  $ \, \varphi(E_i) = c_{i,\gamma(i)} E'_{\gamma(i)} \, $  and
   $ \, \varphi(F_i) = s_{i,\gamma(i)} K_{\gamma(i)}^{\prime \, -1} \, E'_{\gamma(i)} \, $,
   we would have that  $ \, \big[ \varphi(E_i) , \varphi(F_i) \big] \in  U_{\bq'}^{\geq}\cap  U_{\bq'}(\liehd)\, $
   is zero or the degree w.r.t.\ the PBW-element  $ \, E'_{\gamma(i)} \, $  is 2.  Thus
   $ \, \big[ \varphi(E_i) , \varphi(F_i) \big] = 0 \, $,  which in turn implies that  $ \, \varphi(K_i) = \varphi(L_i) \, $,
   against the injectivity of  $ \varphi \, $.  Therefore we must have that
   $ \, \varphi(F_i) = r_{i,\gamma(i)}F'_{\gamma(i)} \, $  and $ \, \varphi(L_i) = L'_{\gamma(i)} \, $.
                                                         \par
   The remaining case is analogous.
 \vskip7pt
   The claims above imply that  $ \varphi $  is given by a Dynkin diagram automorphism possibly composed with a
   ``Chevalley involution''.  Namely, if  $ \; \varphi(E_i) = c_{i,\gamma(i)} E'_{\gamma(i)} \; $  for all  $ \, i \in I \, $,
   then  $ \, q'_{ij} = q_{\gamma(i)\gamma(j)} \, $  for all  $ \, i , j \in I \, $  and  $ \gamma $  is an automorphism
   of the generalized Dynkin diagram  $ D(\bq) \, $.  If instead
   $ \, \varphi(E_i) = d_{i,\gamma(i)}L_{\gamma(i)}^{\prime \, -1} \, F'_{\gamma(i)} \; $  for all  $ \, i \in I \, $,
   then  $ \, q'_{ij} = q_{\gamma(j)\gamma(i)}^{\, -1} \, $  for all  $ \, i , j \in I \, $,  and  $ \gamma $
   is an automorphism of  $ D(\bq) $  composed with a Chevalley involution.  Altogether, this implies that
   $ \, \bq \approx \bq' \, $,  \, q.e.d.
 \vskip7pt
   Conversely, assume now that  $ \; \bq \approx \bq' \; $.  If  $ \, q'_{ij} = q_{\gamma(i)\gamma(j)} \, $
   for all  $ \, i , j \in I \, $,  then the algebra map
   $ \; \varphi : U_\bq(\hskip0,8pt\liegd) \relbar\joinrel\longrightarrow  U_{\bq'}(\hskip0,8pt\liegd) \; $  given for all
   $ \, i \in I \, $   by
  $$  \varphi(E_i) := E'_{\gamma(i)} \;\, ,  \;\quad  \varphi(F_i) := F'_{\gamma(i)} \;\, ,
  \;\quad \varphi\big(K_i^{\pm 1}\big) := K_{\gamma(i)}^{\prime \, \pm 1} \;\, ,  \;\quad  \varphi\big(L_i^{\pm 1}\big) :=
  L_{\gamma(i)}^{\prime \, \pm 1}  $$
 is well-defined and yields a Hopf algebra isomorphism.  Similarly, if  $ \; q'_{ij} = q_{\gamma(j)\gamma(i)}^{-1} \; $
 for all  $ \, i , j \in I \, $,  then the algebra map
 $ \; \varphi : U_\bq(\hskip0,8pt\liegd) \relbar\joinrel\longrightarrow U_{\bq'}(\hskip0,8pt\liegd) \; $
 given for  $ \, i \in I \, $  by
  $$  \varphi(E_i) := L_{\gamma(i)}^{\prime \, -1} \, F'_{\gamma(i)} \;\, ,  \;\;  \varphi(F_i) :=
  K_{\gamma(i)}^{\prime \, -1} \, E'_{\gamma(i)} \;\, ,
  \;\;  \varphi\big(K_i^{\pm 1}\big) := L_{\gamma(i)}^{\prime \, \mp 1} \;\, ,  \;\;  \varphi\big(L_i^{\pm 1}\big) :=
  K_{\gamma(i)}^{\prime \, \mp 1}  $$
 is well-defined and yields a Hopf algebra isomorphism.
\epf

\end{free text}

\vskip17pt

\subsection{Multiparameter quantum groups with larger torus} \label{MpQUEAs-larger-torus}  \
 \vskip7pt
   The MpQUEA's  $ \QEq $  we considered so far have a toral part (i.e., the subalgebra  $ U_\bq^0 $
   generated by the  $ K_i^{\pm 1} $'s
   and the  $ L_j^{\pm 1} $'s)  that is nothing but the group algebra of a double copy of the root lattice  $ Q $  of
   $ \lieg \, $,
   much like in the one-parameter case (but for the duplication of  $ Q \, $,  say).
   Now, in that (uniparameter) case, one also
   considers MpQUEA's with a larger toral part, namely the group algebra of any intermediate lattice between
   $ Q $  and  $ P \, $;
   similarly, we can introduce MpQUEA's whose toral part is the group algebra of any lattice
   $ \, \varGamma_\ell \times \varGamma_r \, $
   with  $ \, Q \subseteq \varGamma_\ell \, $  and  $ \, Q \subseteq \varGamma_r \, $.

\vskip13pt

\begin{free text}  \label{tori_in_MpQUEA's}
 {\bf Larger tori for MpQUEA's.}
 The definition of the ``toral parts'' of our MpQUEA's  $ \, U_\bq(\hskip0,8pt\liegd) \, $
 --- cf.\  Definition \ref{def:q-Bor-sbgr}  ---
 is actually independent of the multiparameter  $ \bq \, $.  We will use this fact to define ``larger toral MpQUEA's''
 as toral parts
 of some larger MpQUEA's, as we did in \S \ref{pol-qgroups_larg-tor}.  This requires some compatibility constraints on
 $ \bq \, $;
 for later use, we fix now some more preliminary facts.
                                                      \par
   Let  $ \varGamma $  be any sublattice of  $ \QQ{}Q $  of rank  $ n $  with  $ \, Q \leq \varGamma \, $.
   For any basis  $ \big\{ \gamma_1 , \dots \gamma_n \big\} $  of  $ \varGamma \, $,  let
   $ \, C := {\big( c_{ij} \big)}_{i=1,\dots,n;}^{j=1,\dots,n;} \in \ZZ^{n\times n}\, $  be
 defined by
 $ \, \alpha_i = \sum_{j=1}^n c_{ij}\,\gamma_j \, $  for every  $ \, i \in I = \{1,\dots,n\} \, $.  Write
 $ \, c := \big| \text{\sl det}(C) \big| \in \NN_+ \, $  and
 $ \, C^{-1} = {\big( c'_{ij} \big)}_{i=1, \dots, n;}^{j=1, \dots, n;} \, $  for the inverse matrix to  $ C \, $;
 in particular, we have that  $ \, c''_{ij} := c \cdot c'_{ij} \in \ZZ \, $  for all  $ \, i, j \in I \, $.
                                                      \par
   For any such  $ \varGamma \, $,  let  $ \, U_{\bq\,,\varGamma}^{+,0} = \FF\varGamma \, $
   be the group algebra with generators  $ K_{\gamma_i}^{\pm 1} $  (for  $ \, i \in I \, $).
   If we set  $ \, K_{\alpha_i} := \prod_{j \in I} K_{\gamma_j}^{\,c_{ij}} \, $  for all  $ \, i \in I \, $,  then the
   $ \FF $--subalgebra  of  $ U_{\bq\,,\varGamma}^{+,0} $  generated by the  $ K_{\alpha_i}^{\pm 1} $'s
   is an isomorphic copy of  $ U_{\bq\,,Q}^{+,0} \, $.  In the obvious symmetric way we define also the
   ``negative counterpart''  $ U_{\bq\,,\varGamma}^{-,0} \, $  generated by the $ L_{\gamma_i}^{\pm 1} $'s.
                                                      \par
   Finally, given any two sublattices  $ \varGamma_\pm $  of rank  $ n $  in  $ \QQ{}Q $  containing  $ Q \, $,
   letting  $ \, \varGamma_\bullet := \varGamma_+ \times \varGamma_- \, $  we define
   $ \; U_{\bq\,,\varGamma_\bullet}^0 := \, U_{\bq\,,\varGamma_+}^{+,0} \!\mathop{\otimes}\limits_\FF \,
   U_{\bq\,,\varGamma_-}^{-,0} \; $;  in this case, the basis elements for  $ \varGamma_\pm $  will be denoted by
   $ \, \gamma_i^\pm \, $  ($ \, i \in I \, $).
\end{free text}

\vskip5pt

\begin{free text}  \label{larger-MpQUEA's}
 {\bf MpQUEA's with larger torus.}  Let  $ \varGamma_+ $  and  $ \varGamma_- $
 be any two lattices in  $ \QQ{}Q $ such that  $ \, Q \leq \varGamma_\pm \, $;
 then set  $ \, \varGamma_{\!\bullet} := \varGamma_+ \times \varGamma_- \, $.  For these lattices
 $ \varGamma_\pm \, $,  we have matrices of integers  $ \, C_\pm = {\big( c_{ij}^\pm \big)}_{i,j \in I} \, $
 and  $ \, C_\pm^{\,-1} = {\big( c_{ij}^{\pm,\prime\,} \big)}_{i,j \in I} \, $,  and also
 $ \, c_\pm := \big| \text{\sl det}(C_\pm) \big| \, $.
 \vskip7pt
   {\sl For the rest of this subsection, we assume that for every  $ \, i, j \in I \, $,
   the field  $ \FF $  contains a  $ c_\pm $--th  root of  $ \, q_{ij} \, $,  hereafter denoted by
   $ q_{ij}^{\,1/c_{{}_\pm}} \, $;  moreover, if the multiparameter  $ \, \bq = {\big(\, q_{ij} \big)}_{i,j \in I} \, $
   is of Cartan type we assume that the multiparameter
   $ \, \bq^{\,1/c_{{}_\pm}} := {\Big(\, q_{ij}^{\,1/c_{{}_\pm}} \Big)}_{i,j \in I} \, $  is of Cartan type as well.}
   In the sequel,  {\sl these assumptions will always tacitly be taken whenever we consider MpQUEA's with larger tori}.
 \vskip7pt
   The natural (adjoint) action of  $ U_\bq^0 $  onto  $ U_\bq $  extends (uniquely) to a
   $ U_{\bq\,,\varGamma_{\!\bullet}}^0 $--action
   $ \, \cdot :\, U_{\bq\,,\varGamma_{\!\bullet}}^0 \times U_\bq \longrightarrow U_\bq \, $, given by
\begin{align*}
  K_{\gamma^+_i} \,\cdot \, E_j  \, = \,  q_{ij}^{\,\varGamma_{\!+}} \, E_j  \;\; ,  \qquad
      L_{\gamma^-_i} \,\cdot\, E_j  \, = \,  {\big( q_{ji}^{\,\varGamma_{\!-}} \big)}^{-1} \, E_j  \\
  \hskip5pt  K_{\gamma^+_i} \,\cdot \, F_j  \, = \,  {\big( q_{ij}^{\,\varGamma_{\!+}} \big)}^{-1} \, F_j  \;\; ,  \qquad \hskip7pt
      L_{\gamma^-_i} \,\cdot \, F_j  \, = \,  q_{ji}^{\,\varGamma_{\!-}} \, F_j  \\
  K_{\gamma^+_i} \,\cdot \, K_{\alpha_{j}}  \, = \,   K_{\alpha_{j}}  \;\; ,  \qquad
      L_{\gamma^-_i} \,\cdot\,  K_{\alpha_{j}}  \, = \,   K_{\alpha_{j}}
\end{align*}
--- where  $ \; q_{rs}^{\,\varGamma_{\!+}} := \prod_{k \in I} {\big( q_{ks}^{\,1/c_{{}_+}} \big)}^{\!c_{rk}^{+,\prime\prime}} \; $
and  $ \,\; q_{ae}^{\,\varGamma_{\!-}} := \prod_{k \in I} {\big( q_{ak}^{\,1/c_{{}_-}} \big)}^{\!c_{ek}^{-,\prime\prime}} \; $
---   that makes  $ U_\bq $  into a  $ U_{\bq\,,\varGamma_{\!\bullet}}^0 $--module algebra.
This allows us to consider the  {\it smash product\/} Hopf algebra
$ \, U_{\bq\,,\varGamma_{\!\bullet}}^0 \ltimes U_\bq \, $;
its product is given by the formula \eqref{eq:smashprod}.
As  $ U_{\bq\,,\varGamma_{\!\bullet}}^0 $  is a right  $ U_\bq^0 $--module
Hopf algebra with respect to the adjoint action, one may consider the vector space
$ \, U_{\bq\,,\varGamma_{\!\bullet}}^0 \mathop{\otimes}\limits_{U_\bq^0} U_\bq \, $.
Moreover, the smash product  $ \, U_{\bq\,,\varGamma_{\!\bullet}}^0 \ltimes U_\bq \, $
maps onto a Hopf algebra structure on it, which hereafter we denote by
$ \, U_{\bq\,,\varGamma_{\!\bullet}}^0 \mathop{\ltimes}\limits_{U_\bq^0} U_\bq \, $
--- see, e.g.,  \cite[Theorem 2.8]{Len}.  We define then
\begin{equation}  \label{def-MpQUEA_wt_lrg-torus}
  U_{\bq\,,\varGamma_{\!\bullet}}(\hskip0,8pt\liegd)  \, \equiv \,  U_{\bq\,,\varGamma_{\!\bullet}}  \, :=
  \,  U_{\bq\,,\varGamma_{\!\bullet}}^0 \mathop{\ltimes}\limits_{U_\bq^0} U_\bq  \, =
  \, U_{\bq\,,\varGamma_{\!\bullet}}^0 \mathop{\ltimes}\limits_{U_\bq^0} \QEq
\end{equation}
   \indent   Since the coalgebra structure is the one given by the tensor product, to give a
   presentation by generators and relations like that for  $ \QEq \, $,  one has to describe
   only the algebra structure.  For this, one has to replace the generators
   $ \, K_i^{\pm 1} = K_{\pm \alpha_i} \, $  and  $ \, L_i^{\pm 1} = L_{\pm \alpha_i} \, $
   with the generators  $ \, \Kc_i^{\pm 1} = K_{\pm \gamma^+_i} \, $  and
   $ \, \Lc_i^{\pm 1} = L_{\pm \gamma^-_i} \, $;  replace relations  {\it (c)\/}  and  {\it (d)\/}  of
   Definition \ref{def:multiqgroup_ang}  with the following, generalized relations:
\begin{align*}
  \big(c'\big)  \qquad  &  \hskip3pt  K_{\gamma^+_i\,} E_j \, K_{\gamma^+_i}^{-1}  \,
  = \,  q_{ij}^{\,\varGamma_{\!+}} \, E_j  \;\; ,  \qquad
      L_{\gamma^-_i\,} E_j \, L_{\gamma^-_i}^{-1}  \, = \,  {\big( q_{ji}^{\,\varGamma_{\!-}} \big)}^{-1} \, E_j  \\
  \big(d'\big)  \qquad  &  \hskip5pt  K_{\gamma^+_i\,} F_j \, K_{\gamma^+_i}^{-1}  \, =
  \,  {\big( q_{ij}^{\,\varGamma_{\!+}} \big)}^{-1} \, F_j  \;\; ,  \qquad \hskip1pt
      L_{\gamma^-_i\,} F_j \, L_{\gamma^-_i}^{-1}  \, = \,  q_{ji}^{\,\varGamma_{\!-}} \, F_j
\end{align*}
finally, in relation  {\it (e)\/}  replace the elements  $ K_i $  and  $ L_i $  by  $ K_{\alpha_i} $
and $ L_{\alpha_i} \, $,  respectively, and leave the quantum Serre relations  {\it (f)\/}  and {\it (g)\/} unchanged.
                                 \par
   With much the same approach, one defines also the ``(multiparameter) quantum subgroups'' of
   $ U_{\bq\,,\varGamma_{\!\bullet}}(\hskip0,8pt\liegd) $  akin to those of  $ \QEq $  (cf.\  Definition \ref{def:q-Bor-sbgr}),
   that we denote by adding a subscript  $ \varGamma_{\!\bullet} \, $,  namely
   $ U_{\bq\,,\varGamma_{\!\bullet}}^+ \, $,  $ U_{\bq\,,\varGamma_\bullet}^- \, $,
   $ U_{\bq\,,\varGamma_{\!\bullet}}^\geq \, $,  $ U_{\bq\,,\varGamma_{\!\bullet}}^\leq \, $,
   $ U_{\bq\,,\varGamma_{\!\bullet}}^{+,0} $  and  $ U_{\bq\,,\varGamma_{\!\bullet}}^{-,0} \, $.
\end{free text}

\vskip7pt

\begin{free text}  \label{duality x larger MpQUEA's}
 {\bf Duality among MpQUEA's with larger torus.}  Let again  $ \, \varGamma_\pm \, $
 be two lattices of rank  $ n $  in  $ \QQ{}Q $
 containing  $ Q \, $,  and set  $ \, \varGamma_\bullet := \varGamma_+ \times \varGamma_- \, $.
 We repeat our assumptions  (cf.\ \S \ref{larger-MpQUEA's}).  We fix bases  $ {\big\{\, \gamma_s^\pm \,\big\}}_{s \in I} $
 of  $ \varGamma_\pm \, $,
 the matrices
 $ \, C_\pm = {\big( c_{ij}^\pm \big)}_{i,j \in I} \, $,  $ \, C_\pm^{\,-1} = {\big( c_{ij}^{\pm,\prime\,} \big)}_{i,j \in I} \, $,
 and write  $ \, c_\pm := \big| \text{\sl det}(C_\pm) \big| \, $  and
 $ \, c_{ij}^{\pm,\prime\prime} := c_\pm \cdot c_{ij}^{\pm,\prime} \, $  ($ \, i, j \in I \, $).
 In addition, we assume that  $ \FF $  contain a  $ (c_+\,c_-) $--th  root of  $ \, q_{ij} \, $,  say
 $ q_{ij}^{\,1/(c_{{}_+}c_{{}_-})} $,  and that overall the multiparameter
 $ \, \bq^{\,1/(c_{{}_+}c_{{}_-})} := {\Big( q_{ij}^{\,1/(c_{{}_+}c_{{}_-})} \Big)}_{i,j \in I}\, $  be of Cartan type.
                                                               \par
   It is straightforward to check that the Hopf skew-pairing
   $ \; \eta : U_\bq^\geq \mathop{\otimes}\limits_\FF U_\bq^{\leq} \!\relbar\joinrel\relbar\joinrel\longrightarrow \FF \; $
   in  Proposition \ref{sk-H_pair}   --- cf.\  Remarks \ref{rmks:mpqg's-vs-Nichols}{\it (a)\/}  too
   --- actually extends to a similar skew-pairing
   $ \; U_{\bq\,,\varGamma_+}^\geq \mathop{\otimes}\limits_\FF
   U_{\bq\,,\varGamma_-}^{\leq} \!\! \relbar\joinrel\relbar\joinrel\longrightarrow \FF \; $  given for all  $ \, i, j \in I \, $ by
  $$  \begin{matrix}
   \eta( E_i \otimes\! L_{\gamma^-}) \; = \; 0  \qquad   &   \qquad  \eta(K_{\gamma^+} \!\otimes F_j) \; = \; 0  \\
  \eta(E_i \otimes F_j) \; = \; -\delta_{ij} {{\,q_{ii}\,}\over {\,q_{ii} - 1\,}}  \qquad   &
  \qquad  \eta( K_{\gamma_i^+\!} \otimes L_{\gamma_j^-}) \; = \;
  {\textstyle \prod\limits_{h, k \in I}} {\Big( q_{hk}^{\,1/(c_{{}_+}c_{{}_-})} \Big)}^{\! c_{ih}^{+,\prime\prime}
  c_{jk}^{-,\prime\prime}}
      \end{matrix}  $$
 \vskip5pt
 One easily sees that, using such a Hopf pairing  $ \eta $  between (suitably chosen) quantum Borel subgroups
 $ U_{\bq\,,\varGamma_+}^{\,\geq} $  and  $ U_{\bq\,,\varGamma_-}^{\,\leq} \, $,
 {\it every MpQUEA with larger torus  $ \, U_{\bq\,,\varGamma_{\!\bullet}}(\hskip0,8pt\liegd) \, $
 can be realized as a Drinfeld double (of those quantum Borel subgroups)},
 thus generalizing what happens for MpQUEA's with ``standard'' torus   --- see
 Remarks \ref{rmks:mpqg's-vs-Nichols}{\it (b)}.
\end{free text}

\vskip5pt

\begin{rmk}
 From the description above, it follows that one clearly can suitably extend both  Theorem \ref{thm:sigma_2-cocy}  and
 Corollary \ref{cor: sigma_mprmts -> sigma_MpQUEAs}  to the case of MpQUEA's with larger torus:
 Namely, let  $ \, \varGamma_\pm \, $  be two lattices of rank  $ n $  in  $ \QQ{}Q $  containing  $ Q \, $,  and set
 $ \, \varGamma_\bullet := \varGamma_+ \times \varGamma_- \, $.  In addition, assume that  $ \FF $  contain a
 $ (c_1\,c_2) $--th  root of  $ \, q_{ij} \, $  with  $ \, c_i \in \{c_+ , c_-\} \, $,  say  $ q_{ij}^{\,1/(c_{{}_1}c_{{}_2})} $,
 and that overall the multiparameter  $ \, \bq^{\,1/(c_{{}_1}c_{{}_2})} := {\Big( q_{ij}^{\,1/(c_{{}_1}c_{{}_2})} \Big)}_{i,j \in I}\, $
 be of Cartan type.  Pick the standard MpQUEA $ \, U_{\check{\bq},\varGamma_{\!\bullet}}(\hskip0,8pt\liegd) \, $
 with larger torus  $ \varGamma_\bullet $  and canonical multiparameter  $ \check{\bq} \, $.
 Then there exist a normalized  $ 2 $--cocycle  of the Hopf algebra
 $ \, U_{ \check{\bq},\varGamma_{\!\bullet}}(\hskip0,8pt\liegd) \, $
 and a Hopf algebra isomorphism such that
  $$  U_{\bq\,,\varGamma_{\!\bullet}}(\hskip0,8pt\liegd)  \; \cong \;
  {\big(\, U_{ \check{\bq},\varGamma_{\!\bullet}}(\hskip0,8pt\liegd) \big)}_\sigma  $$
\end{rmk}

\bigskip

\section{Twisted QUEA's vs.\ multiparameter QUEA's}  \label{TwQUEA's-MpQUEA's}

\smallskip

   In this section we show
 that,  \textsl{assuming the Cartan matrix  $ A $  to be of  \textit{finite type}},
 all  TwQUEA's   --- as considered in  \S \ref{twist_polyn-QUEAs}  ---   are actually (isomorphic to) MpQUEA's
 --- for multiparameters in a special, yet quite general, subclass of integral type.
 Conversely, any MpQUEA for such a multiparameter is (isomorphic to) a TwQUEA for a single, specific twist element.
 In short   --- and up to sticking to twists of type  \eqref{Resh-twist_F-uhg}  or  \eqref{Resh-twist_F-uhgd}
 and to multiparameters of integral type ---   we have
 \vskip4pt
   \centerline{\it every TwQUEA is (isomorphic to) a MpQUEA, and viceversa.}
 \vskip4pt
   In all this section, we fix as ground field
   $ \, \FF := \k_q := \lim\limits_{\buildrel {\leftarrow\joinrel\relbar\joinrel\relbar} \over {m \in \NN}} \k\big( q^{1/m} \big) \, $,
   with  $ \k $ being a field of characteristic zero and  $ q $  an indeterminate.
   As before, by  $ q^r $  for any  $ \, r \in \QQ \, $  we always mean
   $ \, q^r = q^{a/d} := {\big( q^{1/d} \big)}^a \, $  if  $ \, r = a/d \, $  with  $ \, a \in \ZZ \, $  and  $ \, d \in \NN_+ \, $.
 In addition,  \textsl{we assume that our (indecomposable) Cartan matrix  $ A $  is of  \textit{finite type}}.

\vskip19pt

\subsection{Twists vs.\ ``rational'' multiparameters}  \label{Tw-vs-ratMp}  \

\vskip11pt

   We introduce now a special subclass of integral multiparameters
   (cf.\  \S \ref{mult-multiparameters})  in  $ \, \FF := \k_q \, $;  then we link them with twist elements
   (as in  \eqref{Resh-twist_F-uhg}  or  \eqref{Resh-twist_F-uhgd}  alike).

\vskip11pt

\begin{free text}  \label{q-rat_Mp}
 {\bf  $ q $--rational  multiparameters.}
 We fix now some more notation.
                                                            \par
   First of all, we remark that for any matrix  $ \, C \in M_n(\QQ) \, $  it is uniquely defined
   $ \, d_C := \min{\big\{\, d \in \NN \,\big|\, d\,C \in M_n(\ZZ) \,\big\}} \, $.  Second, for any given
   $ \, R \in M_n(\QQ) \, $  we adopt notation $ \, \underline{q^R} := {\big( q^{r_{ij}} \big)}_{i,j \in I} \, $.
   We say that
 {\it a multiparameter  $ \, \bq := {\big(\, q_{ij} \big)}_{i,j \in I} \, $  in  $ \, \FF := \k_q \, $
 is of  $ q $--rational  type  $ \, R := {\big( r_{ij} \big)}_{i,j \in I} \, $}
 if  $ \, R \in M_n(\QQ) \, $  and  $ \bq $  is of Cartan type with  $ \, \bq = \underline{q^R} \, $,
 i.e.,  $ \, q_{ij} = q^{r_{ij}} \, $  for all  $ \, i, j \in I \, $.  In other words, writing
 $ \, r_{ij} = b_{ij} \big/ d_R \, $  for some  $ \, b_{ij} \in \ZZ \, $  ($ \, i, j \in I \, $),
 we have that
 {\sl  $ \, \bq := {\big(\, q_{ij} \big)}_{i,j \in I} \, $  is of  $ q $--rational  type
 $ \, R := {\big( r_{ij} \big)}_{i,j \in I} \, $  if and only if it is of integral type  $ \, \big(\,
 q^{1/d_R} , \, B := {\big( b_{ij} \big)}_{i,j \in I} \,\big) \, $}.
                                                                      \par
   In the sequel, we call  $ \, \text{\sl $ q $--Mp}_\QQ \, $  the set of all multiparameters of
   $ q $--rational  type (or simply  ``$ q $--rational  multiparameters'')   ---
   for any possible  $ \, R := {\big( r_{ij} \big)}_{i,j \in I} $  ---   in  $ \k_q \, $.
\end{free text}

\vskip11pt

\begin{free text}  \label{Twists<->Mp's<->2-coc's}
 {\bf The links  $ \, \boldsymbol{\{} $twists$ \boldsymbol{\}} \leftrightarrows \boldsymbol{\{}
 $multiparameters$ \boldsymbol{\}} \leftrightarrows \boldsymbol{\{} $2-cocycles$ \boldsymbol{\}} \, $}.
 We begin defining two maps from  $ M_n(\QQ) $  to itself given by
\begin{equation}  \label{eq: def-theta(Psi)}
   \Psi  \; \mapsto \;  \vartheta(\Psi)  \, := \,
 d_\Psi^{-1}
 D A + A^{\scriptscriptstyle T} \big( \Psi^{\scriptscriptstyle T} \! - \Psi \big) \, A
\end{equation}
 and
\begin{equation}  \label{eq: def-xi(R)}
   R  \; \mapsto \;  \xi(R)  \, := \,  2^{-1} A^{\scriptscriptstyle -T} \big(
 d_R^{-1} D A - R \,\big) \, A^{-1}
\end{equation}
   \indent   A moment's check shows that the following hold:
 \vskip5pt
   \quad  {\it (a)} \quad  $ \; \text{\sl Im}(\vartheta) \, = \, \big\{\, R \in M_n(\QQ ) \,\big|\, R + \! R^{\scriptscriptstyle T} =
 d_R^{-1} \, 2 \, D A \,\big\} \,   =: \, \mathfrak{o}_{{}_{2DA}}(\QQ) \; $
 \vskip3pt
   \quad  {\it (b)} \quad  $ \; \text{\sl Im}(\xi) \, =
   \, \big\{\, \Psi \in M_n(\QQ ) \,\big|\, \Psi + \Psi^{\scriptscriptstyle T} = 0 \,\big\} \,
   =: \, \lieso_n(\QQ) \; $
 \vskip3pt
   \quad  {\it (c)} \qquad  $ \;\; \xi \circ \vartheta \, = \, \id_{\lieso_n(\QQ)} \;\;\; ,
   \;\quad  \vartheta \circ \xi \, = \, \id_{\mathfrak{o}_{{}_{2DA}}(\QQ)} \; $
 \vskip5pt
%
%
   As a consequence,  $ \, \vartheta' := \vartheta\big|_{\lieso_n(\QQ)} \, $  and
   $ \, \xi' := \xi\big|_{\mathfrak{o}_{{}_{2DA}}(\QQ)} \, $  yield mutually inverse bijections
 $ \; \lieso_n(\QQ) \;{\buildrel \vartheta' \over
 {\lhook\joinrel\relbar\joinrel\relbar\joinrel\twoheadrightarrow}}\; \mathfrak{o}_{{}_{2DA}}(\QQ) \; $
 and
 $ \; \lieso_n(\QQ) \;{\buildrel \xi' \over {\twoheadleftarrow\joinrel\relbar\joinrel\relbar\joinrel\rhook}}\;
 \mathfrak{o}_{{}_{2DA}}(\QQ) \; $.
 Note also that, being defined over  {\sl antisymmetric\/}  matrices,  $ \vartheta' $
 is described by the following modified version of  \eqref{eq: def-theta(Psi)}:
\begin{equation}  \label{eq: def-theta'(Psi)}
   \Psi  \; \mapsto \;  \vartheta'(\Psi)  \, := \,
 d_\Psi^{-1} D A + A^{\scriptscriptstyle T}
   \big( \Psi^{\scriptscriptstyle T} \! - \Psi \big) \, A  \, = \,  \big( d_\Psi^{-1}
 D - 2 \, A^{\scriptscriptstyle T} \Psi \big) \, A
\end{equation}
 \vskip3pt
   In addition, there exists also a natural bijection
   $ \; \mathfrak{o}_{{}_{2DA}}(\QQ) \;{\buildrel \chi \over {\lhook\joinrel\relbar\joinrel\relbar\joinrel\relbar\joinrel\twoheadrightarrow}}\;
   \text{\sl $ q $--Mp}_\QQ \; $  given by  $ \, R \mapsto \chi(R) := \underline{q^R} \, $.  Using it, we can define the maps
\begin{equation}  \label{eq: twist->mp}
  \lieso_n(\QQ) \;{\buildrel \chi \circ \vartheta' \over {\lhook\joinrel\relbar\joinrel\relbar\joinrel\relbar\joinrel\relbar\joinrel\twoheadrightarrow}}\;
  \text{\sl $ q $--Mp}_\QQ  \quad ,  \qquad  \Psi \, \mapsto \, \bq_\Psi := \underline{q^{\vartheta'(\Psi)}}
\end{equation}
 and
\begin{equation}  \label{eq: mp->twist}
  \text{\sl $ q $--Mp}_\QQ \;{\buildrel \xi' \circ \chi^{-1} \over
  {\lhook\joinrel\relbar\joinrel\relbar\joinrel\relbar\joinrel\relbar\joinrel\twoheadrightarrow}}\; \lieso_n(\QQ)  \quad ,
  \qquad  \bq = \underline{q^R} \, \mapsto \, \Psi_\bq := \xi'(R)
\end{equation}
 \vskip5pt
\noindent
 that are inverse to each other, hence are bijections.
 \vskip5pt
   {\sl As a matter of notation, in the following when  $ \, \Psi \in \lieso_n(\QQ) \, $  and
   $ \, \bq \in \text{\sl $ q $--Mp}_\QQ \, $
   are such that  $ \, \bq = \big( \chi \circ \vartheta' \big)(\Psi) \, $  and  $ \, \Psi = \big( \xi' \circ \chi^{-1} \big)(\bq) \, $
   we shall write in short  $ \; \Psi \leftrightsquigarrow \bq \; $}.
 \vskip5pt
   Finally, by  Definition \ref{def-sigma},  every multiparameter  $ \, \bq \in \text{\sl $ q $--Mp}_\QQ \, $  uniquely defines a
   corresponding  $ 2 $--cocycle  $ \, \sigma = \sigma_\bq \, $
on  $ \QEqcheck \, $:
this yields a map  $ \, \bq \mapsto \sigma_\bq \, $  which is injective, hence  $ \text{\sl $ q $--Mp}_\QQ $
is in bijection with
the subset  $ {\widehat{\mathcal Z}}^{\,2}_\QQ $  of all  $ \sigma_\bq $'s  inside the set
 $ \, \mathcal{Z}^2\big(\QEqcheck,\FF\big) \, $  of all  $ 2 $--cocycles.
  Composing this bijection with the bijection  $ \; \Psi \leftrightsquigarrow \bq \; $
  we eventually get a third bijection between  $ \lieso_n(\QQ) $   --- encoding ``rational twists'' ---
  and  $ {\widehat{\mathcal Z}}^{\,2}_\QQ $   --- encoding ``rational  $ 2 $--cocycles''.
  We shall shortly denote this bijection by  $ \; \Psi \leftrightsquigarrow \sigma \; $:  explicitly, it is described by
\begin{equation}  \label{eq: twist->cocycle_I}
  \lieso_n(\QQ)
  \;{\lhook\joinrel\relbar\joinrel\relbar\joinrel\relbar\joinrel\twoheadrightarrow}\;
  {\widehat{\mathcal Z}}^{\,2}_\QQ   \;\; ,
  \quad  \Psi \, \mapsto \, \sigma  \qquad  \text{with}  \quad  \sigma\big( K_{\alpha_i} , K_{\alpha_j} \big)
  := q^{-(DA + A^T \Psi A)_{ij}}
\end{equation}
 On the other hand, we shall also consider yet another bijection, namely
\begin{equation}  \label{eq: twist->cocycle_II}
  \lieso_n(\QQ)
  \;{\lhook\joinrel\relbar\joinrel\relbar\joinrel\relbar\joinrel\relbar\joinrel\twoheadrightarrow}\;
  {\widetilde{\mathcal Z}}^{\,2}_\QQ  \quad ,
  \qquad  \Psi \, \mapsto \, \sigma \; ,  \quad  \text{with}  \quad  \sigma\big( K_{\alpha_i} ,
  K_{\alpha_j} \big) := q^{-{(A^T \Psi A)}_{ij}}
\end{equation}
where  $ \, {\widetilde{\mathcal Z}}^{\,2}_\QQ \, $  is nothing but the subset of all  $2 $--cocycles  in
$ \, \mathcal{Z}^2\big(\QEqcheck,\FF\,\big) \, $  of the form given in  \eqref{eq: twist->cocycle_II};
the inverse of this map is clearly given by
\begin{equation}  \label{eq: cocycle->twist_II}
  {\widetilde{\mathcal Z}}^{\,2}_\QQ
  \;{\lhook\joinrel\relbar\joinrel\relbar\joinrel\relbar\joinrel\twoheadrightarrow}\;  \lieso_n(\QQ)   \;\; ,
  \quad  \sigma \, \mapsto \, \Psi_\sigma := -A^{-T} S_\sigma \, A^{-1}
\end{equation}
where  $ \; S_\sigma := {\big( s_{ij} \big)}_{i, j \in I} \; $  is uniquely defined by
$ \, \sigma\big( K_{\alpha_i} , K_{\alpha_j} \big) := q^{s_{ij}} \, $.
                                                                    \par
   In the sequel, we shall denote this last bijective correspondence by  $ \; \Psi \longleftrightarrow \sigma \; $.
\end{free text}

\vskip15pt

\subsection{TwQUEA's vs.\ MpQUEA's: duality}  \label{TwQUEA-MpQUEA: dual}  \

\vskip9pt

   Roughly speaking   --- that is, up to technicalities such as dealing with finite-dimensional objects,
   or dealing with Hopf algebras in categories with a well-behaving notion of ``dual Hopf algebra'', etc.\ ---
   the two notions of ``twist element'' and of ``2-cocycle'' are, by definition, dual to each other (in Hopf-theoretical sense).
   As a consequence the two procedures of
``comultiplication twisting'' and of ``multiplication twisting''
 are dual to each other as well (see  Proposition \ref{prop: duality-deforms}  for a formalization).
                                                                      \par
  Beyond this, we can prove the following: when the Hopf algebras  $ H $  and  $ K $  are opposite (polynomial) quantum
  Borel subgroups in duality, the link between  $ \Psi $  and  $ \sigma $  is ruled precisely by the bijection
  $ \; \Psi \longleftrightarrow \sigma \; $.  In short, we can claim that
 \vskip3pt
   \centerline{\it TwQUEA's and MpQUEA's of (opposite) Borel type are dual to each other}
 \vskip3pt
\noindent
 and in this duality the correspondence  \; {\sl twists}  $ \, \rightleftarrows \, $  {\sl  $ 2 $--cocycles}  \;
 is given by the bi\-jection  $ \; \Psi \longleftrightarrow \sigma \; $.  The precise statement is the following:

\vskip11pt

\begin{theorem}  \label{dual Borel TwQUEA-MpQUEA}
 Let  $ \, U_{q,\varGamma_\pm\!}(\lieb_\pm) \! $  be opposite Borel quantum subgroups
and let also  $ \; \eta : U_{q,\varGamma_+\!}(\lieb_+)
  \mathop{\otimes}\limits_{\;\k_q} {U_{q,\varGamma_-\!}(\lieb_-)} \!\! \relbar\joinrel\relbar\joinrel\longrightarrow \k_q \; $
  be a skew-Hopf pairing
 as in  \S \ref{pol-qgroups_larg-tor}.  Given  $ \, \Psi \in \lieso_n(\QQ) $,  let
 $ \, \varGamma'_\pm := (\id + \psi_+)(\varGamma_\pm) - \psi_-(\varGamma_\mp) + Q_\pm \, $.
 For every  $ \, \sigma \in {\mathcal Z}^2 (U_{q,\varGamma'_-\!}(\lieb_-),\k_{q}) \, $,
 consider the corresponding twistings  $ \, U_{q,\varGamma'_+\!}^\Psi(\lieb_+) $
 and  $ \, {\big( U_{q,\varGamma'_-\!}(\lieb_-) \big)}_\sigma \, $.  Then the extended linear map
 $ \; \eta : U_{q,\varGamma'_+\!}^\Psi(\lieb_+) \mathop{\otimes}\limits_{\;\k_q}
 {\big( U_{q,\varGamma'_-\!}(\lieb_-) \big)}_\sigma \!\! \relbar\joinrel\longrightarrow \k_q \; $
 given in  \S \ref{pol-qgroups_larg-tor}  (see also  \S \ref{duality x larger MpQUEA's})
 with respect to  $ \, \varGamma_\bullet = \varGamma'_+ \times \varGamma'_- \, $
 is again a skew-Hopf pairing with respect to the new, deformed Hopf structures if and only if
 $ \; \Psi \longleftrightarrow \sigma \; $.  In other words, the deformed coproduct
 $ \Delta^{\scriptscriptstyle (\Psi)} \! $  on  $ U_{q,\varGamma'_+\!}(\lieb_+) $  and the deformed product
 $ \, \cdot_\sigma \, $  on  $ U_{q,\varGamma'_-\!}(\lieb_-) $  are dual to each other (via  $ \eta $)
 if and only if  $ \,\; \Psi \longleftrightarrow \sigma \; $.
                                                                           \par
   A symmetric, parallel result holds true when switching the roles of  $ \, \Psi $  and  $ \sigma $
   from left to right and viceversa.
\end{theorem}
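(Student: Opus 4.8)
The plan is to reduce everything to a direct computation on the defining generators. Since a skew-Hopf pairing between Hopf algebras is determined by its values on algebra generators of the first factor against algebra generators of the second (and the ``twisted'' resp.\ ``cocycle-deformed'' generators $E^{\scriptscriptstyle \Psi}_i$, $K_{\gamma^+}$, resp.\ $F^{\scriptscriptstyle \Psi}_i$, $L_{\gamma^-}$, furnish such generating sets for $U_{q,\varGamma'_+\!}^\Psi(\lieb_+)$ and ${\big(U_{q,\varGamma'_-\!}(\lieb_-)\big)}_\sigma$ by the analysis in \S\ref{twist-gen's_x_twist-QUEA's} and \S\ref{deforming-MpQUEAs}), it suffices to check the skew-Hopf pairing axioms of Definition \ref{def_(skew-)Hopf-pairing} on these generators. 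The key observation is that, in the ``twisted generator'' presentation, the \emph{coproduct} of $U_{q,\varGamma'_+\!}^\Psi(\lieb_+)$ looks formally identical to the undeformed one (cf.\ the formulas in \S\ref{twist-gen's_x_twist-QUEA's}: $\Delta^{\scriptscriptstyle (\Psi)}(E^{\scriptscriptstyle \Psi}_\ell)=E^{\scriptscriptstyle \Psi}_\ell\otimes 1+K_{+\tau^+_\ell}\otimes E^{\scriptscriptstyle \Psi}_\ell$, etc.), and dually the \emph{product} on ${\big(U_{q,\varGamma'_-\!}(\lieb_-)\big)}_\sigma$ differs from the undeformed one only by the scalar $2$-cocycle factors on the toral part. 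Thus the pairing axioms that do not involve the toral-toral pairing are automatically preserved, and the whole question collapses to a single scalar identity.

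First I would write out the relevant instances of the two multiplicativity axioms. On the one hand, $\eta\big(K_{\gamma_i^+}, L_{\gamma_j^-}\cdot_\sigma L_{\gamma_k^-}\big)$ must equal $\eta\big((K_{\gamma_i^+})_{(1)},L_{\gamma_j^-}\big)\,\eta\big((K_{\gamma_i^+})_{(2)},L_{\gamma_k^-}\big)$; since $\Delta(K_{\gamma_i^+})=K_{\gamma_i^+}\otimes K_{\gamma_i^+}$ is undeformed and $L_{\gamma_j^-}\cdot_\sigma L_{\gamma_k^-}=\sigma(L_{\gamma_j^-},L_{\gamma_k^-})\,L_{\gamma_j^-}L_{\gamma_k^-}\,\sigma^{-1}(\cdots)$, this forces a relation between $\eta(K_{\gamma_i^+},L_{\gamma_j^-+\gamma_k^-})$, the pair of values $\eta(K_{\gamma_i^+},L_{\gamma_j^-})\eta(K_{\gamma_i^+},L_{\gamma_k^-})$, and $\sigma$ evaluated on the $L$'s. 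On the other hand, the axiom $\eta(h'h'',k)=\eta(h',k_{(2)})\,\eta(h'',k_{(1)})$ applied to $h'=h''=E^{\scriptscriptstyle \Psi}_\ell$ (or mixed $E^{\scriptscriptstyle \Psi}$–$K$) against a toral element reproduces, after using the undeformed-looking coproduct of the twisted generators, exactly the constraint that the $\Psi$-twisted coproduct on the $+$ side be adjoint to the $\sigma$-deformed product on the $-$ side. Carrying out these substitutions and using $\eta\big(K_{\gamma_i^+},L_{\gamma_j^-}\big)=q^{\,\langle\gamma_i^+,\gamma_j^-\rangle}$-type formulas from \S\ref{duality x larger MpQUEA's}, one reduces the ``if and only if'' to the matrix identity relating the exponent matrix $S_\sigma$ of $\sigma$ on the $K_{\alpha_i}$'s to $A^{\scriptscriptstyle T}\Psi A$, which is precisely the definition of $\Psi\longleftrightarrow\sigma$ in \eqref{eq: cocycle->twist_II}. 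The key point is that the \emph{antisymmetry} of $\Psi$ (ensured by $\Psi\in\lieso_n(\QQ)$, cf.\ Lemma \ref{psi_pm-psi_mp-antisym}) is exactly what matches the fact that a bimultiplicative $2$-cocycle on an abelian group is determined by an antisymmetrizable bicharacter, so the correspondence is forced to be the antisymmetric one $\Psi=-A^{-T}S_\sigma A^{-1}$ rather than some other affine variant like $\vartheta'$.

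For the symmetric statement (switching $\Psi$ and $\sigma$ from right to left) I would simply invoke that the whole setup is self-dual: $U_{q,\varGamma_-\!}(\lieb_-)$ and $U_{q,\varGamma_+\!}(\lieb_+)$ are opposite Borel quantum subgroups in skew-Hopf pairing via $\eta$ composed with the flip, so running the same argument with the roles of $\lieb_+$ and $\lieb_-$ interchanged gives the dual conclusion verbatim, with the same bijection $\Psi\longleftrightarrow\sigma$ because $A^{\scriptscriptstyle T}\Psi A$ is carried to its negative transpose under the interchange, which is again $A^{\scriptscriptstyle T}\Psi A$ precisely when $\Psi$ is antisymmetric.

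The main obstacle I anticipate is purely bookkeeping: correctly tracking the toral exponents through the larger-lattice pairing (the $c_{ij}^{\pm,\prime\prime}$ data and the $(c_+c_-)$-th roots from \S\ref{duality x larger MpQUEA's}), and making sure the twisted generators $E^{\scriptscriptstyle \Psi}_i=K_{-\zeta^-_{i,-}}E_i$ are paired against the right toral corrections so that the ``cross terms'' $\eta(E^{\scriptscriptstyle \Psi}_i, L_{\gamma^-})$ genuinely vanish and do not secretly contribute. Once those cross terms are shown to stay zero (which follows from the $K$-part of $E^{\scriptscriptstyle \Psi}_i$ pairing trivially with $L$'s, since $\eta(K,L)$ only sees the standard lattice pairing and $\zeta^-_{i,-}$ is orthogonal to nothing problematic — or more precisely, the contribution is absorbed into the $\tau^\pm$-exponents that also appear in the deformed coproduct), the remaining identity is the single scalar equation above, and verifying its equivalence to $\Psi\longleftrightarrow\sigma$ is immediate from \eqref{eq: twist->cocycle_II}–\eqref{eq: cocycle->twist_II}. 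I would therefore structure the proof as: (1) reduce to generators; (2) observe coalgebra/algebra structures are ``formally undeformed'' in the twisted/cocycle presentations except on the torus; (3) show cross terms vanish; (4) extract the one toral identity and match it to $\eqref{eq: cocycle->twist_II}$; (5) deduce the symmetric statement by self-duality.
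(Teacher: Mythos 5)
Your overall strategy (reduce to generators, extract one scalar/matrix identity on toral exponents, and match it with \eqref{eq: cocycle->twist_II}) is the same as the paper's, and you correctly predict the final identity $\Psi=-A^{-T}S_\sigma A^{-1}$. However, the two pairing instances you single out as carrying the content are precisely the ones that are \emph{vacuous}, and the one instance that actually produces the constraint is missing. First, the toral--toral instance $\eta\big(K_{\gamma^+_i},\,L_{\gamma^-_j}\cdot_\sigma L_{\gamma^-_k}\big)$ gives no relation involving $\sigma$ at all: for group-like elements $g,h$ one has $m_\sigma(g,h)=\sigma(g,h)\,g\,h\,\sigma^{-1}(g,h)=g\,h$, so the cocycle factors cancel identically and this axiom reduces to the undeformed one (this is exactly condition \eqref{eq: cond-Hopf_III} in the paper, which holds automatically). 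Second, the axiom $\eta(h'h'',k)=\eta(h',k_{(2)})\,\eta(h'',k_{(1)})$ that you propose to apply with $h'=h''=E^{\scriptscriptstyle\Psi}_\ell$ involves the \emph{product} on the $+$ side (unchanged by the twist) and the \emph{coproduct} on the $-$ side (unchanged by the cocycle), so it too is automatically preserved and yields nothing.

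The entire content of the equivalence sits in the \emph{other} multiplicativity axiom, $\eta\big(h,\,k'\cdot_\sigma k''\big)=\eta\big(h^{(\Psi)}_{(1)},k'\big)\,\eta\big(h^{(\Psi)}_{(2)},k''\big)$, evaluated at $h=E_j$ and $\{k',k''\}=\{K_{\alpha_i^-},F_j\}$ (in either order). There the two deformations genuinely interact: on the left, $K_{\alpha_i^-}\cdot_\sigma F_j=\sigma^{-1}\big(K_{\alpha_i^-},K_{\alpha_j^+}^{-1}\big)\,K_{\alpha_i^-}F_j$ picks up a nontrivial cocycle factor because $F_j$ is skew-primitive rather than group-like; on the right, $\Delta^{\scriptscriptstyle(\Psi)}(E_j)$ contributes the shifted group-like $K_{+(\id+\psi_+)(\alpha_j^+)}$, producing $q^{-((\id+\psi_+)(\alpha_j^+),\,\alpha_i^-)}$. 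Equating the two yields $\sigma\big(K_{\alpha_i^-},K_{\alpha_j^+}\big)=q^{\,d_ia_{ij}-((\id+\psi_+)(\alpha_j^+),\,\alpha_i^-)}=q^{-(A^{T}\Psi A)_{ij}}$, which is \eqref{eq: Psi<--->sigma}. Without this computation your argument never actually links $\Psi$ to $\sigma$; as written, steps (2)--(4) of your outline would only ever verify identities that hold for every $\sigma$ and every $\Psi$. The fix is local --- replace your two chosen instances by this mixed $E$--$(K\cdot_\sigma F)$ one --- but as it stands the proof has a genuine gap.
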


\begin{proof}
 This is a sheer matter of computation.  Indeed, let us consider for instance the element
 $ \, \Delta^{\scriptscriptstyle (\Psi)}(E_j) \, $:  by construction, if we consider the standard
 $ Q $--grading  on  $ U_{q,\varGamma'_-\!}(\lieb_-) $  then for
 $ \, Y \in {U_{q,\varGamma'_-\!}(\lieb_-)}^{\times 2} \, $  we see that  $ \, \eta\big( E_j \, , Y \,\big) \not= 0 \, $
 can only occur with elements which actually belong to the  $ \k_q $--span  of elements of the form
 $ \, K_{\gamma_-} \cdot_\sigma F_j \, $  or  $ \, F_j\cdot_\sigma K_{\gamma_-} \, $  with
 $ \, \gamma_- \in \varGamma'_- \, $;  in fact, to simplify the notation we can assume
 $ \, \gamma_- = \alpha_i^- \, $.  Now write
 $ \, \Delta^{\scriptscriptstyle (\Psi)}(E_j) = (E_j)_{(1)}^\Psi \otimes (E_j)_{(2)}^{\Psi} =
 E_j \otimes K_{+\psi_-(\alpha_j^-)} + K_{+(\text{id} + \psi_+)(\alpha_j^+)} \otimes E_j \; $.
 Then, a direct computation gives
  $$  \displaylines{
%
%
   \qquad   \big( \eta \otimes \eta \big) \big( \Delta^{\scriptscriptstyle (\Psi)}(E_j) , K_{\alpha_i^-} \otimes F_j \big)
      \; = \;  \eta\big((E_j)_{(1)}^{\Psi} , K_{\alpha_i^-}\big) \eta\big((E_j)_{(2)}^{\Psi}, F_j \big)  \; =   \hfill  \cr
   \qquad \qquad   = \;  \eta\big( E_j,  K_{\alpha_i^-}\big) \, \eta\big( K_{+\psi_-(\alpha_j^-)},  F_j\big) \, + \,
   \eta\big(K_{+(\text{id} + \psi_+)(\alpha_j^+)}, K_{\alpha_i^-}\big) \, \eta\big(E_j \, ,  F_j \big)  \; =   \hfill  \cr
   \qquad \qquad \qquad   = \;  \eta\big( K_{+(\text{id} + \psi_+)(\alpha_j^+)} \, ,
   K_{\alpha_i^-} \big) \, \eta\big( E_j \, , F_j \big)  \;
   = \;  q^{-( (\text{id} + \psi_+)(\alpha_j^+) \,, \, {\alpha_i^-})} \, \eta\big( E_j \, , F_j \big)   \hfill  }  $$
   \indent   On the other hand, for the deformed product  $ \, \cdot_\sigma \, $  in
   $ {\big( U_{q,\varGamma'_-\!}(\lieb_-) \big)}_\sigma $  we have
  $$  \displaylines{
   \quad   K_{\alpha_i^-} \cdot_\sigma F_j  \, =
   \,  \sigma\big( {(K_{\alpha_i^-})}_{(1)} \, , {(F_j)}_{(1)} \big) \, {K_{\alpha_i^-}}_{(2)} \, {(F_j)}_{(2)} \,
   \sigma^{-1}\big( {(K_{\alpha_i^-})}_{(3)} \, , {(F_j)}_{(3)} \big)
   \, =   \hfill  \cr
   \hfill   = \,  \sigma\big( K_{\alpha_i^-} \, , {(F_j)}_{(1)} \big) \, \sigma^{-1}\big( K_{\alpha_i^-}
   \, , {(F_j)}_{(3)} \big) \,  K_{\alpha_i^-} \, {(F_j)}_{(2)}   \,
   = \,  \sigma^{-1}\big( K_{\alpha_i^-} \, , K_{\alpha_j^+}^{-1} \big) \, K_{\alpha_i^-} \, F_j   \quad  }  $$
so that
  $$  \displaylines{
   \qquad   \eta\big( E_j \, , K_{\alpha_i^-} \cdot_\sigma F_j \big)  \; = \;
   \sigma^{-1}\big( K_{\alpha_i^-} \, , K_{\alpha_j^+}^{-1} \big) \, \eta\big( E_j \, , K_{\alpha_i^-} \, F_j \big)  \; =   \hfill  \cr
   \qquad \qquad   = \;  \sigma^{-1}\big( K_{\alpha_i^-} \, , K_{\alpha_j^+}^{-1} \big) \,
   \eta\big( K_{\alpha_j^+} \, , K_{\alpha_i^-} \big) \, \eta\big( E_j \, , F_j \big)  \; =   \hfill  \cr
   \qquad \qquad \qquad   = \;  q^{-d_i a_{ij}}\, \sigma\big( K_{\alpha_i^-} \, , K_{\alpha_j^+} \big) \,
   \eta\big( E_j \, , F_j \big)  }  $$
   \indent   Comparing with the above, this means that we have
\begin{equation}  \label{eq: cond-Hopf_I}
  \big( \eta \otimes \eta \big) \big( \Delta^{\scriptscriptstyle (\Psi)}(E_j) ,
  K_{\alpha_i^-} \otimes F_j \big)  \,\; = \;\,  \eta\big( E_j \, , K_{\alpha_i^-} \cdot_\sigma F_j \big)
\end{equation}
if and only if
\begin{equation}  \label{eq: Psi<--->sigma}
  \sigma\big( K_{\alpha_i^-} \, , K_{\alpha_j^+} \big)  \; = \;  q^{\,d_i a_{ij}-( (\text{id} +
  \psi_+)(\alpha_j^+) \,, \, \alpha_i^-)}  \,\; = \;\,  q^{-(A^T \Psi A)_{ij}}
\end{equation}
and  {\sl the last condition means that}  $ \, \Psi \longleftrightarrow \sigma \, $.
Similar computations show that
\begin{equation}  \label{eq: cond-Hopf_II}
  \eta\big( \Delta^{\scriptscriptstyle (\Psi)}(E_j) \, , F_j \otimes K_{\alpha_i^-} \big)  \,\; =
  \;\,  \eta\big( E_j \, , F_j \cdot_\sigma K_{\alpha_i^-} \big)
\end{equation}
if and only if  \eqref{eq: Psi<--->sigma}  holds, again, that is if and only if
$ \, \Psi \longleftrightarrow \sigma \, $.
                                                          \par
   Furthermore, notice that  $ \; \Delta^{\scriptscriptstyle (\Psi)}\big( K_{\gamma_+} \big) =
   \Delta\big( K_{\gamma_+} \big) \; $  and
   $ \; K_{\gamma'_-} \cdot_\sigma K_{\gamma''_-} = K_{\gamma'_-} \, K_{\gamma''_-} \; $
   for all  $ \, \gamma'_\pm, \gamma''_\pm \in \varGamma_\pm \, $,
   so that we automatically have
\begin{equation}  \label{eq: cond-Hopf_III}
  \eta\big( \Delta^{\scriptscriptstyle (\Psi)}\big(K_{\gamma_+}\big) \, , K_{\gamma'_-} \otimes K_{\gamma''_-} \big)  \,\;
  = \;\,  \eta\big( K_{\gamma_+} \, , K_{\gamma'_-} \cdot_\sigma K_{\gamma''_-} \big)
\end{equation}
   \indent   Now, conditions  \eqref{eq: cond-Hopf_I},  \eqref{eq: cond-Hopf_II}  and
   \eqref{eq: cond-Hopf_III}  altogether are the conditions for
   $ \Delta^{\scriptscriptstyle (\Psi)} \! $  and  $ \, \cdot_\sigma \, $  to be
   {\sl dual to each other\/}  via  $ \eta $   --- so that the pairing  $ \eta $  itself be a
   {\sl Hopf skew-pairing\/}  w.r.t.\ the new, deformed structures.
   The above proves that the sole necessary and sufficient condition for all this is
   \eqref{eq: Psi<--->sigma},  i.e., that  $ \; \Psi \longleftrightarrow \sigma \; $,  \, as claimed.
                                                    \par
   The same argument proves also the last part of the claim, when the roles of  $ \Psi $  and  $ \sigma $  are interchanged.
\end{proof}

\vskip15pt

\subsection{TwQUEA's vs.\ MpQUEA's: correspondence}  \label{TwQUEA-MpQUEA: corr}  \

\vskip9pt

   We shall presently prove the following striking fact:  {\it the classes of TwQUEA's and of MpQUEA's associated with
   ``rational'' data actually coincide}.  More precisely, if we consider a (rational) antisymmetric twist  $ \Psi $  and a
 $ q $--rational  multiparameter  $ \bq $  such that  $ \, \Psi \leftrightsquigarrow \bq \, $,
 then any TwQUEA (over  $ \lieb_\pm \, $,  $ \lieg $  or  $ \liegd $)  with twist  $ \Psi $
 and any MpQUEA (over  $ \lieb_\pm \, $,  $ \lieg $  or  $ \liegd $) with multiparameter  $ \bq $
 are isomorphic to each other.
                                                           \par
   In other words, as each MpQUEA is a  $ 2 $--cocycle  deformation of the canonical quantum group,
   this result can be read as follows:  {\it every comultiplication twisting by a (rational)  {\sl twist}  of a canonical
   quantum group is a multiplication twisting by a (rational)  {\sl  $ 2 $--cocycle},  and viceversa},
   with the correspondence  \, {\sl twist}  $ \rightleftarrows $  {\sl  $ 2 $--\,cocycle} \;  ruled by
   $ \, \Psi \leftrightsquigarrow \sigma \; $.

\vskip13pt

   The precise statement of our main result   --- formulated here for ``double'' quantum groups ---   reads as follows:

\vskip13pt

\begin{theorem}  \label{thm: Tw-Uqgd=Mp-Uqgd}
 Let  $ \, \Psi \in \lieso_n(\QQ) \, $  and  $ \, \bq \in \text{\sl $ q $--Mp}_\QQ \, $  be such that
 $ \; \Psi \leftrightsquigarrow \bq \; $.  Let  $ M_\pm $  be any lattice in  $ \QQ{}Q $  containing  $ Q \, $,
 with  $ {\big\{ \mu^\pm_i \big\}}_{i \in I} $  any  $ \ZZ $--basis  of it, let  $ M_\pm^{\scriptscriptstyle \Psi} $
 be the sublattice of  $ \, {\QQ{}Q}^{\times 2} $  with  $ \ZZ $--basis
 $ \; {\big\{\, \varpi^\pm_i := \mu^\pm_i + \psi_\pm\big( \mu^\pm_i \big) - \psi_\mp\big( \mu^\mp_i \big) \big\}}_{i \in I} \; $
 and consider in  $ \, {\QQ{}Q}^{\times 2} $  also the lattices  $ \, M_\bullet := M_+ \times M_- \, $  and
 $ \, M_\ast^{\scriptscriptstyle \Psi} := M_+^{\scriptscriptstyle \Psi} + M_-^{\scriptscriptstyle \Psi} \, $.
                                                                      \par
   Let  $ \, U_{\bq\,,M_{\bullet}}\big(\hskip0,8pt\liegd\big) $  be the MpQUEA associated with the lattice
   $ M_{\bullet} \, $  (inside  $ \, {\QQ{}Q}^{\times 2} $),  and
   $ \, {\hat{U}}_{q\,,M_\ast^{\,\scriptscriptstyle \Psi}}^{\scriptscriptstyle \Psi}\big(\hskip0,8pt\liegd\big) \, $
   be the TwQUEA associated with  $ M_\ast^{\scriptscriptstyle \Psi} $  (cf.\ \S \ref{twist-gen's_x_twist-QUEA's}).
   Then there exists a Hopf algebra isomorphism %
 $ \,\; U_{\bq\,,M_{\bullet}}\big(\hskip0,8pt\liegd\big) \, \cong \,
 {\hat{U}}_{q\,,M_\ast^{\scriptscriptstyle \Psi}}^{\scriptscriptstyle \Psi}\big(\hskip0,8pt\liegd\big) \;\, $
 given (for  $ \, i \!\in\! I $)  by
  $$  \displaylines{
   E_i \, \mapsto \, q_i E^{\scriptscriptstyle \Psi}_i := \, q_i K_{-\psi_-(\alpha^-_i)} \, E_i  \;\; ,
   \quad  K_{\mu_i^+} \, \mapsto \, K_{+(\mu^+_i + \psi_+(\mu^+_i) - \psi_-(\mu^-_i))} \, = \, K_{+\varpi^+_i}  \cr
   L_{\mu_i^{-}} \; \mapsto \;  \, K_{-(\mu^-_i + \psi_-(\mu^-_i) - \psi_+(\mu^+_i))} \, = \, K_{-\varpi^-_i}  \;\; ,
   \;\quad  F_i \; \mapsto \; F^{\scriptscriptstyle \Psi}_i := \, K_{+\psi_+(\alpha^+_i)} \, F_i  }  $$
 In other words, letting  $ \, \sigma = \sigma_\bq \, $  be the (rational)  $ 2 $--cocycle  corresponding to
 $ \bq $  so that  $ \, \Psi \leftrightsquigarrow \sigma \, $  (as in  \S \ref{Twists<->Mp's<->2-coc's}),
 we have a Hopf algebra isomorphism
  $$  {\big( U_{\check{\bq}\,,M_{\bullet}}\big(\hskip0,8pt\liegd\big) \big)}_\sigma  \; \cong \;
  {\hat{U}}_{q\,,M_\ast^{\scriptscriptstyle \Psi}}^{\scriptscriptstyle \Psi}\big(\hskip0,8pt\liegd\big)  $$
given by the same formulas as above.
\end{theorem}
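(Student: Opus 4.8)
The strategy is to verify directly that the proposed assignment on generators extends to a well-defined algebra homomorphism, then show it is bijective, and finally check compatibility with the Hopf structure. The crucial preliminary observation is that the generators appearing on the right-hand side of the assignment, namely the ``twisted generators'' $E_i^{\scriptscriptstyle \Psi} = K_{-\psi_-(\alpha_i^-)} E_i$, $F_i^{\scriptscriptstyle \Psi} = K_{+\psi_+(\alpha_i^+)} F_i$ and the toral elements $K_{\pm\varpi_i^\pm}$, are precisely those introduced in \S \ref{twist-gen's_x_twist-QUEA's}, for which we already recorded that their coproduct, counit and antipode take the \emph{untwisted-looking} form (the displayed formulas $\Delta^{\scriptscriptstyle (\Psi)}(E_\ell^{\scriptscriptstyle \Psi}) = E_\ell^{\scriptscriptstyle \Psi} \otimes 1 + K_{+\tau_\ell^+} \otimes E_\ell^{\scriptscriptstyle \Psi}$, etc., with $\tau_i^\pm = \varpi_i^\pm$). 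So on the coalgebra side the map is visibly a coalgebra morphism once we match $K_i \leftrightarrow K_{\varpi_i^+}$ and $L_i \leftrightarrow K_{-\varpi_i^-}$, and the real content lies entirely in the \emph{algebra} relations.

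\textbf{Step 1: the algebra relations match.} I would take the defining relations (a)--(g) of $U_{\bq,M_\bullet}(\liegd)$ from Definition \ref{def:multiqgroup_ang} (in the larger-torus version $(c')$, $(d')$ of \S \ref{larger-MpQUEA's}) and check that their images hold in $\hat U^{\scriptscriptstyle \Psi}_{q,M_\ast^{\scriptscriptstyle \Psi}}(\liegd)$. Relations (a), (b) are immediate since the target toral part is a (commutative) group algebra of a lattice. For (c), (d) one computes $K_{\varpi_i^+}\, E_j^{\scriptscriptstyle \Psi}\, K_{\varpi_i^+}^{-1}$ using the commutation of $K_\gamma$ with $E_j$ inside $U_{q}(\liegd)$, namely $K_\gamma E_j K_\gamma^{-1} = q^{(\gamma,\alpha_j)} E_j$ (from \S \ref{larger-tori_QUEA's}); the exponent becomes $(\varpi_i^+,\alpha_j) = ((\mathrm{id}+\psi_+)(\mu_i^+) - \psi_-(\mu_i^-),\alpha_j)$, and one must verify this equals the exponent $r_{ij}$ (up to the appropriate sign and scaling by $d_i$) defining $q_{ij} = q^{r_{ij}}$ in the $q$-rational multiparameter $\bq$. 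This is exactly the content of the correspondence $\vartheta'$ in \eqref{eq: def-theta'(Psi)}: the matrix $P = DA + A^{\scriptscriptstyle T}(\Psi^{\scriptscriptstyle T}-\Psi)A$ computed in \S \ref{twist-Uhg} is precisely $\vartheta'(\Psi) = d_\Psi^{-1}DA + A^T(\Psi^T - \Psi)A$ restricted to antisymmetric $\Psi$, and $\Psi \leftrightsquigarrow \bq$ means $\bq = \underline{q^{\vartheta'(\Psi)}}$. So Step 1 reduces, after unwinding the pairing $(\ ,\ )$ on $Q$, to the matrix identities already established in \S \ref{Twists<->Mp's<->2-coc's}. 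Relation (e) requires computing $[E_i^{\scriptscriptstyle \Psi}, F_j^{\scriptscriptstyle \Psi}]$: pulling the toral prefactors $K_{-\psi_-(\alpha_i^-)}$ and $K_{+\psi_+(\alpha_j^+)}$ through and using $[E_i,F_j] = \delta_{ij}(K_{\alpha_i^+} - K_{\alpha_i^-}^{-1})/(q_i - q_i^{-1})$ one should land on $\delta_{ij}\, q_{ii}(K_{\varpi_i^+} - K_{\varpi_i^-}^{-1})/(q_{ii}-1)$ up to rescaling --- this is where the normalization factor $q_i$ in ``$E_i \mapsto q_i E_i^{\scriptscriptstyle \Psi}$'' is needed, and it pins down why that scalar appears. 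Finally the quantum Serre relations (f), (g) are preserved because conjugating $E_j$ by a fixed toral element multiplies the $k$-th summand of the Serre expression by a single scalar (independent of $k$, since the monomials $E_i^{1-a_{ij}-k}E_j E_i^k$ all lie in the same $Q$-graded component), and the $q_{ij}^k$, $q_{ii}^{\binom{k}{2}}$ twist factors in the MpQUEA Serre relation are exactly what the twisted braiding produces --- this is the standard fact that Nichols-algebra Serre relations are twist-equivalence invariant, and can be cited from the identification $U_\bq^\pm \cong \mathfrak{B}(V_{E/F})$ in Remarks \ref{rmks:mpqg's-vs-Nichols}.

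\textbf{Step 2: bijectivity and the Hopf structure.} Once Step 1 gives a well-defined algebra map $\varphi$, surjectivity is clear since the twisted generators $E_i^{\scriptscriptstyle \Psi}$, $F_i^{\scriptscriptstyle \Psi}$, $K_{y_\pm}$ generate $\hat U^{\scriptscriptstyle \Psi}_{q,M_\ast^{\scriptscriptstyle \Psi}}(\liegd)$ by definition (\S \ref{twist-gen's_x_twist-QUEA's}). For injectivity I would compare PBW-type bases: both sides have a triangular decomposition into (nilpotent)$\otimes$(toral)$\otimes$(nilpotent), the nilpotent parts are free on the $E$'s resp. $F$'s in both presentations (only the prefactors change, invertibly), and the toral parts are group algebras of lattices that $\varphi$ maps isomorphically ($M_\bullet = M_+ \times M_- \xrightarrow{\sim} M_\ast^{\scriptscriptstyle \Psi}$ via $\mu_i^\pm \mapsto \varpi_i^\pm$, which is a lattice isomorphism by Lemma \ref{lem:posivedef}(b) / the injectivity of $T_\ell^\pm \mapsto T_{\ell,\pm}^{\scriptscriptstyle \Psi}$ --- one needs $\Psi$ antisymmetric here, exactly our hypothesis $\Psi \in \lieso_n(\QQ)$). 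Hence $\varphi$ carries a basis to a basis, so it is an isomorphism of algebras. That $\varphi$ intertwines $\Delta$, $\epsilon$, $\SS$ is then the observation made above: the formulas in \S \ref{twist-gen's_x_twist-QUEA's} for $\Delta^{\scriptscriptstyle (\Psi)}(E_\ell^{\scriptscriptstyle \Psi})$, $\Delta^{\scriptscriptstyle (\Psi)}(F_\ell^{\scriptscriptstyle \Psi})$, $\Delta^{\scriptscriptstyle (\Psi)}(K_{y_\pm})$ are literally the images under $\varphi^{\otimes 2}$ of the MpQUEA coproduct formulas in Definition \ref{def:multiqgroup_ang} (with $K_i \to K_{\varpi_i^+}$, $L_i \to K_{-\varpi_i^-}$), and similarly for $\epsilon$ and $\SS$; the scalar $q_i$ cancels in the coproduct since it appears on both legs consistently. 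The final ``In other words'' assertion --- the isomorphism ${\big(U_{\check\bq,M_\bullet}(\liegd)\big)}_\sigma \cong \hat U^{\scriptscriptstyle \Psi}_{q,M_\ast^{\scriptscriptstyle \Psi}}(\liegd)$ --- follows by composing the isomorphism just constructed with the $2$-cocycle deformation isomorphism $U_{\bq,M_\bullet}(\liegd) \cong {\big(U_{\check\bq,M_\bullet}(\liegd)\big)}_\sigma$ supplied by Theorem \ref{thm:sigma_2-cocy} (in its larger-torus form, per the Remark at the end of \S \ref{MpQUEAs-larger-torus}), using that $\sigma_\bq$ is precisely the cocycle attached to $\bnu$ with $\bnu.\check\bq = \bq$, and that $\Psi \leftrightsquigarrow \sigma$ encodes the same matrix data as $\Psi \leftrightsquigarrow \bq$ by the chain of bijections in \S \ref{Twists<->Mp's<->2-coc's}.

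\textbf{Expected main obstacle.} The routine-but-delicate point is Step 1 for relations (e) and (f)/(g): one must carefully track how the toral prefactors of the twisted generators conjugate $E_j$, $F_j$ and confirm that the resulting scalars are \emph{exactly} $q_{ij}$, $q_{ii}$ as given by $\bq = \underline{q^{\vartheta'(\Psi)}}$ --- in particular getting the signs, the factor of $2$ (note $\vartheta'$ involves $2A^T\Psi$), and the $d_i$-scaling right, and seeing that the normalization $E_i \mapsto q_i E_i^{\scriptscriptstyle \Psi}$ is forced by the $[E,F]$ relation. All the needed matrix identities are already in \S \ref{Twists<->Mp's<->2-coc's}, so the difficulty is bookkeeping rather than conceptual; conceptually everything is governed by the single correspondence $\Psi \leftrightsquigarrow \bq$, which is why the statement holds.
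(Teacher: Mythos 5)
Your proposal is correct and follows essentially the same route as the paper's proof: define the map on generators, verify the defining relations of $U_{\bq\,,M_\bullet}(\hskip0,8pt\liegd)$ by means of the identity $q_{ij}=q^{\vartheta'(\Psi)_{ij}}=q^{(\alpha_i+\psi_+(\alpha_i)-\psi_-(\alpha_i)\,,\,\alpha_j)}$ (with the antisymmetry of $\psi_+-\psi_-$ from Lemma \ref{psi_pm-psi_mp-antisym} handling the $L$--relations, and $(\psi_+(\alpha_j),\alpha_i)=(\alpha_j,\psi_-(\alpha_i))$ together with the normalization $q_i$ pinning down relation $(e)$), and then observe that the twisted generators carry untwisted-looking coproducts. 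The one step you genuinely do differently is bijectivity: you propose a PBW-basis comparison, whereas the paper simply exhibits the inverse $\Phi'$ on generators ($E_i^{\scriptscriptstyle\Psi}\mapsto q_i^{-1}E_i$, $F_i^{\scriptscriptstyle\Psi}\mapsto F_i$, $K_{+\varpi_i^+}^{\pm1}\mapsto K_{\mu_i^+}^{\pm1}$, $K_{-\varpi_i^-}^{\pm1}\mapsto L_{\mu_i^-}^{\pm1}$) and checks that both composites are the identity; your route works but tacitly uses that the twisted presentation still admits a triangular PBW basis, which the explicit inverse sidesteps. One small caution on the Serre relations: the map is not conjugation by a single toral element, so the scalars produced by commuting the prefactors $K_{-\psi_-(\alpha_i^-)}$ past the $E$'s do depend on $k$ --- your parenthetical about $k$-independence applies only to an overall conjugation --- and the actual matching is exactly the $q_{ii}^{\binom{k}{2}}q_{ij}^{\,k}$ factors you name, which the paper verifies via the identities $(m)_q=q^{(m-1)/2}[m]_{q^{1/2}}$ and ${\binom{m}{k}}_{\!q}=q^{(-k^2+km)/2}\big[{m\atop k}\big]_{q^{1/2}}$.
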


\pf
   Define an algebra map  $ \, \Phi : U_{\bq\,,M_{\bullet}}\big(\hskip0,8pt\liegd\big)
\relbar\joinrel\relbar\joinrel\longrightarrow{\hat{U}}_{q\,,M_\ast^{\scriptscriptstyle \Psi}}^{\scriptscriptstyle \Psi}\big(\hskip0,8pt\liegd\big) \, $
on the generators of  $ \, U_{\bq\,,M_{\bullet}}\big(\hskip0,8pt\liegd\big) \, $  as above.
We only have to prove that such a  $ \Phi $  is well--defined, for then it is clearly surjective. Actually,
$ \Phi $  is well-defined indeed, since the defining relations of  $ U_{\bq\,,M_{\bullet}}\big(\liegd\big) $
--- see  Definition \ref{def:multiqgroup_ang}  and  \S \ref{larger-MpQUEA's}  ---
are all mapped to zero: this follows  straightforward calculations, so we provide only some of them as guidelines.
                                                                   \par
   It is clear that  $ \Phi $  ``respects'' the commutation relations  {\it (a)\/}  and  {\it (b)\/}  in
   Definition \ref{def:multiqgroup_ang},  so now we go for the other ones.  As in  \S \ref{twist-gen's_x_twist-QUEA's},
   write
  $$  K^{\scriptscriptstyle \Psi}_{i,+}  \, := \,
  K_{(\id + \,\psi_+)(\alpha^+_i) - \psi_-(\alpha^-_i)} = K_{\alpha^+_i + \,\zeta^+_{i,+} - \,\zeta^-_{i,-}}  $$
 and
  $$  K^{\scriptscriptstyle \Psi}_{i,-}  \, := \,  K_{-( \id \! + \,\psi_-)(\alpha^-_i) +
  \psi_+(\alpha^+_i)} = K_{-\alpha^-_i - \,\zeta^-_{i,-} \! + \,\zeta^+_{i,+}}  $$
 Recall that, through the correspondence  $ \, \Psi \leftrightsquigarrow \bq \, $,  we have
  $$  \qquad   q_{ij}  \, = \,  q^{{\vartheta'(\Psi)}_{ij}}  \, = \,
  q^{\,(\alpha_i + \,\psi_+(\alpha_i) \, - \, \psi_-(\alpha_i) \, , \, \alpha_{j})}   \qquad \quad  \forall \;\; i, j \in I \;\; .  $$
 Now, by definition we have  $ \, K_{+\varpi^+_i}^{\pm 1} K_{-\varpi^-_j}^{\pm 1} =
 K_{-\varpi^-_j}^{\pm 1} K_{+\varpi^+_i}^{\pm 1} \, $  for all  $ \, i, j \in I \, $.  Moreover,
  $$  \displaylines{
   \quad   \Phi\big( K_i E_j K_i^{-1} \big)  \; = \;  q_j \, \big(K^{\scriptscriptstyle \Psi}_{i,+}\big) \,
   E^{\scriptscriptstyle \Psi}_j \big( K^{\scriptscriptstyle \Psi}_{i,+}\big)^{-1}  \; =   \hfill  \cr
   \quad \qquad   = \;  q_j \, K_{\alpha^+_i + \psi_+(\alpha^+_i) - \psi_-(\alpha^-_i)}
   K_{-\psi_-(\alpha^-_j)} E_j K^{\,-1}_{\alpha^+_i + \psi_+(\alpha^+_i) - \psi_-(\alpha^-_i)}  \; =   \hfill  \cr
   \quad \qquad \qquad   = \;  q^{(\alpha_i + \psi_+(\alpha_i) - \psi_-(\alpha_i) \, , \, \alpha_j)}
   \, q_j \, K_{-\psi_-(\alpha^-_j)} E_j  \; =
   \hfill  \cr
   \quad \qquad \qquad \qquad   = \;\;  q^{\vartheta'(\Psi)_{ij}} \, q_j \, E^{\scriptscriptstyle \Psi}_j  \;
   = \;  q_{ij} \, q_j \, E^{\scriptscriptstyle \Psi}_j  \; = \;  q_{ij} \, \Phi(E_j)
   \hfill
 }  $$
  $$  \displaylines{
   \quad   \Phi\big( L_i E_j L_i^{-1} \big)  \; = \;  q_j \, \big(K^{\scriptscriptstyle \Psi}_{i,-}\big) \,
   E^{\scriptscriptstyle \Psi}_j \big( K^{\scriptscriptstyle \Psi}_{i,-} \big)^{-1}  \; =   \hfill  \cr
   \quad \qquad  = \;  q_j \, K_{-\alpha^-_i - \psi_-(\alpha^-_i) + \psi_+(\alpha^+_i)}
   K_{-\psi_-(\alpha^-_j)} E_j K^{\,-1}_{-\alpha^-_i - \psi_-(\alpha^-_i) + \psi_+(\alpha^+_i)}  \; =   \hfill  \cr
   \quad \qquad \qquad   = \;  q^{(-\alpha_i - \psi_-(\alpha_i) + \psi_+(\alpha_i) \, , \, \alpha_j)} \,
   q_j \, K_{-\psi_-(\alpha^-_j)} E_j  \; \,{\buildrel \circledast \over =}   \hfill  \cr
   \quad \qquad \qquad \qquad   {\buildrel \circledast \over =}\, \;
   q^{-(\alpha_j + \psi_+(\alpha_j) - \psi_-(\alpha_j) \, , \, \alpha_i)} \, q_j \, K_{-\psi_-(\alpha^-_j)} E_j  \; =   \hfill  \cr
   \quad \qquad \qquad \qquad \qquad   = \;  q^{-\vartheta'(\Psi)_{ji}} \, q_j \, E^{\scriptscriptstyle \Psi}_j  \; =
   \;  q_{ji}^{-1} \, q_j \, E^{\scriptscriptstyle \Psi}_j  \; = \;  q_{ji}^{-1} \, \Phi(E_j)   \hfill  }  $$
where
 the equality  $ \, {\buildrel \circledast \over =} \, $  follows from  Lemma \ref{psi_pm-psi_mp-antisym}.
 This proves that  $ \Phi $  ``respects'' the commutation relations  {\it (c)\/}  in  Definition \ref{def:multiqgroup_ang};
 similar computations prove the same for relations  {\it (d)\/}  as well.  To check the relations  {\it (e)},
 we first observe that
  $$ \big( \psi_+(\alpha_j) , \alpha_i \big)  \,\; = \;\,  {\textstyle \sum_{k,\ell}} \, \psi_{k,\ell} \, a_{\ell,j} \, a_{k,i}  \,\; =
  \;\,  \big( \alpha_j , \psi_-(\alpha_i) \big)   \qquad   \text{\ for all \ } i, j \in I\ .  $$
 Then, for all  $ \, i, j \in I \, $  we have
\begin{align*}
   \big[ \Phi(E_i) \, , \Phi(F_j) \big]  &  \; = \;  q_i \big[ E^{\scriptscriptstyle \Psi}_i ,
   F^{\scriptscriptstyle \Psi}_j \big]  \; = \;  q_i \big[ K_{-\psi_-(\alpha^-_i)} \, E_i \, ,
   K_{+\psi_+(\alpha^+_j)} \, F_j \big]  \\
     &  \; = \;  q_i \, \big( K_{-\psi_-(\alpha^-_i)} \, E_i \, K_{+\psi_+(\alpha^+_j)} \, F_j  -
     K_{+\psi_+(\alpha^+_j)} \, F_j \, K_{-\psi_-(\alpha^-_i)} \, E_i \big)  \\
     &  \; = \;  q_i \, K_{-\psi_-(\alpha^-_i)} \, K_{+\psi_+(\alpha^+_j)} \, \big( q^{-(\alpha_i , \psi_+(\alpha_j))} E_i \, F_j -
     q^{-(\alpha_j , \psi_-(\alpha_i))} F_j \, E_i \big)  \\
\end{align*}
\begin{align*}
   &  \; = \;  q_i \, q^{-(\alpha_i , \psi_+(\alpha_j))} \, K_{-\psi_-(\alpha^-_i)} \,
   K_{+\psi_+(\alpha^+_j)} \; \delta_{ij} \, {{\, K_{\alpha^+_i} - K_{\alpha^-_i}^{-1} \,}
\over {\, q_i - q_i^{-1} \,}}  \\
   &  \; = \;  \delta_{ij} \, q_i \, q^{-(A^T \Psi A)_{ii}} \, {{\,\; K_{\alpha^+_i + \psi_+(\alpha^+_i) -
   \psi_-(\alpha^-_i)} - K_{-\alpha^-_i + \psi_+(\alpha^+_i) - \psi_-(\alpha^-_i)} \;\,} \over{\,\; q_i - q_i^{-1} \;\,}}  \\
   &  \; = \;  \delta_{ij} \, q_i^2 \, {{\,\; K^{\scriptscriptstyle \Psi}_{i,+} -
   K^{\scriptscriptstyle \Psi}_{i,-} \;\,} \over {\,\; q_i^2 -1 \;\,}}  \; = \;  \Phi\big( [E_i , F_j] \big)
\end{align*}
 Finally, for the quantum Serre relations  {\it (f)\/}  and  {\it (g)},  we use that for all $ \, m \in \NN \, $
 we have formal identities involving  $ q $--numbers  and  $ q^{\frac{1}{2}} $--numbers,  namely
  $$  {(m)}_q  \, = \,  \frac{\, q^m - 1 \,}{\, q - 1 \,}  \, = \,  q^{\frac{m-1}{2}} \,
\frac{\, q^{\frac{m}{2}} - q^{-\frac{m}{2}} \,}{\, q^{\frac{1}{2}} - q^{-\frac{1}{2}} \,}  \, =
\, q^{\frac{m-1}{2}} \, {[m]}_{q^{\frac{1}{2}}} \;\; ,   \qquad   {\bigg( {m \atop k} \bigg)}_{\!\!q} =
\, q^{\frac{-k^{2}+km}{2}} {\bigg[ {m \atop k} \bigg]}_{\!q^{\frac{1}{2}}}  $$
 and the identities  $ \; \big( (\psi_+ - \psi_-)(\alpha_i) , \alpha_j \big) = -\big( \alpha_i \, ,
 (\psi_+ - \psi_-)(\alpha_j) \big) \; $,  $ \; \big( \psi_+(\alpha_i) , \alpha_j \big) =
 \big( \alpha_i \, , \psi_-(\alpha_j) \big) \; $  and  $ \; \big( \psi_-(\alpha_i) , \alpha_i \big) = 0 \; $  for all  $ \; i, j \in I \; $.
 \vskip3pt
   It is not hard to see that  $ \Phi $  is  also a  {\sl Hopf algebra map\/}  too: for example, we have
\begin{align*}
  \com^{\!(\scriptscriptstyle \Psi)}\!\big( \Phi(E_i) \big)  &  \; = \;  q_i \,
  \com^{\!(\scriptscriptstyle \Psi)}\big( E^{\scriptscriptstyle \Psi}_i \big)  \, =
  \,  q_i \,  \com^{\!(\scriptscriptstyle \Psi)}\big( K_{-\psi_-(\alpha_i^-)} E_i \big)  \, =
  \,  q_i \,  \com^{\!(\scriptscriptstyle \Psi)}\big( K_{-\psi_-(\alpha_i^-)} \big) \, \com^{(\scriptscriptstyle \Psi)}(E_i)  \\
  &  \; = \;  q_i \, K_{-\psi_-(\alpha_i^-)} E_i \otimes 1 \, + K_{\alpha_i^+ + \psi_+(\alpha_i^+) - \psi_-(\alpha_i^-)}
  \otimes q_i \, K_{-\psi_-(\alpha_i^-)} E_i  \; =  \\
  &  \; = \;  q_i \, E^{\scriptscriptstyle \Psi}_i \otimes 1 \, + \, K^{\scriptscriptstyle \Psi}_{i,+}
  \otimes q_i \, E^{\scriptscriptstyle \Psi}_i  \; = \;  (\Phi \otimes \Phi)\big(\com(E_i)\big)
\end{align*}
 \noindent
 and  $ \; \eps^{(\scriptscriptstyle \Psi)}\big( \Phi(E_i) \big) \, = \, q_i \,
 \eps^{(\scriptscriptstyle \Psi)}\big( E^{\scriptscriptstyle \Psi}_i \big) \, = \,
 \eps\big( q_i \, K_{-\psi_-(\alpha_i^-)} E_i \big) \, = \, 0 \, = \, \eps(E_i) \; $  for all  $ \, 1 \! \leq \! i \! \leq \! n \, $.
 \vskip5pt
   Now define an algebra map
   $ \; \Phi' : {\hat{U}}_{q\,,M_\ast^{\scriptscriptstyle \Psi}}^{\scriptscriptstyle \Psi}\big(\hskip0,8pt\liegd\big)
   \relbar\joinrel\relbar\joinrel\longrightarrow U_{\bq\,,M_{\bullet}}\big(\hskip0,8pt\liegd\big) \; $  by
  $ \; \Phi' \big( E^{\scriptscriptstyle \Psi}_i \big) := q_i^{-1} \, E_i \, $,
  $ \; \Phi'\big(F_i^{\scriptscriptstyle \Psi}\big) := F_i \, $,
  $ \; \Phi'\Big(\! \big( K_{+\varpi^+_i}\big)^{\pm 1} \Big) := K_{\mu_i^+}^{\pm 1} \; $  and
  $ \; \Phi'\Big(\! \big( K_{-\varpi^-_i}\big)^{\pm 1} \Big) := L_{\mu_i^-}^{\pm 1} \, $,  \, for all $ \, 1 \leq i \leq n \, $.
 By means of calculations quite similar to the previous ones, one proves that such a  $ \Phi' $  is well-defined,
 and definitions clearly yield
 $ \, \Phi' \circ \Phi = \id_{U_{\bq\,,M_{\bullet}}(\hskip0,8pt\liegd)} \, $  and
 $ \, \Phi \circ \Phi' = \id_{ {\hat{U}}_{q\,,M_\ast^{\scriptscriptstyle \Psi}}^{\scriptscriptstyle \Psi}(\hskip0,8pt\liegd)} \, $.
 So in the end  $ \,  {\hat{U}}_{q\,,M_\ast^{\scriptscriptstyle \Psi}}^{\scriptscriptstyle \Psi}\big(\hskip0,8pt\liegd\big)
 \cong U_{\bq\,,M_{\bullet}}\big(\hskip0,8pt\liegd\big) \, $  as Hopf algebras.
\epf

\vskip3pt

   Similar arguments as those used in the proof above lead to a simpler version of
   Theorem \ref{thm: Tw-Uqgd=Mp-Uqgd}  for Borel MpQUEA's and Borel TwQUEA's:

\vskip17pt

\begin{prop}  \label{prop: Tw-Uqb=Mp-Uqb}
  Let  $ \, \Psi \in \lieso_n(\QQ) \, $  and  $ \, \bq \in \text{\sl $ q $--Mp}_\QQ \, $  be such that
  $ \; \Psi \leftrightsquigarrow \bq \; $.  Let  $ M $  be any lattice in  $ \QQ{}Q $  containing  $ Q \, $,
  with  $ {\big\{ \mu_i \big\}}_{i \in I} $  any  $ \ZZ $--basis  of it, and let  $ M_\pm^{\scriptscriptstyle (\Psi)} $
  be the sublattice of  $ \, \QQ{}Q \times \QQ{}Q $ with  $ \ZZ $--basis
  $ \; {\big\{\, \varpi^\pm_i := \mu^\pm_i + \psi_\pm\big( \mu^\pm_i \big) - \psi_\mp\big( \mu^\mp_i \big) \big\}}_{i \in I} \; $.
                                                                \par
   Let  $ \, U_{\bq\,,M}(\hskip0,8pt\lieb_\pm) $  be the (positive/negative) Borel MpQUEA
   associated with  $ M $,  and
   $ \, {\hat{U}}_{q\,,M_\pm^{\scriptscriptstyle (\Psi)}}\hskip-27pt{}^{\scriptscriptstyle \Psi}\hskip21pt(\hskip0,8pt\lieb_\pm) \, $
   be the (positive/negative) Borel TwQUEA associated with  $ M_\pm^{\scriptscriptstyle (\Psi)} $
 (cf.\ \S \ref{twist-gen's_x_twist-QUEA's}).
 Then there exist Hopf algebra isomorphisms
  $$  U_{\bq\,,M}(\hskip0,8pt\lieb_+)  \; \cong \;
  {\hat{U}}_{q\,,M_+^{\scriptscriptstyle (\Psi)}} \hskip-27pt{}^{\scriptscriptstyle \Psi}\hskip21pt(\hskip0,8pt\lieb_+)
  \quad ,  \qquad
      U_{\bq\,,M}(\hskip0,8pt\lieb_-)  \; \cong \;
      {\hat{U}}_{q\,,M_-^{\scriptscriptstyle (\Psi)}} \hskip-27pt{}^{\scriptscriptstyle \Psi}\hskip21pt(\hskip0,8pt\lieb_-)  $$
\,respectively given by
  $$  \displaylines{
   E_i \; \mapsto \; E^{\scriptscriptstyle \Psi}_i \, := \, K_{-\psi_-(\alpha^-_i)} \, E_i  \;\; ,  \;\quad
   K_{\mu_i} \; \mapsto \; K^{\scriptscriptstyle \Psi}_{+\mu_{i} } \, := \, K_{+(\mu^+_i + \psi_+(\mu^+_i) - \psi_-(\mu^-_i))} \,
   = \, K_{+\varpi^+_i}  \cr
   L_{\mu_i} \; \mapsto \; K^{\scriptscriptstyle \Psi}_{-\mu_i} \, := \, K_{-(\mu^-_i + \psi_-(\mu^-_i) - \psi_+(\mu^+_i))} \,
   = \, K_{-\varpi^-_i}  \;\; ,  \;\quad  F_i \; \mapsto \; F^{\scriptscriptstyle \Psi}_i \, := \, K_{+\psi_+(\alpha^+_i)} \, F_i  }  $$
 In other words, letting  $ \, \sigma = \sigma_\bq \, $  be the (rational)  $ 2 $--cocycle
 corresponding to  $ \bq $  as in  \S \ref{Twists<->Mp's<->2-coc's},  so that
$ \, \Psi \leftrightsquigarrow \sigma \, $,  we have Hopf algebra isomorphisms
  $$  {\big( U_{q\,,M}(\hskip0,8pt\lieb_+) \big)}_\sigma  \; \cong \;
   {\hat{U}}_{q\,,M_+^{\scriptscriptstyle (\Psi)}}\hskip-27pt{}^{\scriptscriptstyle \Psi}\hskip21pt(\hskip0,8pt\lieb_+)  \quad ,
  \qquad
      {\big( U_{q\,,M}(\hskip0,8pt\lieb_-) \big)}_\sigma  \; \cong \;
    {\hat{U}}_{q\,,M_-^{\scriptscriptstyle (\Psi)}}\hskip-27pt{}^{\scriptscriptstyle \Psi}\hskip21pt(\hskip0,8pt\lieb_-)  $$
given by the same formulas as above.  In particular, when
$ \, M \supseteq Q_\pm + \psi_\pm(Q_\pm) + \psi_\mp(Q_\mp) \, $  we have
$ \, \hat{U}_{\!\!q\,,M_\pm^{\scriptscriptstyle (\Psi)}}^{\;\scriptscriptstyle \Psi}(\hskip0,8pt\lieb_\pm) =
U_{q\,,M}^{\;\scriptscriptstyle \Psi}(\hskip0,8pt\lieb_\pm) \, $,  hence the isomorphisms above read
  $$  {\big( U_{q\,,M}(\hskip0,8pt\lieb_\pm) \big)}_\sigma  \; \cong \;\, U_{\bq\,,M}(\hskip0,8pt\lieb_\pm)
  \,\; \cong \;\,  U_{q\,,M}^{\;\scriptscriptstyle \Psi}(\hskip0,8pt\lieb_\pm)  $$
                     \par
   A similar claim holds true as well for  {\sl twisted}  quantum Borel subalgebras and
   {\sl multiparameter}  quantum Borel subalgebras inside the MpQUEA's  $ \, \upsiqgdG{M} \, $.   \qed
\end{prop}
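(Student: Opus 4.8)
The plan is to mimic, for the Borel case, the argument carried out in the proof of Theorem~\ref{thm: Tw-Uqgd=Mp-Uqgd}, only it will be considerably shorter because the relations of a Borel MpQUEA involve fewer generators: for $\lieb_+$ we only need to track relations $(b)$, $(c')$, $(f)$ and the coalgebra/antipode formulas, and symmetrically for $\lieb_-$. First I would define an algebra morphism
$$
  \Phi^+ : U_{\bq\,,M}(\hskip0,8pt\lieb_+) \relbar\joinrel\relbar\joinrel\longrightarrow
  {\hat{U}}_{q\,,M_+^{\scriptscriptstyle (\Psi)}}\hskip-27pt{}^{\scriptscriptstyle \Psi}\hskip21pt(\hskip0,8pt\lieb_+)
$$
on generators by $E_i \mapsto E^{\scriptscriptstyle \Psi}_i = K_{-\psi_-(\alpha^-_i)} E_i$ and $K_{\mu_i} \mapsto K_{+\varpi^+_i}$, and check that the defining relations are respected.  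The relations $K_{\mu_i}K_{\mu_j}=K_{\mu_j}K_{\mu_i}$ and $K_{\pm\mu_i}K_{\mp\mu_i}=1$ are obvious since the $K_{+\varpi^+_i}$'s lie in a (commutative) group algebra; the relation $K_{\mu_i}E_jK_{\mu_i}^{-1}=q_{ij}^{\,\varGamma}E_j$ is verified exactly by the computation already displayed in the proof of Theorem~\ref{thm: Tw-Uqgd=Mp-Uqgd} for $\Phi\big(K_iE_jK_i^{-1}\big)$, using the correspondence $\Psi\leftrightsquigarrow\bq$, i.e.\ $q_{ij}=q^{(\alpha_i+\psi_+(\alpha_i)-\psi_-(\alpha_i),\,\alpha_j)}$; and the quantum Serre relation $(f)$ is handled by the same formal identities between $q$--numbers and $q^{1/2}$--numbers plus the antisymmetry identities $\big((\psi_+-\psi_-)(\alpha_i),\alpha_j\big)=-\big(\alpha_i,(\psi_+-\psi_-)(\alpha_j)\big)$ and $\big(\psi_-(\alpha_i),\alpha_i\big)=0$ invoked at the end of that proof.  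For the coalgebra part one checks directly, using the formulas for $\Delta^{\scriptscriptstyle(\Psi)}$ and $\SS^{\scriptscriptstyle(\Psi)}$ on twisted generators recorded in \S\ref{twist-gen's_x_twist-QUEA's}, that $\Delta^{\scriptscriptstyle(\Psi)}\big(\Phi^+(E_i)\big)=(\Phi^+\otimes\Phi^+)\big(\Delta(E_i)\big)$ and similarly for $\eps$, $\SS$, precisely as displayed for $E_i$ in the proof of Theorem~\ref{thm: Tw-Uqgd=Mp-Uqgd}.

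Next I would exhibit the inverse: define $\Phi'^+$ on $\hat{U}$ by $E^{\scriptscriptstyle\Psi}_i\mapsto E_i$, $K_{+\varpi^+_i}^{\pm1}\mapsto K_{\mu_i}^{\pm1}$, verify well-definedness by the symmetric computation, and observe that $\Phi'^+\circ\Phi^+$ and $\Phi^+\circ\Phi'^+$ are the identity on generators, hence everywhere.  The negative Borel case is entirely parallel, with $F_i\mapsto F^{\scriptscriptstyle\Psi}_i=K_{+\psi_+(\alpha^+_i)}F_i$, $L_{\mu_i}\mapsto K_{-\varpi^-_i}$, relation $(d')$ in place of $(c')$ and relation $(g)$ in place of $(f)$.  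The reformulation in terms of the $2$--cocycle $\sigma=\sigma_\bq$ is then immediate from Theorem~\ref{thm:sigma_2-cocy}{\it(c)}: since $U_{\bq\,,M}(\hskip0,8pt\lieb_\pm)\cong\big(U_{\check{\bq}\,,M}(\hskip0,8pt\lieb_\pm)\big)_\sigma$ and the canonical multiparameter $\check{\bq}$ corresponds to $\Psi=0$, composing the above isomorphism with that cocycle deformation gives $\big(U_{q\,,M}(\hskip0,8pt\lieb_\pm)\big)_\sigma\cong{\hat U}^{\scriptscriptstyle\Psi}_{q\,,M_\pm^{\scriptscriptstyle(\Psi)}}(\hskip0,8pt\lieb_\pm)$.

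The last assertion, for the case $M\supseteq Q_\pm+\psi_\pm(Q_\pm)+\psi_\mp(Q_\mp)$, is a bookkeeping remark rather than a new computation: as already noted in \S\ref{twist-gen's_x_twist-QUEA's}, under this hypothesis the sublattice $M_\pm^{\scriptscriptstyle(\Psi)}$ is already contained in (indeed, after the change of generators, equals the image of) $M+Q^\Psi_{(\pm)}$, so the algebra $\hat U^{\scriptscriptstyle\Psi}_{q\,,M_\pm^{\scriptscriptstyle(\Psi)}}(\hskip0,8pt\lieb_\pm)$ generated by the twisted generators coincides with $U^{\scriptscriptstyle\Psi}_{q\,,M}(\hskip0,8pt\lieb_\pm)$, the Borel subalgebra of $\upsiqgG{M}$ (or $\upsiqgdG{M}$) with the restricted twisted Hopf structure; substituting this identification into the isomorphisms just proved gives the chain
$$
  \big(U_{q\,,M}(\hskip0,8pt\lieb_\pm)\big)_\sigma \;\cong\; U_{\bq\,,M}(\hskip0,8pt\lieb_\pm) \;\cong\; U^{\scriptscriptstyle\Psi}_{q\,,M}(\hskip0,8pt\lieb_\pm).
$$
Finally, the same reasoning applied inside a fixed double TwQUEA $\upsiqgdG{M}$ — taking for the $K_{\mu_i}$-images the relevant group-like elements of its toral part rather than abstract generators — yields the ``similar claim'' for twisted versus multiparameter quantum Borel subalgebras sitting inside $\upsiqgdG{M}$.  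I expect the only mildly delicate point to be the handling of the larger-torus bookkeeping: one must be careful that the $\ZZ$--bases $\{\mu^\pm_i\}$ and the induced basis $\{\varpi^\pm_i\}$ of $M_\pm^{\scriptscriptstyle(\Psi)}$ match up correctly with the definitions of \S\ref{larger-MpQUEA's} and \S\ref{twist-gen's_x_twist-QUEA's} — in particular that $\varpi^\pm_i$ really is a $\ZZ$--basis, which is guaranteed by Lemma~\ref{lem:posivedef}{\it(b)} — so that the two algebras being compared genuinely have the same toral part; the remaining verifications are routine and entirely analogous to those already spelled out for the double case.
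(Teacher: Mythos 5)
Your proposal is correct and follows exactly the route the paper intends: the paper gives no separate proof of this proposition, stating only that ``similar arguments as those used in the proof above'' (i.e.\ of Theorem~\ref{thm: Tw-Uqgd=Mp-Uqgd}) apply, and your write-up is precisely that restriction to the Borel case, correctly noting that the $q_i$ normalisation of $E_i$ and the relation $(e)$ drop out. The remaining points (the cocycle reformulation via Theorem~\ref{thm:sigma_2-cocy}{\it(c)}, and the identification $\hat U^{\scriptscriptstyle\Psi}_{q,M_\pm^{\scriptscriptstyle(\Psi)}}(\lieb_\pm)=U^{\scriptscriptstyle\Psi}_{q,M}(\lieb_\pm)$ for large enough $M$ via \S\ref{twist-gen's_x_twist-QUEA's}) are handled as the paper does.
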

\vskip11pt

   Finally, we shall presently see that any TwQUEA of type  $ \upsiqgG{\varGamma} $
   can be seen as a (still to define)  ``$ q $--rational  MpQUEA over  $ \lieg \, $''.
   We begin by defining the latter.

\vskip13pt

\begin{definition}  \label{def: Ubq(g)}
 Let  $ \, \Psi \in \lieso_n(\QQ) \, $  and  $ \, \bq \in \text{\sl $ q $--Mp}_\QQ \, $
 be such that  $ \; \Psi \leftrightsquigarrow \bq \; $,  and let  $ \, \psi_+ \, $
 be the map associated with  $ \Psi $  (cf\  \S \ref{root-twisting}).
 Let  $ M_\pm $  be lattices in  $ \QQ{}Q $  containing  $ Q $  such that
 $ \, \psi_+(M_\pm) \subseteq M_\pm \, $;  denote by  $ {\big\{ \mu^\pm_i \big\}}_{i \in I} $
 any  $ \ZZ $--basis  of it.  Let  $ \, \II_{\scriptscriptstyle \Psi} $  be the two-sided ideal of
 $ \, U_{\bq\,,M_\bullet}\big(\hskip0,8pt\liegd\big) \, $  generated by the elements
 \vskip4pt
   \centerline{ \qquad \qquad   $ K_{\mu_i^+} \, L_{\mu_i^-} \, - \,  K_{2\,\psi_+(\mu_i^+)} \,
   L_{2\,\psi_-(\mu_i^-)}   \qquad \qquad   \big(\, i \in I \,\big) \; $.}
\vskip4pt
   Then we denote by  $ \, U_{\bq\,,M_\bullet}(\hskip0,8pt\lieg) \, $
   --- that we loosely call {\it ``MpQUEA over  $ \lieg $''}  ---   the quotient Hopf algebra
  $$ U_{\bq\,,M_\bullet}(\hskip0,8pt\lieg) \; := \;
  U_{\bq\,,M_{\bullet}}\big(\hskip0,8pt\liegd\big) \Big/ \II_{\scriptscriptstyle \Psi}  $$
\end{definition}

\vskip7pt

\begin{rmk}  \label{rmk:Ug=Uhatg}
 If  $ M $  is any lattice in  $ \QQ{}Q $ containing  $ Q $
 and such that  $ \psi_+(M) \subseteq M \, $,  then for  $ \, M_+ := M =: M_- \, $
 we write  $ \, M_\bullet := M_+ \times M_- \, $.  In particular,
 $ \, M =  M_+ + M_- = M^{\scriptscriptstyle \Psi} = M + \psi_+(M) + \psi_-(M) \, $.
                                                              \par
   Now, under the assumption  $ \, \psi_+(M) \subseteq M \, $  for the lattice in
   $ \QQ{}Q \, $,  we have that  $ \; \hat{U}_{q,M}^{\scriptscriptstyle \,\Psi}(\hskip0,8pt\lieg) =
   U_{q,M}^{\scriptscriptstyle \,\Psi}(\hskip0,8pt\lieg) \; $.  In fact,
   $ \hat{U}_{q,M}^{\scriptscriptstyle \,\Psi}(\hskip0,8pt\liegd) $  is the Hopf subalgebra of
   $ U_{q,M}^{\scriptscriptstyle \,\Psi}(\hskip0,8pt\liegd) $  generated by the elements
   $ E_i^{\scriptscriptstyle \Psi} := K_{-\psi_-(\alpha_i^-)}E_i \, $,  $ F_i^{\scriptscriptstyle \Psi} :=
   K_{+\psi_+(\alpha_i^+)}F_i \, $,  $ K_{\varpi_i^+} := K_{\mu_i^+ + \psi_+(\mu_i^+) - \psi_-(\mu_i^-)} \, $
   and  $ \, K_{\varpi_i^-} := K_{\mu_i^- + \psi_-(\mu_i^-) - \psi_+(\mu_i^+)} \, $  for all  $ i \in I \, $.
   Besides, by Remark \ref{rem: UpsiqgdG ---> UpsiqgG},  $ \hat{U}_{q,M}^{\scriptscriptstyle \,\Psi}(\hskip0,8pt\lieg) $
   is the image of the Hopf algebra epimorphism
   $ \; \hat{\pi}_{\lieg}^{\scriptscriptstyle \,\Psi} : \hat{U}_{q,M}^{\scriptscriptstyle \,\Psi}(\hskip0,8pt\liegd)
   \longrightarrow \hat{U}_{q,M}^{\scriptscriptstyle \,\Psi}(\hskip0,8pt\lieg) \; $  given by
   $ \, \hat{\pi}_\lieg^{\scriptscriptstyle \Psi}\big( E_i^{\scriptscriptstyle \Psi} \big) := E_i^{\scriptscriptstyle \Psi} \, $,
 $ \, \hat{\pi}_\lieg^{\scriptscriptstyle \Psi}\big( K_{\varpi_i^+} \big) := K_{\mu_i + 2 \psi_+(\mu_i)} \, $,
 $ \, \hat{\pi}_\lieg^{\scriptscriptstyle \Psi}\big( K_{\varpi_i^-} \big) := K_{\mu_i - 2 \psi_+(\mu_i)} \, $
 and  $ \, \hat{\pi}_\lieg^{\scriptscriptstyle \Psi}\big( F_i^{\scriptscriptstyle \Psi} \big) := F_i^{\scriptscriptstyle \Psi} \, $
 for all  $ \, i \in I \, $;  therefore  $ \, K_{\mu_i}^{\,2} =
 \hat{\pi}_\lieg^{\scriptscriptstyle \Psi}\big( K_{\varpi_i^+} K_{\varpi_i^-} \big) \in
 \hat{U}_{q,M}^{\scriptscriptstyle \,\Psi}(\hskip0,8pt\lieg) \, $.
 This implies that  $ \, K_{2 \psi_+(\mu_i)} \! \in \hat{U}_{q,M}^{\scriptscriptstyle \,\Psi}(\hskip0,8pt\lieg) \, $
 and consequently  $ \, K_{\mu_i} \! \in \hat{U}_{q,M}^{\scriptscriptstyle \,\Psi}(\hskip0,8pt\lieg) \, $
 for all  $ \, i \in I \, $;  hence, the toral part (the group of group-like elements) of both Hopf algebras coincide.
 Since the other generators of  $ \, \hat{U}_{q,M}^{\scriptscriptstyle \,\Psi}(\hskip0,8pt\lieg) \, $
 differ only by a group-like element, the two Hopf algebras
 $ \hat{U}_{q,M}^{\scriptscriptstyle \,\Psi}(\hskip0,8pt\lieg) $  and
 $ U_{q,M}^{\scriptscriptstyle \,\Psi}(\hskip0,8pt\lieg) $  must coincide.
\end{rmk}

\vskip11pt

   We are now ready for the result comparing TwQUEA's and MpQUEA's ``over  $ \lieg \, $'':

\vskip15pt

\begin{theorem}  \label{thm: Tw-Uqg=Mp-Uqg}
 Let  $ \, \Psi \in \lieso_n(\QQ) \, $  and  $ \, \bq \in \text{\sl $ q $--Mp}_\QQ \, $
 be such that  $ \; \Psi \leftrightsquigarrow \bq \; $.  Let  also $ M $  be any lattice in
 $ \QQ{}Q $  containing  $ Q $  and such that  $ \, \psi_+(M) \subseteq M \, $.
 Then there exists a Hopf algebra isomorphism
  $$ U_{\bq\,,M_\bullet}(\hskip0,8pt\lieg)  \,\; \cong \;\,  U_{q,M}^{\scriptscriptstyle \,\Psi}(\hskip0,8pt\lieg)  $$
 given by the formulas  $ \,\; K_{\mu_i} \longleftrightarrow K_{+(\id + \psi_+ - \psi_-)(\mu_i)} \;\, $,
 $ \,\; L_{\mu_i} \longleftrightarrow K_{-(\id + \psi_- - \psi_+)(\mu_i)} \;\, $,
 $ \,\; E_i \, \longleftrightarrow \, q_i \, K_{-\psi_-(\alpha_i)} \, E_i \;\, $  and
 $ \,\; F_i \longleftrightarrow K_{+\psi_+(\alpha_i)} \, F_i \;\, $,  \, for all  $ \, i \in I \, $.
                                              \par
   In other words, letting  $ \, \sigma = \sigma_\bq \, $  be the (rational)  $ 2 $--cocycle
   corresponding to  $ \bq $  as in  \S \ref{Twists<->Mp's<->2-coc's},  so that
   $ \, \Psi \leftrightsquigarrow \sigma \, $,  we have a Hopf algebra isomorphism
  $$  {\big( U_{q\,,M_\bullet}(\hskip0,8pt\lieg) \big)}_\sigma  \; \cong \;
  U_{q\,,M}^{\scriptscriptstyle \,\Psi}(\hskip0,8pt\lieg)  $$
given by the same formulas as above.
 \vskip5pt
   {\sl $ \underline{\text{Remark}} $:\,  as we assume  $ \Psi $  to be antisymmetric,
   which is  {\it not}  restrictive, we can re-cast the formulas above as
   $ \,\; K_{\mu_i} \longleftrightarrow K_{+(\id + 2 \psi_+)(\mu_i)} \;\, $,
   $ \,\; L_{\mu_i} \longleftrightarrow K_{-(\id + 2 \psi_-)(\mu_i)} \;\, $,
   $ \,\; E_i \, \longleftrightarrow \, q_i \, K_{+\psi_+(\alpha_i)} \, E_i \;\, $  and
   $ \,\; F_i \longleftrightarrow K_{-\psi_-(\alpha_i)} \, F_i \;\, $,  \, for all  $ \, i \in I \, $}.
\end{theorem}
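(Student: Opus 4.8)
The plan is to reduce Theorem~\ref{thm: Tw-Uqg=Mp-Uqg} to the already established ``double'' statement, Theorem~\ref{thm: Tw-Uqgd=Mp-Uqgd}. Apply the latter with $\, M_+ := M =: M_- \,$ (legitimate: $\, \psi_+(M)\subseteq M \,$ by hypothesis and, $\Psi$ being antisymmetric since $\, \Psi \in \lieso_n(\QQ) \,$, also $\, \psi_-(M) = -\psi_+(M) \subseteq M \,$). It provides a Hopf algebra isomorphism $\; \Phi \colon U_{\bq\,,M_\bullet}(\liegd) \,\cong\, {\hat{U}}_{q\,,M_\ast^{\scriptscriptstyle \Psi}}^{\scriptscriptstyle \Psi}(\liegd) \,$, with $\, M_\bullet = M \times M \,$ and $\, M_\ast^{\scriptscriptstyle \Psi} \,$ the lattice with $\ZZ$--basis $\, \{\varpi_i^\pm\}_{i\in I} \,$, acting on generators by the formulas of that theorem. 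On the source side, $\, U_{\bq\,,M_\bullet}(\lieg) \,$ is by Definition~\ref{def: Ubq(g)} the quotient of $\, U_{\bq\,,M_\bullet}(\liegd) \,$ by the ideal $\, \II_{\scriptscriptstyle \Psi} \,$ generated by the elements $\, K_{\mu_i^+} L_{\mu_i^-} - K_{2\psi_+(\mu_i^+)} L_{2\psi_-(\mu_i^-)} \,$. On the target side, Remark~\ref{rmk:Ug=Uhatg} identifies $\, {\hat{U}}_{q\,,M_\ast^{\scriptscriptstyle \Psi}}^{\scriptscriptstyle \Psi}(\liegd) \,$ with $\, {\hat{U}}_{q\,,M}^{\scriptscriptstyle \,\Psi}(\liegd) \,$ and exhibits a Hopf epimorphism $\; \hat{\pi}_\lieg^{\scriptscriptstyle \,\Psi} \colon {\hat{U}}_{q\,,M}^{\scriptscriptstyle \,\Psi}(\liegd) \twoheadrightarrow {\hat{U}}_{q\,,M}^{\scriptscriptstyle \,\Psi}(\lieg) = U_{q\,,M}^{\scriptscriptstyle \,\Psi}(\lieg) \,$. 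Hence it suffices to prove that $\, \Phi \,$ carries $\, \II_{\scriptscriptstyle \Psi} \,$ \emph{onto} $\, \Ker\big(\hat{\pi}_\lieg^{\scriptscriptstyle \,\Psi}\big) \,$; then $\Phi$ descends to an isomorphism $\, U_{\bq\,,M_\bullet}(\lieg) \cong U_{q\,,M}^{\scriptscriptstyle \,\Psi}(\lieg) \,$, whose action on generators is the composite of the formulas of Theorem~\ref{thm: Tw-Uqgd=Mp-Uqgd} with $\, \hat{\pi}_\lieg^{\scriptscriptstyle \,\Psi} \,$, which is exactly what is stated; the boxed ``Remark'' reformulation then follows simply by substituting $\, \psi_- = -\psi_+ \,$.

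For the key equality, since $\Phi$ is an isomorphism $\, \Phi(\II_{\scriptscriptstyle \Psi}) \,$ is the two--sided ideal generated by the images of the generators of $\, \II_{\scriptscriptstyle \Psi} \,$; and since $\Phi$ is $\ZZ$--linear on toral elements with $\, K_{\mu_i^+} \mapsto K_{+\varpi_i^+} \,$, $\, L_{\mu_i^-} \mapsto K_{-\varpi_i^-} \,$, a short computation in the free abelian group underlying the torus — using $\, \varpi_i^\pm = \mu_i^\pm + \psi_\pm(\mu_i^\pm) - \psi_\mp(\mu_i^\mp) \,$ and $\, \psi_- = -\psi_+ \,$, so that the second--order terms $\psi_+^2$ cancel — shows that each such image equals, up to the invertible toral factor $\, K_{2\psi_+(\mu_i)^+ + 2\psi_+(\mu_i)^-} \,$, the element $\, K_{\mu_i^+} K_{\mu_i^-}^{-1} - 1 \,$. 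Now $\, \hat{\pi}_\lieg^{\scriptscriptstyle \,\Psi} \,$ restricted to the group algebra $\, \FF\big[ M_\ast^{\scriptscriptstyle \Psi} \big] \,$ is induced by the lattice epimorphism $\, M_\ast^{\scriptscriptstyle \Psi}\twoheadrightarrow M \,$ identifying the two copies of $\lieh$ (it sends $\, \varpi_i^\pm \mapsto \mu_i \pm 2\psi_+(\mu_i) \,$); hence $\, \hat{\pi}_\lieg^{\scriptscriptstyle \,\Psi}\big(K_{\mu_i^+} K_{\mu_i^-}^{-1}\big) = 1 \,$ and each generator of $\, \Phi(\II_{\scriptscriptstyle \Psi}) \,$ lies in $\, \Ker(\hat{\pi}_\lieg^{\scriptscriptstyle \,\Psi}) \,$. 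For the reverse inclusion one checks, exactly as in Remark~\ref{rmk:Ug=Uhatg} and using $\, \psi_+(M)\subseteq M \,$, that the kernel of $\, M_\ast^{\scriptscriptstyle \Psi}\twoheadrightarrow M \,$ is freely generated by $\, \{\mu_i^+ - \mu_i^-\}_{i\in I} \,$, so that $\, \Ker(\hat{\pi}_\lieg^{\scriptscriptstyle \,\Psi})\cap\FF\big[M_\ast^{\scriptscriptstyle \Psi}\big] \,$ is generated by the $\, K_{\mu_i^+} K_{\mu_i^-}^{-1} - 1 \,$; finally, by the triangular (PBW) decomposition of $\, {\hat{U}}_{q\,,M}^{\scriptscriptstyle \,\Psi}(\liegd) \,$ over $\, \FF\big[M_\ast^{\scriptscriptstyle \Psi}\big] \,$ and the fact that $\, \hat{\pi}_\lieg^{\scriptscriptstyle \,\Psi} \,$ fixes the $E^{\scriptscriptstyle \Psi}_i$-- and $F^{\scriptscriptstyle \Psi}_i$--type generators, $\, \Ker(\hat{\pi}_\lieg^{\scriptscriptstyle \,\Psi}) \,$ is generated as a two--sided ideal by its intersection with $\, \FF\big[M_\ast^{\scriptscriptstyle \Psi}\big] \,$, and the two ideals coincide.

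Finally, the ``in other words'' reformulation is immediate: $\, \bq \in \text{\sl $ q $--Mp}_\QQ \,$ corresponds to the $2$--cocycle $\, \sigma = \sigma_\bq \,$ with $\, \Psi \leftrightsquigarrow \sigma \,$ (\S\ref{Twists<->Mp's<->2-coc's}), and by the larger--torus extension of Theorem~\ref{thm:sigma_2-cocy} one has $\, U_{\bq\,,M_\bullet}(\liegd) \cong \big( U_{\check{\bq}\,,M_\bullet}(\liegd) \big)_\sigma \,$; since $\sigma$ is a bicharacter pulled back from $\, \G_n \,$, it leaves the toral subalgebra undeformed and descends through the quotient by $\, \II_{\scriptscriptstyle \Psi} \,$, whence $\, U_{\bq\,,M_\bullet}(\lieg) \cong \big( U_{\check{\bq}\,,M_\bullet}(\lieg) \big)_\sigma \,$ (likewise for the Borel versions of Proposition~\ref{prop: Tw-Uqb=Mp-Uqb}, which follow from Theorem~\ref{thm: Tw-Uqgd=Mp-Uqgd}'s Borel analogue by the same quotient argument). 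The genuinely delicate point is the \emph{equality} (not just one containment) of the two ideals $\, \Phi(\II_{\scriptscriptstyle \Psi}) \,$ and $\, \Ker(\hat{\pi}_\lieg^{\scriptscriptstyle \,\Psi}) \,$: this is precisely where the hypothesis $\, \psi_+(M)\subseteq M \,$ and the integral--lattice bookkeeping of Remark~\ref{rmk:Ug=Uhatg} are essential, the remainder being a transcription of computations already performed in the proof of Theorem~\ref{thm: Tw-Uqgd=Mp-Uqgd}.
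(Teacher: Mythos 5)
Your proposal follows the paper's strategy for its first half --- reduce to Theorem~\ref{thm: Tw-Uqgd=Mp-Uqgd}, identify $U_{q,M}^{\scriptscriptstyle\,\Psi}(\lieg)$ as the image of $\hat{\pi}_\lieg^{\scriptscriptstyle\,\Psi}$ via Remark~\ref{rmk:Ug=Uhatg}, and verify $\big(\hat{\pi}_\lieg^{\scriptscriptstyle\,\Psi}\circ\Phi\big)(\II_{\scriptscriptstyle\Psi})=0$ --- but then diverges genuinely in how it gets injectivity. The paper constructs an explicit inverse $\hat{\varphi}$ on generators (the bulk of its computation is checking that $\hat{\varphi}$ is well defined and that $\hat{\varphi}\big(\hat{\pi}_\lieg^{\scriptscriptstyle\Psi}(K_{\varpi_i^\pm})\big)$ recovers $K_{\mu_i^+}$, resp.\ $L_{-\mu_i^-}$), whereas you prove the equality of ideals $\Phi(\II_{\scriptscriptstyle\Psi})=\Ker\big(\hat{\pi}_\lieg^{\scriptscriptstyle\,\Psi}\big)$ directly. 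Your lattice computation is correct: with $\psi_-=-\psi_+$ one indeed gets $\Phi\big(K_{\mu_i^+}L_{\mu_i^-}-K_{2\psi_+(\mu_i^+)}L_{2\psi_-(\mu_i^-)}\big)=K_{2\psi_+(\mu_i)^++2\psi_+(\mu_i)^-}\big(K_{\mu_i^+-\mu_i^-}-1\big)$, and the kernel of $M_\ast^{\scriptscriptstyle\Psi}\twoheadrightarrow M$ is exactly $\ZZ\text{-span}\{\mu_i^+-\mu_i^-\}$ (a computation that does use $\psi_+(M)\subseteq M$, as you note). Your route is conceptually cleaner and explains *why* the ideal $\II_{\scriptscriptstyle\Psi}$ is the right one, something the paper's explicit-inverse verification leaves implicit. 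The one load-bearing step you should not leave as an aside is claim (c): that $\Ker\big(\hat{\pi}_\lieg^{\scriptscriptstyle\,\Psi}\big)$ is generated as a two-sided ideal by its intersection with $\FF\big[M_\ast^{\scriptscriptstyle\Psi}\big]$. The paper never states a triangular decomposition for the hat-algebras $\hat{U}_{q,M_\ast^{\scriptscriptstyle\Psi}}^{\scriptscriptstyle\Psi}(\liegd)$, so you must supply it --- e.g.\ by transporting, through the isomorphism $\Phi$, the triangular decomposition of $U_{\bq,M_\bullet}(\liegd)$ as a Drinfeld double of bosonizations (Remarks~\ref{rmks:mpqg's-vs-Nichols} and \S\ref{duality x larger MpQUEA's}) --- and then observe that the elements $K_{\mu_i^+-\mu_i^-}$ are central (they act trivially on the $E_j$'s and $F_j$'s since the two lattice copies pair identically with $Q$), so that freeness over the toral part gives the required statement that no further collapse occurs in the quotient. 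With that sentence added, your argument is complete; the cocycle reformulation at the end is handled at the same level of detail as in the paper.
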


\pf
 Let  $ {\big\{ \mu_i \big\}}_{i \in I} $  be any  $ \ZZ $--basis of  $ M $  and denote by
 $ {\big\{ \mu_i^{\pm} \big\}}_{i \in I} $  the corresponding  $ \ZZ $--basis of
 $ M_\pm \, $.  From  Remark \ref{rem: UpsiqgdG ---> UpsiqgG}  we know that
 $ \hat{U}_{q,M}^{\scriptscriptstyle \,\Psi}(\hskip0,8pt\lieg) $
 is the image of the Hopf algebra epimorphism
 $ \; \hat{\pi}_\lieg^{\scriptscriptstyle \Psi} : \hat{U}_{q,M}^{\scriptscriptstyle \,\Psi}\big(\hskip0,8pt\liegd\big)
 \relbar\joinrel\relbar\joinrel\relbar\joinrel\twoheadrightarrow \hat{U}_{q,M}^{\scriptscriptstyle \,\Psi}(\hskip0,8pt\lieg) \, $,
 where the latter Hopf algebra  equals  $ U_{q,M}^{\scriptscriptstyle \,\Psi}(\hskip0,8pt\lieg) $  by
 Remark \ref{rmk:Ug=Uhatg}.
                                                                          \par
   On the other hand, thanks to  Theorem \ref{thm: Tw-Uqgd=Mp-Uqgd}  we have a Hopf algebra
   isomorphism
   $ \; \Phi : U_{\bq\,,M_\bullet}\big(\hskip0,8pt\liegd\big)
   \lhook\joinrel\relbar\joinrel\relbar\joinrel\relbar\joinrel\twoheadrightarrow
   \hat{U}_{q\,,M}^{\scriptscriptstyle \,\Psi}\big(\hskip0,8pt\liegd\big) \; $.
   Hence, under the conditon that
   $ \, \big( \hat{\pi}_{\lieg}^{\scriptscriptstyle \Psi} \circ \Phi \big)(\II_{\scriptscriptstyle \Psi}) = 0 \, $,
   there exists a surjective Hopf algebra morphism  $ \; \varphi : U_{\bq\,,M_\bullet}(\hskip0,8pt\lieg)
 \relbar\joinrel\relbar\joinrel\relbar\joinrel\twoheadrightarrow U_{q,M}^{\scriptscriptstyle \,\Psi}(\hskip0,8pt\lieg) \; $
 defined on the generators by
\begin{align*}
  \varphi\big(  K_{\mu_i} \big)  &  \, := \,  K_{+(\id + \psi_+ - \psi_-)(\mu_i)}  =  K_{+(\id + 2\psi_+)(\mu_i)}  \;\; ,
  &  \quad  \varphi(E_i)  &  \, := \,  q_i \, K_{-\psi_-(\alpha_i)} \, E_i  \\
  \varphi\big( L_{\mu_i} \big)  &  \, := \,  K_{-(\id + \psi_- - \psi_+)(\mu_i)}  =  K_{-(\id + 2\psi_-	 )(\mu_i)}  \;\; ,  &
  \quad  \varphi(F_i)  &  \, = \,  K_{+\psi_+(\alpha_i)} \, F_i
\end{align*}
 for all  $ \, i \in I \, $,  where we denote by  $ K_{\mu_i} \, $,  $ L_{\mu_i} \, $, $ E_i $  and  $ F_i $
 the generators of  $ \, U_{\bq\,,M_\bullet}(\hskip0,8pt\lieg) \, $.
                                                \par
   Now, the fact that  $ \, \big( \hat{\pi}_\lieg^{\scriptscriptstyle \Psi} \circ \Phi \big)(\II_{\scriptscriptstyle \Psi}) = 0 \, $
   follows by direct computation.  In fact, since by assumption  $ M $  contains  $ Q $  and  $ \psi_+(M) \, $,
   and  $ \Psi $  is antisymmetric, we have that
   $ \, \big( \hat{\pi}_\lieg^{\scriptscriptstyle \Psi} \circ \Phi \big)\big( K_{\mu_i^+} \big) =
   \hat{\pi}_\lieg^{\scriptscriptstyle \Psi}\big( K_{+\varpi_i^+} \big) =  K_{\mu_i + 2\psi_+(\mu_{i})} \, $
   and  $ \, \big( \hat{\pi}_\lieg^{\scriptscriptstyle \Psi} \circ \Phi \big)\big( L_{\mu_i^-} \big) =
   \hat{\pi}_\lieg^{\scriptscriptstyle \Psi}\big( K_{-\varpi_i^-} \big) = K_{-(\id + 2\psi_-	 )(\mu_i)} \, $.
   Therefore, we have also
  $$  \big( \hat{\pi}_\lieg^{\scriptscriptstyle \Psi} \circ \Phi \big)\big( K_{\mu_i^+} \, L_{\mu_i^-} \big)  \;
  = \;  \big( \hat{\pi}_\lieg^{\scriptscriptstyle \Psi} \circ \Phi \big)\big( K_{2\psi_+ (\mu_i^+)} \, L_{2\psi_- (\mu_i^-)} \big)  $$
   \indent   Conversely, let  $ \; \hat{\varphi} : U_{q,M}^{\scriptscriptstyle \,\Psi}(\hskip0,8pt\lieg)
   \relbar\joinrel\longrightarrow U_{\bq\,,M_\bullet}(\hskip0,8pt\lieg) \; $  be the algebra morphism given by
  $$  \hat{\varphi}\big( K_{\mu_i} \big)  \, := \, K_{\mu_i^+ - \psi_+(\mu_i^+)} \, L_{\psi_+(\mu_i^-)} \;\;\; ,
  \;\quad  \hat{\varphi}\big( E_i^{\scriptscriptstyle \Psi} \big) \, := \,  q_i^{-1} \, E_i \;\;\; ,  \;\quad
  \hat{\varphi}\big( F_i^{\scriptscriptstyle \Psi} \big) \, := \,  F_i  $$
for all  $ \, i \in I \, $,  where  $ \, E_i^{\scriptscriptstyle \Psi} := q_i \, K_{-\psi_-(\alpha_i)} \, E_i \, $
and  $ \, F_i^{\scriptscriptstyle \Psi} := K_{+\psi_+(\alpha_i)} \, F_i \, $  are the images in
$ U_{\bq\,,M_\bullet}(\hskip0,8pt\lieg) $  of the same name elements in
$ U_{\bq\,,M_\bullet}\big(\hskip0,8pt\liegd\big) \, $.  With this definition we have
\begin{equation}  \label{eq: defvarphiK}
  \begin{matrix}
     \hat{\varphi}\big( \hat{\pi}_\lieg^{\scriptscriptstyle \Psi}\big( K_{\varpi_i^+} \big) \big) \, =
     \,  \hat{\varphi}\big( K_{(\id + 2\psi_+)(\mu_i)} \big) \, = \, K_{+\mu_i^+}  \\
     \hat{\varphi}\big( \hat{\pi}_\lieg^{\scriptscriptstyle \Psi}\big( K_{\varpi_i^-} \big) \big) \, =
     \,  \hat{\varphi}\big( K_{(\id + 2\psi_-)(\mu_i)} \big)  \, = \,  L_{-\mu_i^-}
  \end{matrix}
\end{equation}
 Indeed, since  $ \, K_{\mu_j}^{\,2} = \hat{\pi}_\lieg^{\scriptscriptstyle \Psi}\big( K_{\varpi_j^+} \, K_{\varpi_j^-} \big) \, $
 for all  $ \, j \in I \, $,  setting $ \, \psi_+(\mu_i) := \sum_{j \in I} m_{ji} \mu_j \, $  yields
 $ \; \hat{\pi}_\lieg^{\scriptscriptstyle \Psi}\big( K_{\varpi_i^+} \big) \, =
 \, K_{\mu_i + 2 \psi_+(\mu_i)} \, = \, K_{\mu_i} \, \prod_{j \in I} \! {\big( K_{\mu_j}^{\,2} \big)}^{m_{ji}} \; $.
 Therefore
\begin{align*}
  \hat{\varphi}\big( \hat{\pi}_\lieg^{\scriptscriptstyle \Psi}\big( K_{\varpi_i^+} \big) \big)  & \; =
  \;  \hat{\varphi}\Big( K_{\mu_i} \, {\textstyle \prod_{j \in I}} {\big( K_{\mu_j}^{\,2} \big)}^{m_{ji}} \Big)  \; =  \\
    &  \; = \;  K_{\mu_i^+ - \psi_+(\mu_i^+)} \, L_{\psi_+(\mu_i^-)} \cdot {\textstyle \prod_{j \in I}} K_{\mu_j^+ - \psi_+(\mu_j^+)}^{\,2\,m_{ji}} \cdot {\textstyle \prod_{j \in I}} L_{\psi_+(\mu_j^-)}^{\,2\,m_{ji}}  \; =  \\
\end{align*}
\begin{align*}
    &  \; = \;  K_{\mu_i^+ - \psi_+(\mu_i^+) + 2\,\psi_+(\mu_i^+)} \, L_{\psi_+(\mu_i^-)}
    \cdot {\textstyle \prod_{j \in I}} K_{-\psi_+(\mu_j^+)}^{\,2\,m_{ji}}
    \cdot {\textstyle \prod_{j \in I}} L_{\psi_+(\mu_j^-)}^{\,2\,m_{ji}}  \; =  \\
    &  \; = \;  K_{\mu_i^+ + \psi_+(\mu_i^+)} \, L_{\psi_+(\mu_i^-)}
    \cdot {\textstyle \prod_{j \in I}} {\big( K_{-2\,\psi_+(\mu_j^+)} \, L_{-2\,\psi_-(\mu_j^-)} \big)}^{\,m_{ji}}  \; =  \\
    &  \; = \;  K_{\mu_i^+ + \psi_+(\mu_i^+)} \, L_{\psi_+(\mu_i^-)}
    \cdot {\textstyle \prod_{j \in I}} {\big( K_{\mu_j^+} \, L_{\mu_j^-} \big)}^{-m_{ji}}  \; =  \\
    &  \; = \;  K_{\mu_i^+ + \psi_+(\mu_i^+)} \, L_{\psi_+(\mu_i^-)} \,
    K_{-\psi_+(\mu_i^+)} \, L_{-\psi_+(\mu_i^-)}  \; = \;  K_{\mu_i^+}
\end{align*}
 Similarly, one may check the equalities in  \eqref{eq: defvarphiK}.
 In particular,  $ \hat{\varphi} $  is surjective.
 \vskip4pt
   Since  $ \, Q \subseteq M \, $,  there exist  $ \, c_{ji} \in \ZZ \, $
   such that  $ \; \alpha_i \, = \, \sum_{j \in I} c_{ji} \, \mu_j \; $  for all  $ \, i \in I \, $.
   Then  $ \; K_{\alpha_i^+} \, = \, \prod_{j \in J} K_{\mu_j^+}^{\,c_{ji}} \; $  and
   $ \; K_{i,+}^{\scriptscriptstyle \,\Psi} \, = \, \prod_{j \in I} K_{\varpi_j^+}^{\,c_{ji}} \; $.
   This implies that  $ \; \tilde{\varphi}\big( \hat{\pi}_\lieg^{\scriptscriptstyle \Psi}\big( K_{i,+}^{\scriptscriptstyle \,\Psi} \big) \big)
   \, = \, \hat{\varphi}\big( \hat{\pi}_\lieg^{\scriptscriptstyle \Psi}\big( \prod_{j \in I} K_{\varpi_j^+}^{\,c_{ji}} \big) \big) \, =
   \, \prod_{j \in I} K_{\mu_j^+}^{\,c_{ji}} \, = \, K_{\alpha_i^+} \; $.  Similarly, one sees that
   $ \; \hat{\varphi}\big( \hat{\pi}_\lieg^{\scriptscriptstyle \Psi}\big( K_{i,-}^{\scriptscriptstyle \,\Psi} \big) \big) \, =
   \, L_{-\alpha_i^-} \; $,  and from this it is easy to verify that  $ \hat{\varphi} $  is indeed a Hopf algebra morphism.
 \vskip4pt
   Finally, since  $ \, \hat{\varphi} \circ \varphi \, $  and  $ \, \varphi \circ \hat{\varphi} \, $
   are the identity on the generators, we conclude that
   $ \, U_{\bq\,,M_\bullet\!}(\hskip0,8pt\lieg) \,\; \cong \;\,  U_{q,M}^{\scriptscriptstyle \Psi}(\hskip0,8pt\lieg) \, $,
   via the formulas given in the claim, q.e.d.
\epf

\vskip13pt

\begin{rmk}
 A final remark is in order.  In the previous results   --- Proposition \ref{prop: Tw-Uqb=Mp-Uqb},
 Theorem \ref{thm: Tw-Uqgd=Mp-Uqgd}  and  Theorem \ref{thm: Tw-Uqg=Mp-Uqg}  ---   we took from scratch
 $ \, \Psi \in \lieso_n(\QQ) \, $,  i.e., our ``twisting datum''  $ \Psi $  was antisymmetric.
 However, we can also start with  {\sl any\/}  twisting matrix  $ \, \Psi \in M_n(\QQ) \, $:
 then  {\sl those results read the same as soon as we replace  $ \, \Psi \leftrightsquigarrow \bq \, $
 with  $ \, (\chi \circ \vartheta)(\Psi) = \bq \, $  (notation of  \S \ref{Twists<->Mp's<->2-coc's})}.
 Notice then that one has
  $$  \bq  \; := \;  (\chi \circ \vartheta)(\Psi)  \; = \;  (\chi \circ \vartheta)\big(\Psi_a\big) $$
 where  $ \, \Psi_a := 2^{-1} \big( \Psi - \Psi^{\scriptscriptstyle T} \big) \, $  is the antisymmetric part of
 $ \Psi \, $.  Eventually, the outcome of this discussion, in short, is the following:
 \vskip4pt
   {\it The (polynomial) TwQUEA's built out of  {\sl any}  matrix  $ \, \Psi \in M_n(\QQ) \, $
   are exactly the same as those obtained just from  {\sl antisymmetric}  matrices  $ \, \Psi \in \lieso_n(\QQ) \, $}.
\end{rmk}

\vskip31pt

\vskip25pt

\end{document}